\documentclass[a4paper,11pt]{elsarticle}
\usepackage[utf8]{inputenc}

\usepackage{bm}
\usepackage{amsmath,amsthm,amssymb}
\usepackage{mathrsfs}
\usepackage{graphicx}
\usepackage{epsfig, setspace}
\usepackage{url}
\usepackage{graphicx, transparent, color}
\usepackage[ruled]{algorithm2e}
\usepackage{xcolor}

\DeclareSymbolFont{rsfs}{U}{rsfs}{m}{n}
\DeclareSymbolFontAlphabet{\mathcal}{rsfs}
\makeatletter
\def\ps@pprintTitle{
  \let\@oddhead\@empty
  \let\@evenhead\@empty
  \let\@oddfoot\@empty
  \let\@evenfoot\@oddfoot
}
\makeatother

\usepackage[margin=2.5cm]{geometry}
\usepackage{tikz}
\usetikzlibrary{shapes,calc}
\usepackage{verbatim}
\usepackage{listings}
\usepackage{textcomp}
\usepackage[normalem]{ulem}
\usepackage{todonotes}

\usepackage{epsfig}
\usepackage{pst-plot}
\usepackage{amsfonts}
\usepackage{lmodern}
\usepackage{euscript}
\usepackage{oldgerm}

\numberwithin{equation}{section}

\newcommand{\iph}{{i + \frac{1}{2}}}

\newtheorem{example}{Example}[section]

\newtheorem{theorem}{Theorem}[section]

\begin{document}
\begin{frontmatter}
 \title{Efficient Hybrid WENO Schemes for Special Relativistic Hydrodynamics with Adaptive Characteristic Reconstruction}
\date{}
\author[1]{Rakesh Kumar\corref{cor1}}\ead{rakesh.kumar@mahindrauniversity.edu.in, rakeshiitb21@gmail.com}
\author[1]{Biswarup Biswas}\ead{biswarupb7@gmail.com}
\author[2]{Asha Kumari Meena}\ead{ashameena01@gmail.com }
\author[3]{Harish Kumar}\ead{harishkumarkaushik511@gmail.com}

\address[1]{Department  of Mathematics,
Ecole Centrale School of Engineering \\
Mahindra University,
Hyderabad-500043, India.}
\address[2]{Department of Mathematics, Malaviya National Institute of Technology,\\ Jaipur, Rajasthan, India}
 \address[3]{Department of Mathematics, Indian Institute of Technology, Delhi, India}

\begin{abstract}
Special relativistic hydrodynamics (SRHD) equations arise in the modeling of high-speed fluid flows encountered in astrophysical phenomena such as jets, supernova explosions, and gamma-ray bursts. Owing to their highly nonlinear hyperbolic nature, solutions often develop strong discontinuities, making the design of stable and accurate numerical schemes challenging. Although Weighted Essentially Non-Oscillatory (WENO) schemes are widely used for such problems, component-wise WENO reconstruction may produce spurious oscillations near discontinuities. On the other hand, characteristic-wise WENO reconstruction provides accurate non-oscillatory solutions for systems of conservation laws, but it involves the computation of eigenvectors in each cell, which leads to high computational cost.
In this work, we intend to develop hybrid schemes which maintain the non-oscillatory feature of characteristic-wise WENO while being less costly. We propose three hybrid schemes, namely the H1-WENO, H2-WENO, and H3-WENO schemes, based on a new troubled-cell indicator constructed from the smoothness indicators of the WENO scheme. The proposed troubled-cell indicator effectively distinguishes smooth and discontinuous regions, allowing the hybrid schemes to employ inexpensive reconstructions in smooth regions and the characteristic-wise WENO reconstruction only near discontinuities. Numerical experiments demonstrate that the proposed schemes retain the accuracy and robustness of characteristic-wise WENO methods while significantly reducing the computational cost. In particular, the H1-WENO scheme achieves an approximately 30--40\% improvement in computational efficiency compared to the standard WENO scheme in 2D test cases.

\end{abstract}
\end{frontmatter}

\section{Introduction}
The special relativistic hydrodynamics equations describe the motion of fluids moving at velocities close to the speed of light. These equations arise from the conservation of mass, momentum, and energy in the framework of Einstein’s theory of special relativity \cite{anile}. SRHD models play an important role in several applications such as astrophysical jets, supernova explosions, gamma-ray bursts, accretion disks around black holes, and high-energy plasma flows \cite{landau1987,begelman1984theory,mirabel1999sources,zensus1997parsec,bottcher2012relativistic,anile}. The governing equations form a highly nonlinear hyperbolic system of conservation laws \cite{marti1991numerical} whose solutions may develop discontinuities such as shock waves and contact discontinuities even from smooth initial data. This makes the construction of accurate and stable numerical schemes very challenging. The main difficulties in developing numerical methods for SRHD include  avoiding spurious oscillations near discontinuities and accurately capturing complex wave interactions. Therefore, robust high-resolution shock-capturing schemes are required to obtain stable and physically reliable solutions for relativistic flows.

In recent years, a wide variety of numerical schemes have been developed for solving the equations of relativistic hydrodynamics (RHD), particularly because of their highly nonlinear and hyperbolic nature. 
One of the initial attempt to develop numerical method for solving the RHD equations was by Wilson in 1972 \cite{wilson1972numerical} which was based on an explicit finite difference scheme. The method suffered from accuracy issues for higher Lorentz factors~\cite{centrella1984planar}. Since then, several shock capturing schemes have been proposed using exact or approximate Riemann solvers \cite{marti1991numerical,marti1994analytical,dai1997iterative,ibanez1999riemann}. Piecewise parabolic  reconstruction method (PPM) is one of the interesting methods \cite{aloy1999genesis, marti1996extension, mignone2005piecewise} which showed promising results for solving the RHD equations.
High-resolution shock-capturing methods have become one of the most successful approaches for resolving discontinuities such as shock waves and contact discontinuities while maintaining high-order accuracy in smooth regions. Among these methods, the Weighted Essentially Non-Oscillatory schemes have received significant attention for solving systems of hyperbolic conservation laws due to their ability to achieve high-order accuracy and suppress spurious oscillations near discontinuities.

WENO schemes are among the most successful high-order numerical methods for solving systems of hyperbolic conservation laws, particularly in the presence of shocks and discontinuities. These schemes are designed to achieve high-order accuracy in smooth regions while maintaining non-oscillatory behavior near discontinuities. The WENO methodology can be viewed as an extension of the Essentially Non-Oscillatory (ENO) schemes introduced in \cite{shu-osh_88a, shu-osh_89a}. The first WENO scheme was proposed in the finite volume framework by Liu et al.~\cite{liu-etal_94a} and was later extended to the finite difference framework by Jiang and Shu~\cite{jia-shu_96a}. Owing to their excellent balance between accuracy, robustness, and shock-capturing capability, WENO schemes have become one of the standard tools for the numerical approximation of nonlinear hyperbolic partial differential equations. Building on the proven success of WENO schemes for nonlinear hyperbolic problems, they have also been successfully extended to the RHD equations \cite{weno2007Tchekhovskoy,wu2015high,chen2022physical,bhoriya2026physical}.

During the past three decades, WENO schemes have undergone significant developments aimed at improving their accuracy, robustness, and computational efficiency. Several higher-order formulations and variants of the WENO methodology have been proposed in the literature; see, for example, \cite{bal-shu_00a, bor-etal_08a, cas-etal_11a, ger-etal_09a, don-bor_13a}. Among the notable variants are the WENO-Z scheme \cite{bor-etal_08a, don-bor_13a}, the Central WENO scheme \cite{kol_14a, cra-sem_16a, cra-etal_18a, cas-sem_19a, sem-vis_20a}, and the WENO-AO scheme \cite{bal-etal_16a, kum-cha_18a, kum-cha_19a, bal-etal_20a, Arb-etal_17a, sem-vis_20a}. These variants have been developed to further improve resolution in smooth regions while preserving stability and non-oscillatory behavior near discontinuities.

The extension of WENO reconstruction from scalar conservation laws to systems of hyperbolic conservation laws is nontrivial. A straightforward component-wise reconstruction may produce spurious oscillations near discontinuities due to the nonlinear coupling among different characteristic fields \cite{jia-shu_96a, har-etal_97a, ren-etal_03a}. To address this issue, characteristic-wise WENO reconstruction was introduced \cite{jia-shu_96a}, in which the variables are projected onto the local characteristic fields using the eigenvectors of the Jacobian matrix. The reconstruction is then performed in the characteristic space before projecting the solution back to the physical space. Although this approach significantly improves stability and reduces nonphysical oscillations near discontinuities, it substantially increases the computational cost due to the repeated evaluation of eigenvalues and eigenvectors.
To reduce this computational cost, several hybrid WENO schemes have been proposed \cite{ren-etal_03a, pup_03a,hil-pul_04a,vis-gai_05a, mov-joh_13a,kum_18a, jun-etal_19a,zhao-etal_20a,kum_cha_22a}. The main idea of these schemes is to employ inexpensive component-wise reconstruction in smooth regions and use characteristic-wise reconstruction only near discontinuities. Such approaches rely on efficient troubled-cell indicators to distinguish smooth and nonsmooth regions. By restricting the costly characteristic decomposition to troubled cells, hybrid schemes significantly improve computational efficiency while preserving the robustness and non-oscillatory properties of characteristic-wise WENO reconstruction.

In this article, we propose a new class of hybrid WENO schemes, namely H1-WENO, H2-WENO, and H3-WENO, with the aim of improving computational efficiency while preserving the accuracy and non-oscillatory properties of characteristic-wise WENO methods. The proposed hybrid schemes are constructed using the fifth-order WENO-AO(5,3) scheme \cite{bal-etal_16a} as the underlying reconstruction procedure. To distinguish smooth and discontinuous regions of the solution, we develop a new troubled-cell indicator based on the smoothness indicators of the WENO scheme. The proposed indicator effectively detects troubled cells and enables the adaptive use of different reconstruction strategies in smooth and non-smooth regions.
In the H1-WENO scheme, component-wise WENO reconstruction is employed in smooth regions, whereas characteristic-wise WENO reconstruction is used near discontinuities. In the H2-WENO scheme, a fifth-order upwind component-wise finite difference method is applied in smooth regions, while characteristic-wise WENO reconstruction is retained near discontinuities. The H3-WENO scheme employs a characteristic-wise upwind finite difference method in smooth regions together with characteristic-wise WENO reconstruction in troubled regions.
This work also investigates an important computational aspect of WENO-type methods, namely whether the dominant computational cost arises from the evaluation of nonlinear WENO weights or from the computation of eigenvectors required for characteristic decomposition. Through extensive numerical experiments, we demonstrate that the characteristic decomposition is considerably more expensive than the evaluation of nonlinear WENO weights. By restricting characteristic decomposition only to troubled regions, the proposed hybrid schemes significantly reduce the computational cost. Numerical experiments show that the H1-WENO scheme achieves approximately 40--50\% improvement in computational efficiency compared with the standard WENO-AO scheme, while the H2-WENO and H3-WENO schemes provide improvements of about 30--40\% and 5--10\%, respectively.

The organization of the paper is as follows. In Section \ref{sec:rhd}, we discuss the SRHD equations along with their eigenvector structure. Section \ref{sec:fdm} presents the construction of the numerical scheme for the SRHD equations in the finite difference framework. In Section \ref{sec:weno}, we briefly review the WENO-AO(5,3) reconstruction procedure. Section \ref{sec:hweno} is devoted to the development of the proposed hybrid schemes and the construction of the new troubled-cell indicator. In Section \ref{sec:num}, we validate the proposed schemes through a set of one- and two-dimensional test problems and provide a comparative study of their performance. Finally, conclusions are presented in Section \ref{sec:con}.

\section{Special Relativistic Hydrodynamics Equations}\label{sec:rhd}
The special relativistic hydrodynamics equations describe the evolution of fluids
moving at velocities comparable to the speed of light. In the laboratory frame, they are
expressed in divergence form as
\begin{equation}
\frac{\partial \mathbf{U}}{\partial t} + 
\sum_{\ell=1}^{d} \frac{\partial \mathbf{F}_{\ell}(\mathbf{U})}{\partial x_{\ell}} = 0,
\qquad (\mathbf{x},t) \in \Omega\times (0, T],
\label{eq:RHD}
\end{equation}
where $d\in\{1,2\}$ denotes the spatial dimension. The conserved state vector and
corresponding flux functions are
\begin{align}
\mathbf{U} &= \big(D, m_1, \dots, m_d, E\big)^{\mathsf{T}}, \label{eq:state}\\[1mm]
\mathbf{F}_{\ell}(\mathbf{U}) 
&= \big(Du_{\ell},\; m_1u_{\ell} + p\delta_{1\ell},\; \dots,\; 
      m_du_{\ell} + p\delta_{d\ell},\; m_{\ell}\big)^{\mathsf{T}},
\qquad \ell = 1,\dots,d,
\label{eq:flux}
\end{align}
where $D=\rho W$ is the relativistic mass density, 
$\mathbf{m} = \rho hW^2 \mathbf{u}$ is the momentum vector, and
\[
E = \rho h W^2 - p
\]
is the total energy density. Here $\rho$ is the rest-mass density, $p$ the pressure, 
$W = (1-|\mathbf{u}|^2)^{-1/2}$ the Lorentz factor, and 
$h = 1 + e + p/\rho$ the specific enthalpy ($e$ is internal energy per unit mass).
Units are chosen such that the speed of light $c=1$. To close the system, we adopt the ideal relativistic equation of state
\begin{equation}
p = (\Gamma - 1)\rho e, 
\qquad \Gamma \in (1,2].
\label{eq:eos}
\end{equation}
Due to the nonlinear coupling of variables in \eqref{eq:state}--\eqref{eq:eos}, 
recovery of primitive variables typically requires the solution of a nonlinear
algebraic equation for the pressure.

Relativistic effects play a crucial role in modeling high-energy astrophysical 
phenomena such as relativistic jets, accretion disks, and neutron-star mergers~\cite{landau1987,begelman1984theory,mirabel1999sources,zensus1997parsec,bottcher2012relativistic,anile}.
The strong nonlinearity of \eqref{eq:RHD} prevents analytical solutions in most
practical settings, making robust numerical schemes indispensable. In most numerical simulations, the information about eigenvalues and eigen-structures plays a crucial role. In the following subsection, we discuss the eigen-structure of the RHD equations in both one and two spatial dimensions.
\subsection{1D RHD Equations}
The one–dimensional special relativistic hydrodynamics equations can be written in conservative form as
\begin{equation}
  \mathbf{U}_t + \mathbf{F}(\mathbf{U})_x = 0 ,
\end{equation}
where $\mathbf{U}$ is the vector of conserved variables and $\mathbf{F}(\mathbf{U})$ is the corresponding flux. The above system is strictly hyperbolic~\cite{anile,ryu2006equation}. The flux Jacobian
\[
  \mathcal{A}(\mathbf{U}) := \frac{\partial \mathbf{F}(\mathbf{U})}{\partial \mathbf{U}}
\]
admits three distinct real eigenvalues. Let $u$ denote the fluid velocity in the $x$-direction (with $c=1$), $W=(1-u^2)^{-1/2}$ the Lorentz factor, $h$ the specific enthalpy, and $c_s$ the relativistic sound speed. Then the eigenvalues of $\mathcal{A}$ are given by~\cite{ryu2006equation,deepak2019entropy}
\begin{equation}
\lambda_1=\frac{u-c_s}{1-uc_s}, \qquad
\lambda_2=u, \qquad
\lambda_3=\frac{u+c_s}{1+uc_s}.
\end{equation}
The relativistic sound speed satisfies
\begin{equation}
c_s^2=\left(\frac{\partial p}{\partial e}\right)_s,
\end{equation}
which, for an ideal-gas equation of state, becomes
\begin{equation}
c_s^2=\frac{\gamma p}{\rho h}.
\end{equation}
The Jacobian $\mathcal{A}$ is diagonalizable and can be decomposed as
\begin{equation}
  \mathcal{A} = \mathbf{R}\,\mathbf{D}\,\mathbf{L},
\end{equation}
where $\mathbf{D} = \operatorname{diag}(\lambda_1,\lambda_2,\lambda_3)$ is the diagonal matrix of
eigenvalues, $\mathbf{R}$ is the matrix of right eigenvectors, and 
$\mathbf{L} = \mathbf{R}^{-1}$ is the matrix of left eigenvectors, so that $\mathbf{R}\mathbf{L}=I$.
A convenient choice for the right eigenvector matrix $\mathbf{R}$ is~\cite{Duan2020}
\begin{equation}
  \mathbf{R}
  =
  \begin{bmatrix}
    1 & 1 & 1 \\[0.3em]
    (u - c_s)\,W h & u W & (u + c_s)\,W h \\[0.3em]
    (1 - u c_s)\,W h & W & (1 + u c_s)\,W h
  \end{bmatrix},
\end{equation}
whose columns correspond, respectively, to the eigenvalues 
$\lambda_1, \lambda_2$ and $\lambda_3$.
\subsection{2D RHD Equations }
The two–dimensional special relativistic hydrodynamics equations in conservative form read
\begin{equation}
  \mathbf{U}_t + \mathbf{F}(\mathbf{U})_x + \mathbf{G}(\mathbf{U})_y = 0,
\end{equation}
where $\mathbf{U}$ is the vector of conserved variables and $\mathbf{F}(\mathbf{U})$, $\mathbf{G}(\mathbf{U})$
are the fluxes in the $x$– and $y$–directions, respectively. The system is hyperbolic, and the flux Jacobians
\[
  \mathcal{A}^x(\mathbf{U}) := \frac{\partial \mathbf{F}(\mathbf{U})}{\partial \mathbf{U}},
  \qquad
  \mathcal{A}^y(\mathbf{U}) := \frac{\partial \mathbf{G}(\mathbf{U})}{\partial \mathbf{U}}
\]
admit real eigenvalues and a complete set of eigenvectors. In the $x$–direction, the four eigenvalues of
$\mathcal{A}^x$ are~\cite{Duan2020}
\begin{equation}
  \lambda_1^x = u, \qquad
  \lambda_2^x = u, \qquad
  \lambda_{\pm}^x
  = \frac{u(1 - c_s^2) \pm \dfrac{c_s}{W}\,
      \sqrt{1 - u^2 - v^2 c_s^2}}{1 - (u^2 + v^2)c_s^2},
\end{equation}
while in the $y$–direction, the four eigenvalues of $\mathcal{A}^y$ are
\begin{equation}
  \lambda_1^y = v, \qquad
  \lambda_2^y = v, \qquad
  \lambda_{\pm}^y
  = \frac{v(1 - c_s^2) \pm \dfrac{c_s}{W}\,
      \sqrt{1 - v^2 - u^2 c_s^2}}{1 - (u^2 + v^2)c_s^2}.
\end{equation}
Here, $v$ denote the fluid velocity in the $y$-direction. In the limit $v = 0$ (or $u = 0$), these reduce to the one–dimensional eigenvalues given earlier. For convenience, we introduce the auxiliary factors
\begin{equation}
  \mathcal{A}_{\pm}^x = \frac{1 - u^2}{1 - u\,\lambda_{\pm}^x},
  \qquad
  \mathcal{A}_{\pm}^y = \frac{1 - v^2}{1 - v\,\lambda_{\pm}^y}.
\end{equation}
A suitable choice of right–eigenvector matrices $\mathbf{R}^x(\mathbf{U})$ and $\mathbf{R}^y(\mathbf{U})$
for the $x$– and $y$–direction Jacobians is
\begin{equation}
\mathbf{R}^x(\mathbf{U}) =
\begin{bmatrix}
1 & \dfrac{1}{W} & u\,W & 1 \\[6pt]
\mathcal{A}_{-}^x\,\lambda_{-}^x\,h\,W & u & 2h\,u^{2}\,W^2 & \mathcal{A}_{+}^x\,\lambda_{+}^x\,h\,W \\[6pt]
h\,u\,W & v & h\bigl(1 + 2u^{2} W^2\bigr) & h\,v\,W \\[6pt]
\mathcal{A}_{-}^x\,h\,W & 1 & 2h\,v\,W^2 & \mathcal{A}_{+}^x\,h\,W
\end{bmatrix},
\end{equation}
and
\begin{equation}
\mathbf{R}^y(\mathbf{U}) =
\begin{bmatrix}
1 & W u & \dfrac{1}{W} & 1 \\[8pt]
h W u & h\bigl(1 + 2u^{2} W^2\bigr) & u & h W u \\[8pt]
h W \mathcal{A}^y_{-}\,\lambda^y_{-} & 2h\,u\,v\,W^2 & v & h W \mathcal{A}^y_{+}\,\lambda^y_{+} \\[8pt]
h W \mathcal{A}^y_{-} & 2h\,u\,W^2 & 1 & h W \mathcal{A}^y_{+}
\end{bmatrix}.
\end{equation}
The corresponding left–eigenvector matrices are
\[
  \mathbf{L}^x(\mathbf{U}) = \bigl(\mathbf{R}^x(\mathbf{U})\bigr)^{-1},
  \qquad
  \mathbf{L}^y(\mathbf{U}) = \bigl(\mathbf{R}^y(\mathbf{U})\bigr)^{-1}.
\]
Thus, the Jacobians admit the eigen-decompositions
\begin{equation}
  \mathcal{A}^x = \mathbf{R}^x \,\mathbf{D}^x\, \mathbf{L}^x,
  \qquad
  \mathcal{A}^y = \mathbf{R}^y \,\mathbf{D}^y\, \mathbf{L}^y,
\end{equation}
where
\[
  \mathbf{D}^x = \operatorname{diag}(\lambda_1^x, \lambda_2^x, \lambda_{-}^x, \lambda_{+}^x),
  \qquad
  \mathbf{D}^y = \operatorname{diag}(\lambda_1^y, \lambda_2^y, \lambda_{-}^y, \lambda_{+}^y).
\]

\section{Finite Difference Scheme (FDM)}\label{sec:fdm}
In the previous section, we have discussed the RHD equations and its eigen-value structure. 
This section outlines the flux reconstruction method for
solving systems of conservation laws in finite difference framework. We begin with the one-dimensional system of RHD equations,
expressed as
\begin{equation}\label{main.ex}
  \mathbf{U}_t + \mathbf{F}(\mathbf{U})_x = 0,
  \qquad (x,t) \in [a,b] \times (0,T].
\end{equation}
subject to appropriate boundary condition. The procedure is then generalized to higher dimensions using a dimension–by–dimension approach
(see \cite{shu-97d, jia-shu_96a} for further details).
To numerically approximate the governing conservation law \eqref{main.ex}, 
the physical domain is divided into a collection of non-overlapping control 
volumes (cells). Each computational cell is denoted by
\[
\mathbb{I}_i = [x_{i-\frac{1}{2}},\, x_{i+\frac{1}{2}}],
\]
where \(x_{i-\frac{1}{2}}\) and \(x_{i+\frac{1}{2}}\) represent the left and 
right interfaces of the \(i\)-th cell, respectively. For simplicity, we 
assume a uniform mesh throughout the domain, so that the cell width$
\Delta x = x_{i+\frac{1}{2}} - x_{i-\frac{1}{2}}
$
remains constant for all \(i\). The geometric center of the cell is defined as
\[
x_i = \frac{1}{2}\left(x_{i-\frac{1}{2}} + x_{i+\frac{1}{2}}\right),~~~~ i=0, 1,2,\ldots, N.
\]

In the finite difference formulation, approximating  conservation law
\eqref{main.ex} over each cell \(\mathbb{I}_i\), we obtain the following 
semi-discrete representation:
\begin{equation}
  \frac{d \mathbf{U}_i(t)}{dt} 
  = -\frac{1}{\Delta x}
  \left(
      \mathbb{F}_{i+\frac{1}{2}} 
      - \mathbb{F}_{i-\frac{1}{2}}
  \right),
  \qquad i=0,1,\ldots,N,
  \label{eq:semids_FD}
\end{equation}
where $\mathbf{U}_i(t)$ denotes the discrete solution vector at the grid point 
$x_i$, and $\mathbb{F}_{i\pm\frac{1}{2}}$ are the numerical fluxes evaluated at 
the cell interfaces. The accuracy and stability of the scheme strongly depend 
on the construction of the numerical flux.
To ensure numerical robustness, particularly in the presence of discontinuities, 
the physical flux $\mathbf{F}(\mathbf{U})$ is decomposed into right-going and 
left-going components:
\begin{equation}
 \mathbf{F}(\mathbf{U})
 = \mathbf{F}^{+}(\mathbf{U}) 
 + \mathbf{F}^{-}(\mathbf{U}),
 \label{eq:flux_split_general}
\end{equation}
where $\mathbf{F}^{+}$ is associated with non-negative characteristic speeds and 
$\mathbf{F}^{-}$ corresponds to non-positive characteristic speeds, i.e.,
\[
\frac{d\mathbf{F}^{+}}{d\mathbf{U}} \geq 0, 
\qquad
\frac{d\mathbf{F}^{-}}{d\mathbf{U}} \leq 0.
\]

Different flux splitting strategies are available in the literature 
(e.g., see \cite{shu-97d}). In this work, we adopt the local 
Lax--Friedrichs (LF) splitting due to its robustness and simplicity:
\begin{equation}
\mathbf{F}^{\pm}(\mathbf{U})
= \frac{1}{2}
\left(
    \mathbf{F}(\mathbf{U})
    \pm \lambda \mathbf{U}
\right),
\label{eq:LF_split}
\end{equation}
where $\lambda$ is a suitably chosen maximum absolute of the 
eigenvalues of the flux Jacobian $\partial \mathbf{F}/\partial \mathbf{U}$ 
computed locally in the $x$-direction. The corresponding split form of the 
numerical flux becomes
\begin{equation}
 \mathbb{F}_{i+\frac{1}{2}}
 = \mathbb{F}^{+}_{i+\frac{1}{2}}
 + \mathbb{F}^{-}_{i+\frac{1}{2}}.
 \label{eq:numerical_flux_split}
\end{equation}

High-order accuracy in smooth regions and non-oscillatory behavior near shocks 
are achieved by reconstructing $\mathbb{F}^{+}_{i+\frac{1}{2}}$ and 
\(\mathbb{F}^{-}_{i+\frac{1}{2}}\) separately using nonlinear WENO 
reconstruction techniques. The upcoming subsections summarize both the 
component-wise and characteristic-wise WENO reconstructions for the RHD 
equations. For a detailed and comprehensive discussion, the reader is referred 
to \cite{shu-97d}.

\subsubsection{Component-Wise Reconstruction}
To compute the flux at the interface $x_{i+\frac{1}{2}}$, the positive and negative flux components are calculated using the Lax-Friedrichs splitting:
\begin{equation*}
 \mathbb{H}_i^{\pm} = \frac{1}{2}\left(\mathbf{F}(\mathbf{U}_i) \pm \lambda \mathbf{U}_i\right), \quad i=0,1,2,\ldots,N.
\end{equation*}
For each component $p = 0, 1, 2$, the WENO reconstruction is applied as follows:
\[
    (\mathbb{F}_{i+\frac{1}{2}}^{+})^p = \text{WENO-Reconstruction}\left((\mathbb{H}_{i-2}^{+})^p, (\mathbb{H}_{i-1}^{+})^p, (\mathbb{H}_{i}^{+})^p, (\mathbb{H}_{i+1}^{+})^p, (\mathbb{H}_{i+2}^{+})^p\right),
\]
\[
    (\mathbb{F}_{i+\frac{1}{2}}^{-})^p = \text{WENO-Reconstruction}\left((\mathbb{H}_{i+3}^{-})^p, (\mathbb{H}_{i+2}^{-})^p, (\mathbb{H}_{i+1}^{-})^p, (\mathbb{H}_{i}^{-})^p, (\mathbb{H}_{i-1}^{-})^p\right).
\]
The reconstructed flux is then given by:
\[
    \mathbb{F}_{i+\frac{1}{2}} = \mathbb{F}_{i+\frac{1}{2}}^{+} + \mathbb{F}_{i+\frac{1}{2}}^{-}.
\]

\subsubsection{Characteristic-Wise Reconstruction}
In the characteristic-wise approach, the left and right eigenvectors of the Jacobian matrix $\mathbf{F}'(\mathbf{U})$ are computed at the interface $x_{i+\frac{1}{2}}$, denoted as $\mathbf{L}_{i+\frac{1}{2}} = \mathbf{L}(\mathbf{V}_{i+\frac{1}{2}})$ and $\mathbf{R}_{i+\frac{1}{2}} = \mathbf{R}(\mathbf{V}_{i+\frac{1}{2}})$, where $\mathbf{V}_{i+\frac{1}{2}}$ is the average of $\mathbf{V}_i$ and $\mathbf{V}_{i+1}$ primitive variable. The flux splitting is performed as:
\[
    \mathbb{H}_k^{+} = \mathbf{L}_{i+\frac{1}{2}} \left[\frac{1}{2}\left(\mathbf{F}(\mathbf{U}_k) + \lambda \mathbf{U}_k\right)\right], \quad k = i-2, i-1, i, i+1, i+2,
\]
\[
    \mathbb{H}_k^{-} = \mathbf{L}_{i+\frac{1}{2}} \left[\frac{1}{2}\left(\mathbf{F}(\mathbf{U}_k) - \lambda \mathbf{U}_k\right)\right], \quad k = i-1, i, i+1, i+2, i+3.
\]
For each flux component $p = 0, 1, 2$, the WENO reconstruction is carried out as:
\[
    (\mathbb{F}_{i+\frac{1}{2}}^{+})^p = \text{WENO-Reconstruction}\left((\mathbb{H}_{i-2}^{+})^p, (\mathbb{H}_{i-1}^{+})^p, (\mathbb{H}_{i}^{+})^p, (\mathbb{H}_{i+1}^{+})^p, (\mathbb{H}_{i+2}^{+})^p\right),
\]
\[
    (\mathbb{F}_{i+\frac{1}{2}}^{-})^p = \text{WENO-Reconstruction}\left((\mathbb{H}_{i+3}^{-})^p, (\mathbb{H}_{i+2}^{-})^p, (\mathbb{H}_{i+1}^{-})^p, (\mathbb{H}_{i}^{-})^p, (\mathbb{H}_{i-1}^{-})^p\right).
\]
Finally, the reconstructed flux is combined as:
\[
    \mathbb{F}_{i+\frac{1}{2}} = \mathbf{R}_{i+\frac{1}{2}} \left(\mathbb{F}_{i+\frac{1}{2}}^{+} + \mathbb{F}_{i+\frac{1}{2}}^{-}\right).
\]

Similarly, the flux at $x_{i-\frac{1}{2}}$ is computed.

\subsection{Extension to Multiple Dimensions}
The WENO reconstruction procedure can be extended to multi-dimensional systems using a dimension-by-dimension approach \cite{jia-shu_96a, bal-shu_00a}. Consider the two-dimensional RHD equations:
\begin{equation}\label{main.ex2d}
  \mathbf{U}_t + \mathbf{F(U)}_x + \mathbf{G(U)}_y = 0, \quad (x,y,t)\in \Omega\times (0,T],
\end{equation}
in the computational domain $\Omega = [a,b] \times [c,d]$. The domain is discretized into uniform rectangular cells $\mathbb{I}_{ij} = [x_{i-\frac{1}{2}}, x_{i+\frac{1}{2}}] \times [y_{j-\frac{1}{2}}, y_{j+\frac{1}{2}}]$ with mesh widths $\Delta x$ and $\Delta y$. The cell center is located at
\[
(x_i, y_j) = \left(\frac{1}{2}(x_{i-\frac{1}{2}} + x_{i+\frac{1}{2}}), \frac{1}{2}(y_{j-\frac{1}{2}} + y_{j+\frac{1}{2}})\right),~i=0,1,\ldots,N_x~~j=0,1,\ldots,N_y.
\]
where $N_x$ and $N_y$ denote the number of cells along the $x$ and $y$-direction, respectively. 

The semi-discrete form of equation \eqref{main.ex2d} over cell $\mathbb{I}_{ij}$ is:
\begin{equation}\label{semi2d}
  \frac{d \mathbf{U}_{ij}(t)}{dt} = -\frac{1}{\Delta x}\left(\mathbb{F}_{i+\frac{1}{2},j} - \mathbb{F}_{i-\frac{1}{2},j}\right) - \frac{1}{\Delta y}\left(\mathbb{G}_{i,j+\frac{1}{2}} - \mathbb{G}_{i,j-\frac{1}{2}}\right),
\end{equation}
where $\mathbf{U}_{ij}(t)$ is the solution value at the cell center, and $\mathbb{F}_{i+\frac{1}{2},j}$ and $\mathbb{G}_{i,j+\frac{1}{2}}$ are the numerical fluxes at the interface.

\subsubsection{Dimension-by-Dimension Reconstruction}
The numerical fluxes in each direction are computed independently using the one-dimensional WENO reconstruction procedure. For the flux of the $x$-direction $\mathbb{F}_{i+\frac{1}{2},j}$, we consider a one-dimensional reconstruction along the $x$-direction while holding the $y$-index $j$ fixed. Similarly, for the flux of the $y$ -direction $\mathbb{G}_{i,j+\frac{1}{2}}$, we perform a reconstruction along the $y$-direction with a fixed $x$ -index $i$.

For the flux in the $x$-direction:
\begin{itemize}
    \item For fixed value of $j$, perform flux splitting in the $x$-direction:
    \begin{align*}
        \mathbb{H}_{k,j}^{x,+} &= \frac{1}{2}\left(\mathbf{F}(\mathbf{U}_{k,j}) + \lambda_x \mathbf{U}_{k,j}\right), ~~k=~i-2,~i-~1,i, ~i+1, ~i+2,\\
        \mathbb{H}_{k,j}^{x,-} &= \frac{1}{2}\left(\mathbf{F}(\mathbf{U}_{k,j}) - \lambda_x \mathbf{U}_{k,j}\right), ~~k=~i-1,~ i,~ i+1,~ i+2,~ i+3,
    \end{align*}
    where $\lambda_x$ is the maximum absolute eigenvalue of the Jacobian $\partial \mathbf{F}/\partial \mathbf{U}$ over the domain.
    \item Apply WENO reconstruction in component- or characteristic-wise to obtain $\mathbb{F}_{i+\frac{1}{2},j}^{\pm}$.
    \item Combine to get $\mathbb{F}_{i+\frac{1}{2},j} = \mathbb{F}_{i+\frac{1}{2},j}^{+} + \mathbb{F}_{i+\frac{1}{2},j}^{-}$.
\end{itemize}

For the flux in $y$-direction:
\begin{itemize}
    \item For fixed value of $i$, perform flux splitting in the $y$-direction:
   \begin{align*}
        \mathbb{H}_{i,k}^{y,+} &= \frac{1}{2}\left(\mathbf{G}(\mathbf{U}_{i,k}) + \lambda_y \mathbf{U}_{i,k}\right), ~~k=~j-2,~j-~1,j, ~j+1, ~j+2,\\
        \mathbb{H}_{i,k}^{y,-} &= \frac{1}{2}\left(\mathbf{G}(\mathbf{U}_{i,k}) - \lambda_y \mathbf{U}_{i,k}\right), ~~k=~j-1,~ j,~ j+1,~ j+2,~ j+3,
    \end{align*}
    where $\lambda_y$ is the maximum absolute eigenvalue of the Jacobian $\partial \mathbf{G}/\partial \mathbf{U}$ over the domain.
    \item Apply WENO reconstruction in component- or characteristic-wise to obtain $\mathbb{G}_{i,j+\frac{1}{2}}^{\pm}$.
    \item Combine to get $\mathbb{G}_{i,j+\frac{1}{2}} = \mathbb{G}_{i,j+\frac{1}{2}}^{+} + \mathbb{G}_{i,j+\frac{1}{2}}^{-}$.
\end{itemize}

\section{Reconstruction Methodology: WENO-AO(5,3)}\label{sec:weno}
In the previous section, we discussed the finite-difference formulation for systems of the hyperbolic conservation laws. In this section, we briefly review the  WENO reconstruction employed in the present work. Several variants of WENO reconstruction have been proposed in the literature, including WENO-JS~\cite{jia-shu_96a}, WENO-Z \cite{bor-etal_08a, don-bor_13a}, CWENO ~\cite{pup_03a,cra-etal_18a,cra-etal_19a,dum-etal_17a}, and WENO-AO~\cite{bal-etal_16a, bal-etal_20a, kum-cha_18a} schemes. Among these, we adopt the WENO-AO(5,3) scheme, as it does not require the existence of positive linear weights and has been shown to provide improved accuracy and resolution compared to classical WENO reconstructions.
This section details the implementation of the WENO-AO(5,3) scheme~\cite{bal-etal_16a}, which is used for high-order spatial reconstruction within the finite-difference framework described earlier. 
Specifically, the WENO-AO(5,3) scheme~\cite{bal-etal_16a} nonlinearly combines one fourth-degree polynomial constructed on a five-point stencil with three quadratic polynomials defined on compact three-point sub-stencils. This adaptive hybridization yields fifth-order accuracy in smooth regions of the solution and automatically reduces to third-order accuracy in the vicinity of discontinuities, thereby enhancing numerical stability while maintaining high resolution. The brief detail of WENO-AO(5,3) reconstruction is describe as below.

\subsection{Reconstruction formulation.}
Given the cell-averaged data, the reconstructed flux value at the interface \( x_{i+\frac{1}{2}} \) is computed as
\[
\mathbb{F}_{i+\frac{1}{2}} = 
\frac{\omega_0^5}{\gamma_0^5}
\Bigg[
  \mathbb{P}_0^5(x_{i+\frac{1}{2}}) 
  - \sum_{k=-1}^{1} \gamma_k^3 \, \mathbb{P}_k^3(x_{i+\frac{1}{2}})
\Bigg]
+ \sum_{k=-1}^{1} \omega_k^3 \, \mathbb{P}_k^3(x_{i+\frac{1}{2}}),
\]
where \( \mathbb{P}_k^m \) denotes the polynomial of degree \( m \) reconstructed on stencil \( \mathbb{S}_k^m \) (see \cite{bal-etal_16a, kum-cha_18a} for more details and notations). The parameter \( \gamma_k^m \) are known as linear weights and can be chosen as \cite{bal-etal_16a}
\[
\begin{aligned}
\gamma_{-1}^3 &= \tfrac{1}{2}(1-\gamma_{\mathrm{Hi}})(1-\gamma_{\mathrm{Lo}}), \\[2pt]
\gamma_{0}^3 &= (1-\gamma_{\mathrm{Hi}})\gamma_{\mathrm{Lo}}, \\[2pt]
\gamma_{1}^3 &= \tfrac{1}{2}(1-\gamma_{\mathrm{Hi}})(1-\gamma_{\mathrm{Lo}}), \\[2pt]
\gamma_{0}^5 &= \gamma_{\mathrm{Hi}},
\end{aligned}
\]
with \( \gamma_{\mathrm{Hi}} = \gamma_{\mathrm{Lo}} = 0.85 \) used in all computations presented herein. These weights satisfy the convexity condition \( \sum_{k=-1}^{1} \gamma_k^3 + \gamma_0^5 = 1 \). The $\omega_k^m$ are the non-linear weight corresponding to linear weights $\gamma_k^m$ and are constructed using smoothness indicator \cite{jia-shu_96a, bal-etal_16a}. The  smoothness indicators \( \beta_k^m \) are computed for each stencil \( \mathbb{S}_k^m \) and for a polynomial of degree \( m \) it is given by \cite{jia-shu_96a, bal-etal_16a}
\[
\beta_k^m = \sum_{\ell=1}^{m}
\int_{x_{i-\frac12}}^{x_{i+\frac12}}
\Delta x^{2\ell-1}
\Bigl( \frac{d^\ell}{dx^\ell} \mathbb{P}_k^m(x) \Bigr)^2 dx,
\]
which measures the integrated square of its derivatives. In \cite{bal-etal_16a}, author shown that smoothness indicator can be written as sum of square of co-efficient of polynomials.  The un-normalized nonlinear weights are then constructed via
\[
\widetilde{\omega}_k^m = 
\gamma_k^m \Bigl( 1 + \frac{\tau^2}{(\epsilon + \beta_k^m)^2} \Bigr),
\]
with parameters \( \epsilon = 10^{-12} \) (to avoid division by zero) and $\tau$
is known as {global} smoothness indicator and it is given by: 
\[
\tau = \frac{1}{3}\Bigl(| 
\beta_0^5 - \beta_{-1}^3| + |\beta_0^5-\beta_0^3| + |\beta_0^5-\beta_1^3| \bigr),
\]
The final normalized weights are
\[
\omega_k^m = 
\frac{\widetilde{\omega}_k^m}{
   \widetilde{\omega}_0^5 +\displaystyle\sum_{l=-1}^{1} \widetilde{\omega}_l^3 
},
\]
ensuring \( \sum_{k=-1}^{1} \omega_k^3 + \omega_0^5 = 1 \).
In smooth regions, \( \tau = \mathcal{O}(\Delta x^6) \) and \( \beta_k^m = \mathcal{O}(\Delta x^2) \), causing the nonlinear weights to converge to the optimal linear weights and recovering the full fifth-order accuracy of the central polynomial. Near discontinuities, stencils containing the discontinuity yield large \( \beta_k^m \), and the corresponding weights are suppressed. The reconstruction then falls back to a convex combination of the smooth quadratic polynomials, ensuring an essentially non-oscillatory third-order profile.

\section{Hybrid FDM-WENO Reconstruction}\label{sec:hweno}
In the previous sections, we discussed both the component-wise and characteristic-wise WENO reconstruction procedures in a finite-difference framework for the system of hyperbolic conservation laws. Component-wise WENO reconstruction is computationally efficient for systems of hyperbolic conservation laws; however, it has been reported in the literature (see for instance \cite{kum_cha_22a, jun-etal_19a, pup_03a}) that this approach may introduce spurious small-scale oscillations when compared to linear reconstructions, contrary to the fundamental purpose of WENO schemes. On the other hand, characteristic-wise WENO reconstruction effectively suppresses such oscillations but is computationally expensive, as it requires the evaluation of left and right eigenvectors at every cell interface. Moreover, in smooth regions of the solution, the nonlinear weights of the WENO method naturally approach the linear weights, reducing the scheme to a linear reconstruction \cite{jun-etal_19a, aru_etal_22a}. This observation motivates the development of a hybrid strategy that blends linear and nonlinear reconstructions, as well as component-wise and characteristic-wise formulations, depending on the local smoothness of the solution.

In designing an efficient hybrid scheme, it is crucial to develop a reliable troubled-cell indicator that separates the computational domain based on the regularity of the solution or flux. An ideal troubled-cell indicator must satisfy two key requirements: (i) it should accurately detect discontinuities or shock regions, and (ii) it should avoid falsely marking smooth regions as troubled cells to ensure computational efficiency.
Several troubled-cell indicators have been proposed in the literature for hybrid schemes developed for systems of hyperbolic conservation laws (see for instance \cite{pup_03a,zhao-etal_20a,kum_18a}). In many cases, the indicator is constructed using the WENO smoothness indicators \cite{pup_03a, kum_20a, jun-etal_19a, hil-pul_04a,vis-gai_05a,mov-joh_13a,fu-etal_14a,kum_cha_22a, aru_etal_22a}, which has been shown to improve both robustness and efficiency. Motivated by these developments, we also aim to design a troubled-cell indicator based on the WENO smoothness indicators.

In this section, we have proposed a new  troubled-cell indicator for a system of hyperbolic conservation laws using the information of smoothness indicators of the WENO-AO(5,3) scheme defined over the stencils $\mathbb{S}_i=\{i-2,i-1,i,i+1,i+2\}$ and $\mathbb{S}_{i+1}=\{i-1,i, i+1,i+2,i+3\}$. Here, we discuss the idea for the 1D scalar equation with flux $f(u)$ and then extend it to the system of equation as it is done in \cite{kum_cha_22a, aru_etal_22a}. 
Consider the lower order smoothness indicator of WENO-AO(5,3) scheme defined over the smaller stencils and define the average of smoothness indicator as follows:
\begin{equation}
    \beta^-_{i+\frac{1}{2}} = \frac{1}{3} \sum_{k=0}^2 \left(\beta_{i+\frac{1}{2}}^3\right)_k^{-}, ~~~ \beta^+_{i+\frac{1}{2}} = \frac{1}{3} \sum_{k=0}^2 \left(\beta_{i+\frac{1}{2}}^3\right)_k^{+},~~
\end{equation}
where $\left(\beta^3_{i+\frac{1}{2}}\right)^{+}_k$ and $\left(\beta^3_{i+\frac{1}{2}}\right)^{-}_k$  denote the smoothness indicator defined over the sub-stencil $\mathbb{S}_{i,k}^3 =\{i+k-2, i+k-1, i+k\}$ and  $\mathbb{S}_{i+1,k}^3 =\{i+k-1, i+k, i+k+1\}$ for $k=0,~1,~2$, respectively. 
 The smoothness indicators for a scalar flux $f(u)$ is given by \cite{jia-shu_96a}
\begin{align*}
    \left(\beta^3_{i+\frac{1}{2}}\right)^{+}_0 &= \frac{1}{4}(f_{i-2}^+-4f_{i-1}^++ 3f_i^+)^2 + \frac{13}{12} (f_{i-2}^+ -2 f_{i-1}^++ f_i^+)^2,\\
    \left(\beta^3_{i+\frac{1}{2}}\right)^{+}_1 &= \frac{1}{4}(f_{i+1}^+-f_{i-2}^+)^2 + \frac{13}{12} (f_{i-1}^+-2f_i^++f_{i+1}^+)^2,\\
    \left(\beta^3_{i+\frac{1}{2}}\right)^{+}_2 &= \frac{1}{4}(-3f_i^+ + 4f_{i+1}^+ -f_{i+2}^+)^2 + \frac{13}{12}(f_i^+ - 2f_{i+1}^+ +f_{i+2}^+)^2.
 \end{align*}
Similarly, we can obtain for negative part also. We have following the following estimate about the smoothness indicator and proof is similar to \cite{kum_cha_22a, aru_etal_22a}. 

\begin{theorem}{\rm If the flux $f(u)$ is smooth over the stencil $\mathbb{S}_{i}$ and $\mathbb{S}_{i+1}$, then  we have
\begin{equation}
\sqrt{(\beta_\iph)^-} + \sqrt{(\beta_\iph)^+} = O(\lambda \Delta x)
\label{eq:smest1}
\end{equation}
where $\lambda= \max|f'(u)|$ and maximum is taken over suitable range of $u$. 
}
\end{theorem}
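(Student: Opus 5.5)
Each averaged indicator is a sum of squares of finite differences of the split fluxes $f^{\pm}_j = \frac12\bigl(f(u_j) \pm \lambda u_j\bigr)$ over three-point sub-stencils. The plan is to bound every such difference by $O(\lambda\Delta x)$, which gives $\beta^{\pm}_{\iph} = O(\lambda^2\Delta x^2)$, and then take square roots.

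\emph{Step 1 (first differences).} Each smoothness indicator $\left(\beta^3_{\iph}\right)^{\pm}_k$ listed above is a sum of squares of two linear combinations of three consecutive values $f^{\pm}_{j}$. Both combinations annihilate constants, so each can be written as a combination, with fixed integer coefficients, of first differences $f^{\pm}_{j+1}-f^{\pm}_j$. For example,
\[
f_{i-2}^+-4f_{i-1}^++3f_i^+ = 3(f_i^+-f_{i-1}^+) - (f_{i-1}^+-f_{i-2}^+), \qquad f_{i+1}^+-f_{i-2}^+ = \sum_{j=i-2}^{i}(f_{j+1}^+-f_j^+).
\]
By the mean value theorem, $f^{\pm}_{j+1}-f^{\pm}_j = \frac12\bigl(f'(\xi_j)\pm\lambda\bigr)(u_{j+1}-u_j)$. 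Since $|f'(\xi_j)|\le\lambda$ on the relevant range of $u$, we get $|f^{\pm}_{j+1}-f^{\pm}_j|\le \lambda|u_{j+1}-u_j|$.

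\emph{Step 2 (smoothness of the data).} Because the flux, and hence the underlying solution, is smooth over $\mathbb{S}_i\cup\mathbb{S}_{i+1}$, a Taylor expansion gives $u_{j+1}-u_j = u_x(\eta_j)\Delta x = O(\Delta x)$ for all indices in $\{i-2,\dots,i+3\}$. Each first difference of $f^{\pm}$ is therefore $O(\lambda\Delta x)$. Each squared combination in $\left(\beta^3_{\iph}\right)^{\pm}_k$ is then $O(\lambda^2\Delta x^2)$, and averaging over $k=0,1,2$ preserves this order, so $\beta^{\pm}_{\iph}=O(\lambda^2\Delta x^2)$. Note that the second-difference term is in fact $O(\lambda\Delta x^2)$ or smaller, but the first-derivative term dominates.

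\emph{Step 3 (conclusion).} Taking square roots gives $\sqrt{\beta^{\pm}_{\iph}} = O(\lambda\Delta x)$. Summing the two estimates yields \eqref{eq:smest1}.

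The main obstacle is making the constant genuinely proportional to $\lambda$. A direct Taylor expansion of $f^{\pm}$ would only give $O(\Delta x)$, with a constant depending on $\max|f'|+\lambda$. The mean value argument in Step 1, together with $|f'|\le\lambda$, is what factors out $\lambda$ and leaves only $\max|u_x|$ in the hidden constant. One also has to interpret ``smooth flux over the stencil'' as implying that $u$ has bounded derivatives there. I would state this as the standing assumption, as in \cite{kum_cha_22a, aru_etal_22a}, rather than try to derive it.
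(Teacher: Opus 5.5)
Your proof is correct, but it takes a different route from the paper's. The paper Taylor-expands the averaged indicators about $x_i$ and obtains $\beta^{\pm}_{i+\frac{1}{2}} = \frac{1}{4}\left(f'(u_i)\pm\lambda\right)^2 u_x^2\Delta x^2 + O(\Delta x^3)$. It uses $\lambda\ge|f'|$ only to fix the signs of $f'(u_i)\pm\lambda$. The square roots are then $\frac{1}{2}(\lambda\pm f'(u_i))|u_x|\Delta x$ to leading order, and the $f'(u_i)$ terms cancel when added, giving $\sqrt{\beta^-_{i+\frac{1}{2}}}+\sqrt{\beta^+_{i+\frac{1}{2}}}\approx\lambda|u_x|\Delta x$.

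You never expand. You rewrite every squared combination through first differences and apply the mean value theorem to $f^{\pm}$. This gives a non-asymptotic bound $\beta^{\pm}_{i+\frac{1}{2}}\le C\lambda^2\max|u_x|^2\Delta x^2$ with an absolute constant ($C\le 25/3$ for the listed coefficients), and it needs only $C^1$ regularity.

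\emph{What your route buys.} Your bound holds for every $\Delta x$, with a constant genuinely proportional to $\lambda$. The paper's $O(\Delta x^3)$ remainder carries constants involving $f''$ and $u_{xx}$, so its conclusion is really a leading-order statement.

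\emph{What the paper's route buys.} It is sharper. It identifies the actual size $\lambda|u_x|\Delta x$ of the quantity in smooth regions, with leading constant $1$, whereas you give only an upper bound with constant near $6$. That exact size is what motivates calibrating the threshold as $\mathbb{K}\propto\lambda^2\Delta x^2$.

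Two of your side remarks are slightly off, though neither is used in your argument. First, in a Taylor approach the leading coefficient $\frac{1}{2}|f'\pm\lambda|\le\lambda$ is already controlled by $\lambda$; the obstruction lies only in the higher-order remainders. Second, the second differences of $f^{\pm}$ are $O(\Delta x^2)$ with a constant containing $f''u_x^2$, which is not proportional to $\lambda$, so calling them $O(\lambda\Delta x^2)$ misstates the $\lambda$-dependence.
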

\begin{proof}
Using Taylor series expansion of smoothness indicators $\beta^+_{i+\frac{1}{2}}$ at $x_i$, we obtain
\begin{equation*}\label{si.1}
 \beta^+_{i+\frac{1}{2}} = \left( \frac{d f^+}{d u} \right)^2 u_x^2\Delta x^2  +O(\Delta x^3)
\end{equation*}
Similarly for $\beta^-_{i+\frac{1}{2}}$, we have
\begin{equation*}\label{si.2}
 \beta^-_{i+\frac{1}{2}} = \left( \frac{d f^-}{d u} \right)^2 u_x^2 \Delta x^2 + O(\Delta x^3)
\end{equation*}
But
\[
\frac{d f^\pm}{d u} = \frac{1}{2}[ f'(u_i) \pm \lambda]
\]
Since $\lambda \ge |f'(u)|$, we get
\[
\sqrt{\beta^+} \approx \frac{1}{2}[f'(u_i) + \lambda] |u_x|\Delta x, \qquad \sqrt{\beta^-} \approx -\frac{1}{2}[f'(u_i) - \lambda] |u_x|\Delta x
\]
and hence adding the two we obtain the desired result.
\end{proof}

We will build a smoothness detector using the result in the above theorem. Firstly, we note that
\[
\sqrt{\beta^-_{i+\frac{1}{2}}} + \sqrt{\beta^+_{i+\frac{1}{2}}} \le \sqrt{2} \sqrt{\beta^-_{i+\frac{1}{2}} + \beta^+_{i+\frac{1}{2}}}
\]
so that
\[
\beta^-_{i+\frac{1}{2}} + \beta^+_{i+\frac{1}{2}} \approx \frac{1}{2} \left( \sqrt{\beta^-_{i+\frac{1}{2}}} + \sqrt{\beta^+_{i+\frac{1}{2}}} \right)^2 \approx \frac{1}{2} \lambda^2 u_x^2 \Delta x^2
\]
While we could use~\eqref{eq:smest1}, the above estimate avoids the computation of the square root. If the solution is smooth around $x_\iph$, then the quantity $\beta^-_{i+\frac{1}{2}} + \beta^+_{i+\frac{1}{2}} = O(\lambda^2 \Delta x^2)$, and otherwise it will be larger than this quantity. Based on this idea, we propose to use the following troubled cell indicator,
\[
 \mathbb{T}_{i+\frac{1}{2}} = \begin{cases}
      1, & \mbox{if}~~ \frac{1}{2}\left[(\beta_{i+\frac{1}{2}}^+) +(\beta_{i+\frac{1}{2}}^-)\right]> \mathbb{K} \\
      0, & \mbox{otherwise}
     \end{cases}
\]
where
\[
 \mathbb{K}=\mathcal{K} \Delta x^2\max_{i}(|f'(u)|^2).
\]
$\mathcal{K}$ is constant introduced to avoid unwanted marking of smooth cells. 
This is valid for scalar problem, we have extended this to RHD equation as follows:
\[
 \mathbb{T}_{i+\frac{1}{2}} = \begin{cases}
      1, & \mbox{if}~~ \frac{1}{2r}\displaystyle{\sum_{q=0}^{r-1}\left[(\beta_{i+\frac{1}{2}}^+)^q +(\beta_{i+\frac{1}{2}}^-)^q\right]> \mathbb{K}} \\
      0, & \mbox{otherwise}
     \end{cases}
\]
where $r$ is the number of equations in the system and is equal to $r=3$ for 1D problems and $r=4$ for 2D problems. The value of $\mathbb{K}$ for 1D RHD problems in the domain $[a,b]$ is
\[
 \mathbb{K}=\frac{\mathcal{K} \Delta x^2}{(b-a)^2} \max(\lambda_1^x, \lambda_2^x,\lambda_3^x)^2.
\]
In case of 2D problems, over the domain $\Omega =[a,b]\times [c,d]$, along the $x$-direction, $\mathbb{K}$ is defined as
\[
 \mathbb{K}=\frac{\mathcal{K} \Delta x^2}{\max\{(b-a)^2,(c-d)^2\}} \max(\lambda_1^x, \lambda_2^x, \lambda_{-}^x, \lambda_{+}^x)^2,
\]
with a similar expression for the $y$ component of the flux.

\begin{algorithm}\label{alog2D}
\small
\SetAlgoLined
\begin{enumerate}
\item \textbf{Flux reconstruction in the $x$–direction (interfaces $x_{i+\frac{1}{2},j}$).}
\begin{enumerate}
\item
Compute the positive and negative parts of the $x$–flux
\[
  \mathbb{H}_{i+r,j}^{x,\pm}
  = \frac{1}{2}\bigl({\bf F}({\bf U}_{i+r,j}) \pm \lambda_x {\bf U}_{i+r,j}\bigr),
  \qquad r=-2,-1,0,1,2,3.
\]
Compute 
\((\beta_{i+\frac{1}{2},j}^+)^{p}\), \((\beta_{i+\frac{1}{2},j}^{-})^{p}\), for $p=0,\ldots 3$ and troubled-cell indicator
\(\mathbb{T}_{i+\frac{1}{2},j}^{x}\).

\item
\eIf{$\mathbb{T}_{i+\frac{1}{2},j}^{x} == 0$}
{
\[
  \mathbb{F}_{i+\frac{1}{2},j}^{+}
  = \mbox{WENO-AO}(
  \mathbb{H}_{i-2,j}^{x,+}, \mathbb{H}_{i-1,j}^{x,+}, \mathbb{H}_{i,j}^{x,+}, \mathbb{H}_{i+1,j}^{x,+},\mathbb{H}_{i+2,j}^{x,+}
  ),
\]
\[
  \mathbb{F}_{i+\frac{1}{2},j}^{-}
  = \mbox{WENO-AO}(
  \mathbb{H}_{i+3,j}^{x,-},\mathbb{H}_{i+2,j}^{x,-}, \mathbb{H}_{i+1,j}^{x,-},\mathbb{H}_{i,j}^{x,-}, \mathbb{H}_{i-1,j}^{x,-}
  ),
\]
\[
  \mathbb{F}_{i+\frac{1}{2},j}
  = \mathbb{F}_{i+\frac{1}{2},j}^{+}
  + \mathbb{F}_{i+\frac{1}{2},j}^{-}.
\]
}
{
Compute the left and right eigenvectors of the Jacobian
\({\bf F}'({\bf U})\) at the interface \(x_{i+\frac{1}{2},j}\),
denoted by \({\bf L}_{i+\frac{1}{2},j}^{x}\) and
\({\bf R}_{i+\frac{1}{2},j}^{x}\).
\[
  \mathbb{M}_{i+r,j}^{x,\pm}
  = {\bf L}_{i+\frac{1}{2},j}^{x}\,\mathbb{H}_{i+r,j}^{x,\pm},
  \qquad r=-2,-1,0,1,2,3,
\]
\[
  \mathbb{F}_{i+\frac{1}{2},j}^{+}
  = \mbox{WENO}\bigl(
  \mathbb{M}_{i-2,j}^{x,+},
  \mathbb{M}_{i-1,j}^{x,+},
  \mathbb{M}_{i,j}^{x,+},
  \mathbb{M}_{i+1,j}^{x,+},
  \mathbb{M}_{i+2,j}^{x,+}
  \bigr),
\]
\[
  \mathbb{F}_{i+\frac{1}{2},j}^{-}
  = \mbox{WENO}\bigl(
  \mathbb{M}_{i+3,j}^{x,-},
  \mathbb{M}_{i+2,j}^{x,-},
  \mathbb{M}_{i+1,j}^{x,-},
  \mathbb{M}_{i,j}^{x,-},
  \mathbb{M}_{i-1,j}^{x,-}
  \bigr),
\]
\[
  \mathbb{F}_{i+\frac{1}{2},j}
  = {\bf R}_{i+\frac{1}{2},j}
  \bigl(
    \mathbb{F}_{i+\frac{1}{2},j}^{+}
    + \mathbb{F}_{i+\frac{1}{2},j}^{-}
  \bigr).
\]
}
\end{enumerate}

\item \textbf{Flux reconstruction in the $y$–direction (interfaces $y_{i,j+\frac{1}{2}}$).}
\begin{enumerate}
\item
Compute the positive and negative parts of the $y$–flux
\[
  \mathbb{H}_{i,j+r}^{y,\pm}
  = \frac{1}{2}\bigl({\bf G}({\bf U}_{i,j+r}) \pm \lambda_y {\bf U}_{i,j+r}\bigr),
  \qquad r=-2,-1,0,1,2,3.
\]
Compute  
\((\beta_{i,j+\frac{1}{2}}^+)^{p}\), \((\beta_{i,j+\frac{1}{2}}^{-})^{p}\), for $p=0,\ldots 3$ and troubled-cell indicator
\(\mathbb{T}_{i,j+\frac{1}{2}}^{y}\).

\item
\eIf{$\mathbb{T}_{i,j+\frac{1}{2}}^{y} = 0$}
{
\[
  \mathbb{G}_{i,j+\frac{1}{2}}^{+}
  = \mbox{WENO-AO}(
  \mathbb{H}_{i,j-2}^{y,+},
  \mathbb{H}_{i,j-1}^{y,+},
  \mathbb{H}_{i,j}^{y,+},
  \mathbb{H}_{i,j+1}^{y,+},
  \mathbb{H}_{i,j+2}^{y,+}
  ),
\]
\[
  \mathbb{G}_{i,j+\frac{1}{2}}^{-}
  = \mbox{WENO-AO}(
  \mathbb{H}_{i,j+3}^{y,-},
  \mathbb{H}_{i,j+2}^{y,-},
  \mathbb{H}_{i,j+1}^{y,-},
  \mathbb{H}_{i,j}^{y,-},
  \mathbb{H}_{i,j-1}^{y,-}),
\]
\[
  \mathbb{G}_{i,j+\frac{1}{2}}
  = \mathbb{G}_{i,j+\frac{1}{2}}^{+}
  + \mathbb{G}_{i,j+\frac{1}{2}}^{-}.
\]
}
{
Compute the left and right eigenvectors of the Jacobian
\({\bf G}'({\bf U})\) at the interface \(y_{i,j+\frac{1}{2}}\),
denoted by \({\bf L}_{i,j+\frac{1}{2}}^{y}\) and
\({\bf R}_{i,j+\frac{1}{2}}^{y}\).
\[
  \mathbb{M}_{i,j+r}^{y,\pm}
  = {\bf L}_{i,j+\frac{1}{2}}^{y}\,\mathbb{H}_{i,j+r}^{y,\pm},
  \qquad r=-2,-1,0,1,2,3,
\]
\[
  \mathbb{G}_{i,j+\frac{1}{2}}^{+}
  = \mbox{WENO}\bigl(
  \mathbb{M}_{i,j-2}^{y,+},
  \mathbb{M}_{i,j-1}^{y,+},
  \mathbb{M}_{i,j}^{y,+},
  \mathbb{M}_{i,j+1}^{y,+},
  \mathbb{M}_{i,j+2}^{y,+}
  \bigr),
\]
\[
  \mathbb{G}_{i,j+\frac{1}{2}}^{-}
  = \mbox{WENO}\bigl(
  \mathbb{M}_{i,j+3}^{y,-},
  \mathbb{M}_{i,j+2}^{y,-},
  \mathbb{M}_{i,j+1}^{y,-},
  \mathbb{M}_{i,j}^{y,-},
  \mathbb{M}_{i,j-1}^{y,-}
  \bigr),
\]
\[
  \mathbb{G}_{i,j+\frac{1}{2}}
  = {\bf R}_{i,j+\frac{1}{2}}^{y}
  \bigl(
    \mathbb{G}_{i,j+\frac{1}{2}}^{+}
    + \mathbb{G}_{i,j+\frac{1}{2}}^{-}
  \bigr).
\]
}
\end{enumerate}
\end{enumerate}
\caption{H1-WENO scheme for RHD equations over the cell $(i,j)$.}
\label{algo2Dh1}
\end{algorithm}

\begin{algorithm}\label{alog2D}
\small
\SetAlgoLined
\begin{enumerate}
\item \textbf{Flux reconstruction in the $x$–direction (interfaces $x_{i+\frac{1}{2},j}$).}
\begin{enumerate}
\item
Compute the positive and negative parts of the $x$–flux
\[
  \mathbb{H}_{i+r,j}^{x,\pm}
  = \frac{1}{2}\bigl({\bf F}({\bf U}_{i+r,j}) \pm \lambda_x {\bf U}_{i+r,j}\bigr),
  \qquad r=-2,-1,0,1,2,3.
\]
Compute 
\((\beta_{i+\frac{1}{2},j}^+)^{p}\), \((\beta_{i+\frac{1}{2},j}^{-})^{p}\), for $p=0,\ldots 3$ and troubled-cell indicator
\(\mathbb{T}_{i+\frac{1}{2},j}^{x}\).

\item
\eIf{$\mathbb{T}_{i+\frac{1}{2},j}^{x} == 0$}
{
\[
  \mathbb{F}_{i+\frac{1}{2},j}^{+}
  = \frac{1}{60}\Bigl(
  2\mathbb{H}_{i-2,j}^{x,+}
  -13\mathbb{H}_{i-1,j}^{x,+}
  +47\mathbb{H}_{i,j}^{x,+}
  +27\mathbb{H}_{i+1,j}^{x,+}
  -3\mathbb{H}_{i+2,j}^{x,+}
  \Bigr),
\]
\[
  \mathbb{F}_{i+\frac{1}{2},j}^{-}
  = \frac{1}{60}\Bigl(
  2\mathbb{H}_{i+3,j}^{x,-}
  -13\mathbb{H}_{i+2,j}^{x,-}
  +47\mathbb{H}_{i+1,j}^{x,-}
  +27\mathbb{H}_{i,j}^{x,-}
  -3\mathbb{H}_{i-1,j}^{x,-}
  \Bigr),
\]
\[
  \mathbb{F}_{i+\frac{1}{2},j}^{x}
  = \mathbb{F}_{i+\frac{1}{2},j}^{+}
  + \mathbb{F}_{i+\frac{1}{2},j}^{-}.
\]
}
{
Compute the left and right eigenvectors of the Jacobian
\({\bf F}'({\bf U})\) at the interface \(x_{i+\frac{1}{2},j}\),
denoted by \({\bf L}_{i+\frac{1}{2},j}^{x}\) and
\({\bf R}_{i+\frac{1}{2},j}^{x}\).
\[
  \mathbb{M}_{i+r,j}^{x,\pm}
  = {\bf L}_{i+\frac{1}{2},j}^{x}\,\mathbb{H}_{i+r,j}^{x,\pm},
  \qquad r=-2,-1,0,1,2,3,
\]
\[
  \mathbb{F}_{i+\frac{1}{2},j}^{+}
  = \mbox{WENO}\bigl(
  \mathbb{M}_{i-2,j}^{x,+},
  \mathbb{M}_{i-1,j}^{x,+},
  \mathbb{M}_{i,j}^{x,+},
  \mathbb{M}_{i+1,j}^{x,+},
  \mathbb{M}_{i+2,j}^{x,+}
  \bigr),
\]
\[
  \mathbb{F}_{i+\frac{1}{2},j}^{-}
  = \mbox{WENO}\bigl(
  \mathbb{M}_{i+3,j}^{x,-},
  \mathbb{M}_{i+2,j}^{x,-},
  \mathbb{M}_{i+1,j}^{x,-},
  \mathbb{M}_{i,j}^{x,-},
  \mathbb{M}_{i-1,j}^{x,-}
  \bigr),
\]
\[
  \mathbb{F}_{i+\frac{1}{2},j}
  = {\bf R}_{i+\frac{1}{2},j}
  \bigl(
    \mathbb{F}_{i+\frac{1}{2},j}^{+}
    + \mathbb{F}_{i+\frac{1}{2},j}^{-}
  \bigr).
\]
}
\end{enumerate}

\item \textbf{Flux reconstruction in the $y$–direction (interfaces $y_{i,j+\frac{1}{2}}$).}
\begin{enumerate}
\item
Compute the positive and negative parts of the $y$–flux
\[
  \mathbb{H}_{i,j+r}^{y,\pm}
  = \frac{1}{2}\bigl({\bf G}({\bf U}_{i,j+r}) \pm \lambda_y {\bf U}_{i,j+r}\bigr),
  \qquad r=-2,-1,0,1,2,3.
\]
Compute the smoothness indicators
\((\beta_{i,j+\frac{1}{2}}^+)^{p}\), \((\beta_{i,j+\frac{1}{2}}^{-})^{p}\), for $p=0,\ldots 3$ and troubled-cell indicator
\(\mathbb{T}_{i,j+\frac{1}{2}}^{y}\).

\item
\eIf{$\mathbb{T}_{i,j+\frac{1}{2}}^{y} = 0$}
{
\[
  \mathbb{G}_{i,j+\frac{1}{2}}^{+}
  = \frac{1}{60}\Bigl(
  2\mathbb{H}_{i,j-2}^{y,+}
  -13\mathbb{H}_{i,j-1}^{y,+}
  +47\mathbb{H}_{i,j}^{y,+}
  +27\mathbb{H}_{i,j+1}^{y,+}
  -3\mathbb{H}_{i,j+2}^{y,+}
  \Bigr),
\]
\[
  \mathbb{G}_{i,j+\frac{1}{2}}^{-}
  = \frac{1}{60}\Bigl(
  2\mathbb{H}_{i,j+3}^{y,-}
  -13\mathbb{H}_{i,j+2}^{y,-}
  +47\mathbb{H}_{i,j+1}^{y,-}
  +27\mathbb{H}_{i,j}^{y,-}
  -3\mathbb{H}_{i,j-1}^{y,-}
  \Bigr),
\]
\[
  \mathbb{G}_{i,j+\frac{1}{2}}
  = \mathbb{G}_{i,j+\frac{1}{2}}^{+}
  + \mathbb{G}_{i,j+\frac{1}{2}}^{-}.
\]
}
{
Compute the left and right eigenvectors of the Jacobian
\({\bf G}'({\bf U})\) at the interface \(y_{i,j+\frac{1}{2}}\),
denoted by \({\bf L}_{i,j+\frac{1}{2}}^{y}\) and
\({\bf R}_{i,j+\frac{1}{2}}^{y}\).
\[
  \mathbb{M}_{i,j+r}^{y,\pm}
  = {\bf L}_{i,j+\frac{1}{2}}^{y}\,\mathbb{H}_{i,j+r}^{y,\pm},
  \qquad r=-2,-1,0,1,2,3,
\]
\[
  \mathbb{G}_{i,j+\frac{1}{2}}^{+}
  = \mbox{WENO}\bigl(
  \mathbb{M}_{i,j-2}^{y,+},
  \mathbb{M}_{i,j-1}^{y,+},
  \mathbb{M}_{i,j}^{y,+},
  \mathbb{M}_{i,j+1}^{y,+},
  \mathbb{M}_{i,j+2}^{y,+}
  \bigr),
\]
\[
  \mathbb{G}_{i,j+\frac{1}{2}}^{-}
  = \mbox{WENO}\bigl(
  \mathbb{M}_{i,j+3}^{y,-},
  \mathbb{M}_{i,j+2}^{y,-},
  \mathbb{M}_{i,j+1}^{y,-},
  \mathbb{M}_{i,j}^{y,-},
  \mathbb{M}_{i,j-1}^{y,-}
  \bigr),
\]
\[
  \mathbb{G}_{i,j+\frac{1}{2}}
  = {\bf R}_{i,j+\frac{1}{2}}^{y}
  \bigl(
    \mathbb{G}_{i,j+\frac{1}{2}}^{+}
    + \mathbb{G}_{i,j+\frac{1}{2}}^{-}
  \bigr).
\]
}
\end{enumerate}
\end{enumerate}
\caption{H2-WENO scheme for RHD equations over the cell $(i,j)$.}
\label{algo2Dh2}
\end{algorithm}

\begin{algorithm}\label{alog2D}
\small
\SetAlgoLined
\begin{enumerate}
\item \textbf{Flux reconstruction in the $x$–direction (interfaces $x_{i+\frac{1}{2},j}$).}
\begin{enumerate}
\item
Compute the positive and negative parts of the $x$–flux
\[
  \mathbb{H}_{i+r,j}^{x,\pm}
  = \frac{1}{2}\bigl({\bf F}({\bf U}_{i+r,j}) \pm \lambda_x {\bf U}_{i+r,j}\bigr),
  \qquad r=-2,-1,0,1,2,3.
\]
Compute the left and right eigenvectors of the Jacobian
\({\bf F}'({\bf U})\) at the interface \(x_{i+\frac{1}{2},j}\),
\[
  \mathbb{M}_{i+r,j}^{x,\pm}
  = {\bf L}_{i+\frac{1}{2},j}^{x}\,\mathbb{H}_{i+r,j}^{x,\pm},
  \qquad r=-2,-1,0,1,2,3,
\]
Compute 
\((\beta_{i+\frac{1}{2},j}^+)^{p}\), \((\beta_{i+\frac{1}{2},j}^{-})^{p}\), for $p=0,\ldots 3$ and troubled-cell indicator
\(\mathbb{T}_{i+\frac{1}{2},j}^{x}\).

\item
\eIf{$\mathbb{T}_{i+\frac{1}{2},j}^{x} == 0$}
{
\[
  \mathbb{F}_{i+\frac{1}{2},j}^{+}
  = \frac{1}{60}\Bigl(
  2\mathbb{M}_{i-2,j}^{x,+}
  -13\mathbb{M}_{i-1,j}^{x,+}
  +47\mathbb{M}_{i,j}^{x,+}
  +27\mathbb{M}_{i+1,j}^{x,+}
  -3\mathbb{M}_{i+2,j}^{x,+}
  \Bigr),
\]
\[
  \mathbb{F}_{i+\frac{1}{2},j}^{-}
  = \frac{1}{60}\Bigl(
  2\mathbb{M}_{i+3,j}^{x,-}
  -13\mathbb{M}_{i+2,j}^{x,-}
  +47\mathbb{M}_{i+1,j}^{x,-}
  +27\mathbb{M}_{i,j}^{x,-}
  -3\mathbb{M}_{i-1,j}^{x,-}
  \Bigr),
\]
\[
  \mathbb{F}_{i+\frac{1}{2},j}
  = {\bf R}_{i+\frac{1}{2},j}^{x}(\mathbb{F}_{i+\frac{1}{2},j}^{+}
  + \mathbb{F}_{i+\frac{1}{2},j}^{-}).
\]
}
{

\[
  \mathbb{F}_{i+\frac{1}{2},j}^{+}
  = \mbox{WENO}\bigl(
  \mathbb{M}_{i-2,j}^{x,+},
  \mathbb{M}_{i-1,j}^{x,+},
  \mathbb{M}_{i,j}^{x,+},
  \mathbb{M}_{i+1,j}^{x,+},
  \mathbb{M}_{i+2,j}^{x,+}
  \bigr),
\]
\[
  \mathbb{F}_{i+\frac{1}{2},j}^{-}
  = \mbox{WENO}\bigl(
  \mathbb{M}_{i+3,j}^{x,-},
  \mathbb{M}_{i+2,j}^{x,-},
  \mathbb{M}_{i+1,j}^{x,-},
  \mathbb{M}_{i,j}^{x,-},
  \mathbb{M}_{i-1,j}^{x,-}
  \bigr),
\]
\[
  \mathbb{F}_{i+\frac{1}{2},j}
  = {\bf R}_{i+\frac{1}{2},j}^{x}
  \bigl(
    \mathbb{F}_{i+\frac{1}{2},j}^{+}
    + \mathbb{F}_{i+\frac{1}{2},j}^{-}
  \bigr).
\]
}
\end{enumerate}

\item \textbf{Flux reconstruction in the $y$–direction (interfaces $y_{i,j+\frac{1}{2}}$).}
\begin{enumerate}
\item
Compute the positive and negative parts of the $y$–flux
\[
  \mathbb{H}_{i,j+r}^{y,\pm}
  = \frac{1}{2}\bigl({\bf G}({\bf U}_{i,j+r}) \pm \lambda_y {\bf U}_{i,j+r}\bigr),
  \qquad r=-2,-1,0,1,2,3.
\]
Compute the left and right eigenvectors of the Jacobian
\({\bf G}'({\bf U})\) at the interface \(y_{i,j+\frac{1}{2}}\),

\[
  \mathbb{M}_{i,j+r}^{y,\pm}
  = {\bf L}_{i,j+\frac{1}{2}}^{y}\,\mathbb{H}_{i,j+r}^{y,\pm},
  \qquad r=-2,-1,0,1,2,3,
\]
Compute the smoothness indicators
\((\beta_{i,j+\frac{1}{2}}^+)^{p}\), \((\beta_{i,j+\frac{1}{2}}^{-})^{p}\), for $p=0,\ldots 3$ and troubled-cell indicator
\(\mathbb{T}_{i,j+\frac{1}{2}}^{y}\).

\item
\eIf{$\mathbb{T}_{i,j+\frac{1}{2}}^{y} = 0$}
{
\[
  \mathbb{G}_{i,j+\frac{1}{2}}^{+}
  = \frac{1}{60}\Bigl(
  2\mathbb{M}_{i,j-2}^{y,+}
  -13\mathbb{M}_{i,j-1}^{y,+}
  +47\mathbb{M}_{i,j}^{y,+}
  +27\mathbb{M}_{i,j+1}^{y,+}
  -3\mathbb{M}_{i,j+2}^{y,+}
  \Bigr),
\]
\[
  \mathbb{G}_{i,j+\frac{1}{2}}^{-}
  = \frac{1}{60}\Bigl(
  2\mathbb{M}_{i,j+3}^{y,-}
  -13\mathbb{M}_{i,j+2}^{y,-}
  +47\mathbb{M}_{i,j+1}^{y,-}
  +27\mathbb{M}_{i,j}^{y,-}
  -3\mathbb{M}_{i,j-1}^{y,-}
  \Bigr),
\]
\[
  \mathbb{G}_{i,j+\frac{1}{2}}
  = {\bf R}_{i,j+\frac{1}{2}}^{y}(\mathbb{G}_{i,j+\frac{1}{2}}^{+}
  + \mathbb{G}_{i,j+\frac{1}{2}}^{-}).
\]
}
{

\[
  \mathbb{G}_{i,j+\frac{1}{2}}^{+}
  = \mbox{WENO}\bigl(
  \mathbb{M}_{i,j-2}^{y,+},
  \mathbb{M}_{i,j-1}^{y,+},
  \mathbb{M}_{i,j}^{y,+},
  \mathbb{M}_{i,j+1}^{y,+},
  \mathbb{M}_{i,j+2}^{y,+}
  \bigr),
\]
\[
  \mathbb{G}_{i,j+\frac{1}{2}}^{-}
  = \mbox{WENO}\bigl(
  \mathbb{M}_{i,j+3}^{y,-},
  \mathbb{M}_{i,j+2}^{y,-},
  \mathbb{M}_{i,j+1}^{y,-},
  \mathbb{M}_{i,j}^{y,-},
  \mathbb{M}_{i,j-1}^{y,-}
  \bigr),
\]
\[
  \mathbb{G}_{i,j+\frac{1}{2}}
  = {\bf R}_{i,j+\frac{1}{2}}^{y}
  \bigl(
    \mathbb{G}_{i,j+\frac{1}{2}}^{+}
    + \mathbb{G}_{i,j+\frac{1}{2}}^{-}
  \bigr).
\]
}
\end{enumerate}
\end{enumerate}
\caption{H3-WENO scheme for RHD equations over the cell $(i,j)$.}
\label{algo2Dh3}
\end{algorithm}
The estimate $\beta^-_{i+\frac{1}{2}} + \beta^+_{i+\frac{1}{2}} = O(\lambda^2 \Delta x^2)$ is sufficient to identify the troubled-cells but it may also mark some smooth-cells as troubled-cell, as found in our numerical experiments. To avoid the marking of smooth cells, we multiply the estimate of $\lambda^2 \Delta x^2$ by a fixed constant $\mathcal{K}$ which leads to an efficient troubled-cell indicator in the sense that it avoids marking smooth cells. 

Based on this, we propose three hybrid WENO schemes, named {H1-WENO}, {H2-WENO}, and {H3-WENO}, which adaptively switch between different reconstruction strategies based on a troubled-cell indicator that classifies the computational domain into smooth and non-smooth regions. The working of hybrid flux can be defined as follows:
\[
 \mathbb{F}_{i+\frac{1}{2}}^{\mbox{hybdrid}} = \begin{cases}
      \mbox{Higher order low-cost reconstruction}, & \mathbb{T}_{i+\frac{1}{2}}==0, \\
      \mbox{Non-Oscillatory reconstruction}, & \mbox{otherwise}.
     \end{cases}
\]
Here are details of three hybrid schemes:
\begin{itemize}
    \item {H1-WENO:} Low cost component-wise WENO reconstruction is employed in smooth regions, while characteristic-wise WENO reconstruction is used in non-smooth regions.
    \item {H2-WENO:} A fifth-order component-wise linear reconstruction is adopted in smooth regions, and characteristic-wise WENO reconstruction is applied in non-smooth regions.
    \item {H3-WENO:} A fifth-order characteristic-wise linear reconstruction is used in smooth regions, while characteristic-wise WENO reconstruction is employed in non-smooth regions.
\end{itemize}

In the numerical computations, we take $\mathcal{K}=20$ and $\mathcal{K}=40$ for the 1D and 2D problems, respectively, for the H1-WENO and H2-WENO schemes, whereas $\mathcal{K}=1$ is used for both the 1D and 2D problems in the H3-WENO scheme. Among these, {H2-WENO} is the most computationally efficient scheme, since it avoids both the eigenvector transformation and nonlinear weights in smooth regions. The algorithms for H1-WENO, H2-WENO, and H3-WENO for 2D system of hyperbolic system are presented in Algorithms~\ref{algo2Dh1}, \ref{algo2Dh2}, and \ref{algo2Dh3}, respectively.

\section{Numerical results}
\label{sec:num}
In the previous section, we presented the construction of three new hybrid algorithms for the spatial approximation of flux derivatives for the RHD equations in both one- and two-dimensional settings. These methods are based on a proposed troubled-cell indicator that accurately identifies non-smooth regions of the solution, enabling the selective application of non-oscillatory schemes near discontinuities while preserving high-order accuracy in smooth regions.
In this section, we compare the proposed algorithms with the {WENO-AO(5,3)} scheme \cite{bal-etal_16a} in terms of both accuracy and computational efficiency for smooth problems as well as problems containing discontinuities. Furthermore, we investigate the influence of the parameter involved in the troubled-cell indicator by varying it over a suitable range of values. In particular, we study how the variation in the value parameter affects the percentage of detected troubled cells and the overall computational efficiency of the schemes.
After the spatial discretization of the flux derivatives, the governing conservation laws reduce to a system of ordinary differential equations of the form
\[
\frac{du}{dt} = \mathcal{R}(u),
\]
where $\mathcal{R}(u)$ denotes the spatial discretization operator. The resulting semi-discrete system is advanced in time using the third-order TVD Runge--Kutta method \cite{shu-osh_88a}, given by
\begin{align*}
u^1 &= u^n + \Delta t \mathcal{R}(u^n), \\
u^2 &= \frac{3}{4}u^n + \frac{1}{4}u^1 + \frac{1}{4}\Delta t \mathcal{R}(u^1), \\
u^{n+1} &= \frac{1}{3}u^n + \frac{2}{3}u^2 + \frac{2}{3}\Delta t \mathcal{R}(u^2).
\end{align*}
For all numerical experiments, we use a CFL number of $0.5$ for one-dimensional problems and $0.4$ for two-dimensional problems to ensure consistency across the simulations. Unless otherwise specified, the ratio of specific heats is taken as $\gamma = \frac{5}{3}$.
\begin{figure}[ht]
 \centering
 \begin{tabular}{cc}
  \includegraphics[width = 7.2cm]{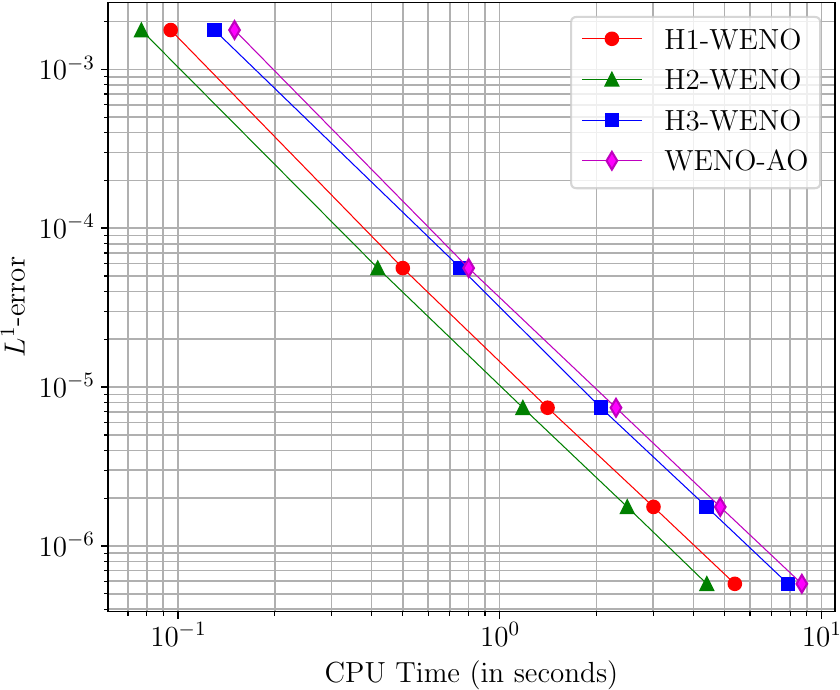}&
  \includegraphics[width = 7.2cm]{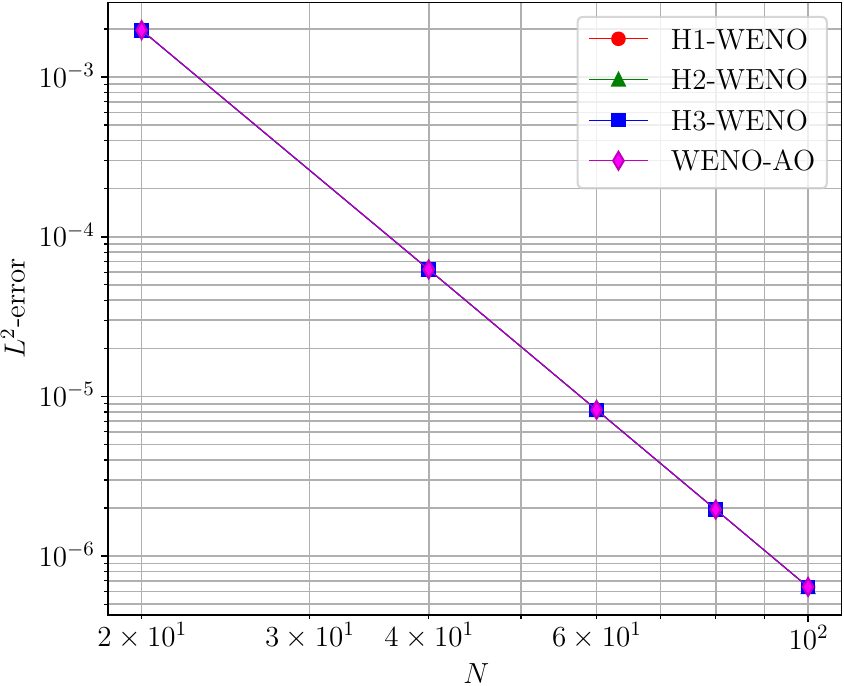}\\
  (a) & (b) 
 \end{tabular}
\caption{Comparison of WENO-AO, H1-WENO, H2-WENO, and H3-WENO in term of (a) CPU time taken to obtain $L^1$-error  (b) $L^2$-error vs number of mesh points for Example \ref{test1}.}
\label{fig:Eg5}
\end{figure}
\begin{figure}[ht]
 \centering
 \begin{tabular}{cc}
  \includegraphics[width = 7.4cm]{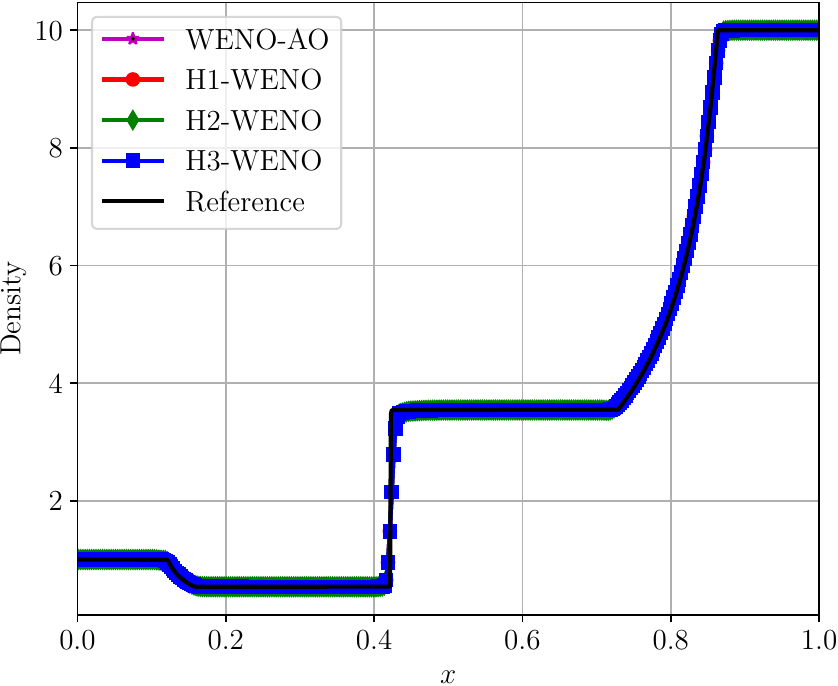}&
  \includegraphics[width = 7.4cm]{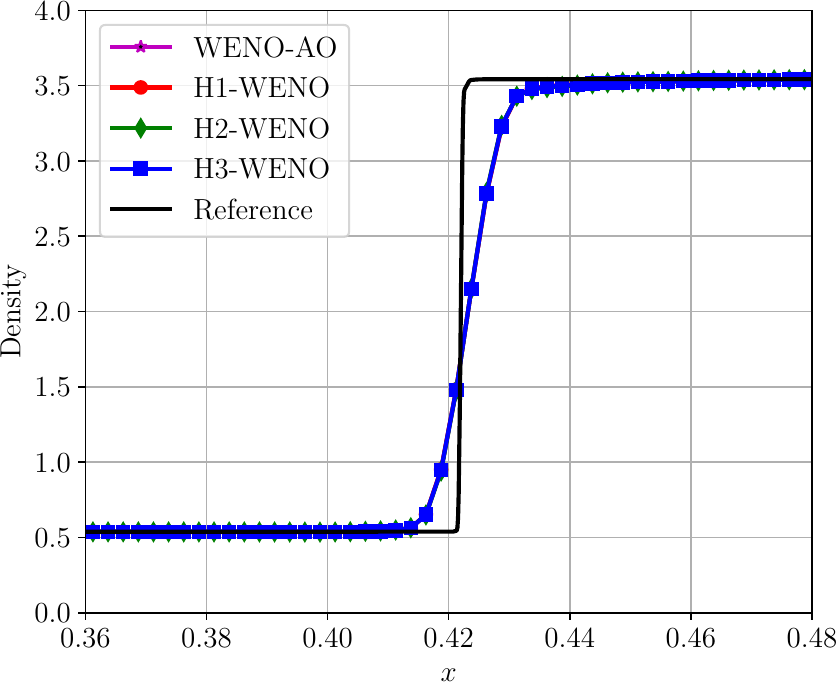}\\
  (a) & (b) \\
   \includegraphics[width = 7.4cm]{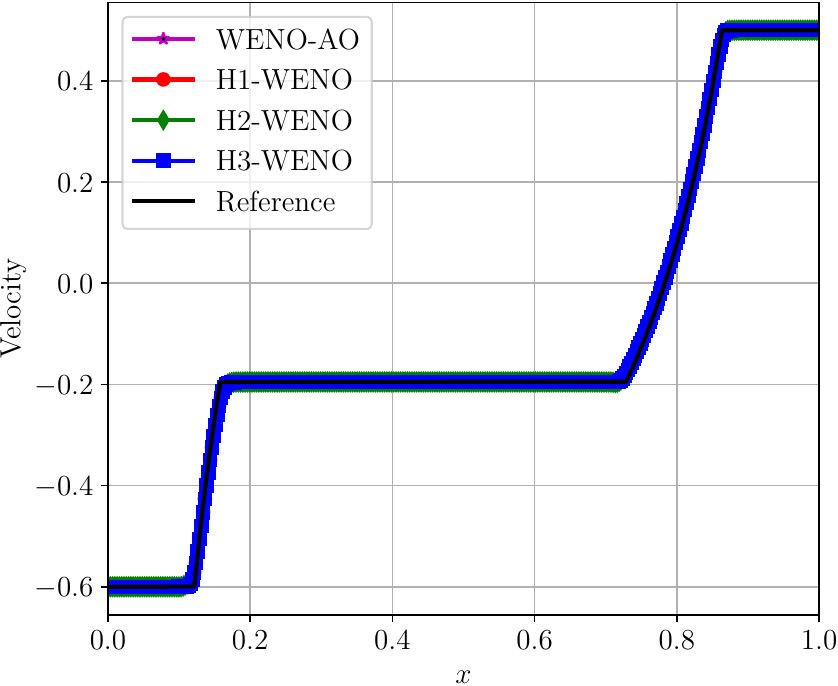}&
  \includegraphics[width = 7.4cm]{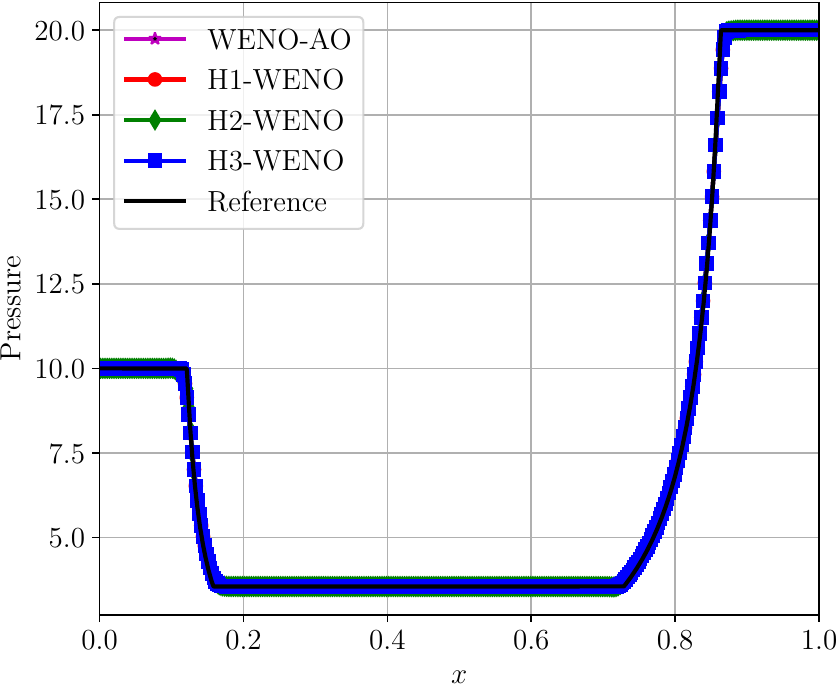}\\
  (c)  & (d) \\
 \end{tabular}
\caption{Comparison of numerical solutions obtained using H1-WENO, H2-WENO, H3-WENO, and WENO-AO schemes at time $T=0.4$ for Example \ref{test3} on a mesh with 500 grid points, compared with the reference solution.}
\label{Fig:test2}
\end{figure}
\begin{figure}[ht]
    \centering
 \begin{tabular}{ccc}
     \includegraphics[width = 4.6cm]{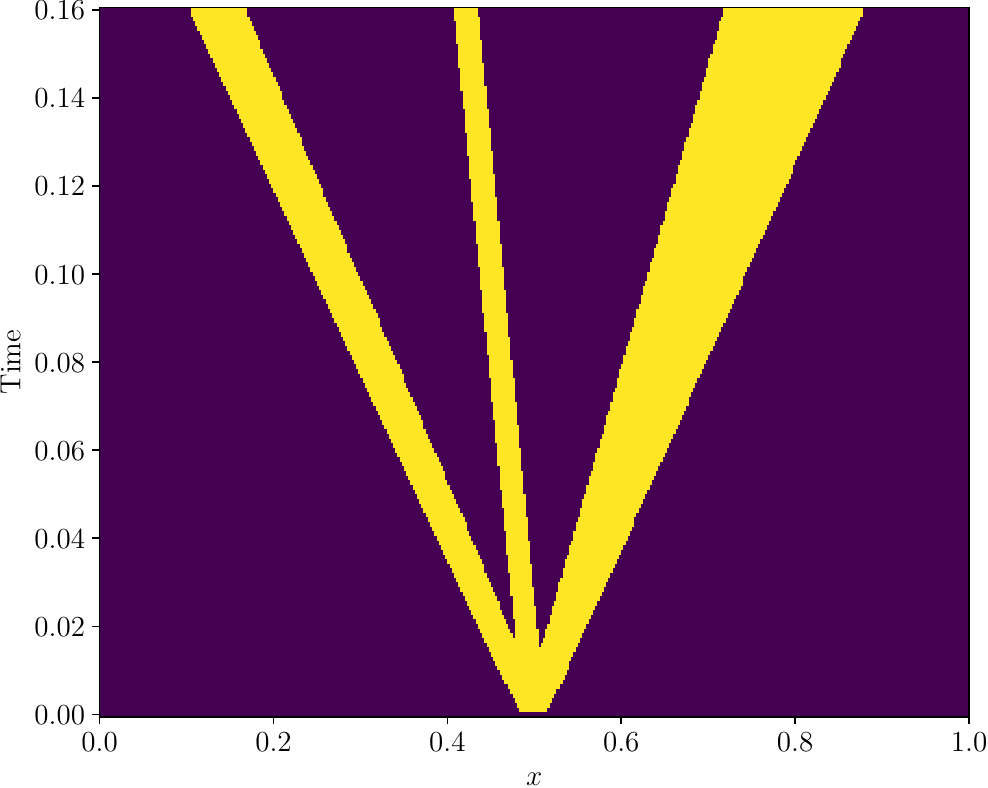}& 
      \includegraphics[width = 4.6cm]{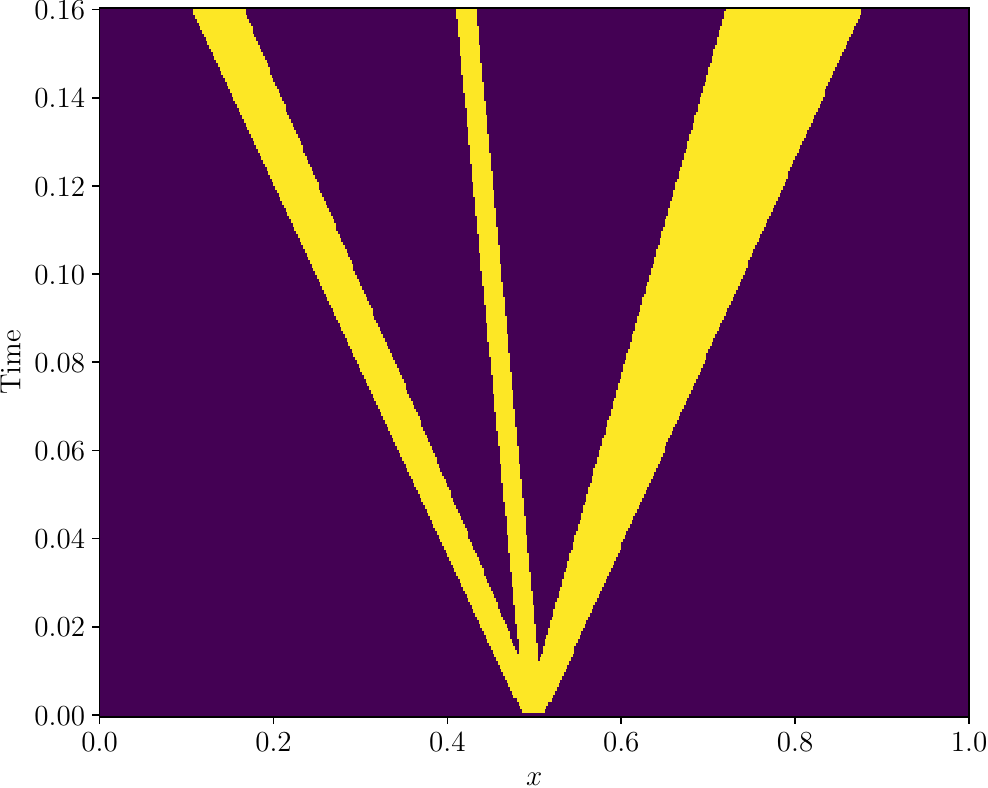}& \includegraphics[width = 4.6cm]{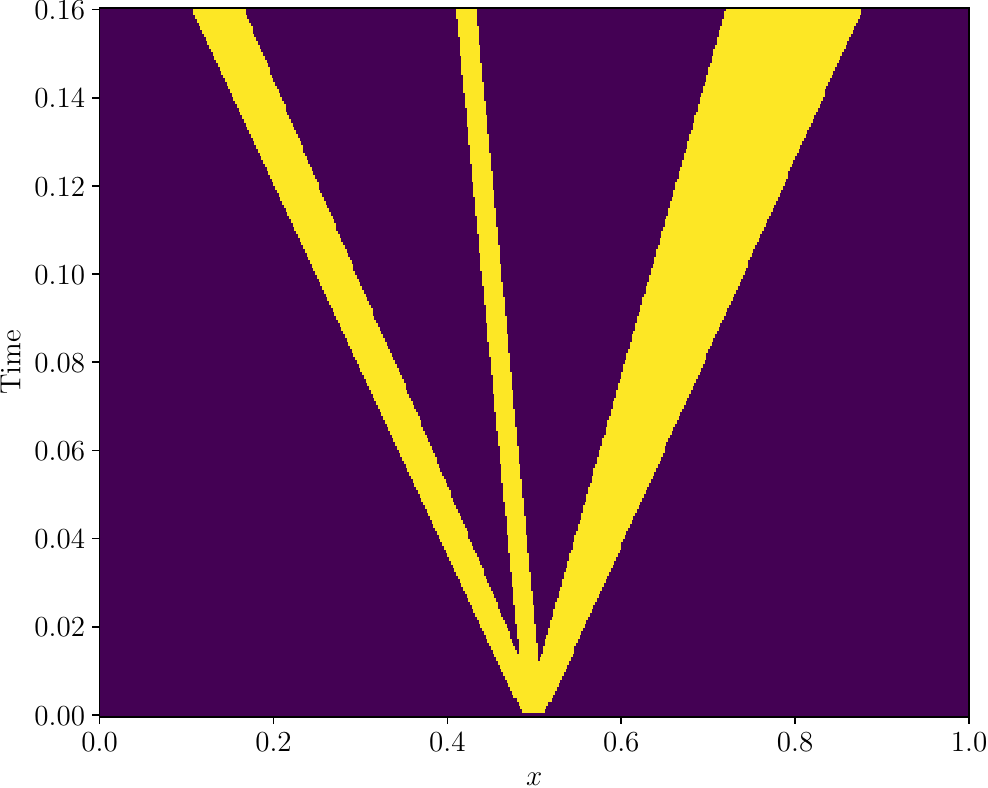}\\
      (a) H1-WENO & (b) H2-WENO &(c) H3-WENO
      \end{tabular}
    \caption{Plot of troubled-cell indicator over $x-t$ plane obtained using the proposed hybrid schemes for Example \ref{test3} using 500 mesh points.}
     \label{Fig:test2.tc}
\end{figure}
\begin{figure}[ht!]
 \centering
 \begin{tabular}{cc}
  \includegraphics[width = 7.4cm]{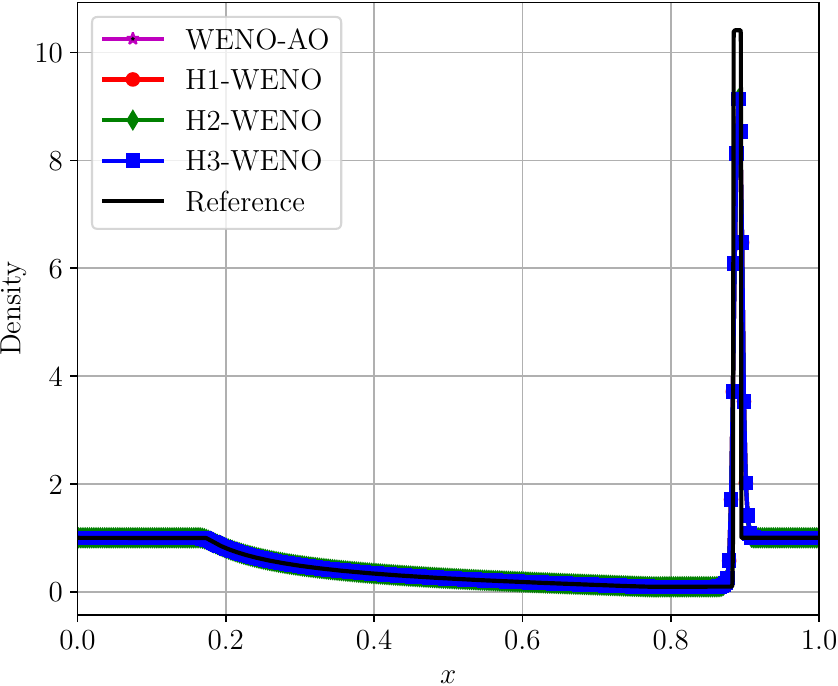}&
  \includegraphics[width = 7.4cm]{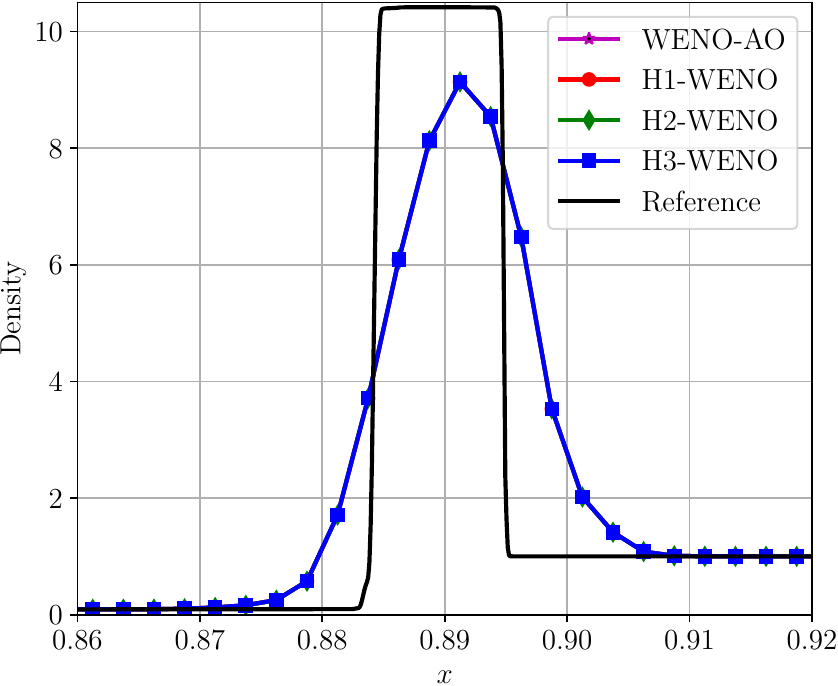}\\
  (a) & (b) \\
   \includegraphics[width = 7.4cm]{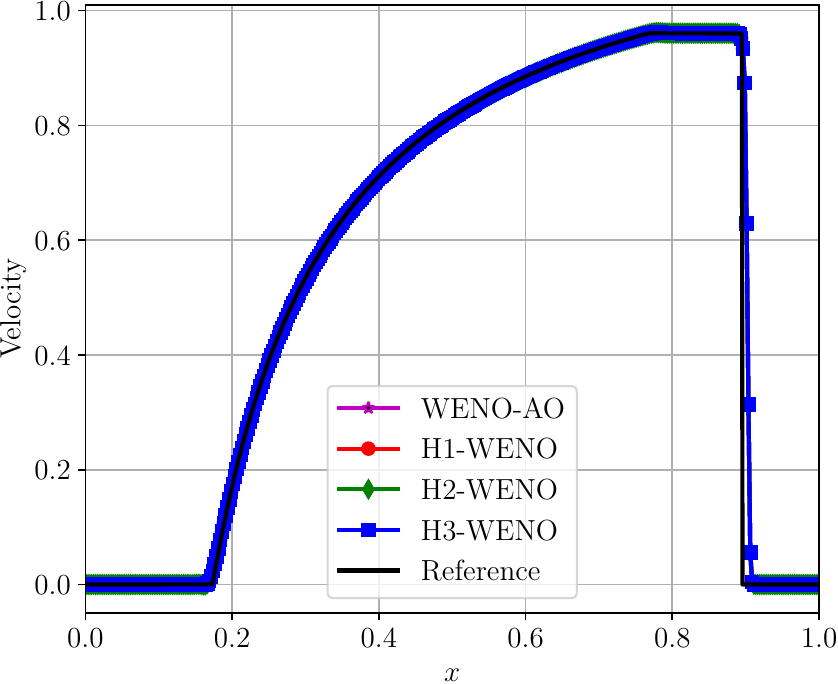}&
  \includegraphics[width = 7.4cm]{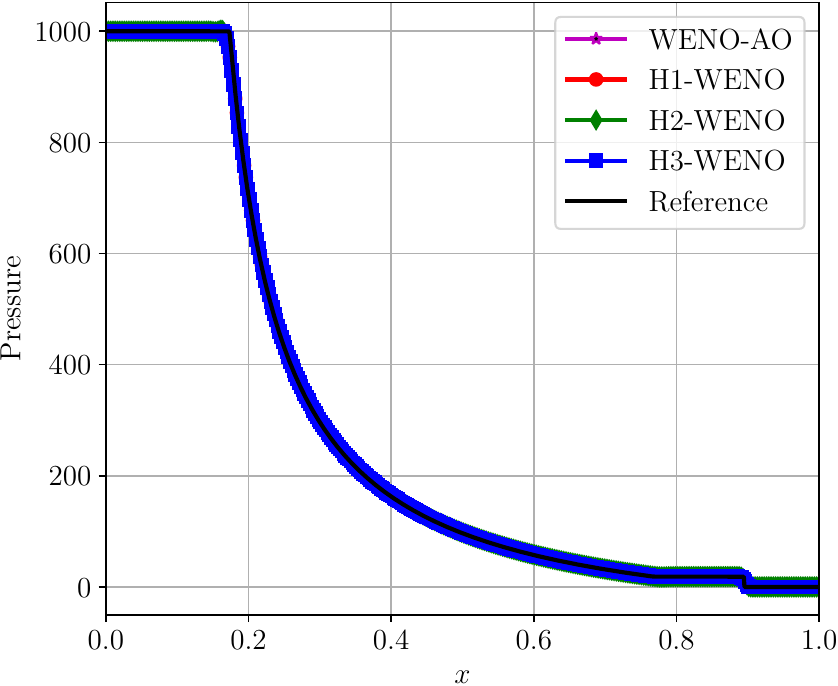}\\
  (c)  & (d) \\
 \end{tabular}
\caption{Comparison of numerical solutions obtained using H1-WENO, H2-WENO, H3-WENO, and WENO-AO schemes at time $T=0.4$ for Example \ref{test3} on a mesh with 500 grid points, compared with the reference solution. }
\label{Fig:test3}
\end{figure}
\subsection{One dimensional numerical tests}
We first proceed with one dimensional test cases. First we test accuracy of the schemes. We then test the proposed schemes on various Riemann problems.

\begin{table}[ht!]
\small
\begin{center}
\begin{tabular}{|c|c|}
\hline
H1-WENO & H2-WENO\\
\hline 
   \begin{tabular}{ c|c| c| c| c }
 $N$ & $L^{\infty}$-error & Order & $L^{1}$-error & Order\\
  \hline
   20 & 2.76672e-03 & -- & 1.77114e-03 & -- \\
   40 & 8.81214e-05 &       5.00 & 5.61679e-05 &       5.00 \\
   60 & 1.16451e-05 &       4.97 & 7.41736e-06 &       4.98 \\
   80 & 2.76683e-06 &       4.99 & 1.76192e-06 &       4.99 \\
  100 & 9.07151e-07 &       5.00 & 5.77612e-07 &       5.00  \\
\end{tabular} &  
 \begin{tabular}{c| c| c| c }
  $L^{\infty}$-error & Order & $L^{1}$-error & Order\\
 \hline
 2.76302e-03 & -- & 1.76690e-03 & -- \\
 8.81164e-05 &       5.00 & 5.61598e-05 &       5.00 \\
1.16450e-05 &       4.97 & 7.41715e-06 &       4.98 \\
 2.76682e-06 &       4.99 & 1.76191e-06 &       4.99 \\
   9.07149e-07 &       5.00 & 5.77610e-07 &       5.00 \\
\end{tabular}
\end{tabular}
\begin{tabular}{|c|c|}
\hline
H3-WENO & WENO-AO\\
\hline 
   \begin{tabular}{ c|c| c| c| c }
 $N$ & $L^{\infty}$-error & Order & $L^{1}$-error & Order\\
\hline
   20 & 2.76302e-03 & --         & 1.76690e-03 & -- \\
   40 & 8.81164e-05 &       5.00 & 5.61598e-05 &       5.00 \\
   60 & 1.16450e-05 &       4.97 & 7.41715e-06 &       4.98\\
   80 & 2.76682e-06 &       4.99 & 1.76191e-06 &       4.99\\
  100 & 9.07149e-07 &       5.00 & 5.77610e-07 &       5.00\\
\hline

\end{tabular} &  
 \begin{tabular}{c| c| c| c }
  $L^{\infty}$-error & Order & $L^{1}$-error & Order\\
\hline
2.76672e-03 & -- & 1.77114e-03 & -- \\
8.81214e-05 &       5.00 & 5.61679e-05 &       5.00 \\
1.16451e-05 &       4.97 & 7.41736e-06 &       4.98 \\
2.76683e-06 &       4.99 & 1.76192e-06 &       4.99 \\
9.07151e-07 &       5.00 & 5.77612e-07 &       5.00 \\
\hline
\end{tabular}\\
\hline
\end{tabular}
\end{center}
\caption{Comparison of H1-WENO, H2-WENO, and H3-WENO schemes with WENO-AO scheme in terms of $L^{\infty}$, $L^1$-errors for Example \ref{test3} at time $T=10$.
}
\label{Tab:test1}
\end{table}
\begin{example}[Accuracy test:]
	\label{test1}{\rm
In order to assess the accuracy of the proposed schemes in comparison with the WENO-AO scheme, we first consider a test problem with the smooth exact solution.  The initial conditions are given by
\[
\left(\rho,\,u,\,p\right)=\left(2+\sin(2\pi x),\,0.5,\,1\right).
\]
The computational domain is taken as $[0,1]$ with periodic boundary conditions.
The corresponding exact solution represents the advection of the rest-mass density and is given by
\[
\rho(x,t)=2+\sin\left(2\pi(x-0.5t)\right),
\]
while the remaining variables remain unchanged throughout the simulation. The solution evolved until the final time $t=10$. To obtain the desired fifth-order spatial convergence, the time step is chosen as
\[
\Delta t = \mathrm{CFL}\,\Delta x^{5/3}.
\]
The numerical errors in the $L^\infty$ and $L^1$ norms for the density variable $\rho$ are reported in Table~\ref{Tab:test1}. The results demonstrate that all proposed schemes accurately capture the smooth solution and produce errors comparable to those obtained with the WENO-AO scheme. Moreover, all schemes achieve the expected fifth-order convergence rate.
In Figure~\ref{fig:Eg5}~(a), we compare the $L^1$-error against CPU time on a log-log scale, while Figure~\ref{fig:Eg5}~(b) presents the variation of the $L^2$-error with respect to the number of mesh points. From Figure~\ref{fig:Eg5}~(a), it can be observed that the H2-WENO scheme requires significantly less computational time to achieve a comparable $L^1$-error, providing nearly a $50\%$ improvement in efficiency over the WENO-AO scheme. Similarly, the H1-WENO scheme achieves an improvement of approximately $40\%$, while the H3-WENO scheme offers only a modest gain of approximately $10\%$. Figure~\ref{fig:Eg5}~(b) further shows that the $L^2$-errors produced by all schemes are very close to each other and exhibit the expected fifth-order convergence behavior. Among all the schemes considered, the H2-WENO scheme is found to be the most computationally efficient.
Furthermore, no troubled cells were detected during the entire simulation, indicating that the proposed troubled cell indicator correctly identifies the solution as smooth and does not introduce unnecessary limiting.
	}
\end{example}

\begin{figure}[ht!]
 \centering
 \begin{tabular}{cc}
  \includegraphics[width = 7.4cm]{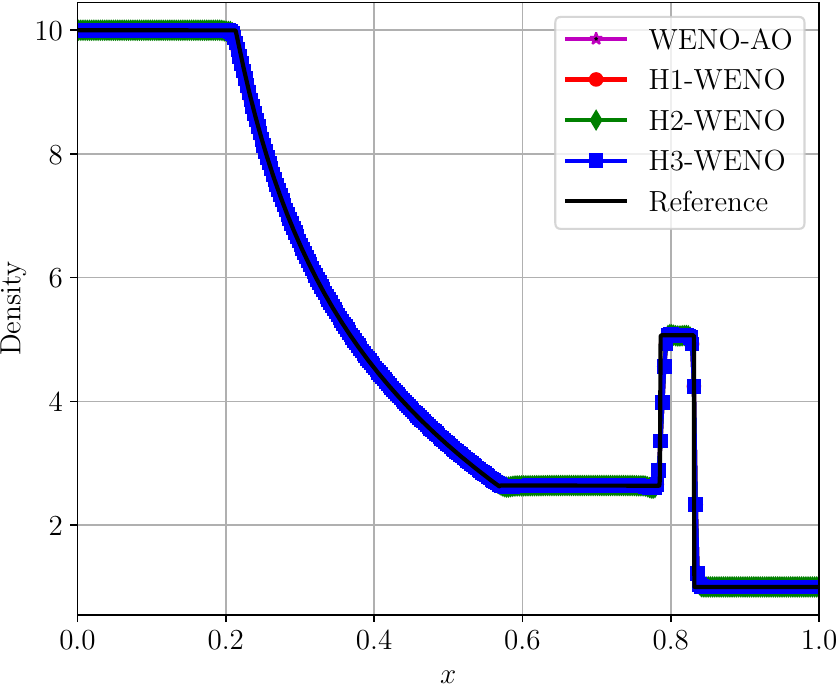}&
  \includegraphics[width = 7.4cm]{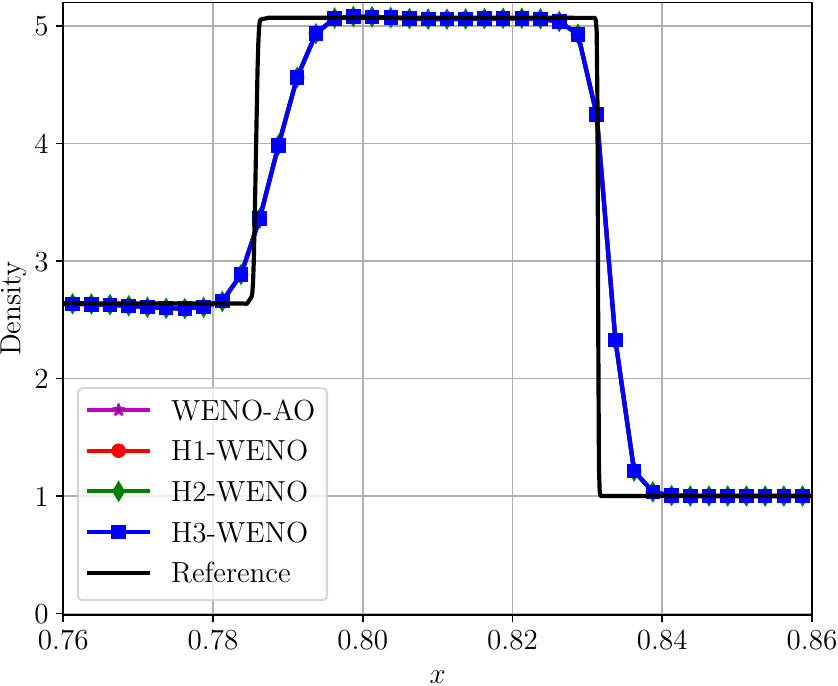}\\
  (a) & (b) \\
   \includegraphics[width = 7.4cm]{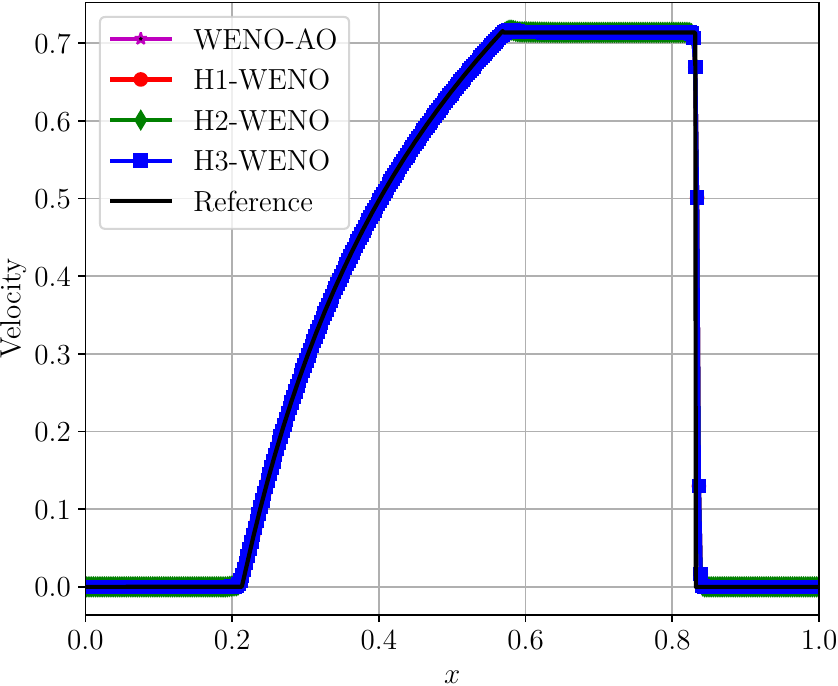}&
  \includegraphics[width = 7.4cm]{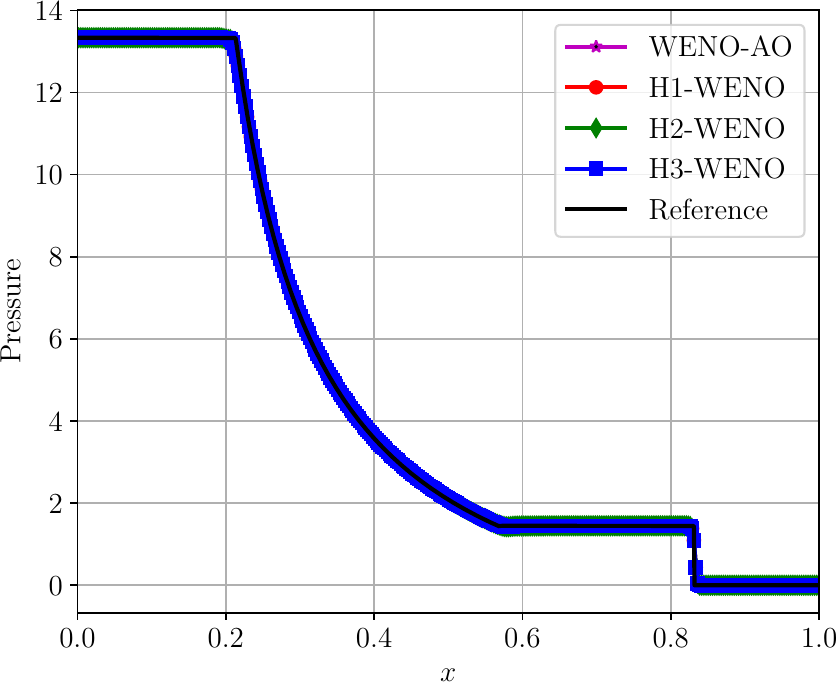}\\
  (c)  & (d) \\
 \end{tabular}
\caption{Comparison of numerical solutions obtained using H1-WENO, H2-WENO, H3-WENO, and WENO-AO schemes at time $T=0.4$ for Example \ref{test5} on a mesh with 500 grid points, compared with the reference solution. }
\label{Fig:test4}
\end{figure}

\begin{figure}[ht!]
 \centering
 \begin{tabular}{cc}
  \includegraphics[width = 7.4cm]{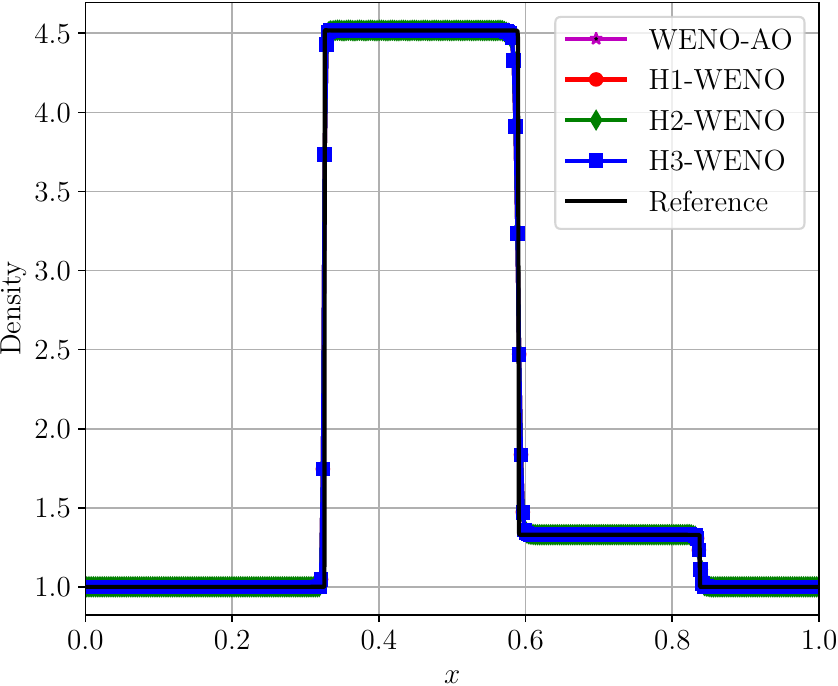}&
  \includegraphics[width = 7.4cm]{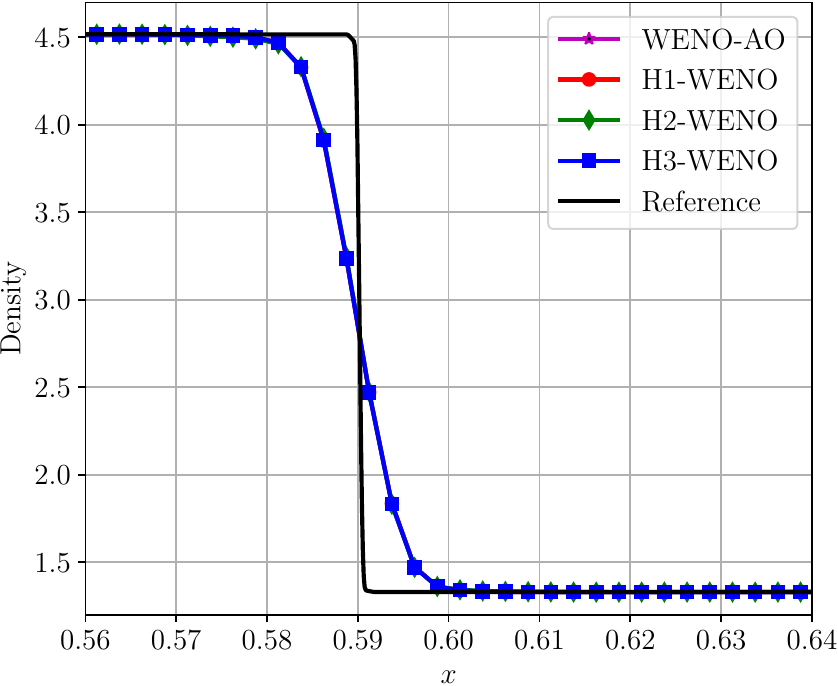}\\
  (a) & (b) \\
   \includegraphics[width = 7.4cm]{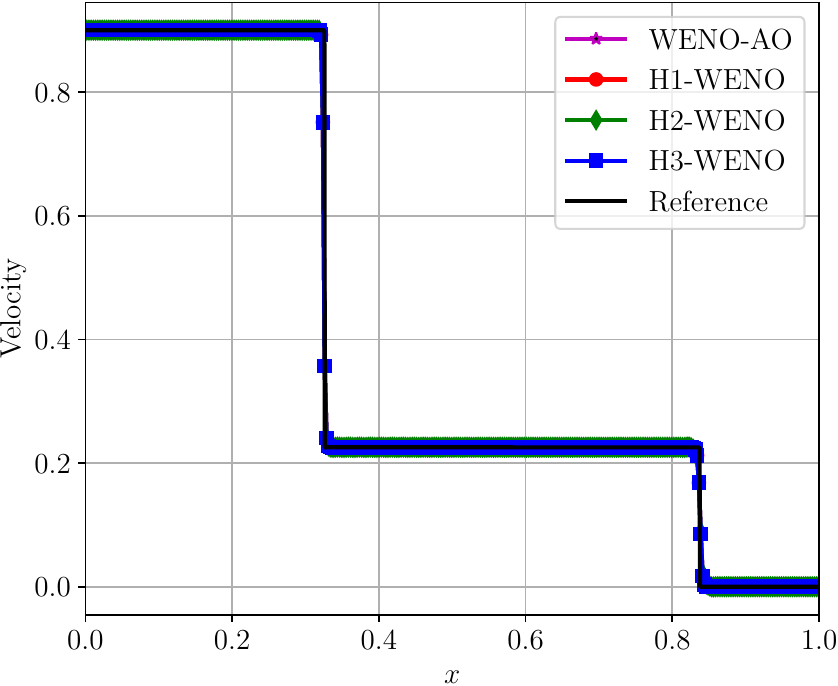}&
  \includegraphics[width = 7.4cm]{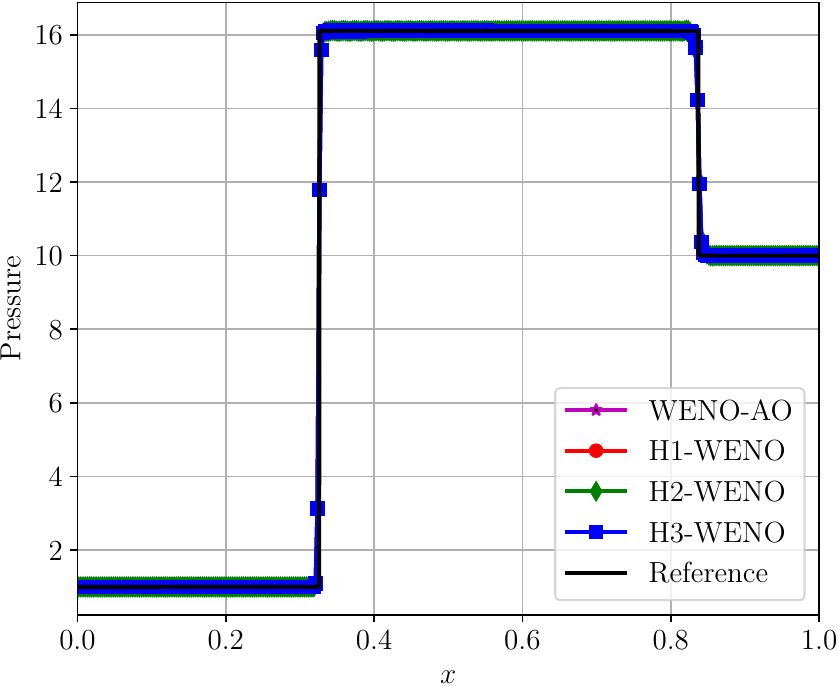}\\
  (c)  & (d) \\
 \end{tabular}
\caption{Comparison of numerical solutions obtained using H1-WENO, H2-WENO, H3-WENO, and WENO-AO schemes at time $T=0.4$ for Example \ref{test6} on a mesh with 500 grid points, compared with the reference solution.}
\label{Fig:test6}
\end{figure}

\begin{figure}[ht!]
 \centering
 \begin{tabular}{cc}
  \includegraphics[width = 7.4cm]{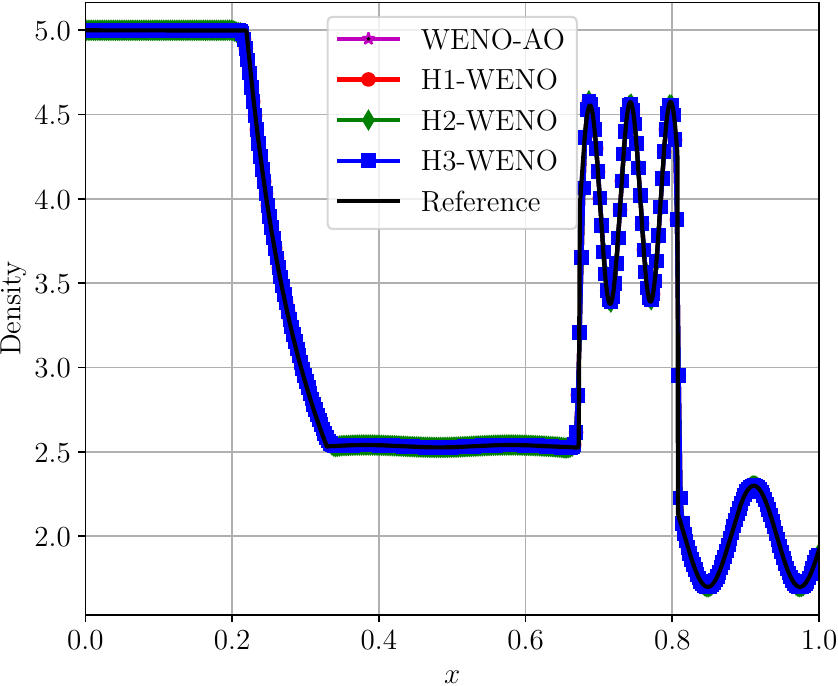}&
  \includegraphics[width = 7.4cm]{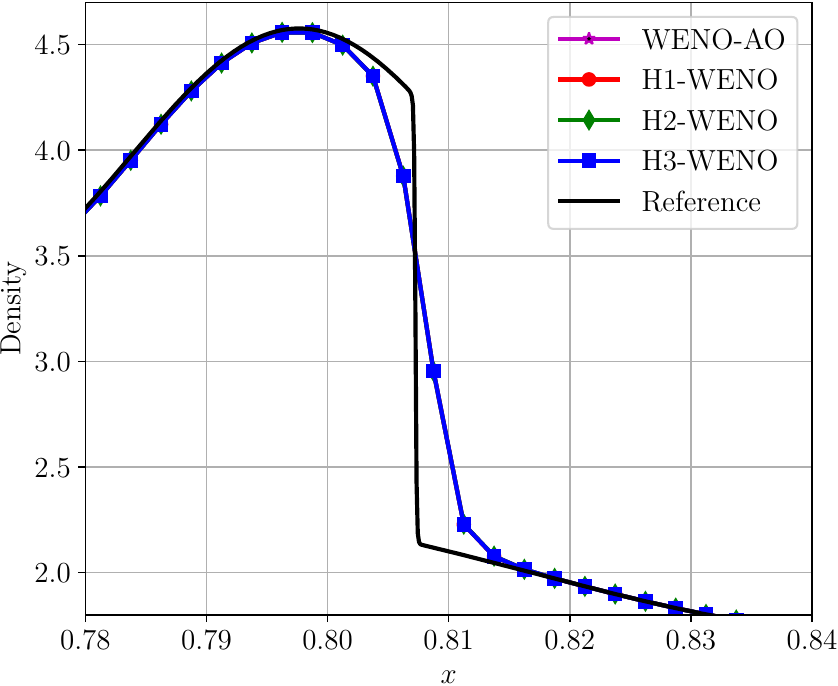}\\
  (a) & (b) \\
   \includegraphics[width = 7.4cm]{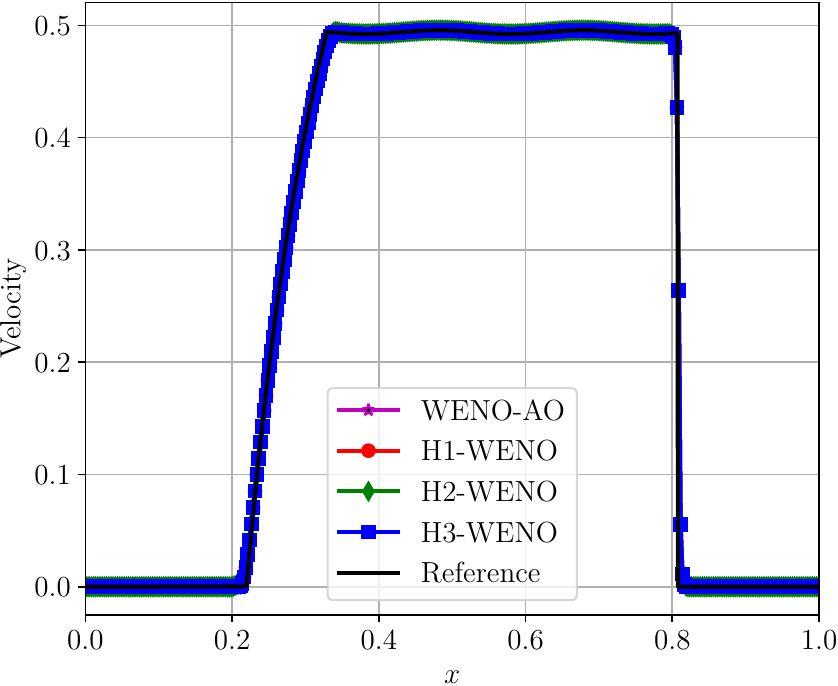}&
  \includegraphics[width = 7.4cm]{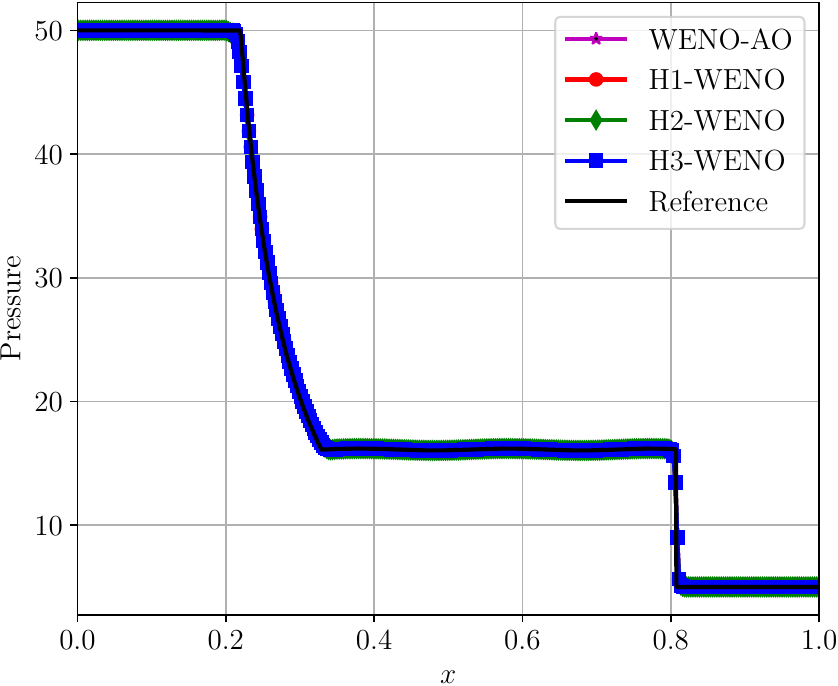}\\
  (c)  & (d) \\
 \end{tabular}
\caption{Comparison of numerical solutions obtained using H1-WENO, H2-WENO, H3-WENO, and WENO-AO schemes at time $T=0.35$ for Example \ref{test7} on a mesh with 500 grid points, compared with the reference solution.}
\label{Fig:test7}
\end{figure}
\begin{example}[Riemann Problem 1:]
	\label{test3}{\rm
In this test case, we consider a Riemann problem from \cite{Mignone2005HLLC}. We use the computational domain of $[0,1]$ with initial discontinuity at $x=0.5$. The initial states are given by,
	\begin{equation*}
	\left(\rho,\,u,\,p\right)=\begin{cases}
	\left(1,-0.6,10\right) & \text{if $x<0.5$}\\
	\left(10,0.5,20\right) &  \text{if $x>0.5$}
	\end{cases}.
	\end{equation*}
The exact solution consists of a wave structure containing two rarefaction waves propagating in opposite directions, separated by a contact discontinuity. The rarefaction waves are generated due to the pressure drop across the initial discontinuity, while the contact discontinuity corresponds to a jump in density and tangential velocity, with the pressure and normal velocity remaining continuous across it. Outflow boundary conditions are imposed at both boundaries.
In Figure~\ref{Fig:test2}, we compare the solutions obtained using the proposed schemes with those computed using the WENO-AO scheme, along with the reference solution on a grid consisting of $500$ mesh points at final time $T=0.4$. The reference solution is generated using the WENO-AO scheme on a highly refined mesh of $10000$ grid points. It can be observed that the solutions produced by the proposed schemes are in very good agreement with both the WENO-AO scheme and the reference solution for all physical variables.
Figure~\ref{Fig:test2.tc} presents the distribution of the troubled-cell indicator used in the proposed schemes over the $x$--$t$ plane. In the figure, the yellow regions correspond to the detected discontinuities, whereas the blue regions represent smooth portions of the solution. The results show that the troubled-cell indicator successfully identifies the discontinuous regions of the solution while leaving the smooth regions unaffected. 
	
}\end{example}

\begin{example}[Riemann Problem 2:]
	\label{test4} {\rm 
We consider a shock-tube Riemann problem from \cite{marti2003numerical}. The computational domain is taken as $[0,1]$ with outflow boundary conditions imposed at both ends. The initial conditions are given by
\begin{equation}\label{rp2}
\left(\rho,\,u,\,p\right)=
\begin{cases}
\left(1,0,10^3\right), & \text{if } x < 0.5,\\
\left(1,0,10^{-2}\right), & \text{if } x > 0.5.
\end{cases}
\end{equation}
The exact solution of this problem contains all fundamental wave structures, including shock waves, rarefaction waves, and contact discontinuities. Therefore, it serves as a suitable benchmark for assessing the wave-capturing capability of the proposed schemes.
The numerical solution is computed using the H1-WENO, H2-WENO, H3-WENO, and WENO-AO schemes on a grid consisting of $500$ mesh points. In Figure~\ref{Fig:test3}, we compare the numerical solutions obtained from all the considered schemes with the reference solution at the final time $T=0.4$. The reference solution is generated on a highly refined mesh using the WENO-AO scheme. From the figure, it can be observed that all the schemes successfully capture the essential wave structures of the problem and produce results that are in close agreement with the WENO-AO scheme and the reference solution.
	}
\end{example}

\begin{figure}[ht!]
 \centering
 \begin{tabular}{cc}
  \includegraphics[width = 7.4cm]{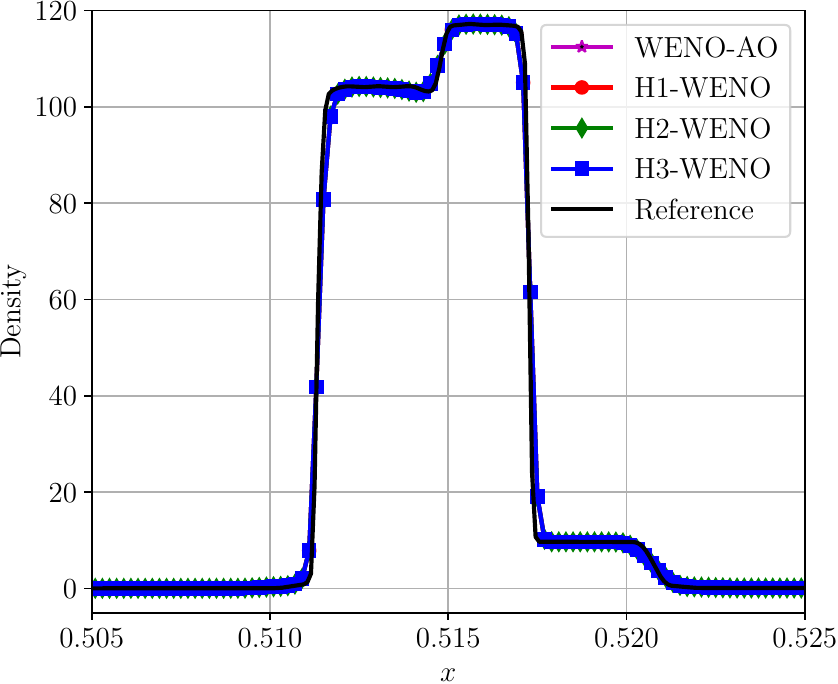}&
  \includegraphics[width = 7.4cm]{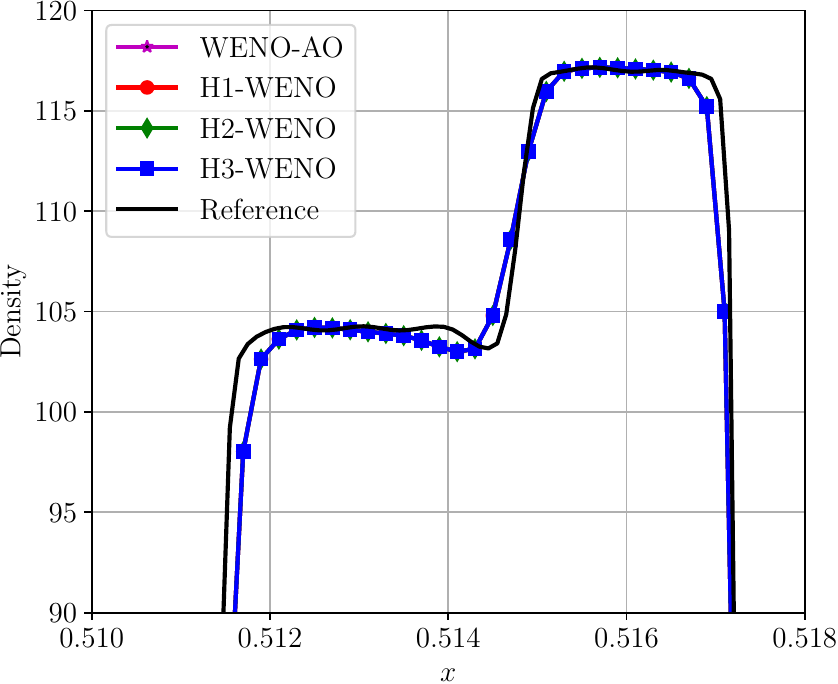}\\
  (a) & (b) \\
   \includegraphics[width = 7.4cm]{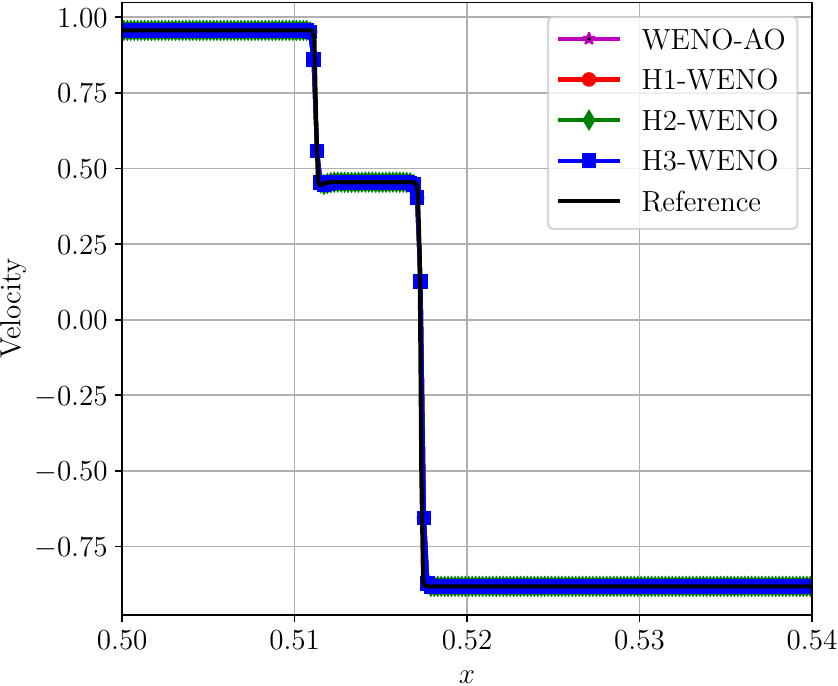}&
  \includegraphics[width = 7.4cm]{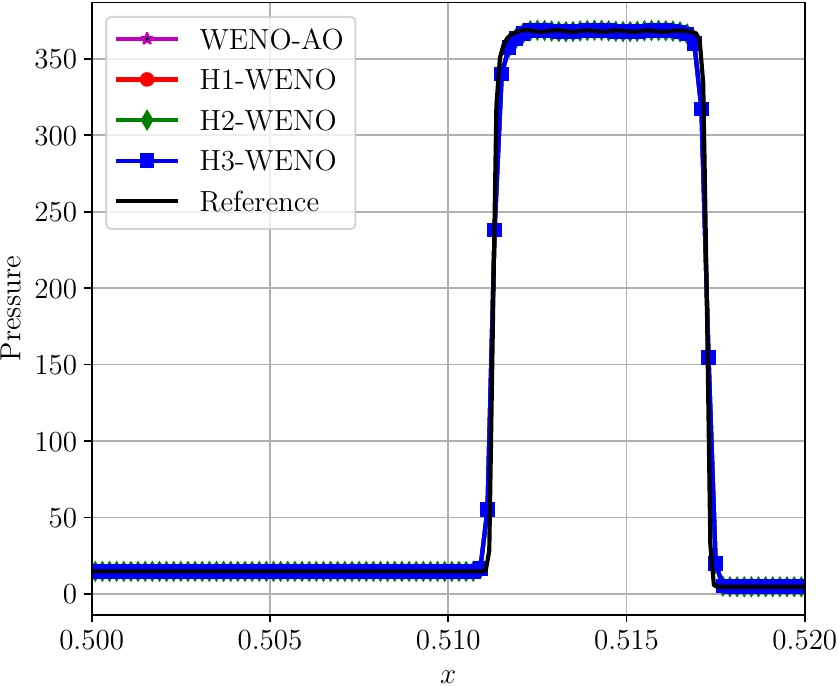}\\
  (c)  & (d) \\
 \end{tabular}
\caption{Comparison of numerical solutions obtained using H1-WENO, H2-WENO, H3-WENO, and WENO-AO schemes at time $T=0.43$ for Example \ref{test8} on a mesh with 2000 grid points, compared with the reference solution. }
\label{Fig:test8}
\end{figure}

\begin{example}[Riemann Problem 3:]
\label{test5}{\rm
We consider another Riemann problem taken from \cite{Mignone2005HLLC}. The computational domain is chosen as $[0,1]$, and the initial data are prescribed as
\begin{equation}\label{rp3}
\left(\rho,\,u,\,p\right)=
\begin{cases}
\left(10,0,\dfrac{40}{3}\right), & \text{if } x < 0.5,\\
\left(1,0,\dfrac{2}{3}\times10^{-6}\right), & \text{if } x > 0.5.
\end{cases}
\end{equation}
Outflow boundary conditions are applied at both ends of the domain. Due to the presence of an extremely low-pressure state on the right side, this test problem provides a stringent assessment of the stability and robustness of the numerical methods.
The numerical solutions computed using the H1-WENO, H2-WENO, H3-WENO, and WENO-AO schemes are displayed in Figure~\ref{Fig:test4} at the final time $t=0.4$ on a mesh consisting of $500$ cells. It is evident from the results that all the schemes are capable of accurately resolving the wave structures arising in the solution while maintaining stable and non-oscillatory behavior near discontinuities. Moreover, the solutions obtained from the proposed hybrid schemes closely match those produced by the WENO-AO scheme, indicating that the proposed methods retain high resolution and robustness even in the presence of severe pressure variations.
}
\end{example}

\begin{example}[Riemann Problem 4:]
\label{test6}{\rm
In this example, we consider another Riemann problem taken from \cite{Mignone2005HLLC}. The computational domain is defined as $[0,1]$, and the initial conditions are specified by
\begin{equation}
\left(\rho,\,u,\,p\right)=
\begin{cases}
\left(1,0.9,1\right), & \text{if } x < 0.5,\\
\left(1,0,10\right), & \text{if } x > 0.5.
\end{cases}
\end{equation}
Outflow boundary conditions are imposed at both boundaries. The exact solution of this problem consists of two shock waves separated by a contact discontinuity, making it a useful benchmark for evaluating the shock-capturing capability of the numerical schemes.
The numerical solutions obtained using the considered schemes are presented in Figure~\ref{Fig:test6} at the final time $t=0.4$. From the results, it can be seen that all the schemes accurately resolve the shock waves and contact discontinuity without generating noticeable spurious oscillations near discontinuous regions. Furthermore, the solutions produced by the proposed schemes remain in close agreement with each other, indicating only minor differences in their overall performance and resolution quality.}
\end{example}

\begin{example}[Density perturbation test case:]
	\label{test7}{\rm
	In this test case, we consider a benchmark problem from \cite{del2002efficient}. The computational domain is taken as $[0,1]$, and outflow boundary conditions are applied at both boundaries. The initial conditions are given by
\[
\left(\rho,\,u,\,p\right)=
\begin{cases}
\left(5,0,50\right), & \text{if } x < 0.5,\\
\left(2 + 0.3\sin(50x),0,5\right), & \text{if } x > 0.5.
\end{cases}
\]
This problem generates highly oscillatory yet smooth density waves interacting with discontinuous structures, making it a demanding test for evaluating the resolution capability of numerical schemes. In particular, accurately resolving the fine-scale smooth oscillations without introducing excessive numerical dissipation is a challenging task.
The numerical results obtained using the considered schemes are shown in Figure~\ref{Fig:test7} at the final time $t=0.35$ on a mesh consisting of $500$ cells. From the figure, it can be observed that all the schemes successfully capture the oscillatory wave structures while maintaining good resolution of the small-scale features. The solutions produced by the proposed schemes remain comparable to those obtained using the WENO-AO scheme, demonstrating their ability to preserve complex smooth structures with high accuracy and minimal numerical diffusion.
}
\end{example}

\begin{example}[Blast waves test case:]
	\label{test8}{\rm
For this test problem, we consider the blast wave interaction problem from \cite{marti1996extension}. The computational domain is chosen as $[0,1]$, and outflow boundary conditions are imposed at both boundaries. The initial conditions are prescribed as
\[
\left(\rho,\,u,\,p\right)=
\begin{cases}
\left(1,0,1000\right), & \text{if } x < 0.1,\\
\left(1,0,0.01\right), & \text{if } 0.1 < x < 0.9,\\
\left(1,0,100\right), & \text{if } x > 0.9.
\end{cases}
\]
This problem produces strong blast waves that interact with each other, resulting in extremely sharp solution structures concentrated within a very narrow region. Accurately resolving these fine-scale features requires highly refined meshes. Therefore, the simulations are carried out using  grid resolution consisting of $4000$ cells.
The numerical solutions obtained using the considered schemes are presented in Figure~\ref{Fig:test8} at the final time $t=0.43$. To better illustrate the performance of the schemes in resolving the intricate wave interactions, a zoomed view of the solution around $x=0.52$ is also provided. From the results, it can be observed that all the schemes are capable of capturing complex shock interactions and narrow structures with good accuracy. Moreover, the proposed schemes produce results comparable to those obtained using the WENO-AO scheme while maintaining sharp resolution of the discontinuities and minimizing numerical oscillations.
	}
\end{example}
\subsection{Two dimensional numerical tests}\label{sec:num2d}
We now present a set of two-dimensional test cases, including several two-dimensional Riemann problems.
\begin{figure}[ht]
 \centering
 \begin{tabular}{cc}
  \includegraphics[width = 7.4cm]{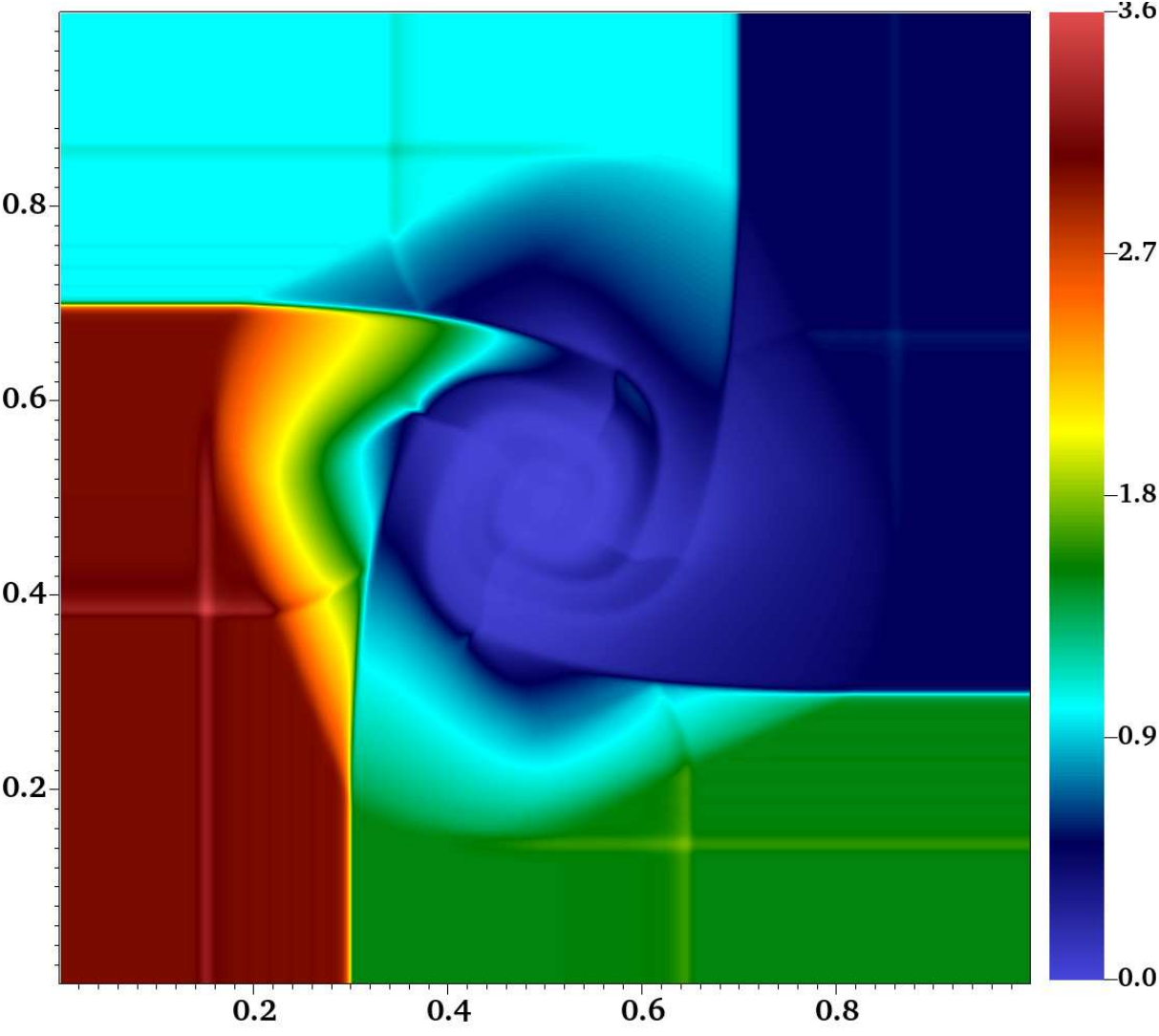}&
  \includegraphics[width = 7.4cm]{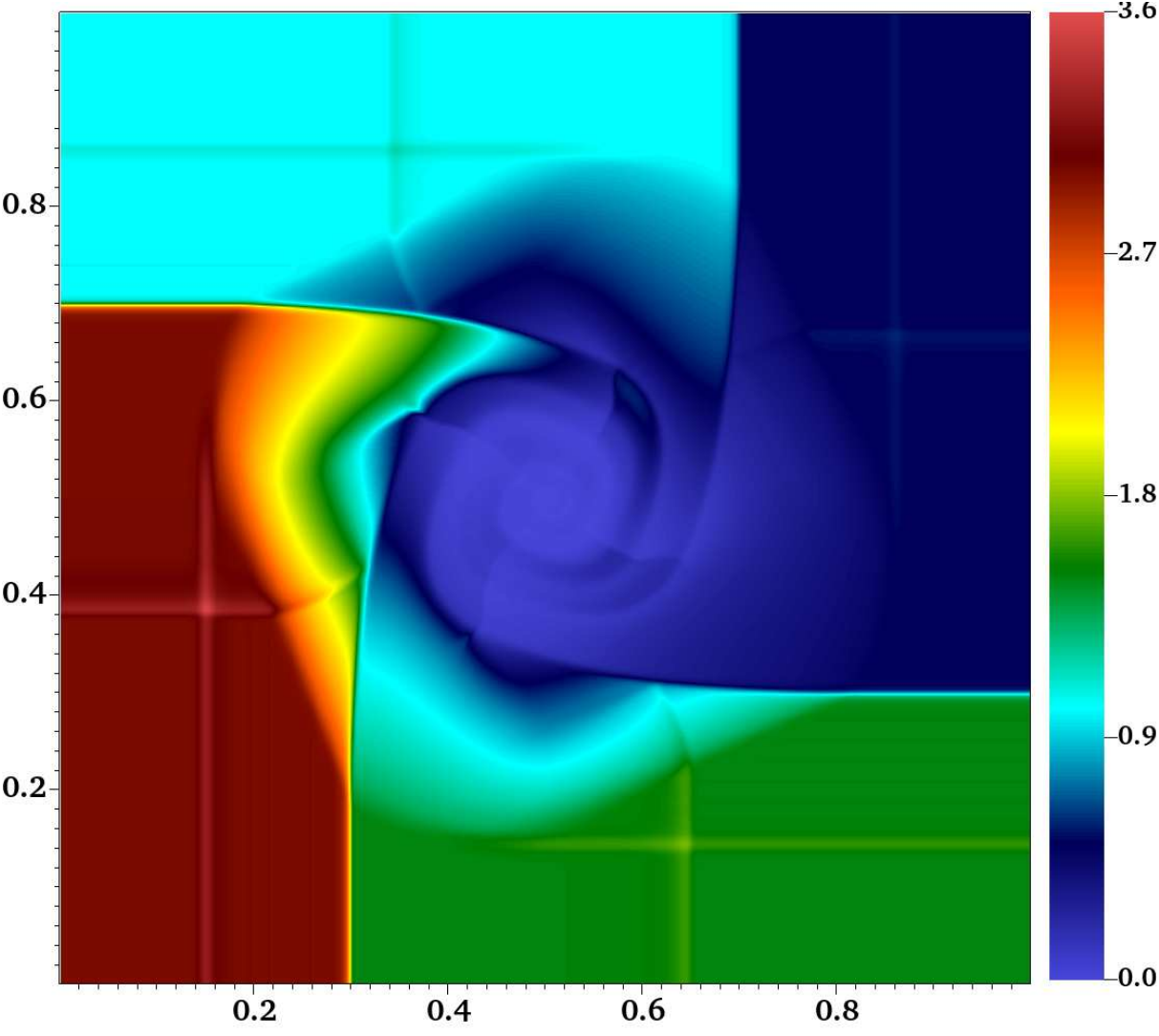}\\
  (a) WENO-AO & (b) H1-WENO \\
   \includegraphics[width = 7.4cm]{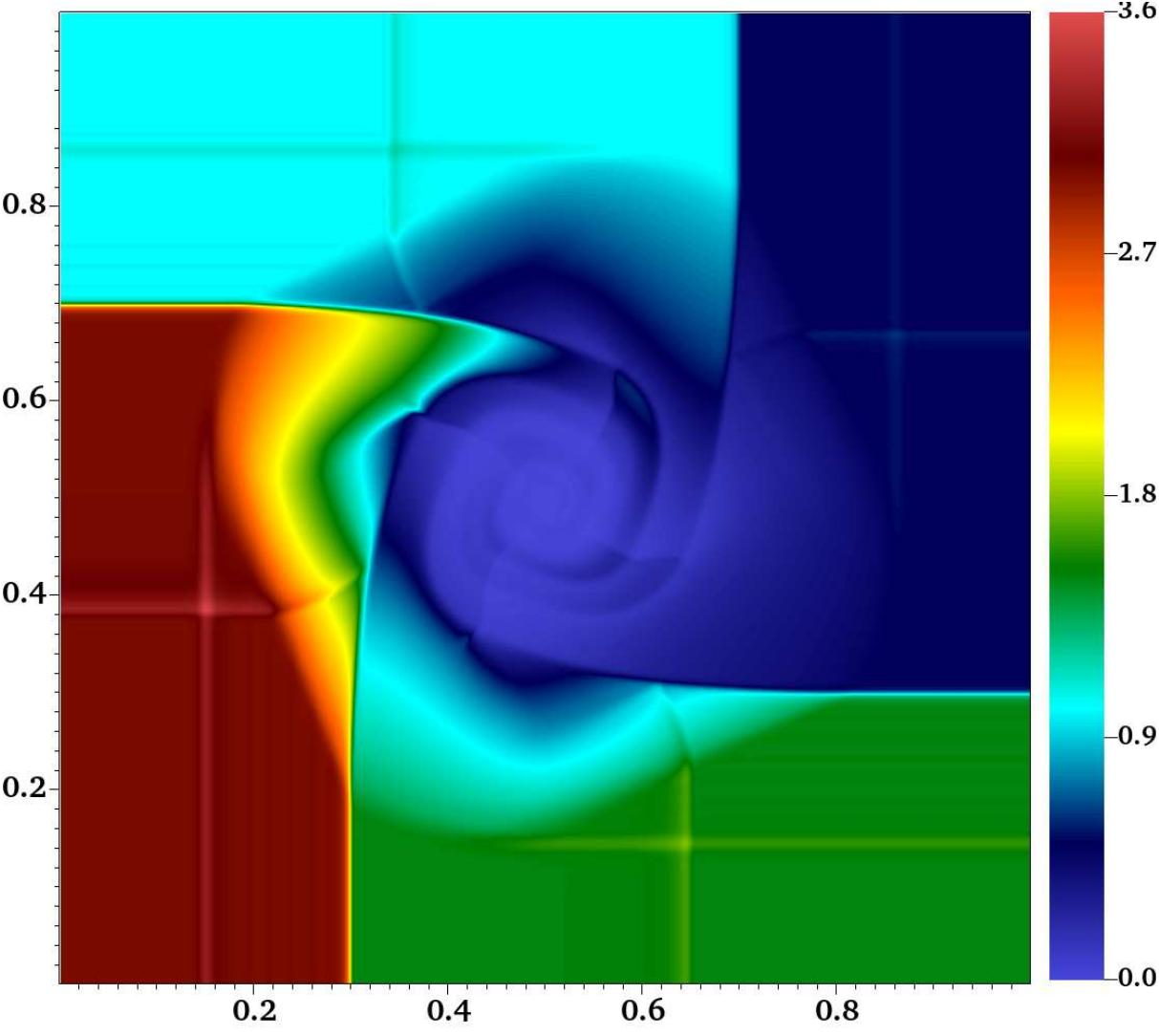}&
  \includegraphics[width = 7.4cm]{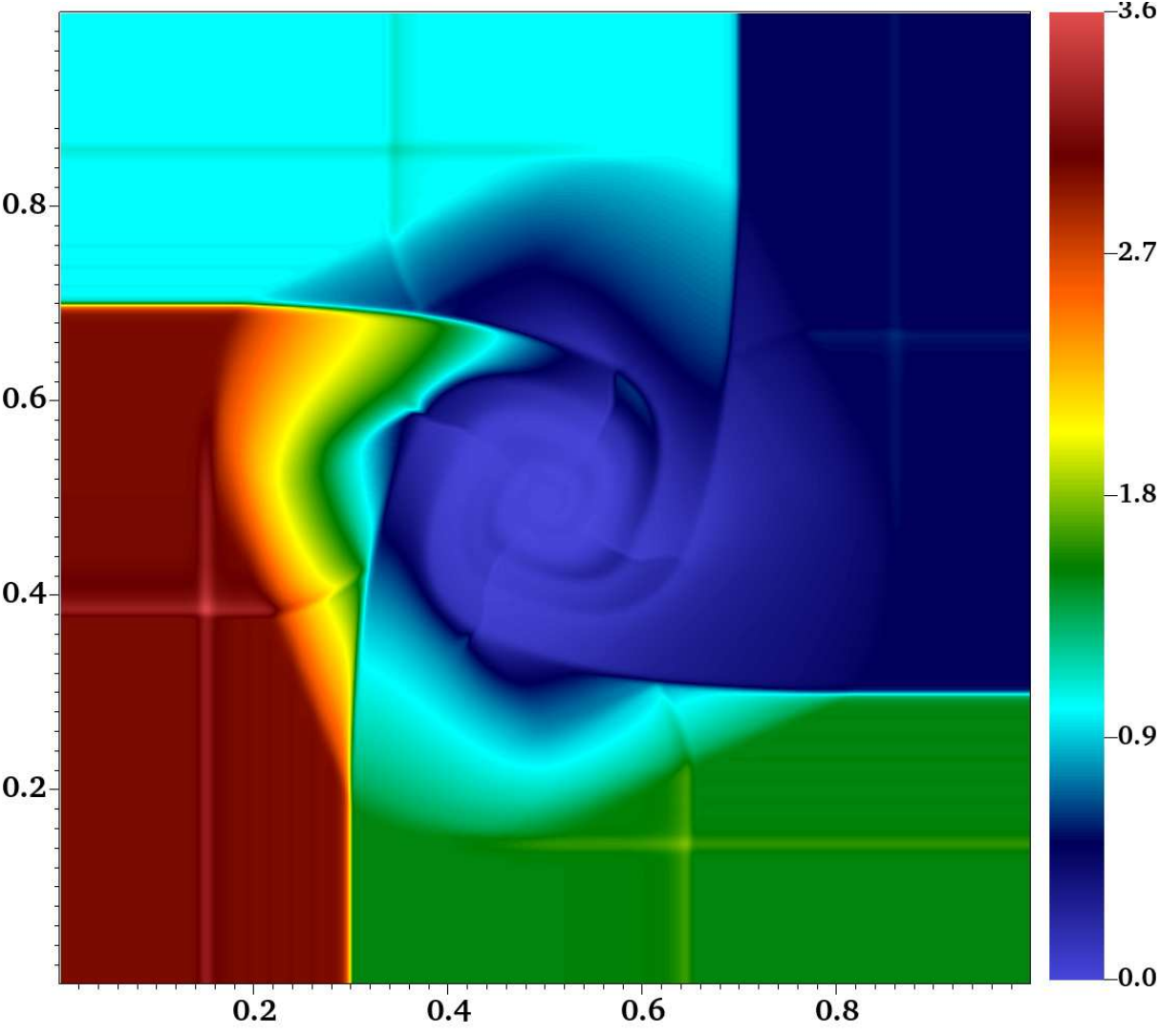}\\
  (c) H2-WENO & (d) H3-WENO \\
 \end{tabular}
 \caption{Comparison of the H1-WENO, H2-WENO, and H3-WENO schemes with the WENO-AO scheme for Example \ref{rp1} in terms of the density variable on a $400 \times 400$ computational grid at time $T=0.4$.}
 \label{Fig:rp1}
 \end{figure}

 \begin{figure}
 \centering
 \begin{tabular}{ccc}
   \includegraphics[width = 4.6cm]{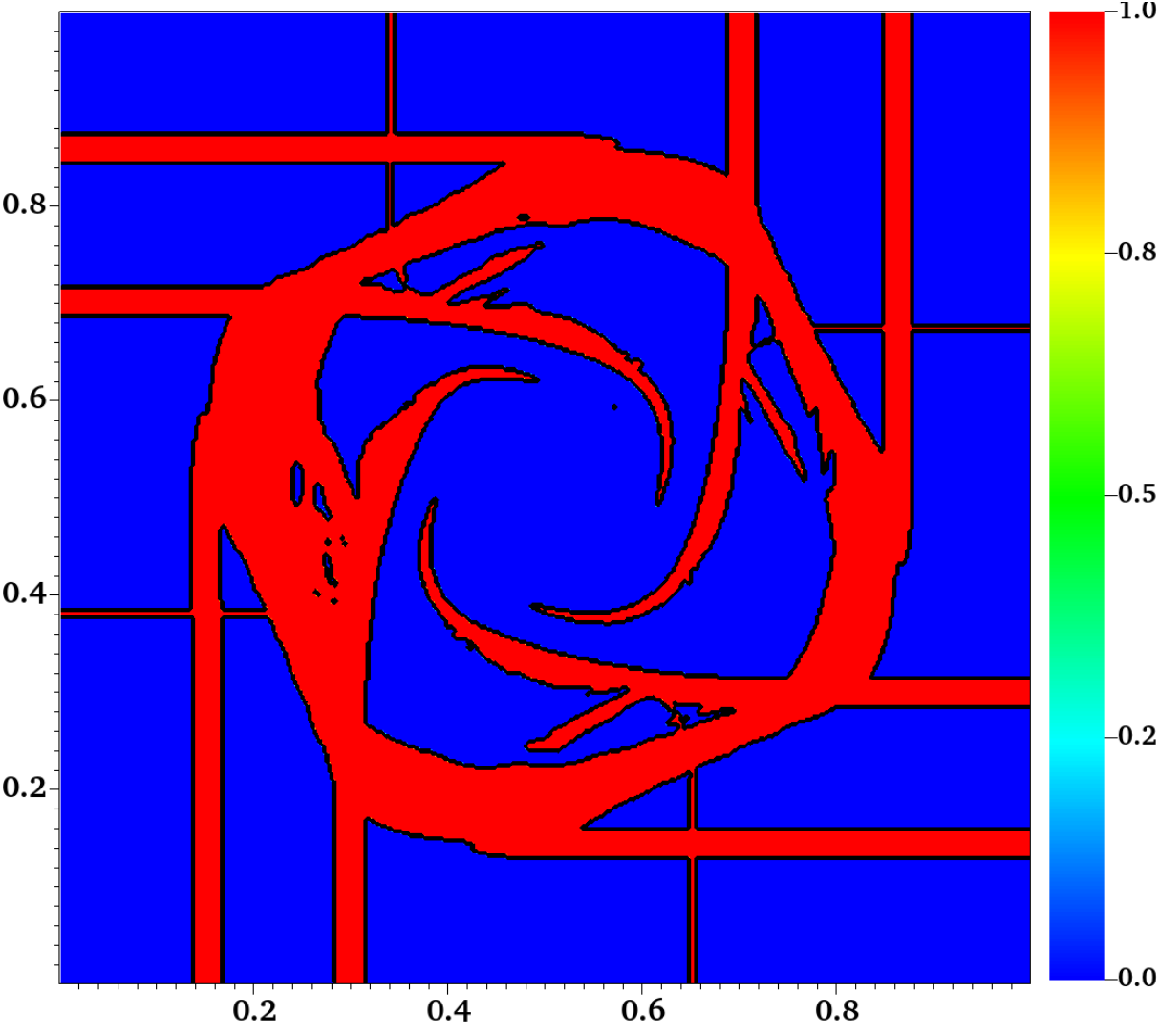} &
  \includegraphics[width = 4.6cm]{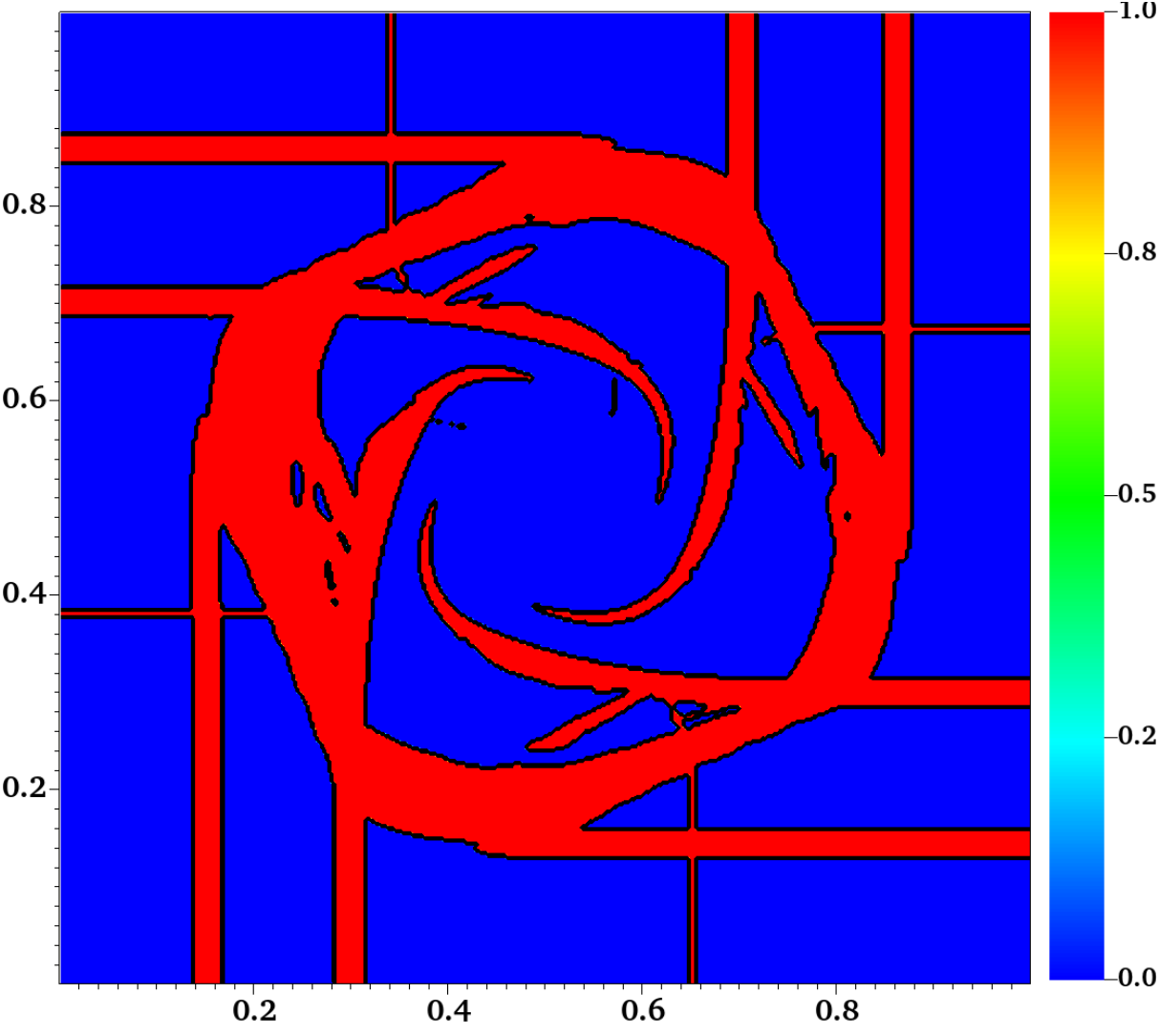}&
   \includegraphics[width = 4.60cm]{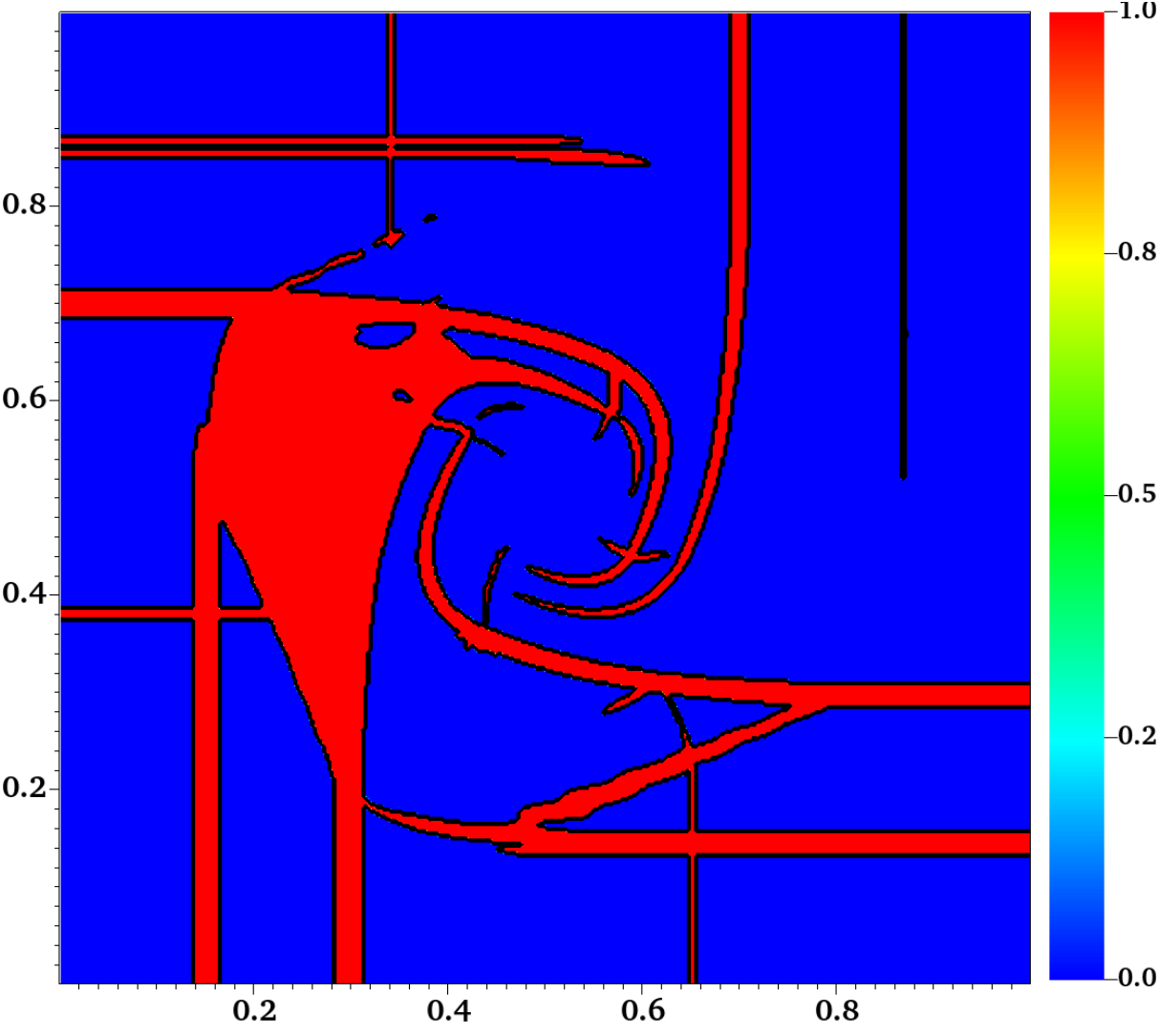}\\
  (a)  H1-WENOZ & (b) H2-WENOZ& (c) H3-WENOZ\\
 \end{tabular}
\caption{Plot of the troubled-cell indicators obtained using the hybrid schemes for Example \ref{rp1} at time $T=0.4$ over a grid size $400\times 400$.}
\label{Fig:rp1.tc}
\end{figure}

\begin{example}[Two-dimensional Riemann problem 1:]
\label{rp1}{\rm
We consider a two-dimensional Riemann problem from \cite{nunez2016xtroem}. The computational domain is taken as $[0,1]\times[0,1]$ with outflow boundary conditions. The initial Riemann data are prescribed as
\begin{equation*}
\left(\rho,\,u_x,\,u_y,\,p\right)=
\begin{cases}
\left(0.5,\,0.5,\,-0.5,\,5\right), & \text{if } x>0.5 \text{ and } y>0.5,\\
\left(1,\,0.5,\,0.5,\,5\right), & \text{if } x<0.5 \text{ and } y>0.5,\\
\left(3,\,-0.5,\,0.5,\,5\right), & \text{if } x<0.5 \text{ and } y<0.5,\\
\left(1.5,\,-0.5,\,-0.5,\,5\right), & \text{if } x>0.5 \text{ and } y<0.5.
\end{cases}
\end{equation*}

The solution of this problem consists of four interacting vortex sheets that produce a low-density region near the center of the computational domain. The solution is computed using the H1-WENO, H2-WENO, H3-WENO, and WENO-AO schemes up to the final time $T=0.4$ on a uniform mesh of $400\times 400$ grid points.
In Figure \ref{Fig:rp1}, we present the density contours obtained using the different numerical schemes at the final time. The proposed hybrid schemes successfully capture the complex flow structures and produce results that are in good agreement with those obtained using the WENO-AO scheme.
Figure \ref{Fig:rp1.tc} shows the corresponding troubled-cell indicators at the final time. In these plots, the red regions represent the detected discontinuities, whereas the blue regions correspond to smooth parts of the solution. It can be seen that the proposed troubled-cell indicator accurately identifies non-smooth regions of the flow. Among the three hybrid schemes, the H3-WENO scheme performs better than the H1-WENO and H2-WENO schemes, as it marks fewer unnecessary cells in smooth regions while still effectively capturing the discontinuities.
	}
	
\end{example}

\begin{figure}[h!]
 \centering
 \begin{tabular}{cc}
  \includegraphics[width = 7.4cm]{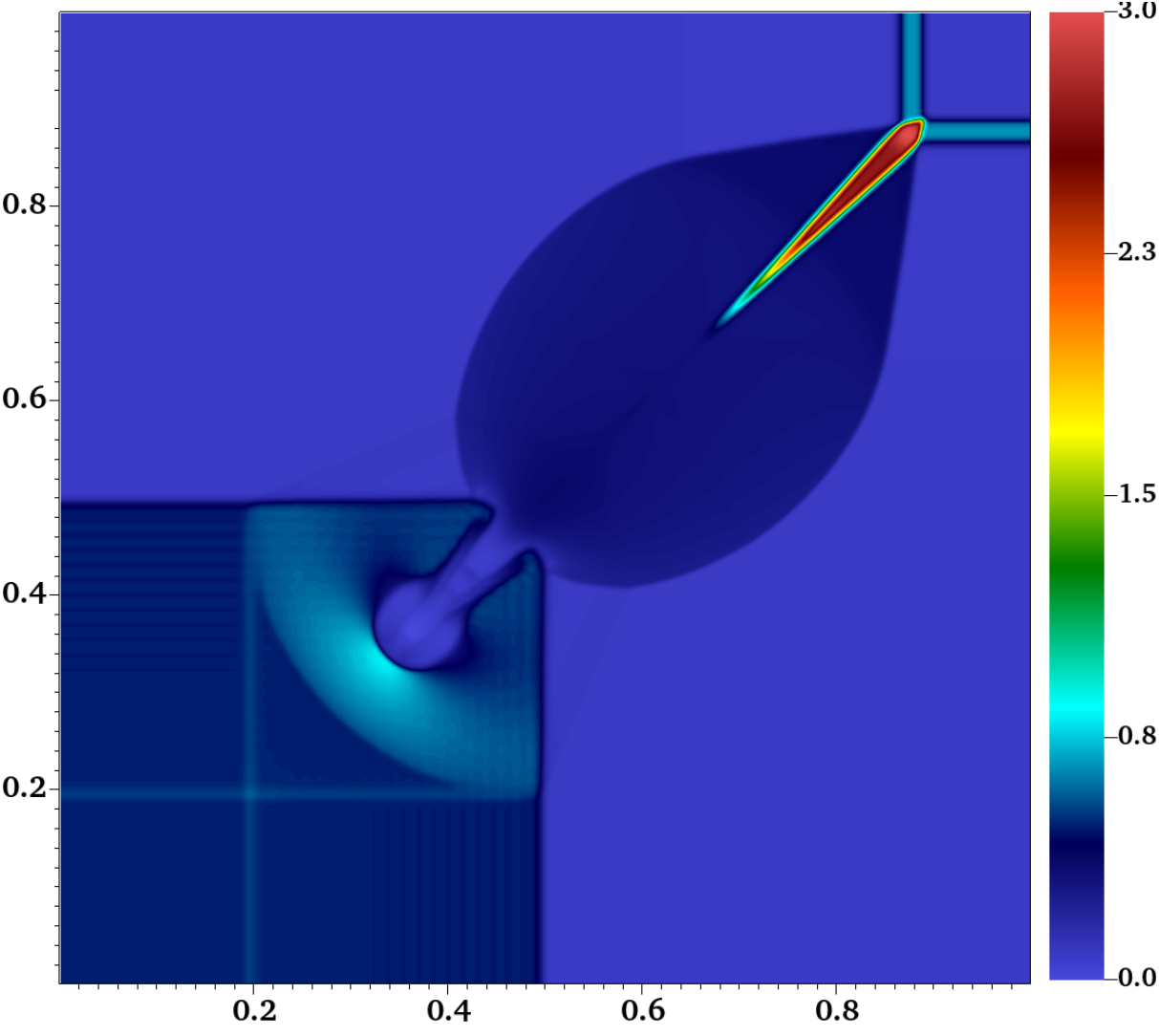}&
  \includegraphics[width = 7.4cm]{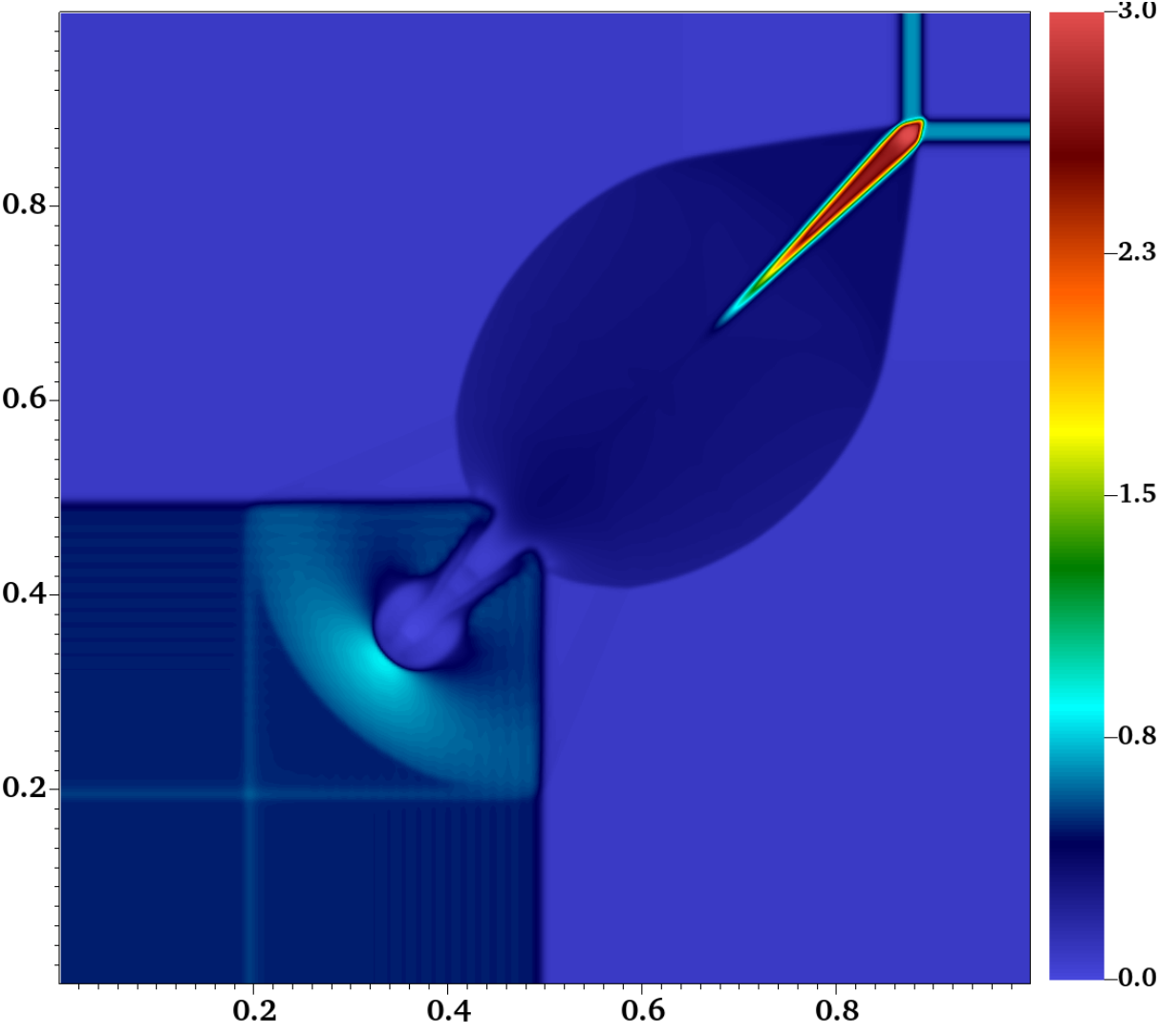}\\
  (a) WENO-AO & (b) H1-WENO \\
   \includegraphics[width = 7.4cm]{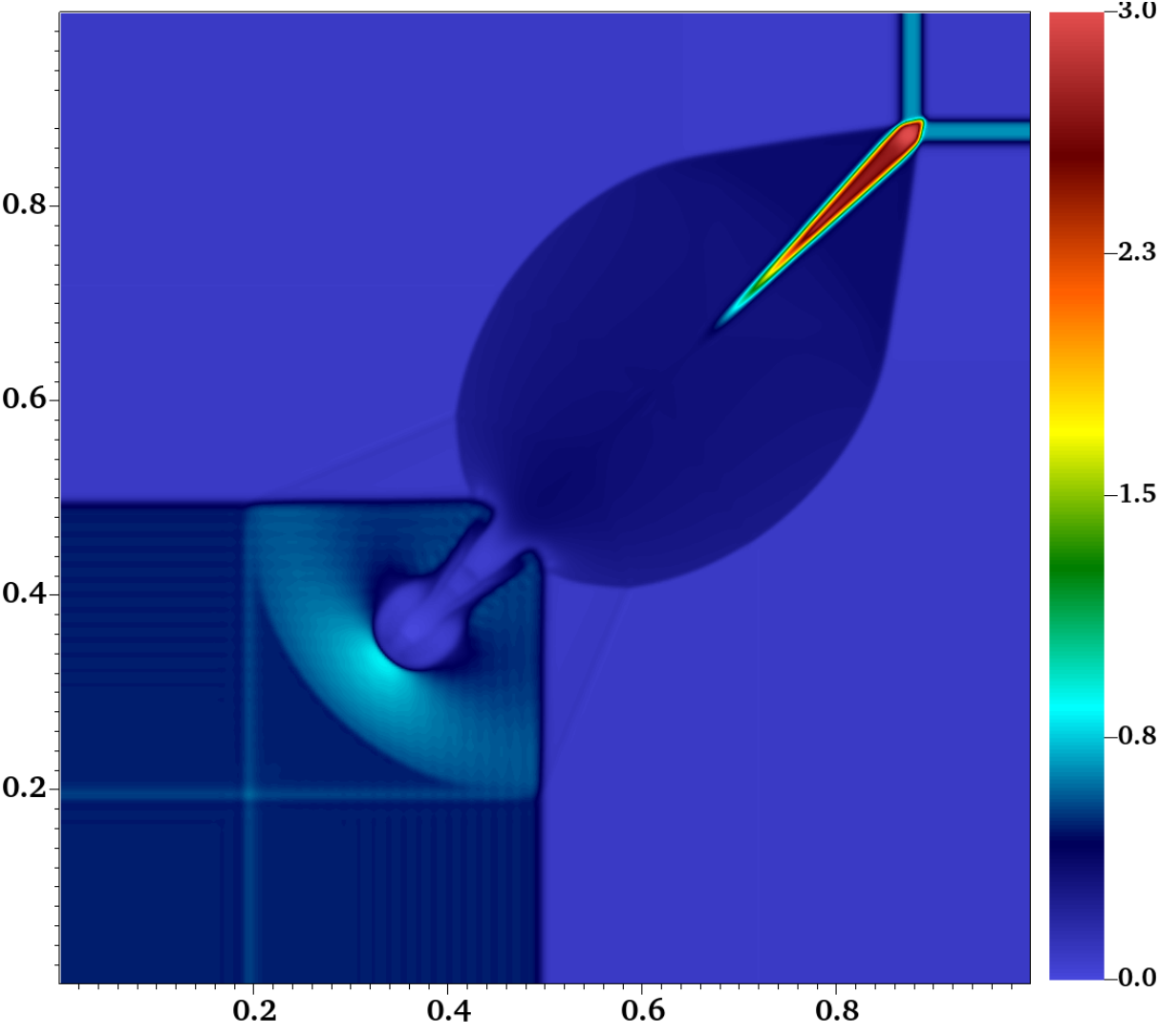}&
  \includegraphics[width = 7.4cm]{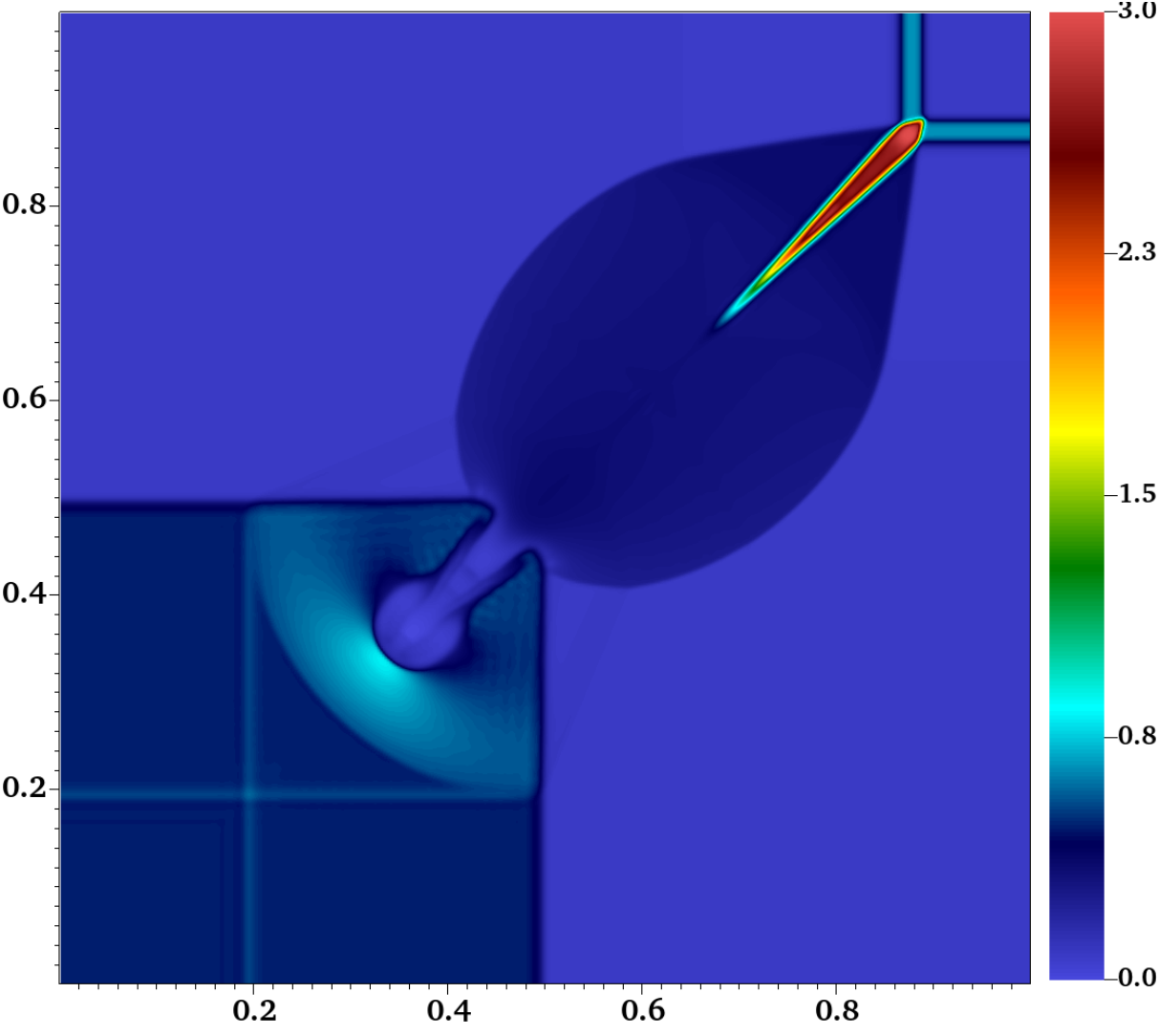}\\
  (c) H2-WENO & (d) H3-WENO \\
 \end{tabular}
 \caption{Comparison of the H1-WENO, H2-WENO, and H3-WENO schemes with the WENO-AO scheme for Example \ref{rp2} in terms of the density variable on a $400 \times 400$ computational grid at time $T=0.4$.}
 \label{Fig:rp2}
 \end{figure}
  \begin{figure}
 \centering
 \begin{tabular}{ccc}
   \includegraphics[width = 4.6cm]{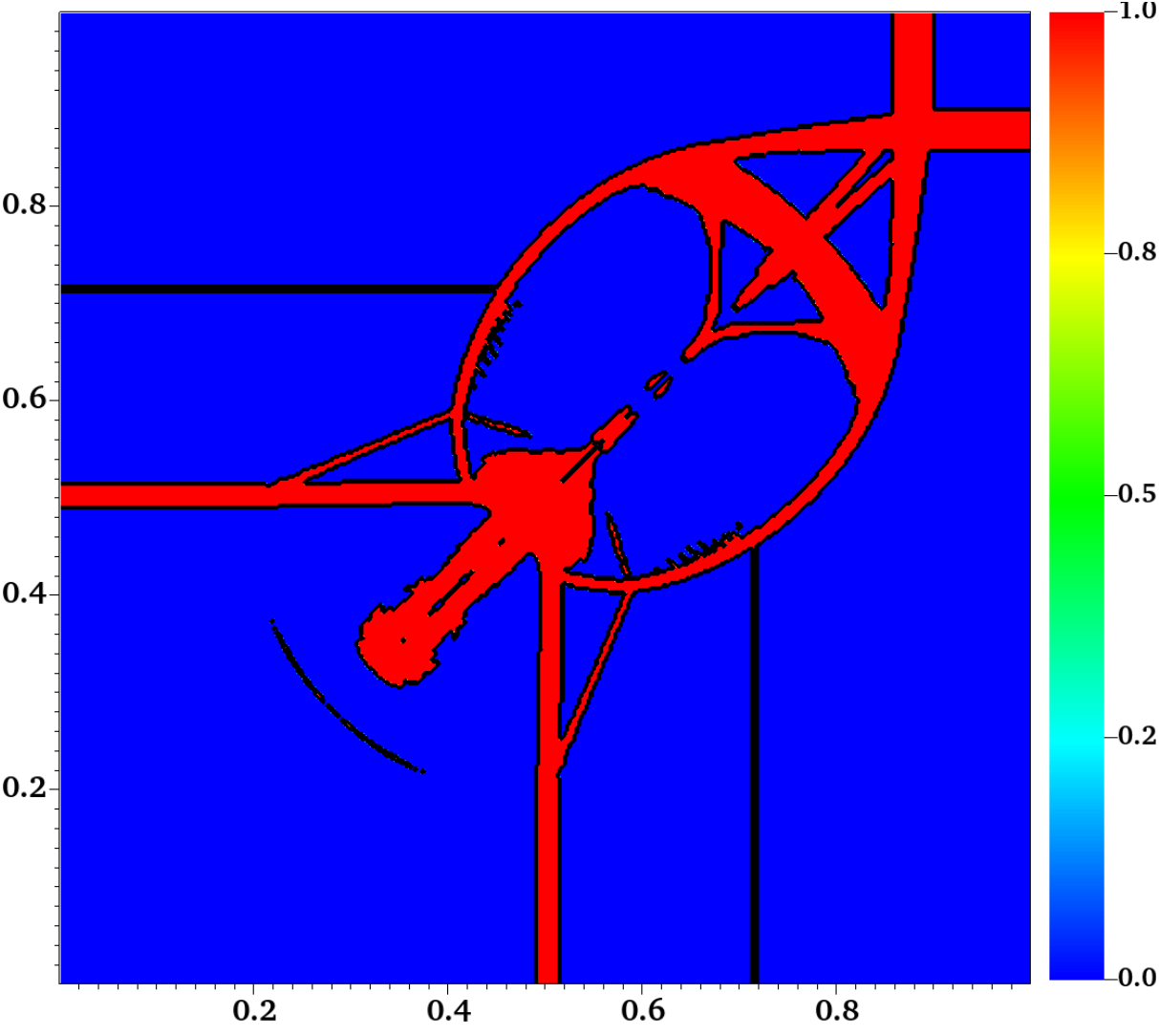} &
  \includegraphics[width = 4.6cm]{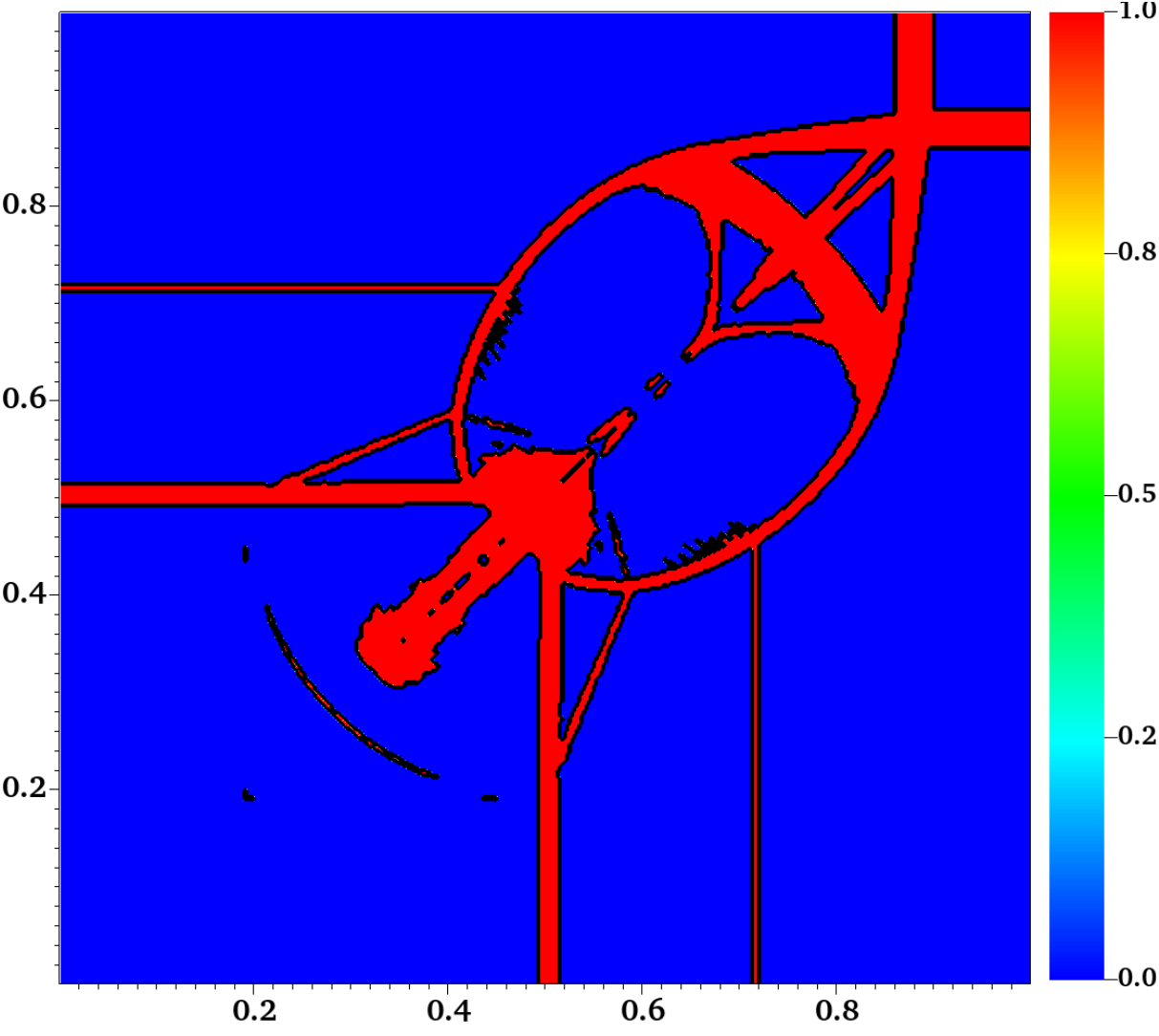}&
   \includegraphics[width = 4.60cm]{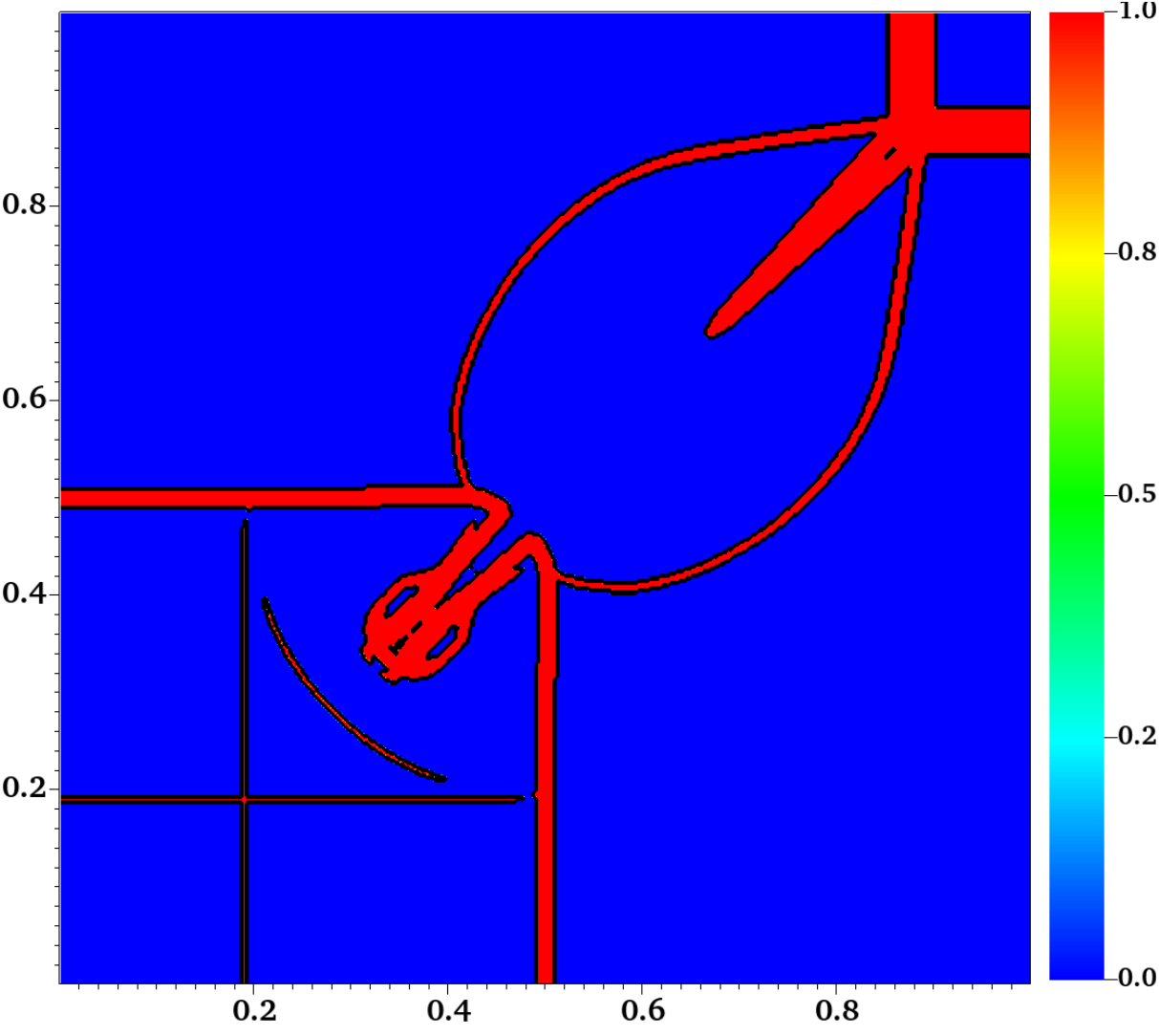}\\
  (a)  H1-WENO & (b) H2-WENO& (c) H3-WENO\\
 \end{tabular}
\caption{Plot of the troubled-cell indicators obtained using the hybrid schemes for Example \ref{rp2} at time $T=0.4$ over a grid size $400\times 400$.}
\label{Fig:rp2.tc}
\end{figure}
\begin{figure}[ht!]
 \centering
 \begin{tabular}{cc}
  \includegraphics[width = 7.4cm]{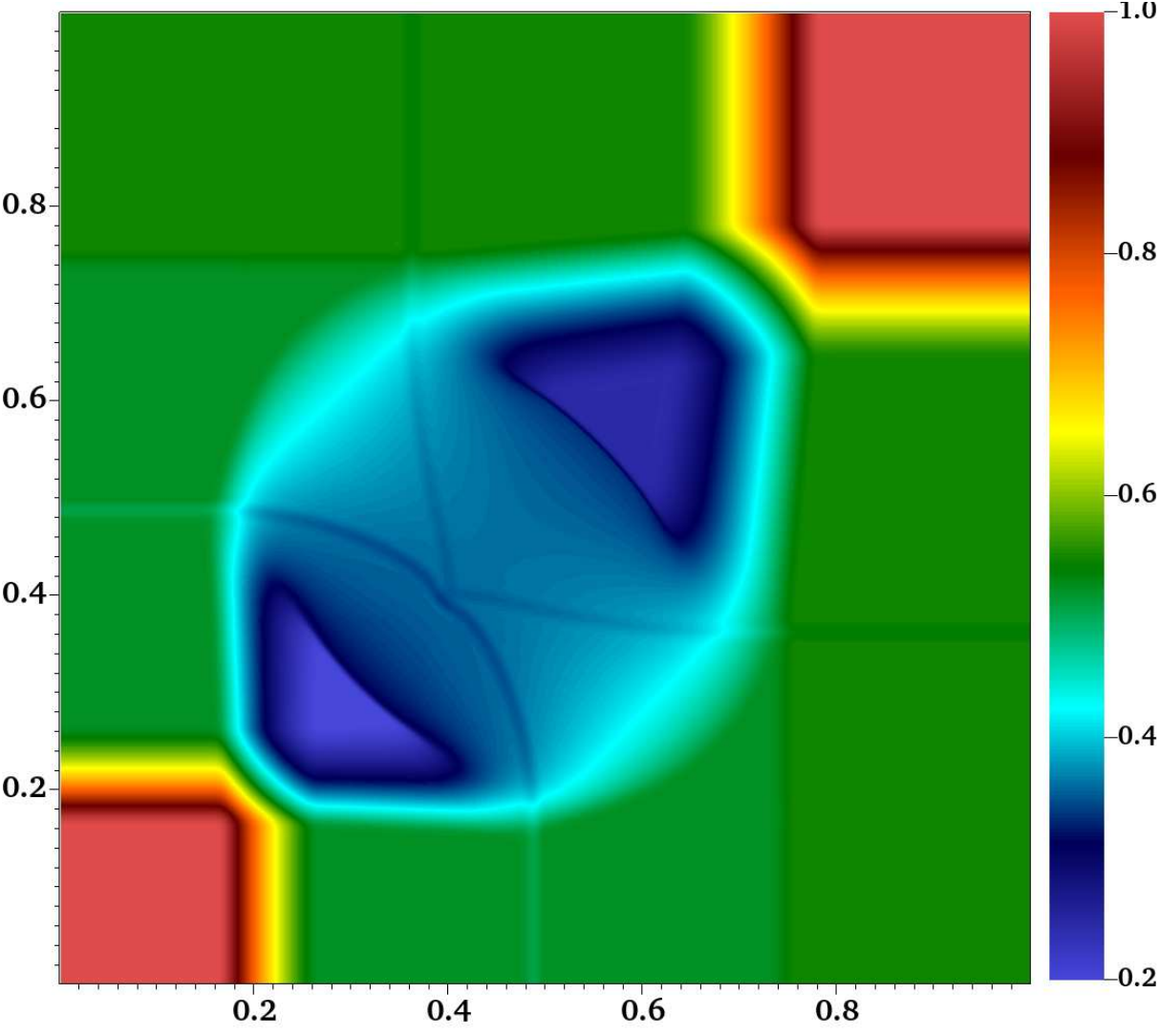}&
  \includegraphics[width = 7.4cm]{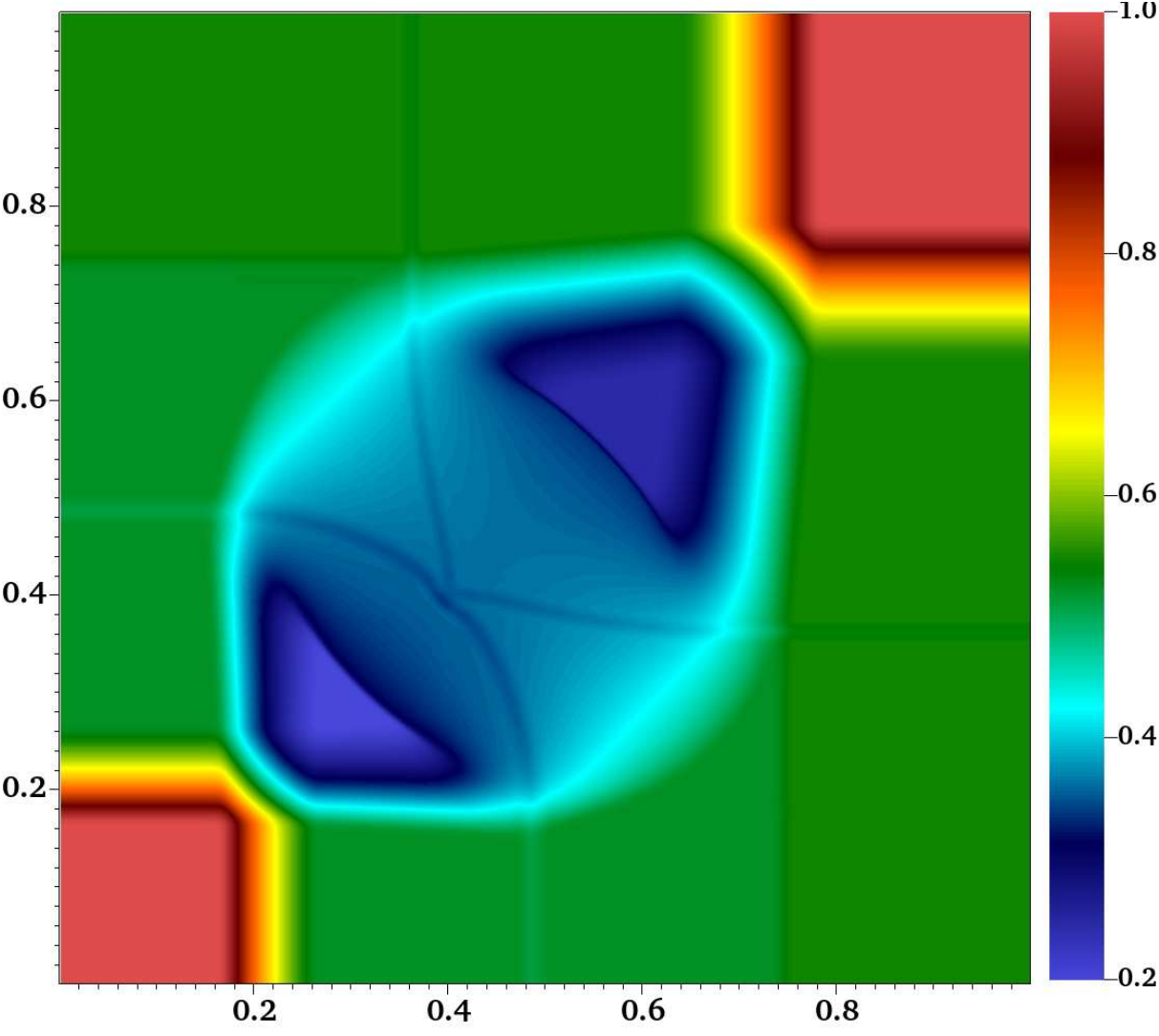}\\
  (a) WENO-AO & (b) H1-WENO \\
   \includegraphics[width = 7.4cm]{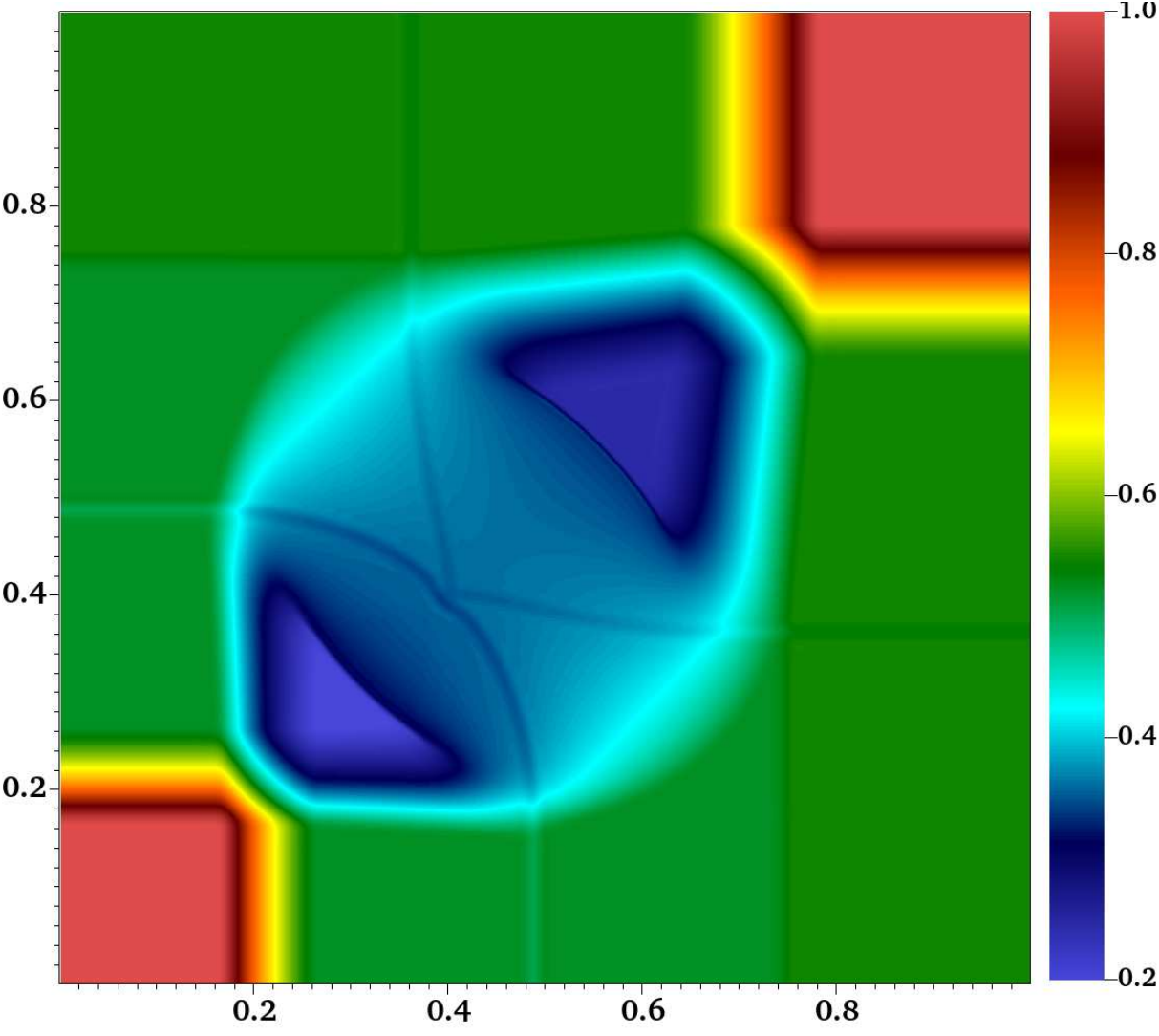}&
  \includegraphics[width = 7.4cm]{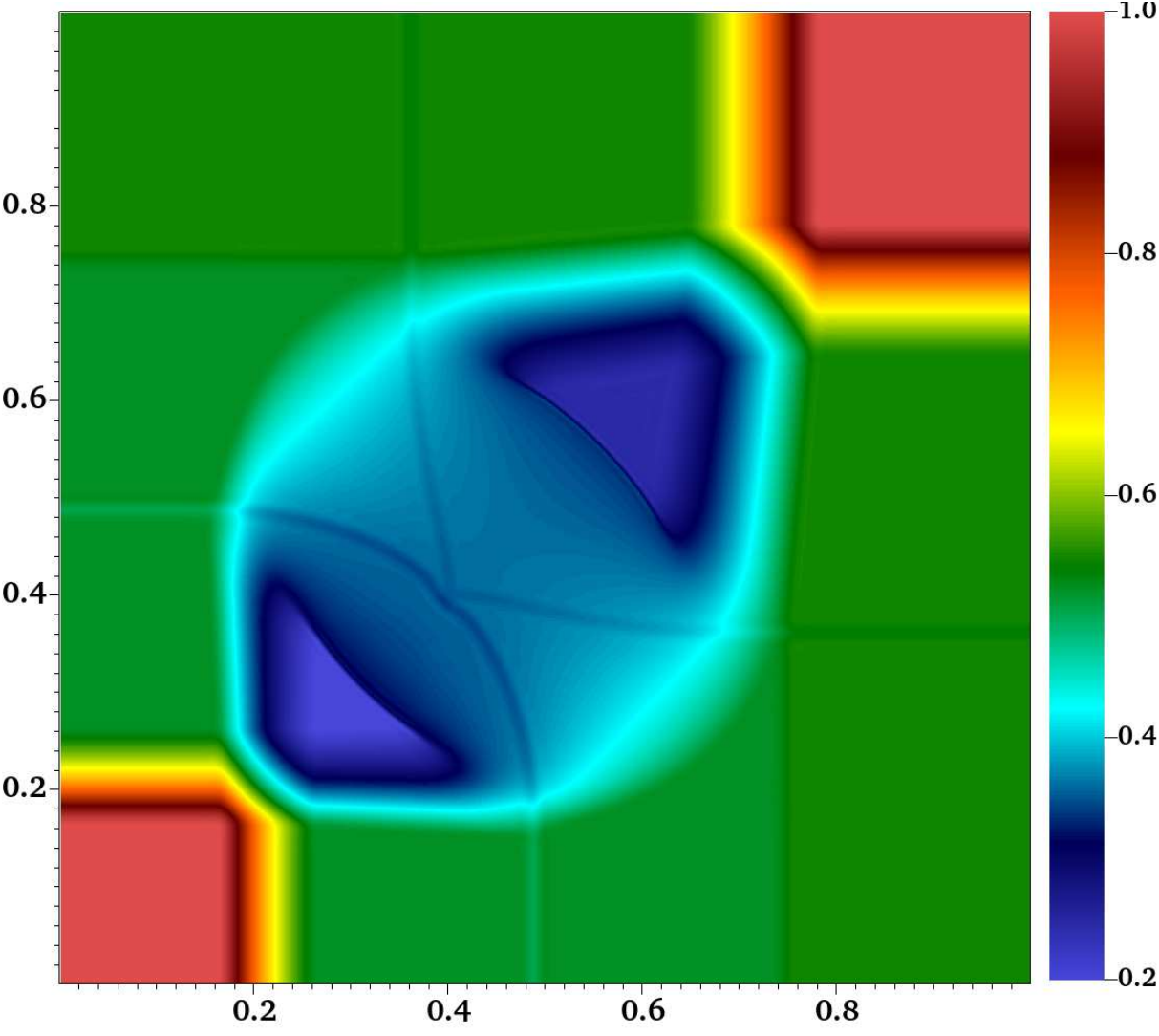}\\
  (c) H2-WENO & (d) H3-WENO \\
 \end{tabular}
 \caption{Comparison of the H1-WENO, H2-WENO, and H3-WENO schemes with the WENO-AO scheme for Example \ref{rp3} in terms of the density variable on a $400 \times 400$ computational grid at time $T=0.4$.}
 \label{Fig:rp3}
 \end{figure}

  \begin{figure}[ht!]
 \centering
 \begin{tabular}{ccc}
   \includegraphics[width = 4.6cm]{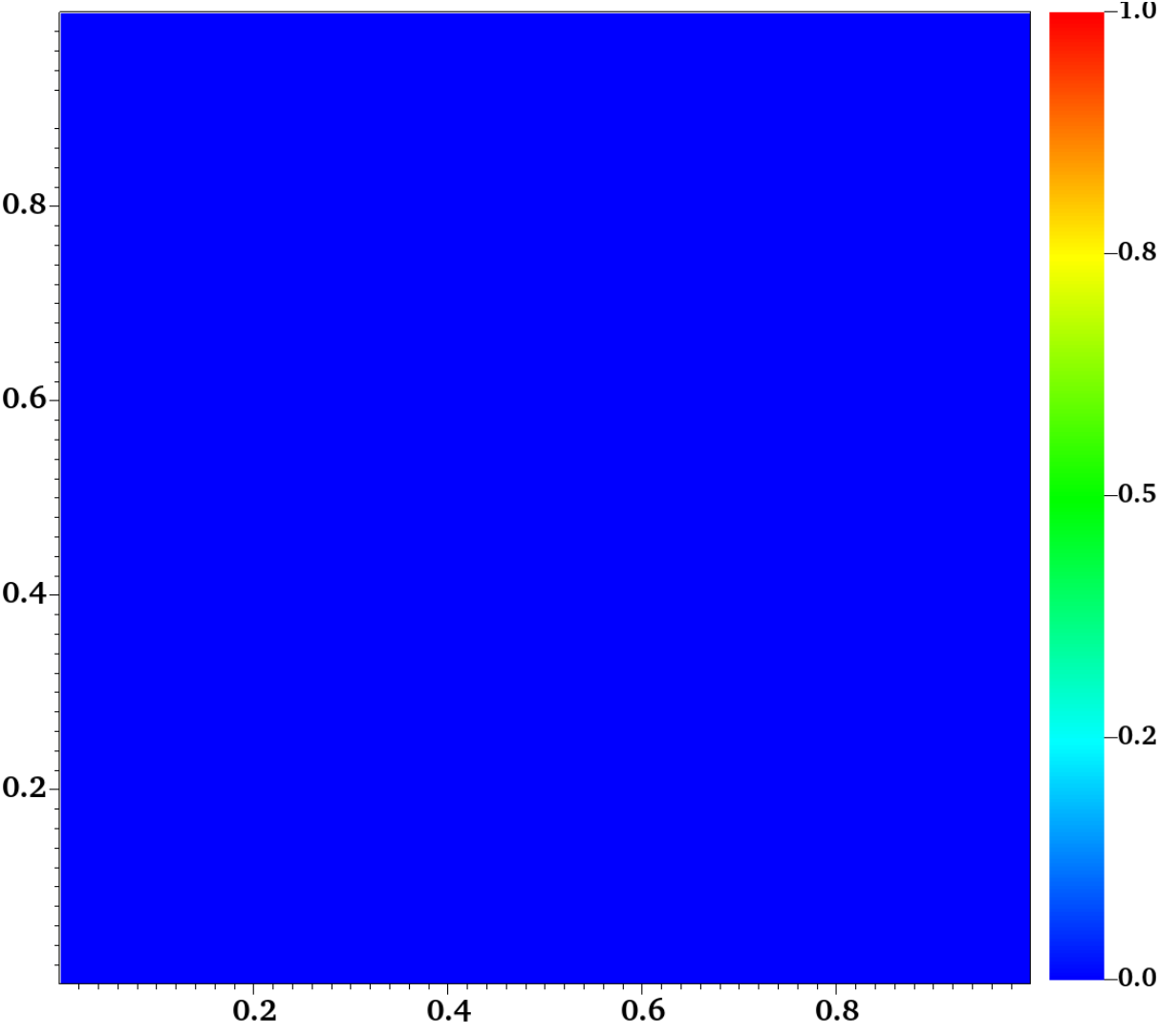} &
  \includegraphics[width = 4.6cm]{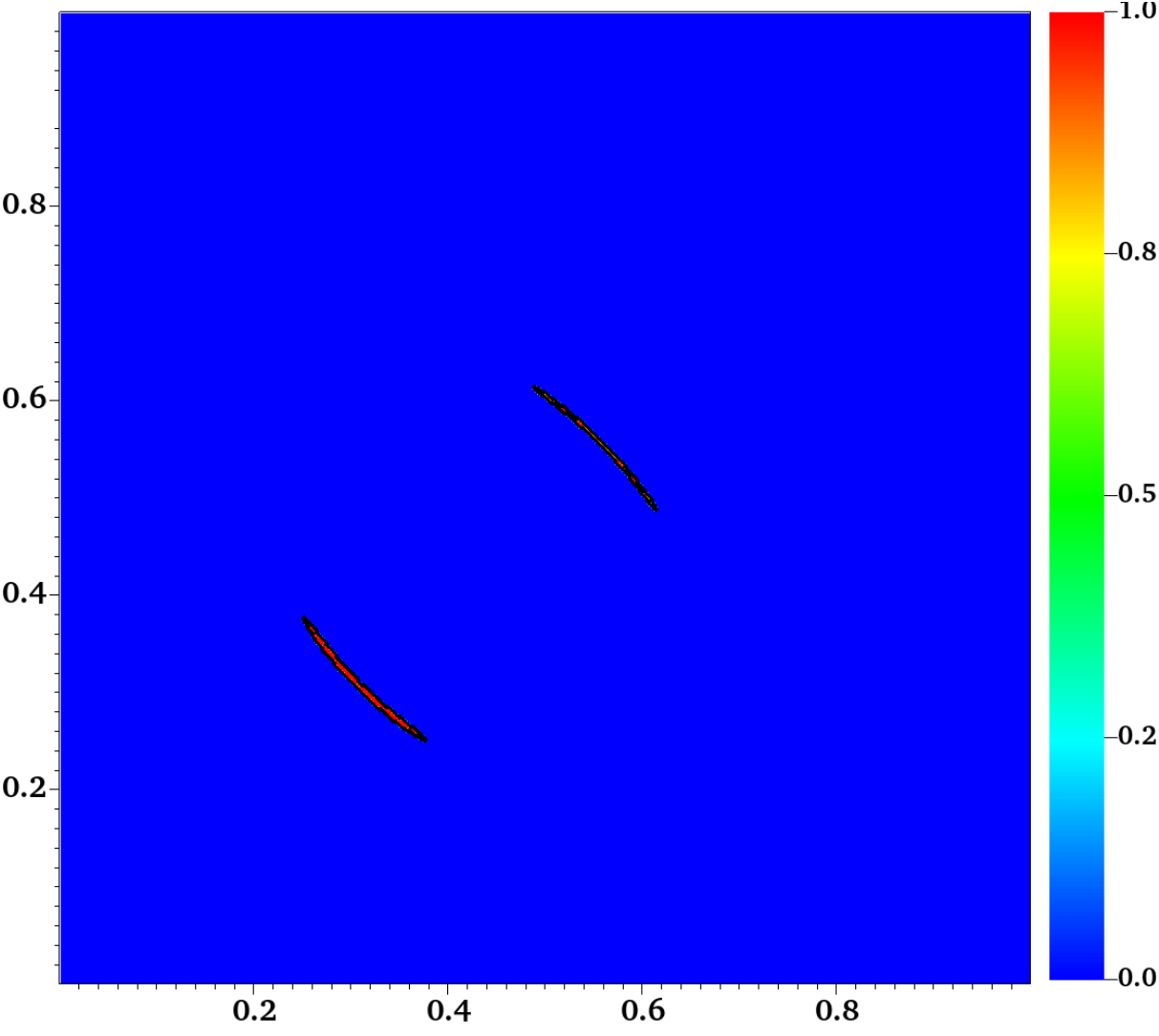}&
   \includegraphics[width = 4.60cm]{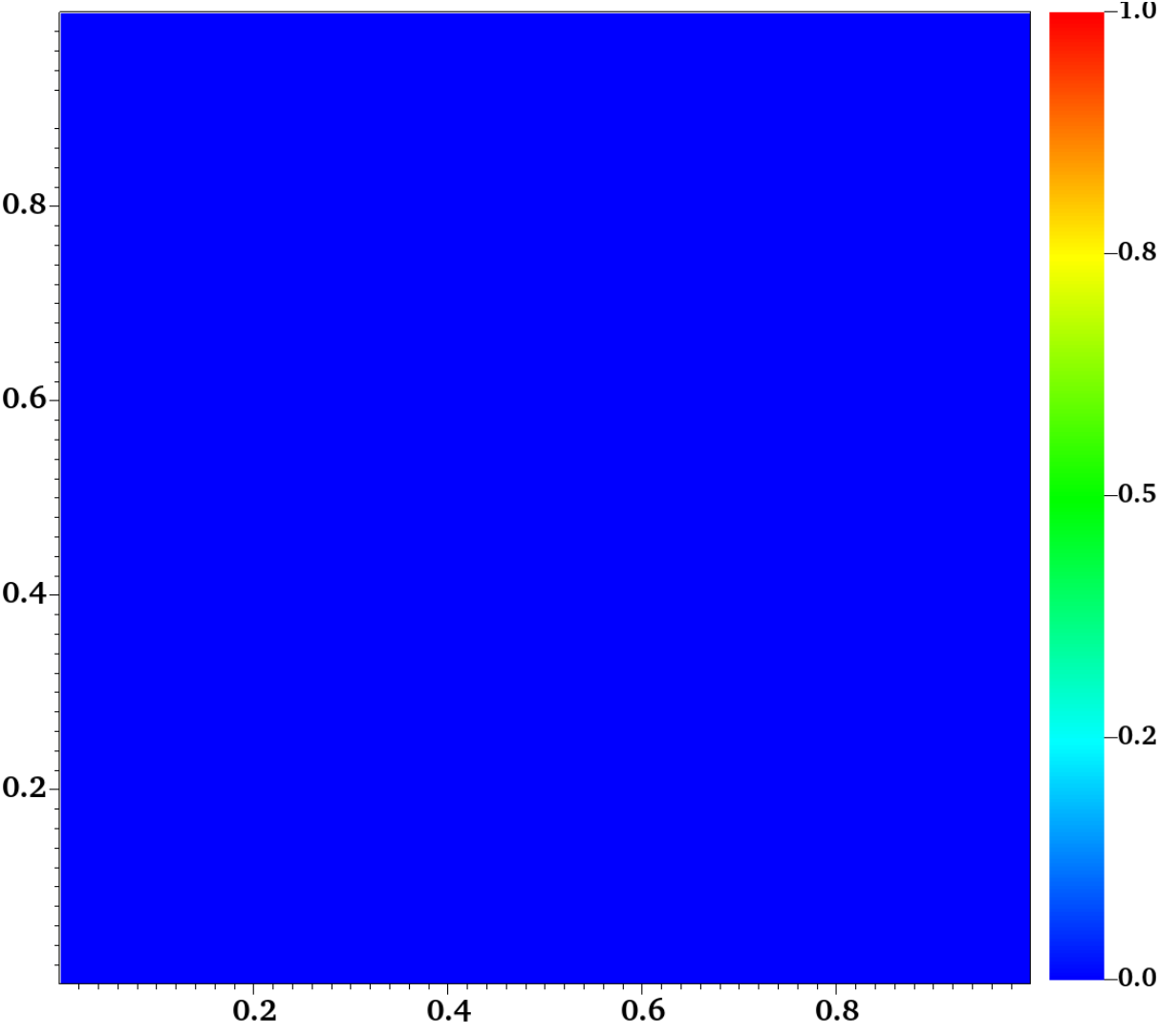}\\
  (a)  H1-WENO & (b) H2-WENO & (c) H3-WENO\\
 \end{tabular}
\caption{Plot of the troubled-cell indicators obtained using the hybrid schemes for Example \ref{rp3} at time $T=0.4$ over a grid size $400\times 400$.}
\label{Fig:rp3.tc}
\end{figure}
\begin{example}[Two-dimensional Riemann problem 2:]
	\label{rp2}
	{\rm
    We next consider a two-dimensional Riemann problem similar to the one studied in \cite{del2002efficient}. The computational domain is chosen as $[0,1]\times[0,1]$ with outflow boundary conditions imposed on all sides. The initial conditions are specified as
\begin{equation*}
\left(\rho,\,u_x,\,u_y,\,p\right)=
\begin{cases}
\left(0.1,\,0,\,0,\,0.01\right), & \text{if } x>0.5 \text{ and } y>0.5,\\
\left(0.1,\,0.9,\,0,\,1\right), & \text{if } x<0.5 \text{ and } y>0.5,\\
\left(0.5,\,0,\,0,\,1\right), & \text{if } x<0.5 \text{ and } y<0.5,\\
\left(0.1,\,0,\,0.9,\,1\right), & \text{if } x>0.5 \text{ and } y<0.5.
\end{cases}
\end{equation*}
This configuration generates complex multidimensional wave interactions involving shock waves, contact discontinuities, and rarefaction structures. The numerical solution is computed using the H1-WENO, H2-WENO, H3-WENO, and WENO-AO schemes on a uniform mesh consisting of $400\times4 00$ cells.

The computed density contours at the final time are shown in Figure \ref{Fig:rp2}. All the considered schemes successfully capture the major flow structures and discontinuities without introducing significant spurious oscillations. The proposed hybrid schemes produce results comparable to the WENO-AO scheme while preserving sharp resolution of discontinuous features. Figure \ref{Fig:rp2.tc} presents the corresponding troubled-cell indicator distributions at the final time. It can be observed that the indicators effectively detect discontinuities, with the H3-WENO troubled-cell indicator showing better localization compared to the H1-WENO and H2-WENO indicators.}
\end{example}

\begin{example}[Two-dimensional Riemann problem 3:]
	\label{rp3}{\rm
 Next, we consider a two-dimensional Riemann problem from \cite{nunez2016xtroem}. The computational domain is chosen as $[0,1]\times[0,1]$ with the following initial states:
\begin{equation*}
\left(\rho,\,u_x,\,u_y,\,p\right)=
\begin{cases}
\left(1,\,0,\,0,\,1\right), & \text{if } x>0.5 \text{ and } y>0.5,\\
\left(0.5771,\,-0.3529,\,0,\,0.4\right), & \text{if } x<0.5 \text{ and } y>0.5,\\
\left(1,\,-0.3529,\,-0.3529,\,1\right), & \text{if } x<0.5 \text{ and } y<0.5,\\
\left(0.5771,\,0,\,-0.3529,\,0.4\right), & \text{if } x>0.5 \text{ and } y<0.5.
\end{cases}
\end{equation*}
The evolution of the solution is governed by the interaction of two rarefaction waves that subsequently generate two symmetric shock structures. This multidimensional wave interaction provides a challenging test for assessing the ability of numerical schemes to accurately resolve discontinuities while preserving the symmetry of the solution.

The numerical simulations are carried out using the H1-WENO, H2-WENO, H3-WENO, and WENO-AO schemes on a uniform mesh of $400\times400$ cells up to the final time $t=0.4$. In Figure \ref{Fig:rp3}, we present the density contours obtained using all four schemes. It can be observed that the proposed hybrid schemes are capable of accurately capture the complex flow structures and produce results that are in close agreement with those obtained using the WENO-AO scheme.

Figure \ref{Fig:rp3.tc} shows the corresponding troubled-cell indicator distributions for the H1-WENO, H2-WENO, and H3-WENO schemes at the final time. It can be observed that, at the final time, no cells are identified as troubled cells in the case of the H1-WENO and H2-WENO schemes, while only a small number of cells are marked in the H3-WENO scheme. This behavior indicates that the numerical solution becomes sufficiently smooth at later times due to the inherent numerical diffusion of the schemes, and the solutions are obtained without noticeable spurious oscillations.
	}
\end{example}

\begin{figure}[ht!]
 \centering
 \begin{tabular}{cc}
  \includegraphics[width = 7.4cm]{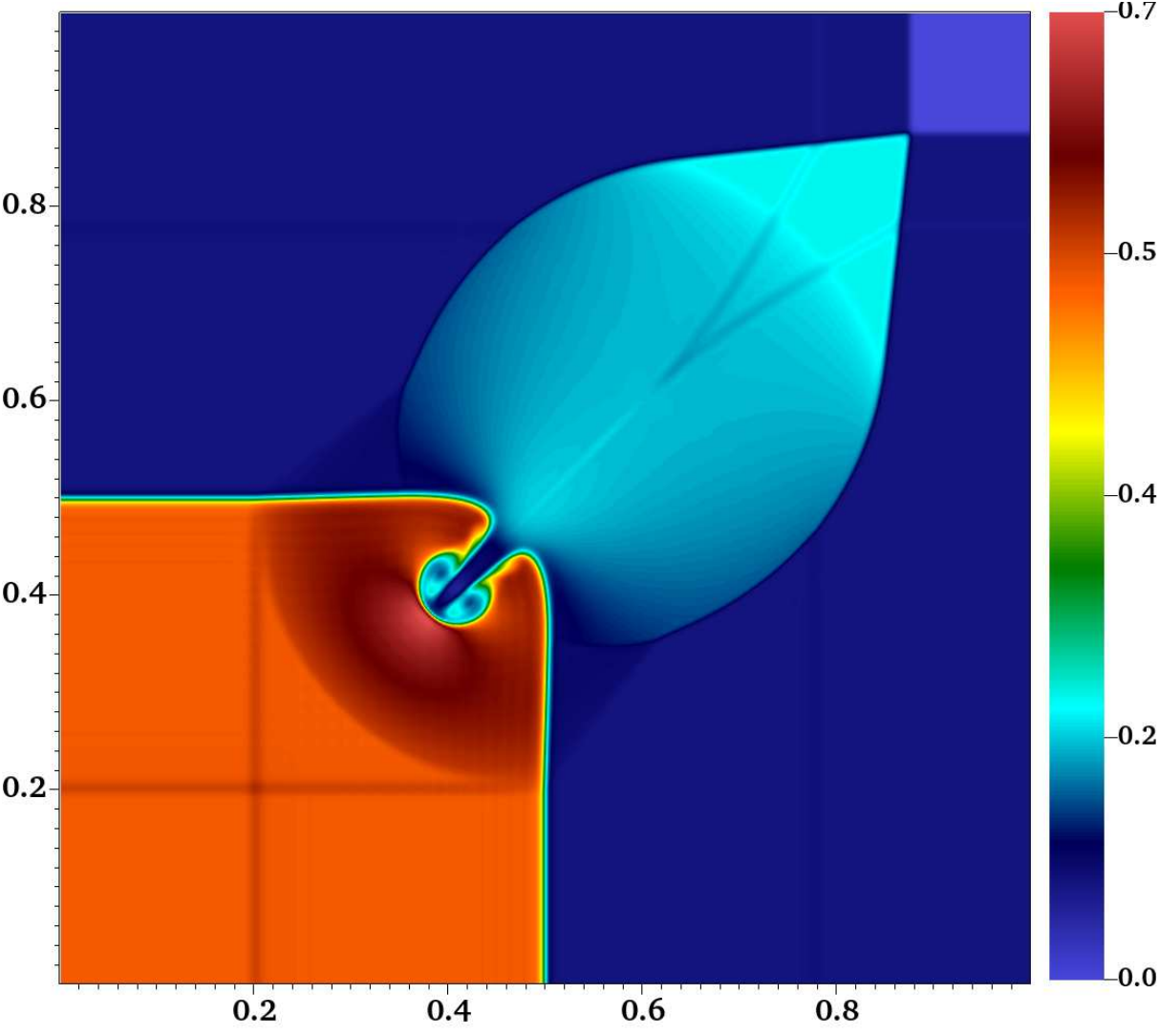}&
  \includegraphics[width = 7.4cm]{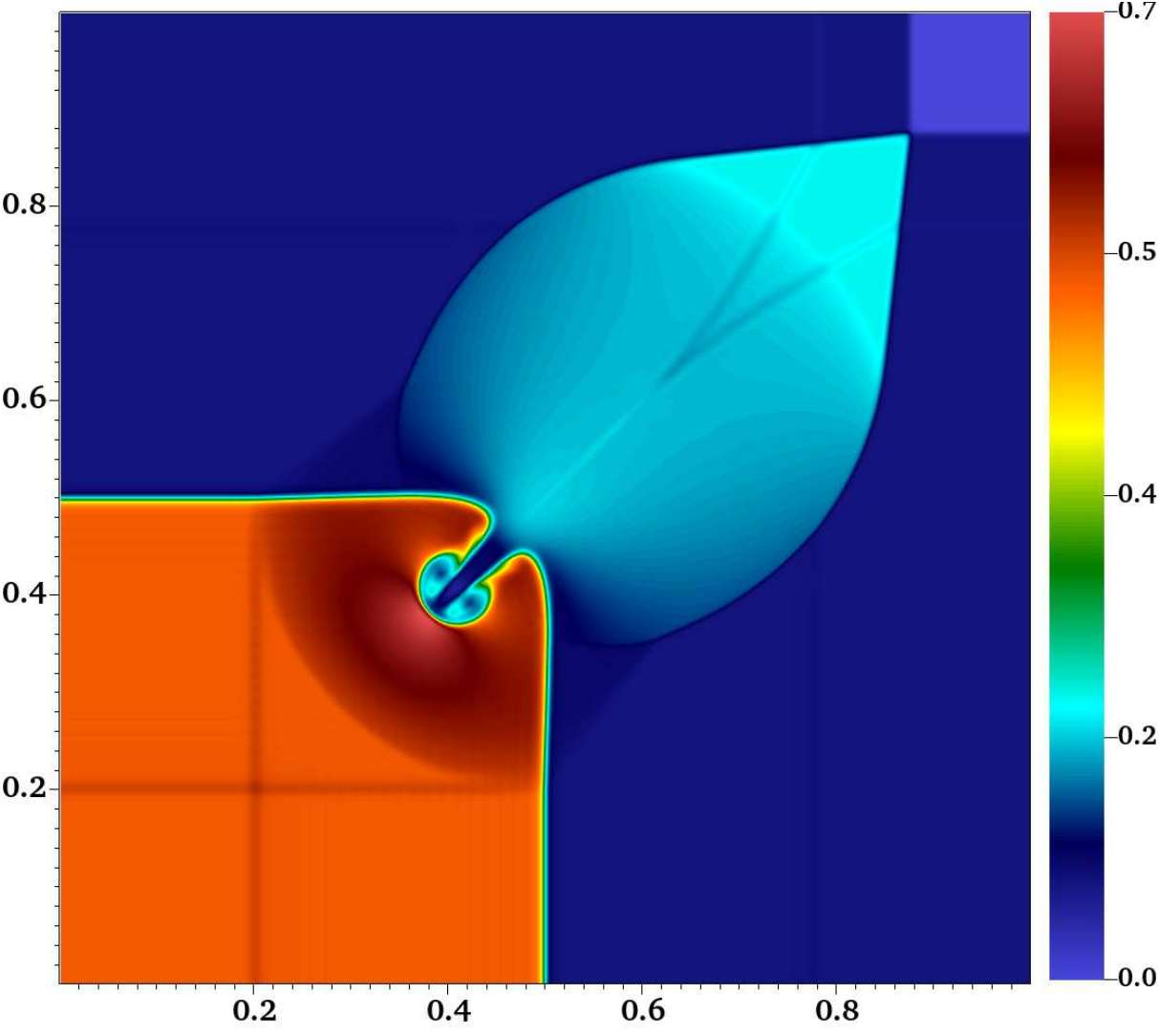}\\
  (a) WENO-AO & (b) H1-WENO \\
   \includegraphics[width = 7.4cm]{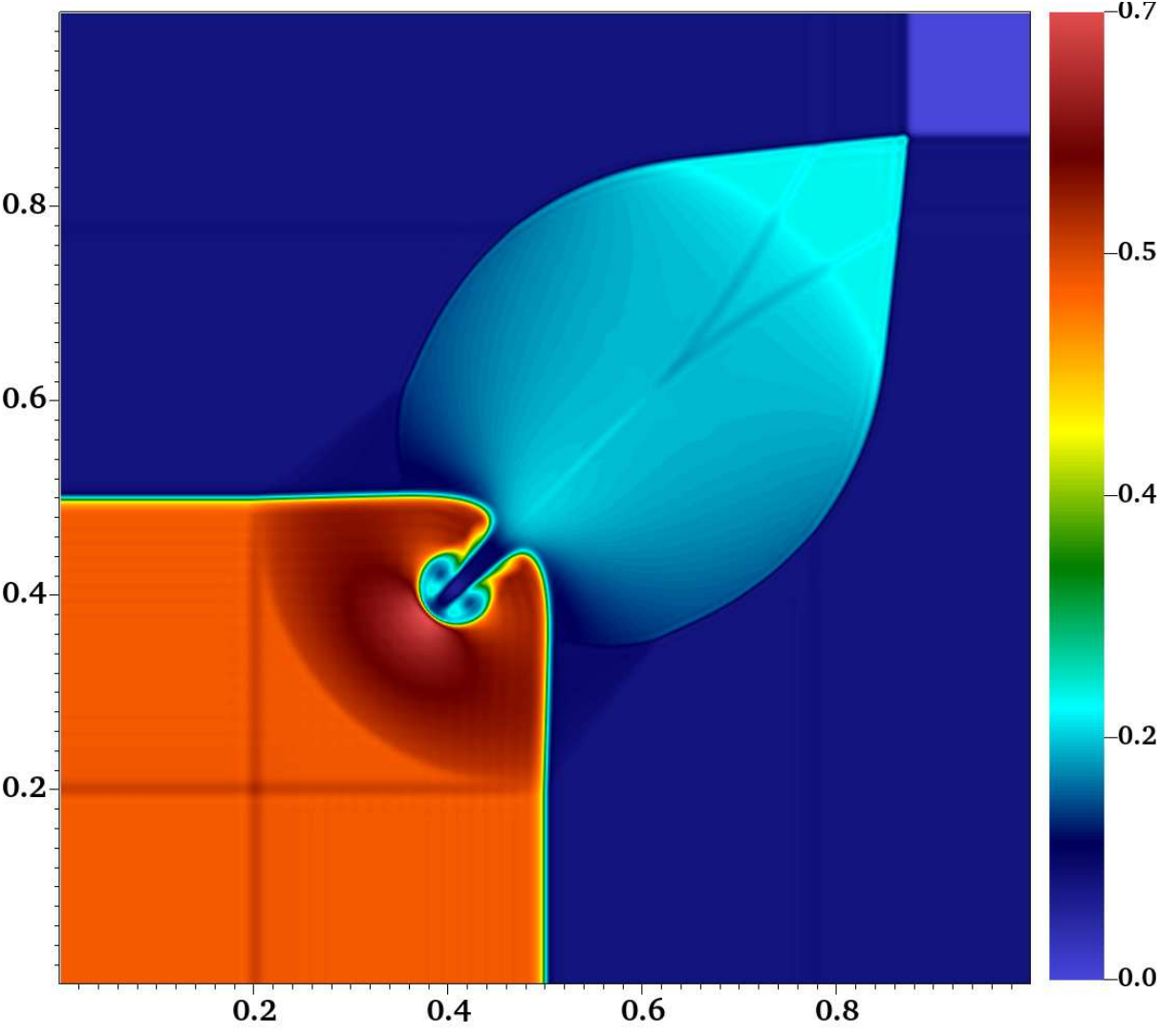}&
  \includegraphics[width = 7.4cm]{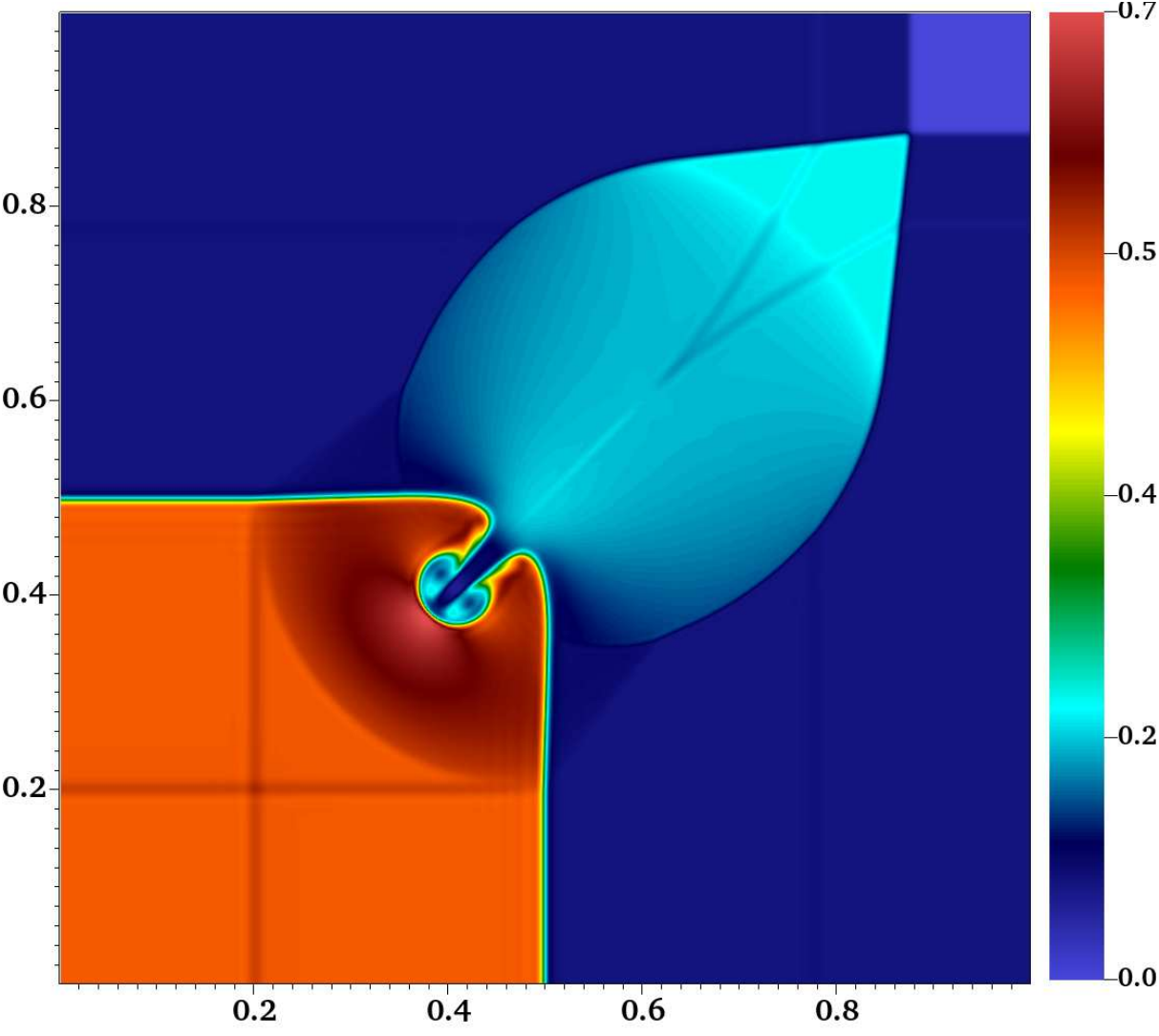}\\
  (c) H2-WENO & (d) H3-WENO \\
 \end{tabular}
 \caption{Comparison of the H1-WENO, H2-WENO, and H3-WENO schemes with the WENO-AO scheme for Example \ref{rp4} in terms of the density variable on a $400 \times 400$ computational grid at time $T=0.4$.}
 \label{Fig:rp4}
 \end{figure}
 
 \begin{figure}
 \centering
 \begin{tabular}{ccc}
   \includegraphics[width = 4.6cm]{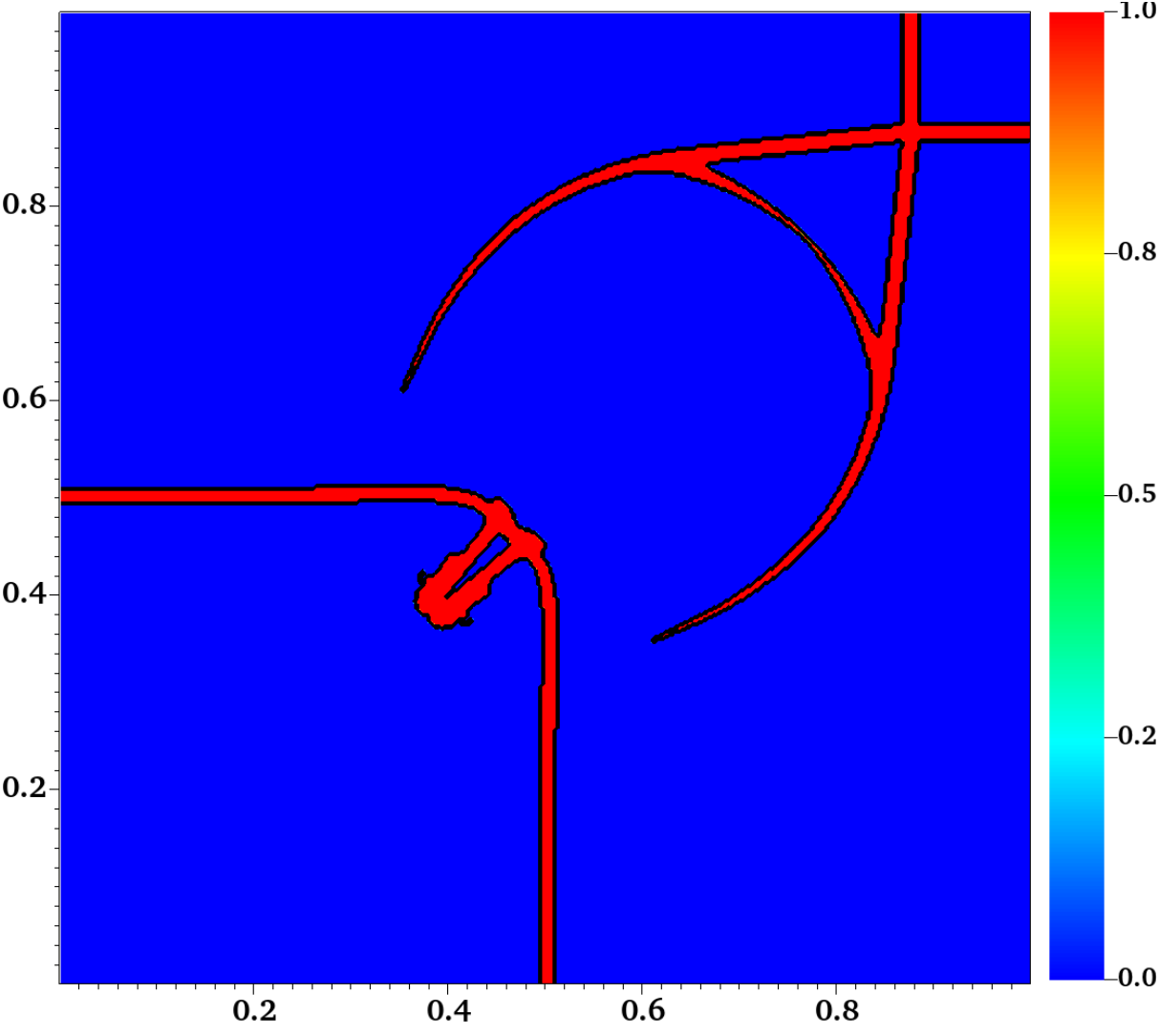} &
  \includegraphics[width = 4.6cm]{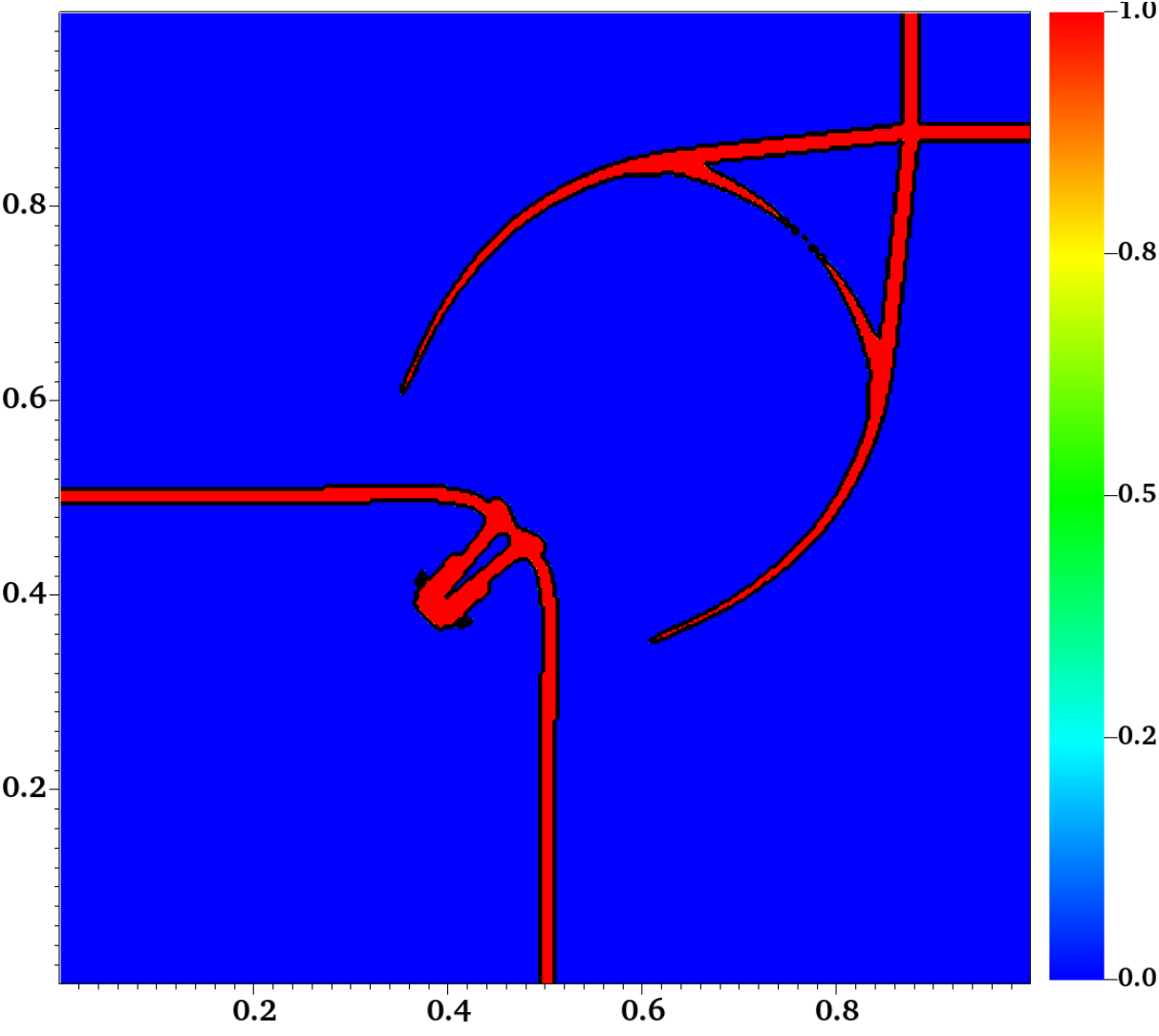}&
   \includegraphics[width = 4.60cm]{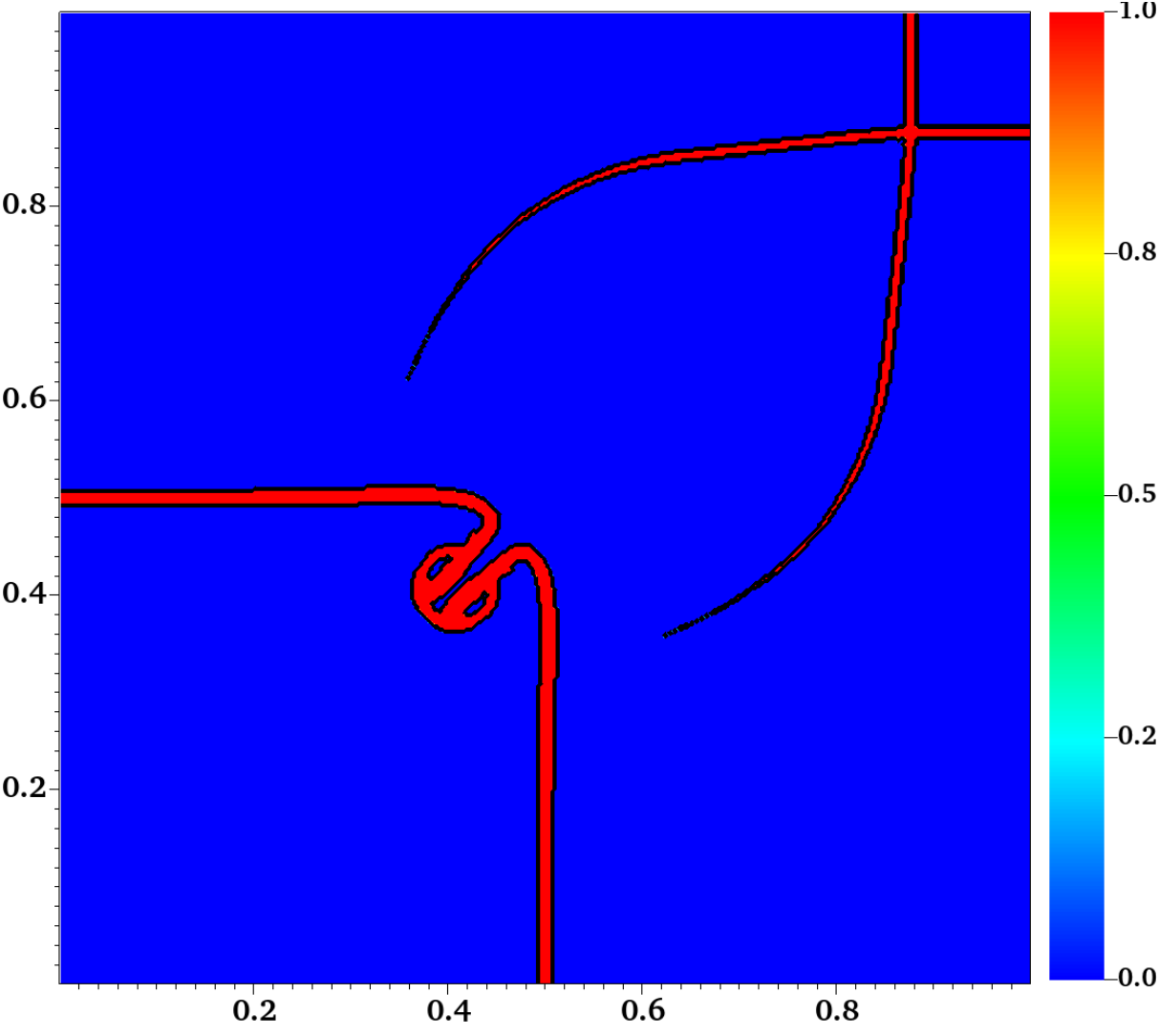}\\
  (a)  H1-WENO & (b) H2-WENO & (c) H3-WENO\\
 \end{tabular}
\caption{Plot of the troubled-cell indicators obtained using the hybrid schemes for Example \ref{rp4} at time $T=0.4$ over a grid size $400\times 400$.}
\label{Fig:rp4.tc}
\end{figure}

\begin{example}[Two-dimensional Riemann problem 4:]
	\label{rp4}	{\rm
We consider another two-dimensional Riemann problem from \cite{nunez2016xtroem} on the computational domain $[0,1]\times[0,1]$ with outflow boundary conditions imposed along all boundaries. The initial conditions are prescribed as
\begin{equation*}
\left(\rho,\,u_x,\,u_y,\,p\right)=
\begin{cases}
\left(0.035145216124503,\,0,\,0,\,0.162931056509027\right), 
& \text{if } x>0.5 \text{ and } y>0.5,\\
\left(0.1,\,0.7,\,0,\,1\right), 
& \text{if } x<0.5 \text{ and } y>0.5,\\
\left(0.5,\,0,\,0,\,1\right), 
& \text{if } x<0.5 \text{ and } y<0.5,\\
\left(0.1,\,0,\,0.7,\,1\right), 
& \text{if } x>0.5 \text{ and } y<0.5.
\end{cases}
\end{equation*}

This problem produces complicated multidimensional interactions involving shock waves and contact discontinuities, making it a demanding benchmark for evaluating the robustness and resolution capability of numerical methods.
The numerical solution is computed using the H1-WENO, H2-WENO, H3-WENO, and WENO-AO schemes on a uniform mesh consisting of $400\times400$ cells. The density plot obtained at the final time are presented in Figure \ref{Fig:rp4}. From the figure, it can be observed that all the proposed hybrid schemes are able to accurately resolve the essential flow structures and provide results that closely match those obtained using the WENO-AO scheme.
In Figure \ref{Fig:rp4.tc}, we display the corresponding troubled-cell indicator distributions for the H1-WENO, H2-WENO, and H3-WENO schemes at the final time.  The results demonstrate that the troubled-cell indicator successfully captures the non-smooth region generated during the evolution of the solution. Moreover, the H3-WENO scheme exhibits improved selectivity by detecting fewer unnecessary troubled cells in smooth regions compared to the H1-WENO and H2-WENO schemes.
}
\end{example}

\begin{figure}
 \centering
 \begin{tabular}{cc}
  \includegraphics[width = 7.2cm]{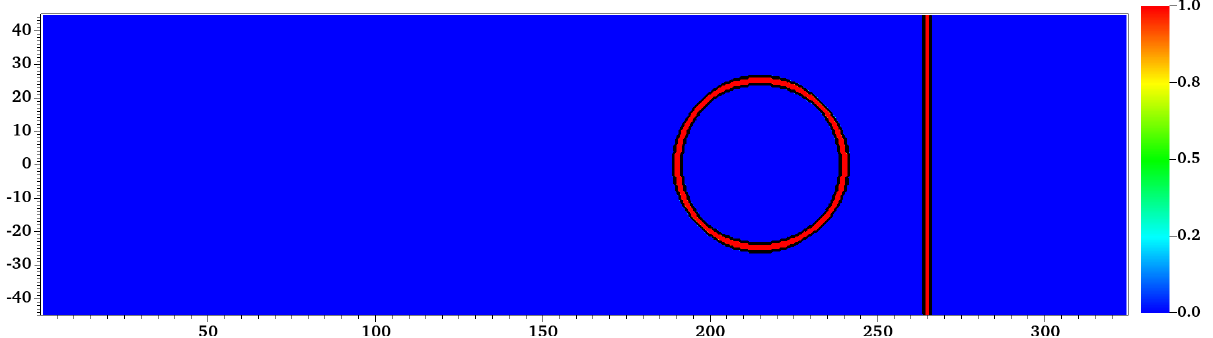} &
  \includegraphics[width = 7.2cm]{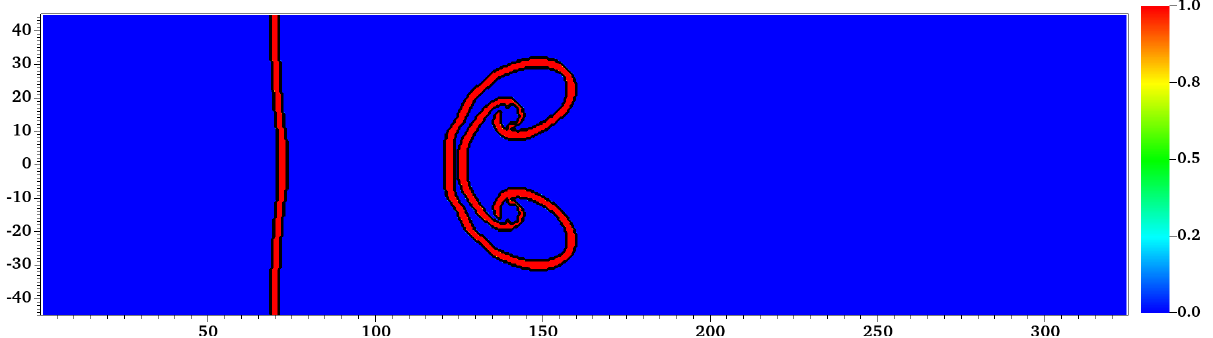}\\
  (a) Troubled cell indicator at $T=0$ & (b) H1-WENO\\
  \includegraphics[width = 7.2cm]{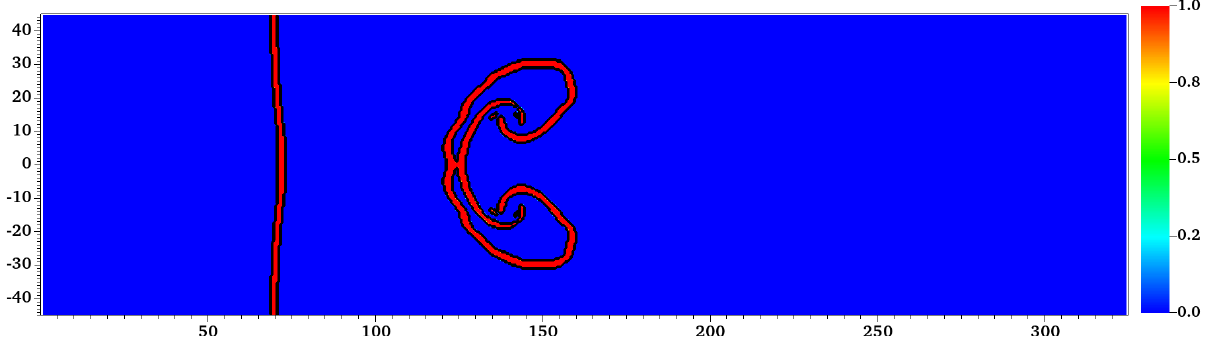} &
  \includegraphics[width = 7.2cm]{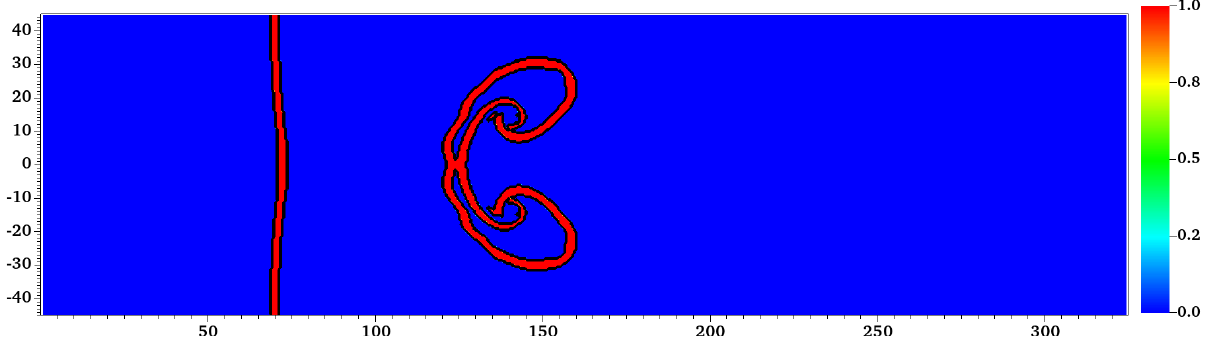}\\
  (c) H2-WENO & (d) H3-WENO\\
    \includegraphics[width = 7.2cm]{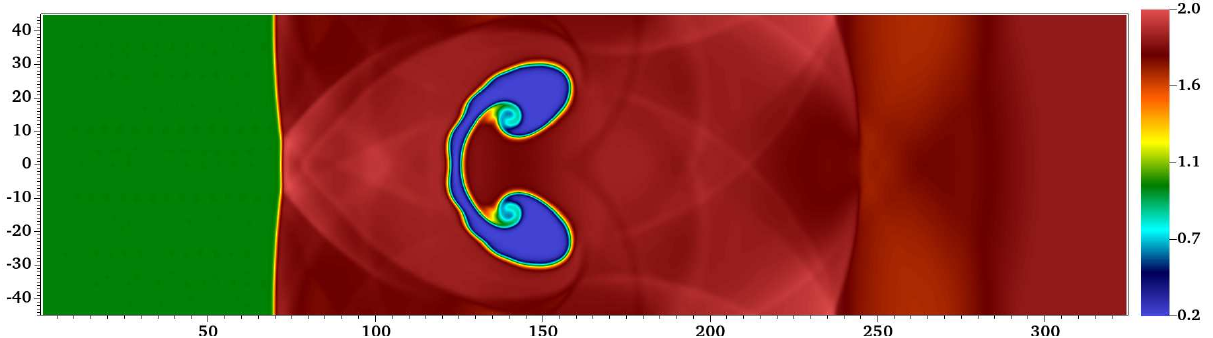} &
  \includegraphics[width = 7.2cm]{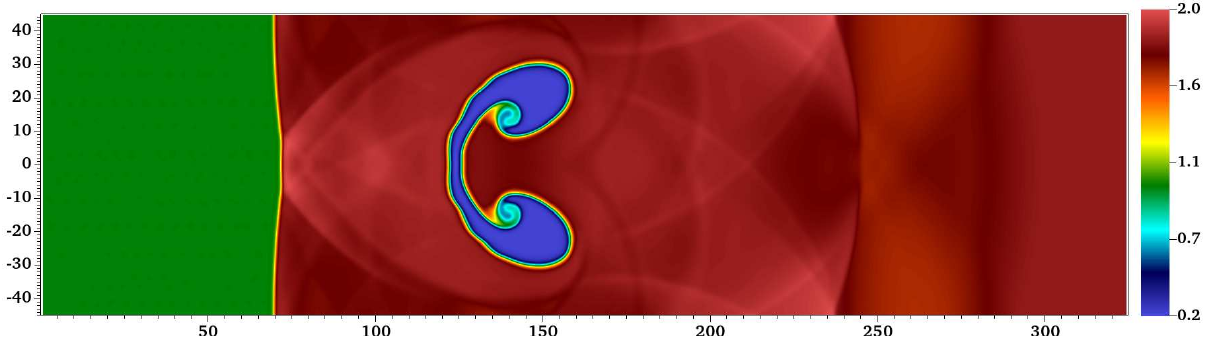}\\
  (e) WENO-AO(5,3) & (f) H1-WENO\\
  \includegraphics[width = 7.2cm]{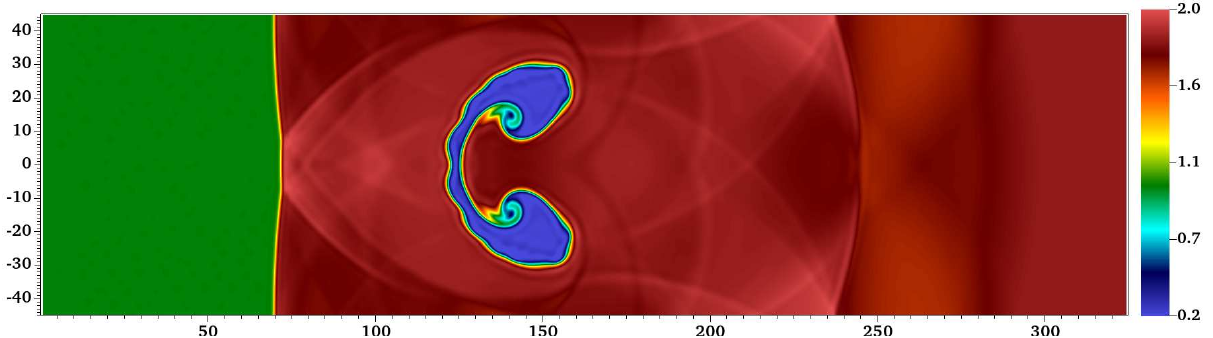} &
  \includegraphics[width = 7.2cm]{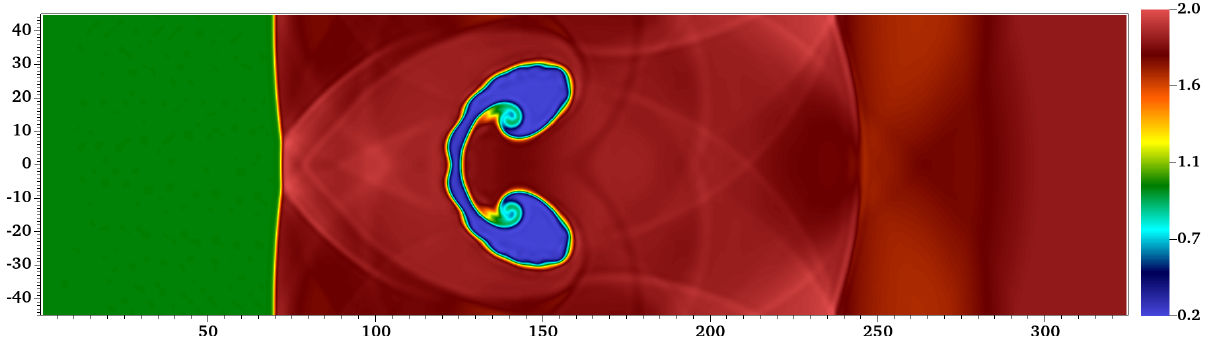}\\
  (g) H2-WENO & (h) H3-WENO
 \end{tabular}
\caption{Comparison of the solutions obtained using the H1-WENO, H2-WENO, H3-WENO, and WENO-AO(5,3) schemes at time T=450 for Example \ref{sb1} on a grid of size 180×650. The corresponding distributions of the troubled-cell indicator at the initial and final times are also shown.}
\label{Fig:sb1}
\end{figure}
\begin{figure}
 \centering
 \begin{tabular}{cc}
  \includegraphics[width = 7.2cm]{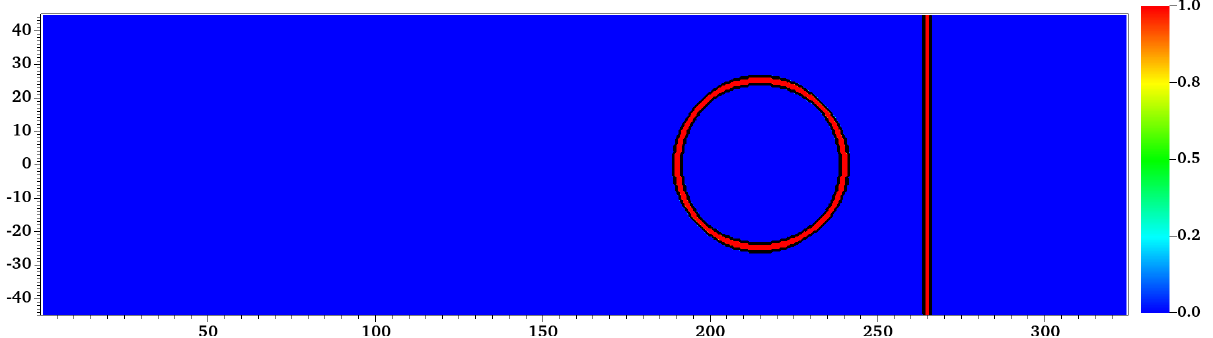} &
  \includegraphics[width = 7.2cm]{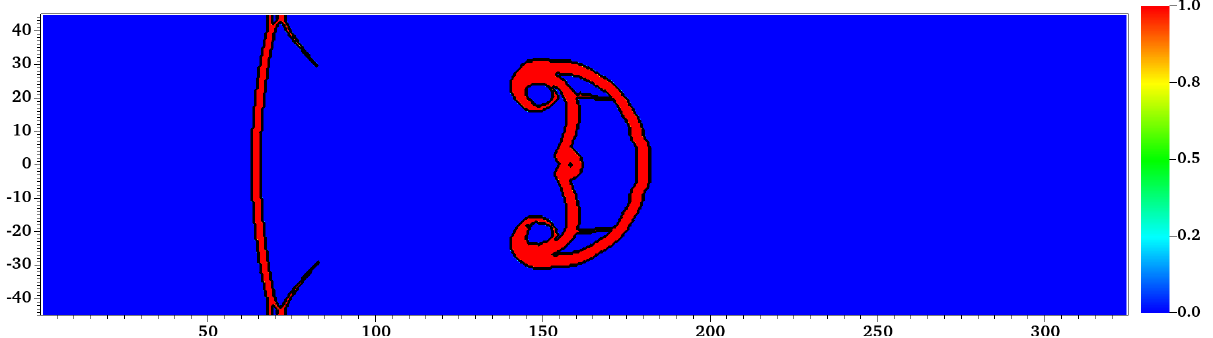}\\
  (a) Troubled cell indicator at $T=0$ & (b) H1-WENO\\
  \includegraphics[width = 7.2cm]{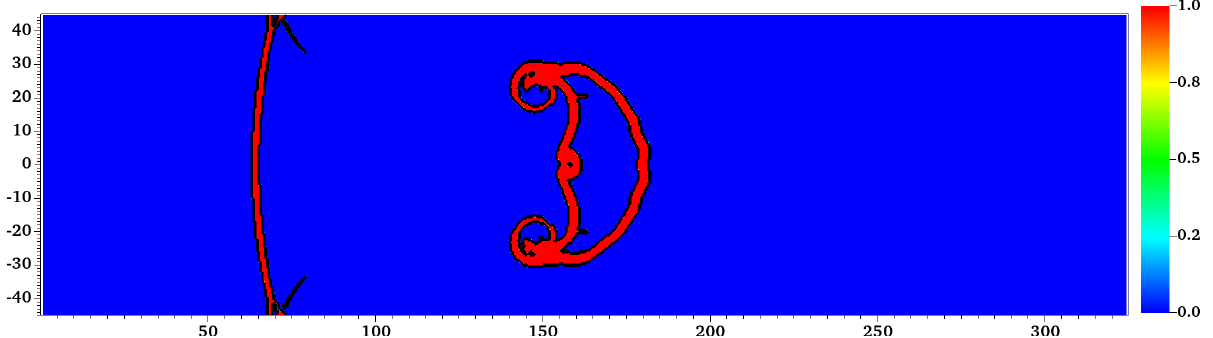} &
  \includegraphics[width = 7.2cm]{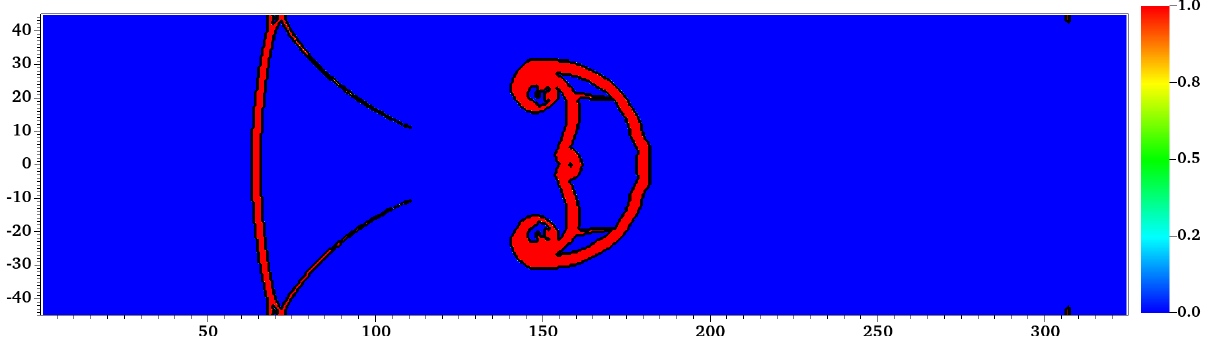}\\
 (c) H2-WENO & (d) H3-WENO\\
    \includegraphics[width = 7.2cm]{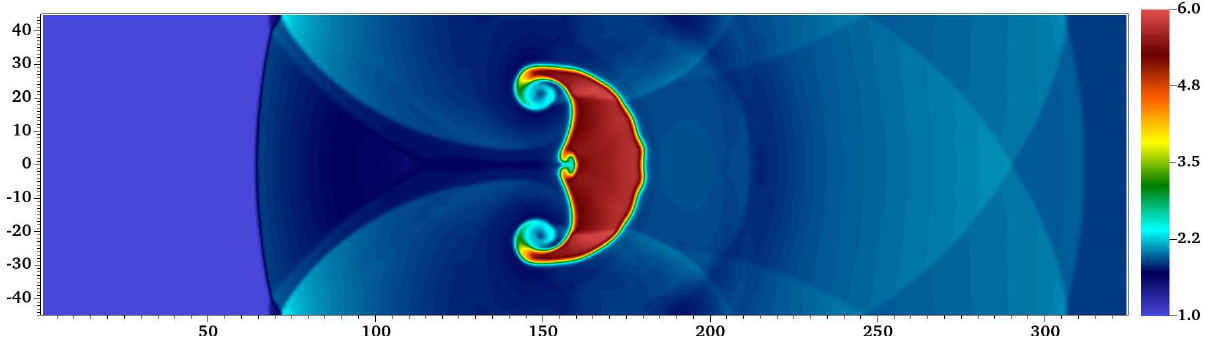} &
  \includegraphics[width = 7.2cm]{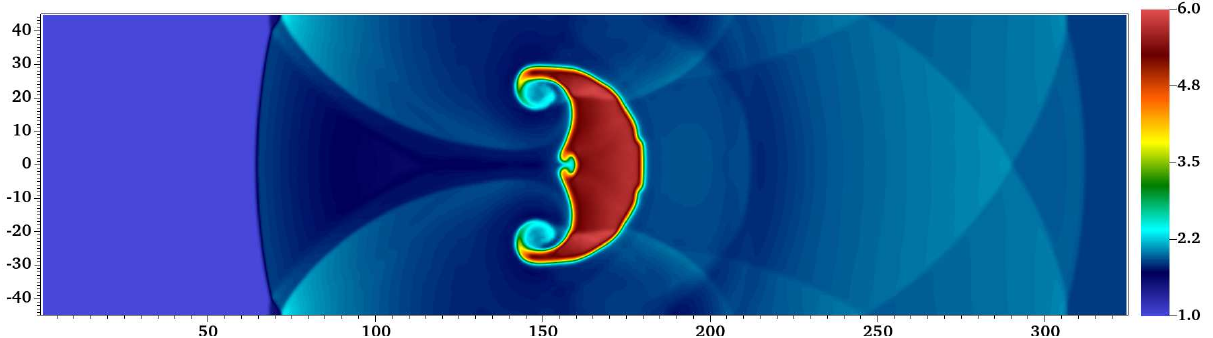}\\
  (e) WENO-AO(5,3) & (f) H1-WENO\\
  \includegraphics[width = 7.2cm]{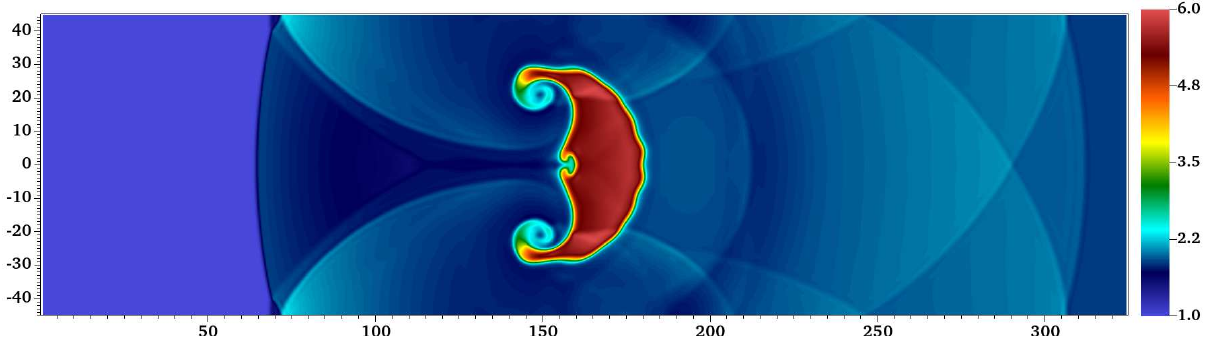} &
  \includegraphics[width = 7.2cm]{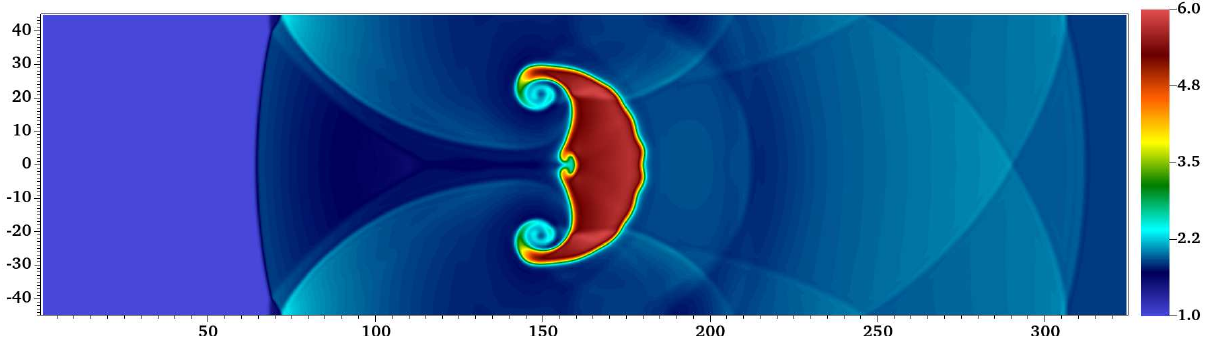}\\
 (g) H2-WENO & (h) H3-WENO
 \end{tabular}
\caption{Comparison of the solutions obtained using the H1-WENO, H2-WENO, H3-WENO, and WENO-AO(5,3) schemes at time T=450 for Example \ref{sb2} on a grid of size 180×650. The corresponding distributions of the troubled-cell indicator at the initial and final times are also shown.}
\label{Fig:sb2}
\end{figure}
\begin{example}[Shock-Bubble Interaction Problem I]
	\label{sb1}	{\rm
	In this example, we investigate the interaction between a left-moving shock wave and a low-density bubble, a standard benchmark problem discussed in \cite{he2012}. The computational domain is taken as $[0,325]\times[-45,45]$. Reflective boundary conditions are imposed along the top and bottom boundaries at $y=\pm 45$, while constant inflow states corresponding to the left and right shock states are prescribed at the boundaries $x=0$ and $x=325$, respectively.

The initial shock configuration is given by
\begin{equation*}
\left(\rho,\,u_x,\,u_y,\,p\right)=
\begin{cases}
\left(1,\,0,\,0,\,0.05\right), 
& \text{if } x<265,\\
\left(1.865225080631180,\,-0.196781107378299,\,0,\,0.15\right), 
& \text{if } x>265.
\end{cases}
\end{equation*}

A cylindrical bubble of radius $25$, centered at $(215,0)$, is initialized with the state
\begin{equation*}
\left(\rho,\,u_x,\,u_y,\,p\right)
=
\left(0.1358,\,0,\,0,\,0.05\right).
\end{equation*}

The interaction between the shock wave and the bubble generates complex flow structures involving strong compression waves, interface deformation, and vortex formation, making this problem particularly challenging for high-resolution numerical schemes.

The numerical simulations are carried out using the H1-WENO, H2-WENO, H3-WENO, and WENO-AO schemes on a computational mesh of size $650 \times 180$. In Figure \ref{Fig:sb1}, the density profiles obtained with the proposed hybrid schemes are compared with those computed using the WENO-AO scheme at the final time $t=450$. The figure also presents the corresponding troubled-cell indicator distributions at both the initial and final times for the H1-WENO, H2-WENO, and H3-WENO schemes.
The numerical results demonstrate that all the proposed hybrid schemes are able to accurately capture the major flow features and show good agreement with the high-resolution WENO-AO solution. Furthermore, the troubled-cell indicator effectively detects regions containing shocks and sharp gradients, while avoiding unnecessary detection in smooth regions, thereby reducing excessive limiting.
	}
\end{example}

\begin{example}[Shock-Bubble Interaction Problem II]
	\label{sb2}{\rm
We next consider the second shock--bubble interaction problem presented in \cite{he2012}. The overall computational setup remains identical to that of the previous example, including the computational domain, boundary conditions, shock configuration, and numerical parameters. The primary difference in this case is that the bubble is initialized with a heavier density state, leading to a substantially different interaction pattern between the incident shock wave and the bubble interface.

The initial state inside the bubble is prescribed as
\begin{equation*}
\left(\rho,\,u_x,\,u_y,\,p\right)
=
\left(3.1538,\,0,\,0,\,0.05\right).
\end{equation*}
Due to the higher density ratio between the surrounding medium and the bubble, the shock interaction generates more pronounced interface deformation and complex wave structures. This problem therefore provides an additional test for assessing the robustness and shock-capturing capability of the proposed numerical schemes.

The numerical simulations are carried out using the H1-WENO, H2-WENO, H3-WENO, and WENO-AO schemes on the same computational mesh as used in the previous example. The resulting density distributions at the final time are displayed in Figure \ref{Fig:sb2}. The obtained solutions show that all the schemes are capable of resolving the essential flow dynamics and capture the complicated shock--interface interactions with good agreement when compared with the  WENO-AO solution.
The corresponding troubled-cell indicator plots for the H1-WENO, H2-WENO, and H3-WENO schemes are shown in Figure \ref{Fig:sb2}. The indicator accurately identifies the regions containing strong discontinuities and sharp gradients produced during the interaction process.
	}
\end{example}

\subsection{Computational cost comparison}
In the previous section, we demonstrated that all the proposed schemes are capable of capturing and reproducing results consistent with the original WENO-AO scheme. Moreover, for smooth test cases, the proposed schemes maintain the desired accuracy while requiring lower computational cost. In particular, the hybrid schemes preserve fifth-order accuracy and achieve comparable error levels in reduced computational time compared to the original scheme, as illustrated in Figure~\ref{fig:Eg5}.
In this subsection, we analyze the computational efficiency of the hybrid schemes for test cases involving discontinuities and compare their performance with the WENO-AO scheme. The comparison is carried out in terms of the speed-up factor relative to the base scheme. The speed-up factor is defined as the ratio of the CPU time required to compute the solution using the proposed scheme to the CPU time required by the WENO-AO scheme. A speed-up factor less than one indicates that the proposed scheme is efficient than the WENO-AO scheme, whereas a value greater than one indicates slower performance. The unit speed up factor assigned to the WENO-AO scheme. 
In addition, we investigate the effect of varying the constant used in the hybridization criterion on the efficiency of the scheme and on the percentage of cells that are marked for the hybrid reconstruction. This study is carried out for representative one-dimensional and two-dimensional test problems in order to better understand the influence of this parameter on the overall performance of the proposed hybrid schemes.
\begin{table}[]
    \centering
 \begin{tabular}{|c|c|c|c|c|}
\hline
Test Cases & WENO-AO & H1-WENO & H2-WENO & H3-WENO \\ [2pt]
\hline
Example \ref{test3} & 1.0000 & 0.6676 & 0.5777 & 0.9307 \\
Example \ref{test4} & 1.0000 & 0.7919 & 0.7145 & 0.9001 \\
Example \ref{test5} & 1.0000 & 0.7029 & 0.6196 & 0.9178 \\
Example \ref{test6} & 1.0000 & 0.6548 & 0.5525 & 0.8956 \\
Example \ref{test7} & 1.0000 & 0.8643 & 0.8252 & 0.9225 \\
Example \ref{test8} & 1.0000 & 0.7363 & 0.6576 & 0.9170 \\
\hline
\end{tabular}

    \caption{Comparison of H1-WENO, H2-WENO, and H3-WENO schemes with WENO-AO scheme in term of speed-up factor for 1D test cases. }
    \label{tab:cpu1d}
\end{table}

\begin{table}[h!]
\centering
\begin{tabular}{|c|c|c|c|c|}
\hline
Test Cases & WENO-AO & H1-WENO & H2-WENO & H3-WENO \\ [2pt]
\hline
Example \ref{rp1} & 1.0000 & 0.6050 & 0.5166 & 0.9685 \\
Example \ref{rp2} & 1.0000 & 0.5644 & 0.4694 & 0.9788 \\
Example \ref{rp3} & 1.0000 & 0.5248 & 0.3985 & 0.9669 \\
Example \ref{rp4} & 1.0000 & 0.5382 & 0.4494 & 0.9817 \\
Example \ref{sb1} & 1.0000 & 0.5264 & 0.4145 & 0.9392 \\
Example \ref{sb2} & 1.0000 & 0.5488 & 0.4135 & 0.9775 \\
\hline
\end{tabular}
\caption{Comparison of H1-WENO, H2-WENO, and H3-WENO schemes with WENO-AO scheme in term of speed-up factor for 2D test cases. }
    \label{tab:cpu2d}
\end{table}
 \begin{figure}[ht]
 \centering
 \begin{tabular}{cc}
  \includegraphics[width = 7.4cm]{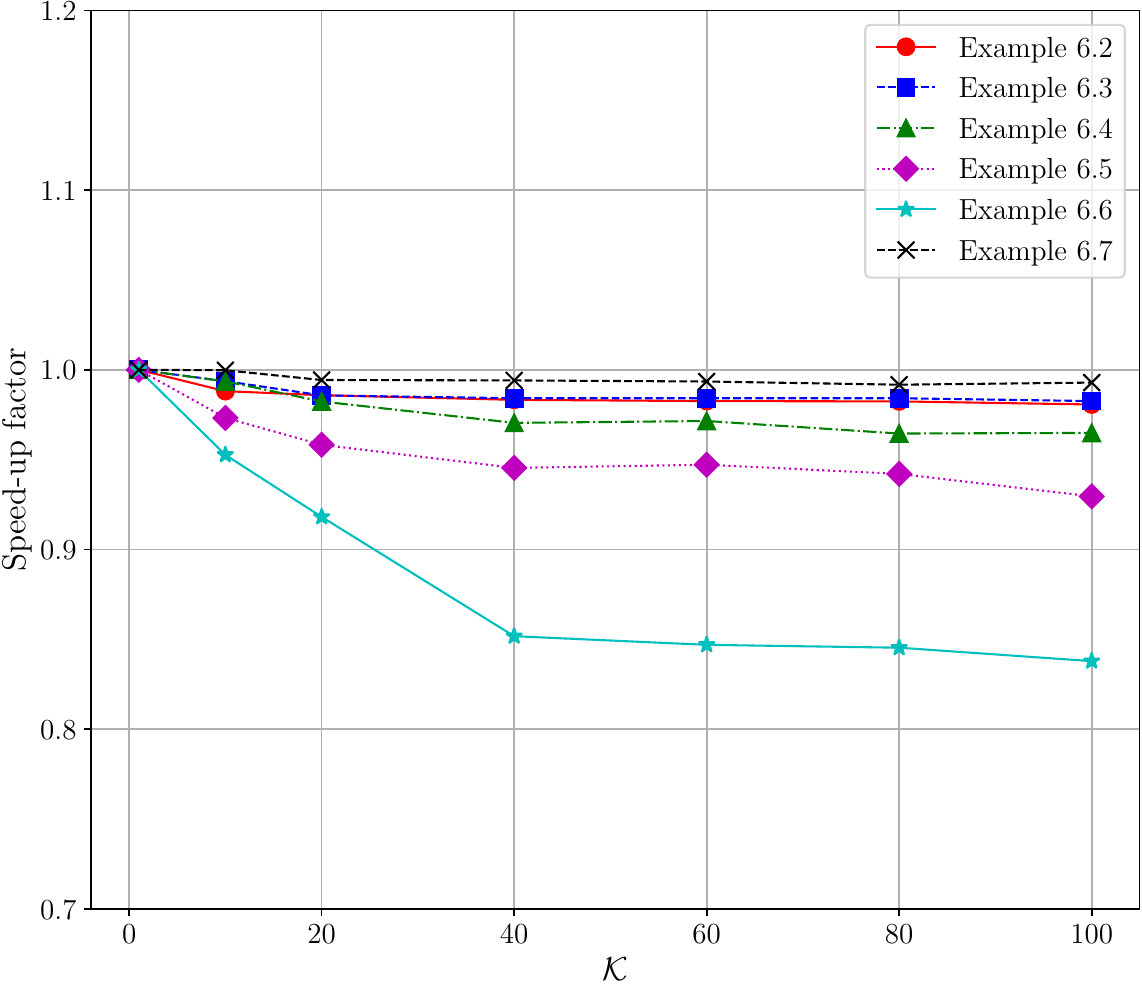}&
  \includegraphics[width = 7.4cm]{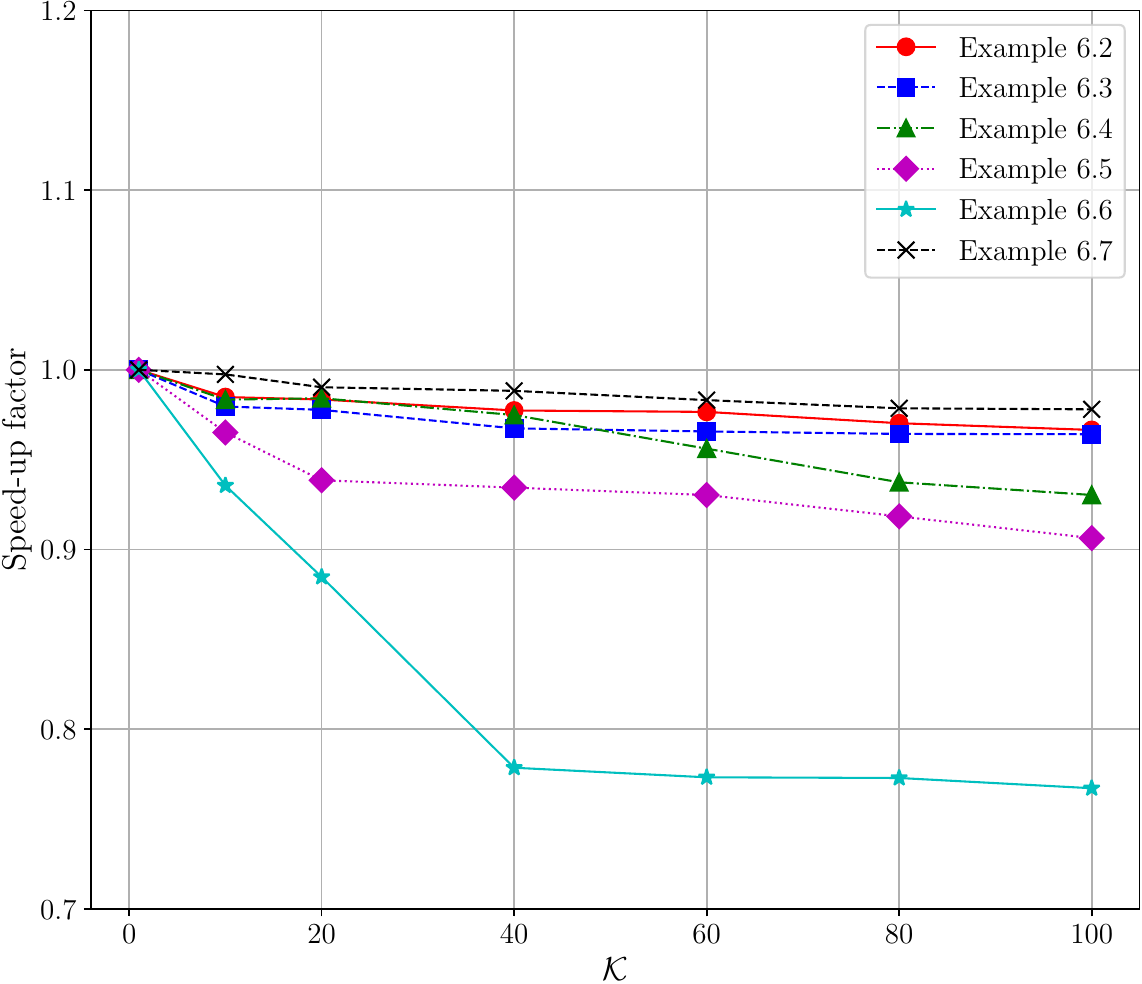}\\
  (a) H1-WENO & (b) H2-WENO \\
   \includegraphics[width = 7.4cm]{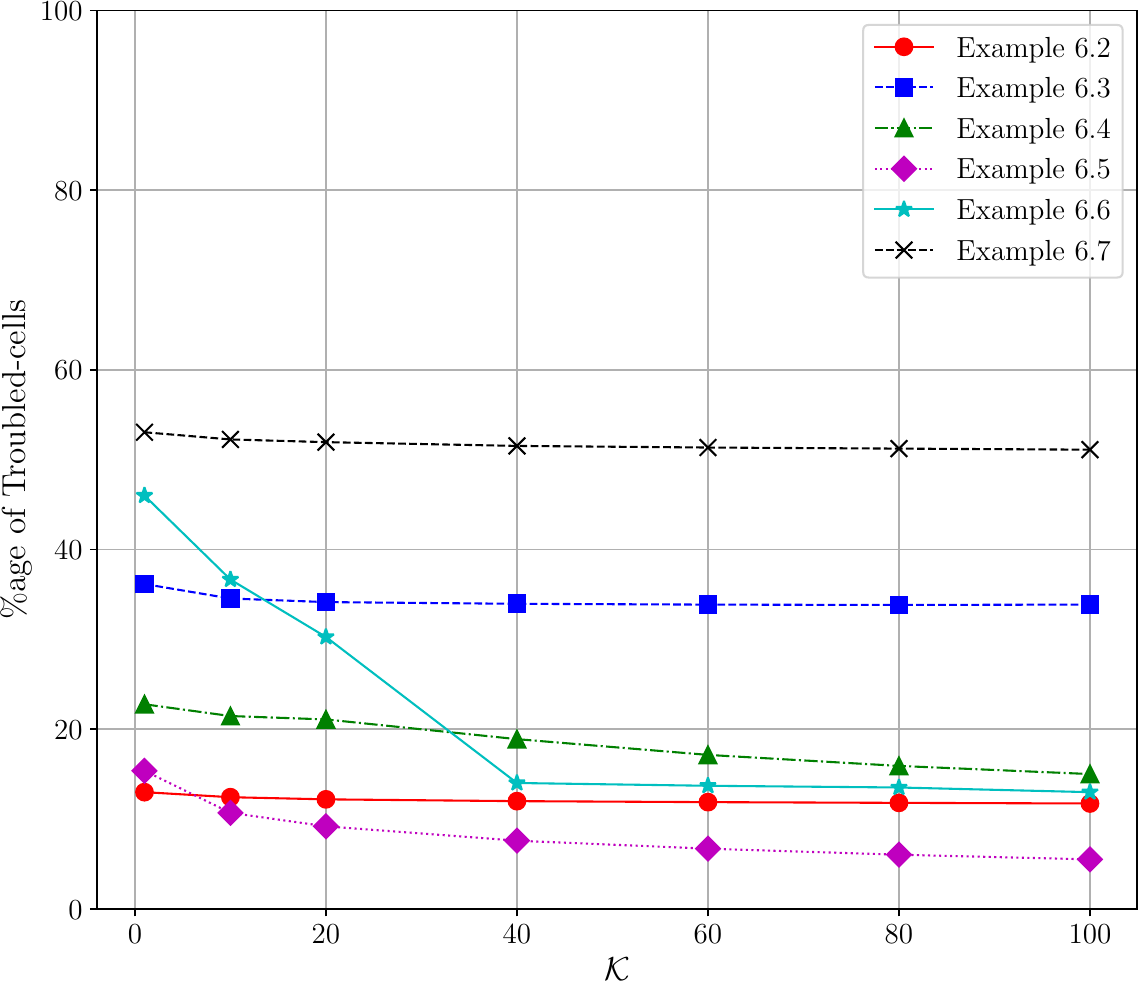}&
  \includegraphics[width = 7.4cm]{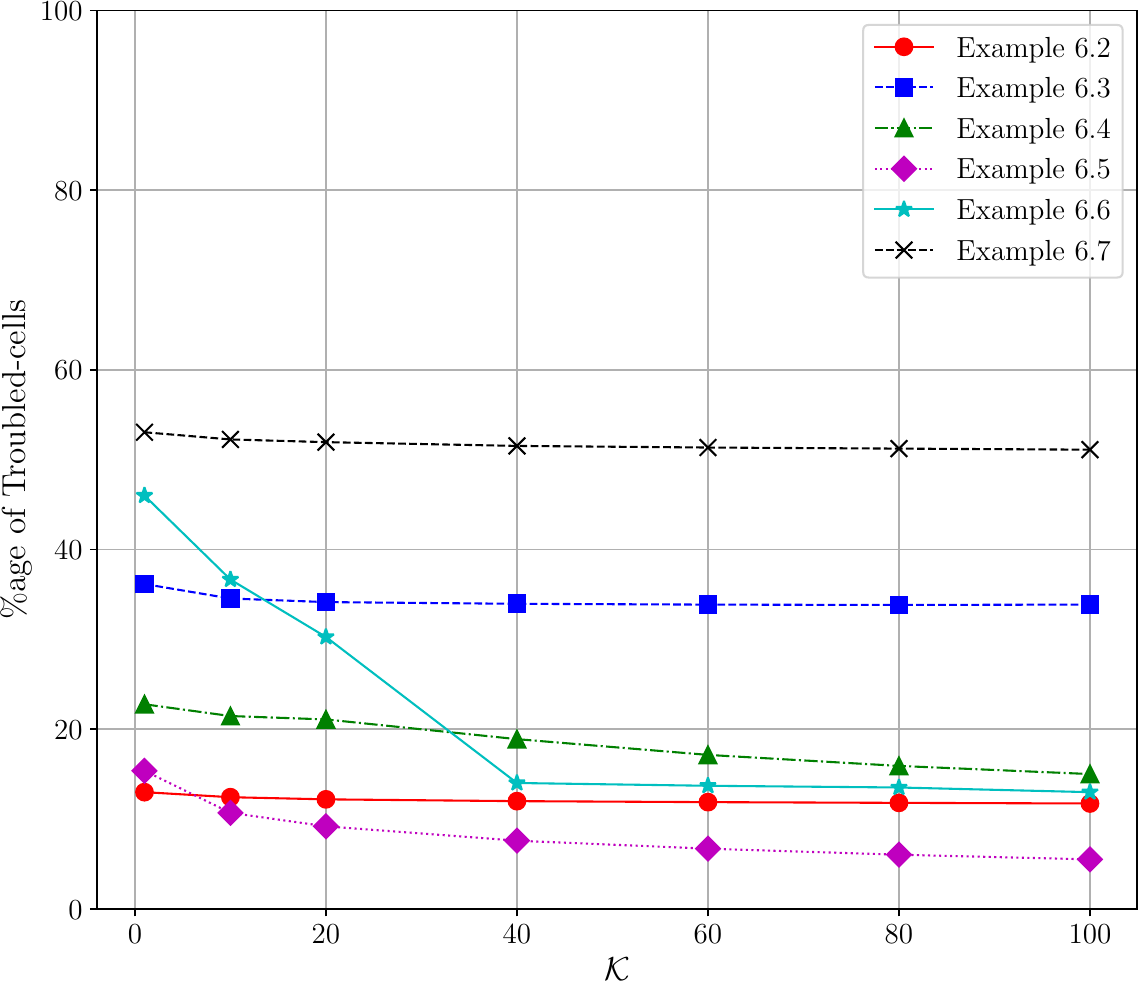}\\
  (c) H1-WENO & (d) H2-WENO \\
 \end{tabular}
 \caption{Comparison of the speed-up factor for different values of $\mathcal{K}$ in 1D test cases: (a) H1-WENO scheme and (b) H2-WENO scheme. Percentage of troubled cells identified during the simulations for different values of $\mathcal{K}$ in 1D test cases: (c) H1-WENO scheme and (d) H2-WENO scheme.}
 \label{Fig:CPU1D}
 \end{figure}
\begin{figure}[ht]
 \centering
 \begin{tabular}{cc}
  \includegraphics[width = 7.4cm]{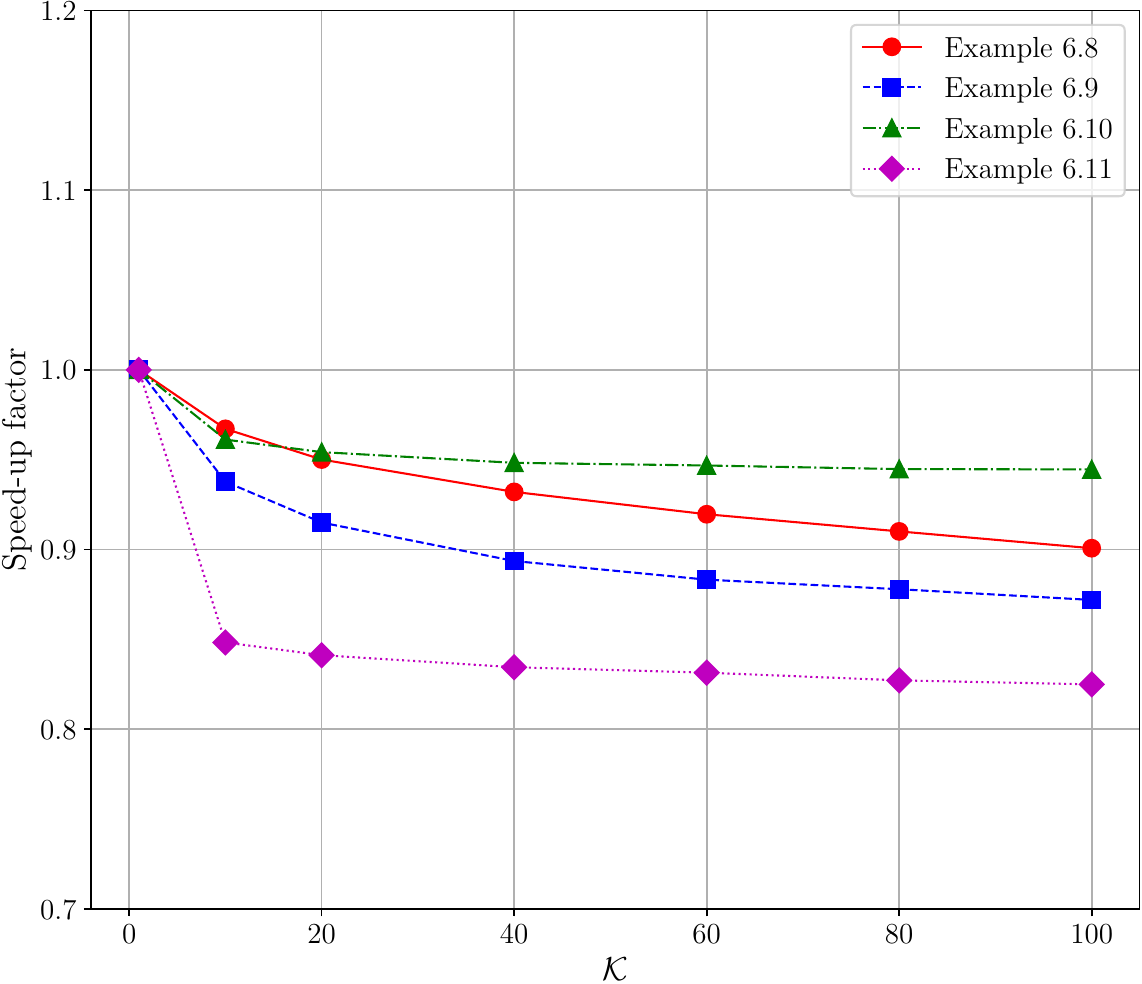}&
  \includegraphics[width = 7.4cm]{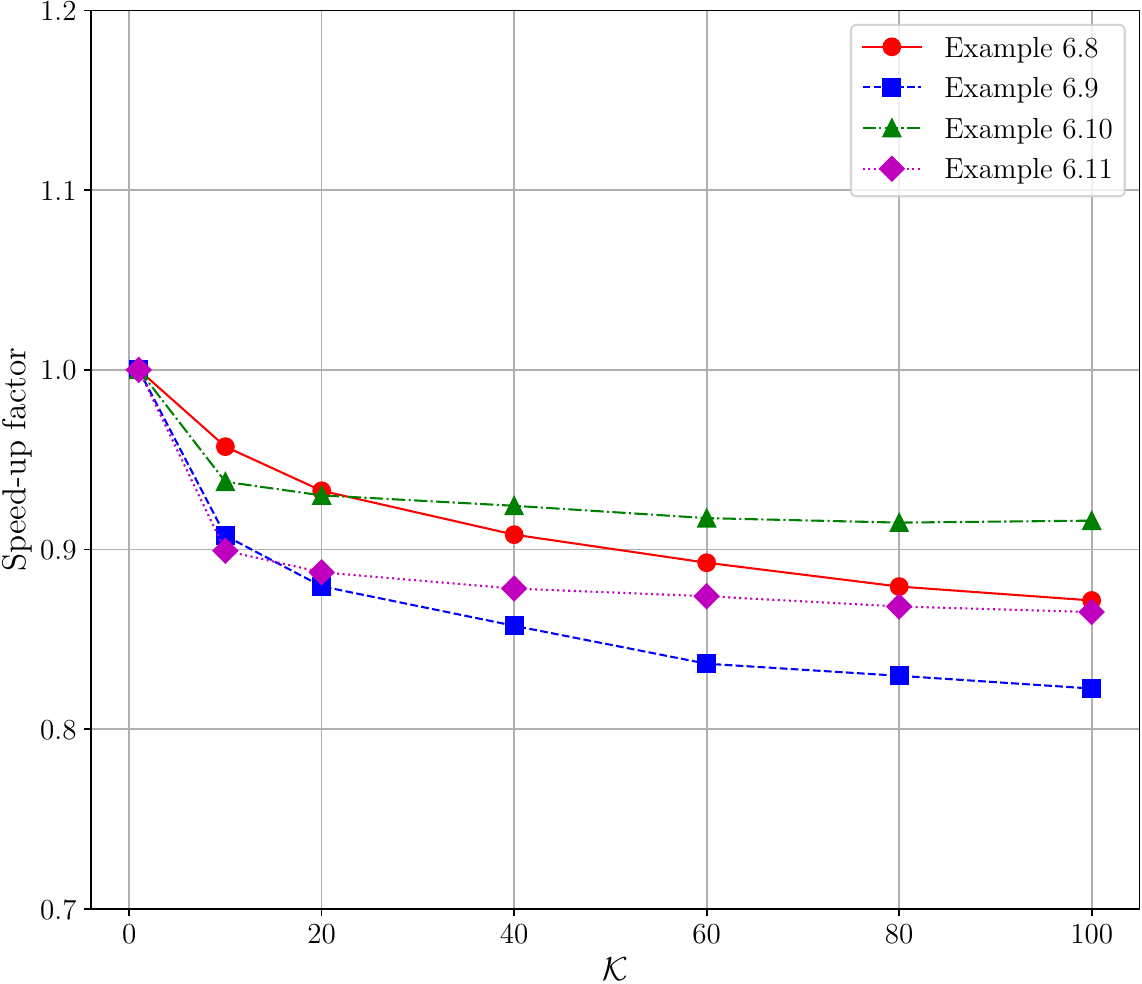}\\
  (a) H1-WENO & (b) H2-WENO \\
   \includegraphics[width = 7.4cm]{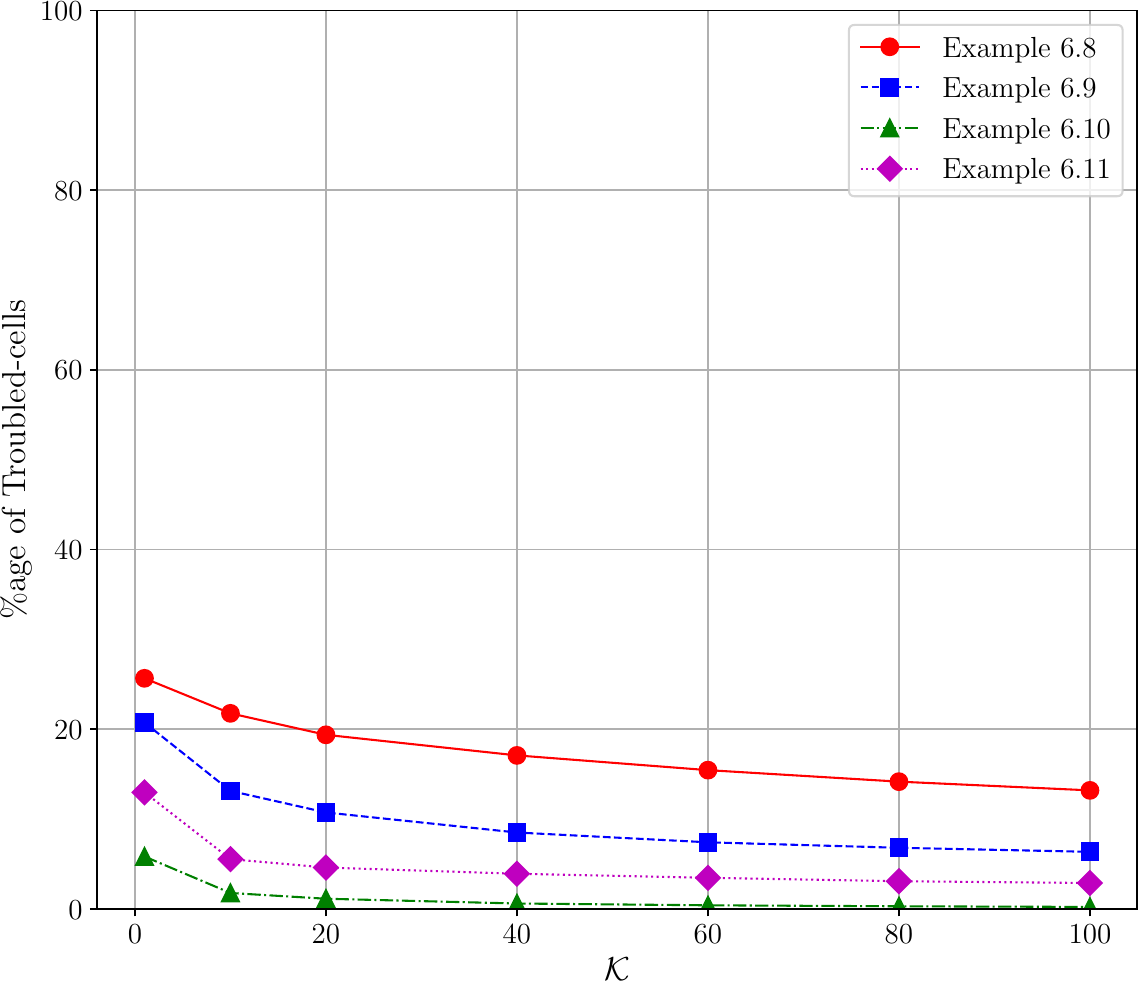}&
  \includegraphics[width = 7.4cm]{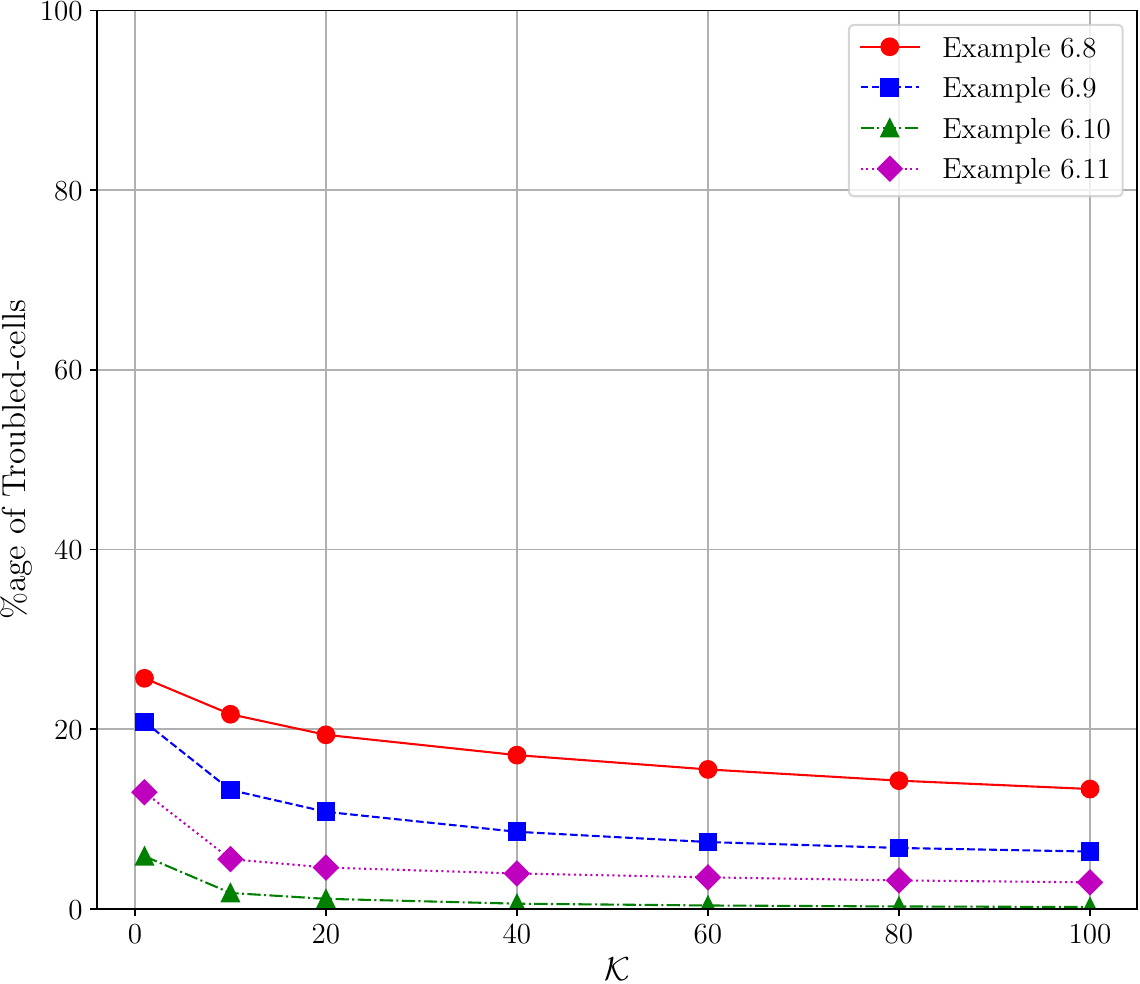}\\
  (c) H1-WENO & (d) H2-WENO \\
 \end{tabular}
 \caption{Comparison of the speed-up factor for different values of $\mathcal{K}$ in 2D test cases: (a) H1-WENO scheme and (b) H2-WENO scheme. Percentage of troubled cells identified during the simulations for different values of $\mathcal{K}$ in 2D test cases: (c) H1-WENO scheme and (d) H2-WENO scheme.}
 \label{Fig:CPU2D}
 \end{figure}

In Tables \ref{tab:cpu1d} and \ref{tab:cpu2d}, we present the speed-up factors obtained using various schemes for the 1D and 2D test cases, respectively. It can be observed that, in both cases, the H1-WENO and H2-WENO schemes, which avoid the computation of eigenvectors in smooth regions, are significantly more efficient than the H3-WENO and WENO-AO schemes. Specifically, the H1-WENO scheme achieves a speed-up of approximately 14--35\% in the 1D test case and 40--45\% in the 2D test case compared to the WENO-AO scheme. Similarly, the H2-WENO scheme shows an improvement of about 28--45\% in 1D and 50--60\% in 2D over WENO-AO. In contrast, the H3-WENO scheme provides a relatively modest gain of around 8--10\% in 1D and 3--4\% in 2D.
Furthermore, the WENO-AO scheme, which employs characteristic-wise WENO reconstruction, is computationally more expensive due to the repeated evaluation of eigenvectors and nonlinear weights. Notably, the computation of eigenvectors is significantly more costly than the evaluation of nonlinear weights, which further contributes to the reduced efficiency of schemes relying on characteristic decomposition.

 \begin{figure}[ht!]
 \centering
 \begin{tabular}{ccc}
   \includegraphics[width = 4.6cm]{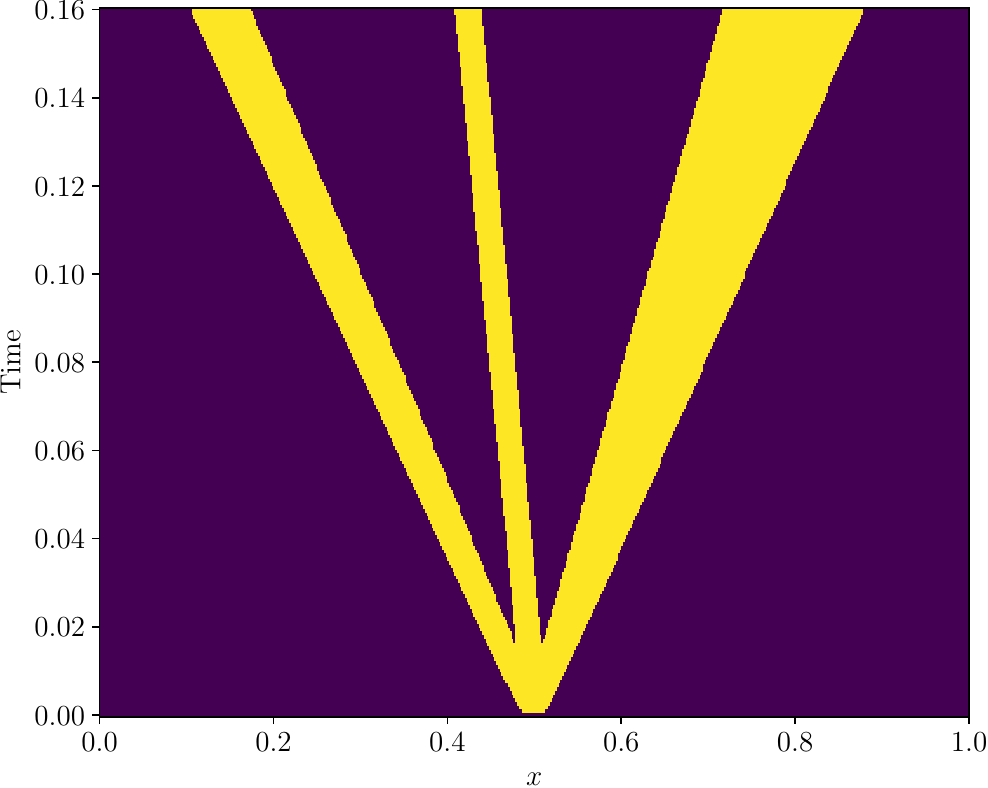} &
  \includegraphics[width = 4.6cm]{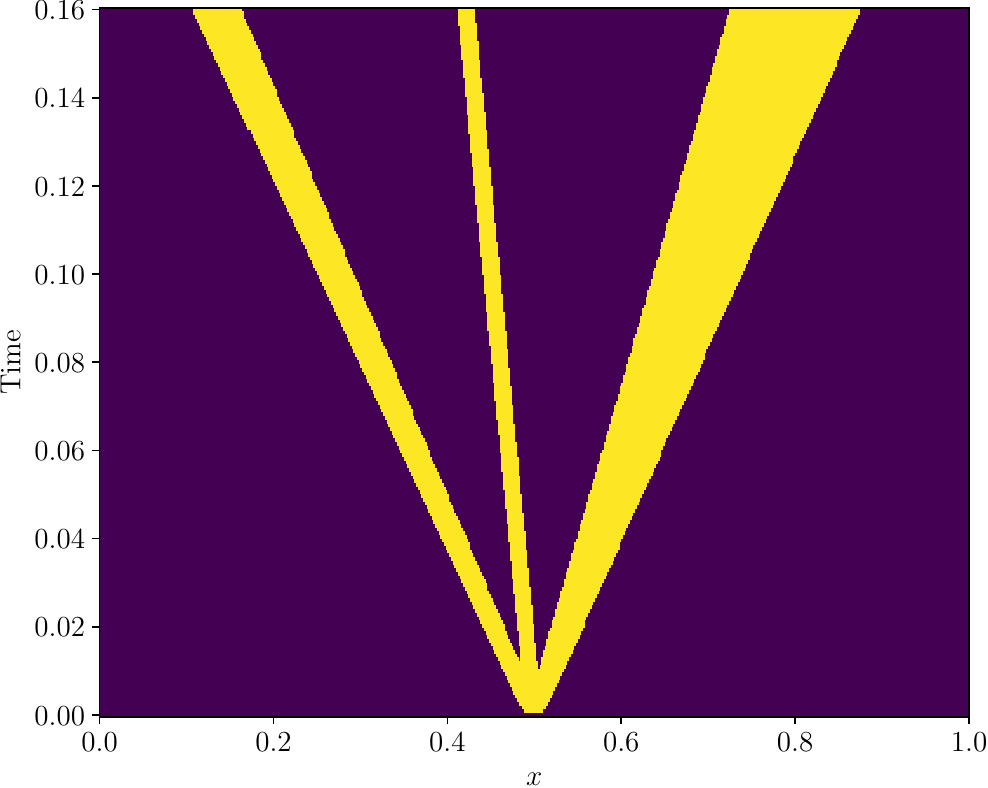}&
   \includegraphics[width = 4.60cm]{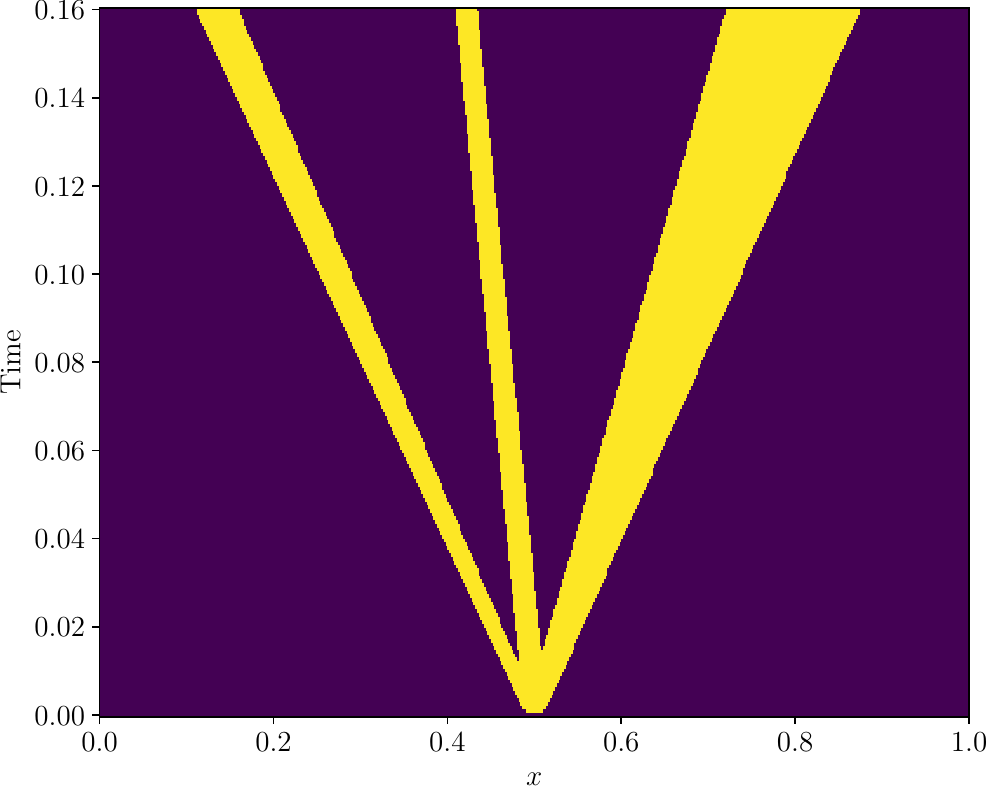}\\
  (a)  H1-WENO $(\mathcal{K}=1)$ & (b)  H1-WENO $(\mathcal{K}=100)$ &(c)   H2-WENO $(\mathcal{K}=1)$\\
     \includegraphics[width = 4.6cm]{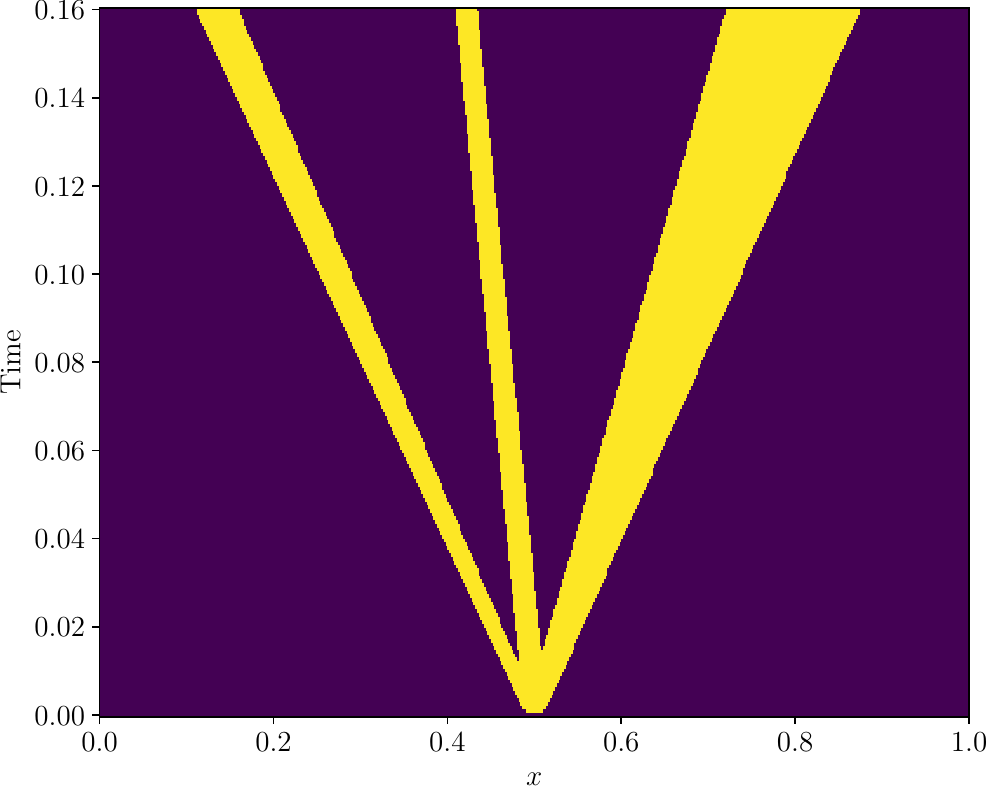} &
  \includegraphics[width = 4.6cm]{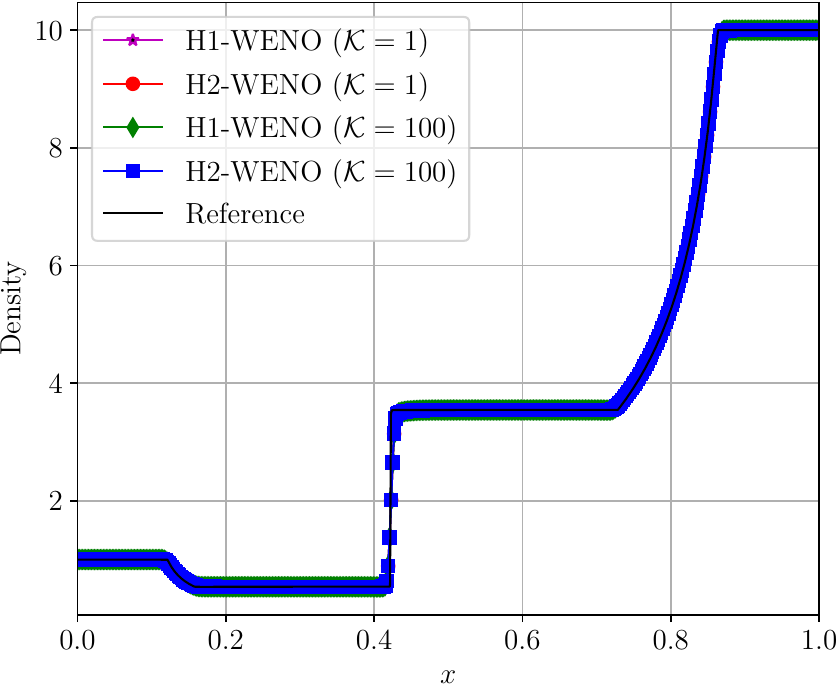}&
   \includegraphics[width = 4.60cm]{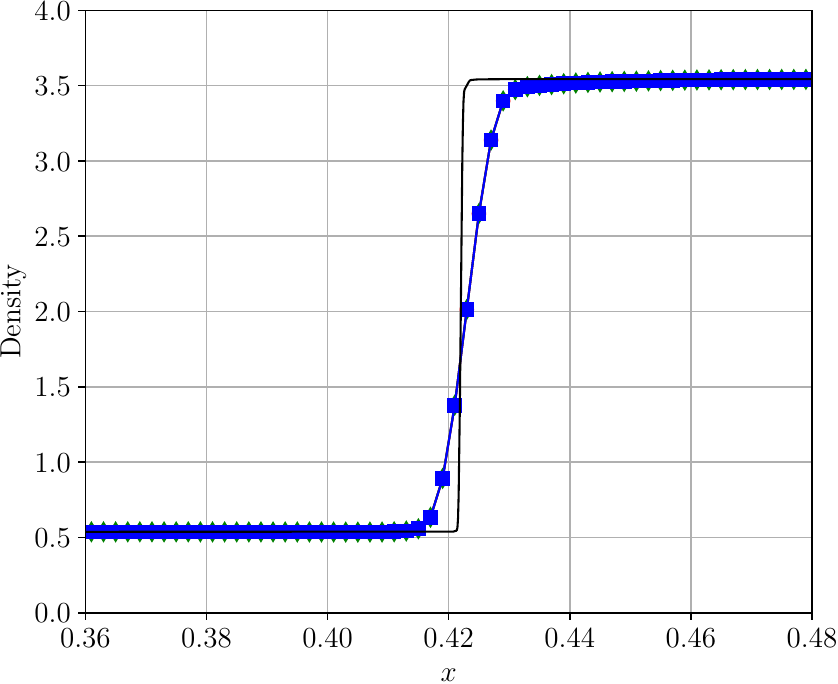}\\
  (d)  H2-WENO $(\mathcal{K}=100)$ & (e) Density& (f) Zoom of (e). 
 \end{tabular}
\caption{Comparison of troubled-cell indicator plots over $x-t$ plane for $\mathcal{K}=1$ and $100$ obtained using H1-WENO and H2-WENO schemes, along with a comparison of the corresponding density solution profiles computed on a 500-point grid. }
\label{Fig:RP1-H1H2}
\end{figure}

 \begin{figure}[ht!]
 \centering
 \begin{tabular}{ccc}
   \includegraphics[width = 4.6cm]{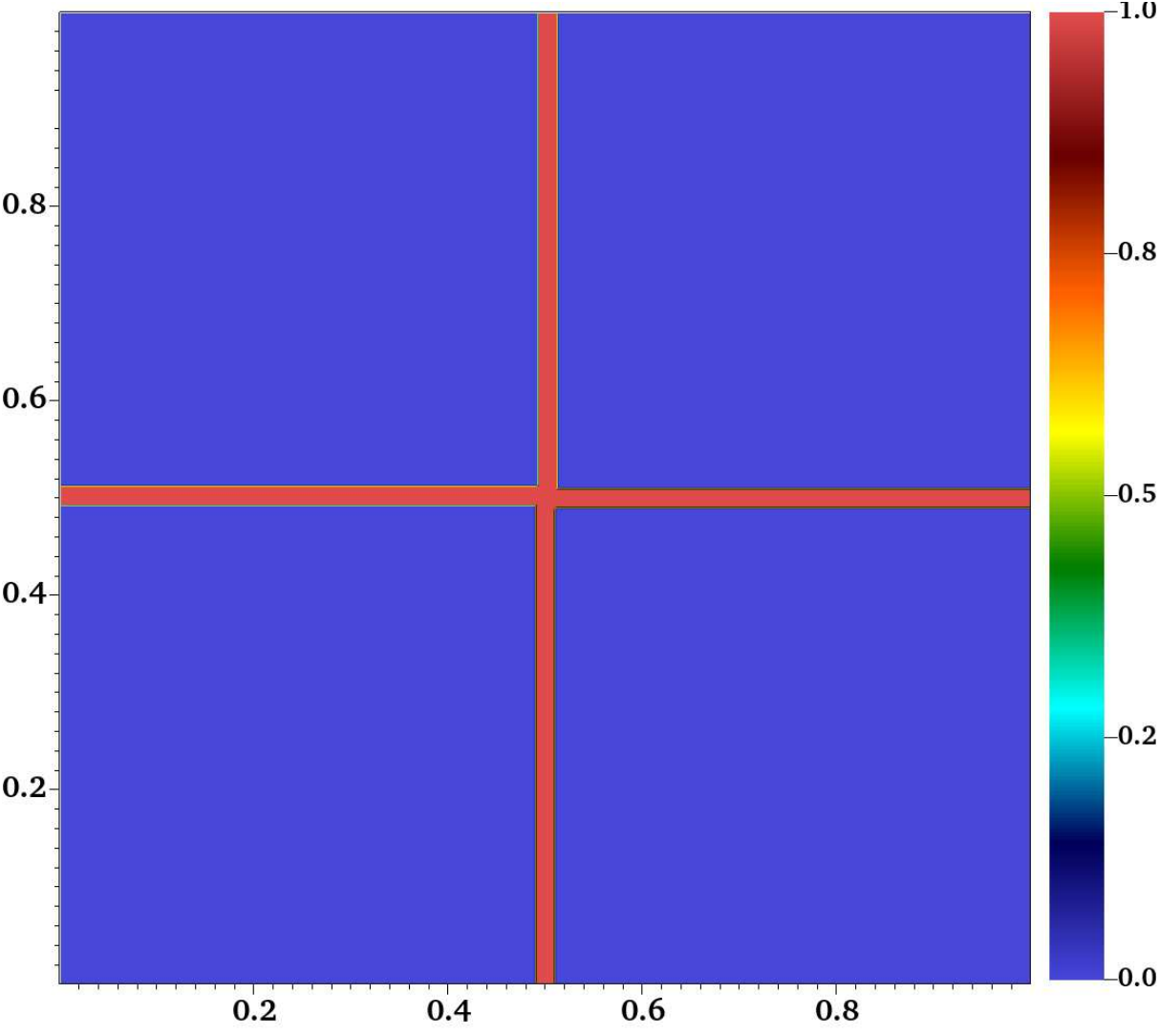} &
  \includegraphics[width = 4.6cm]{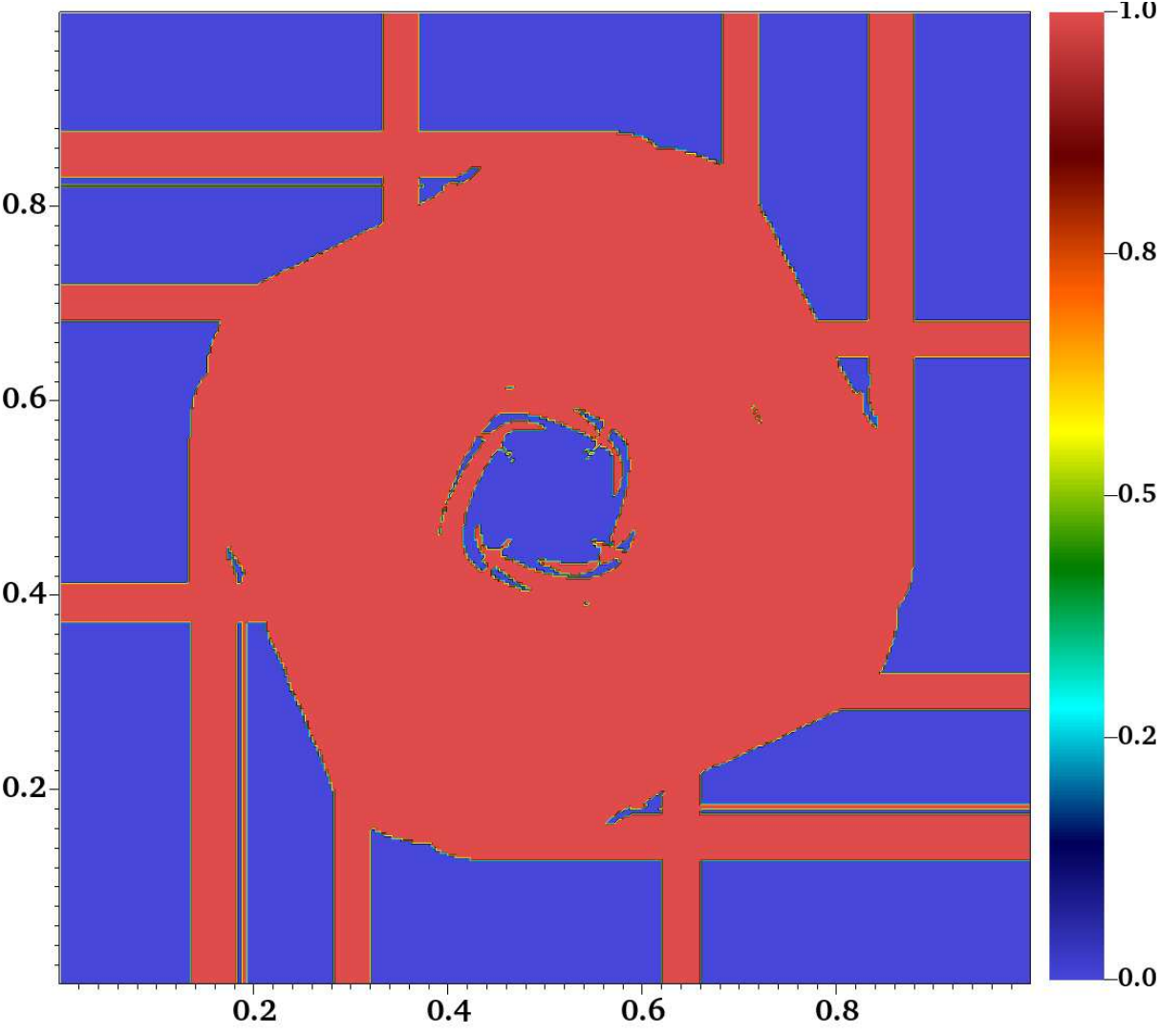}&
   \includegraphics[width = 4.60cm]{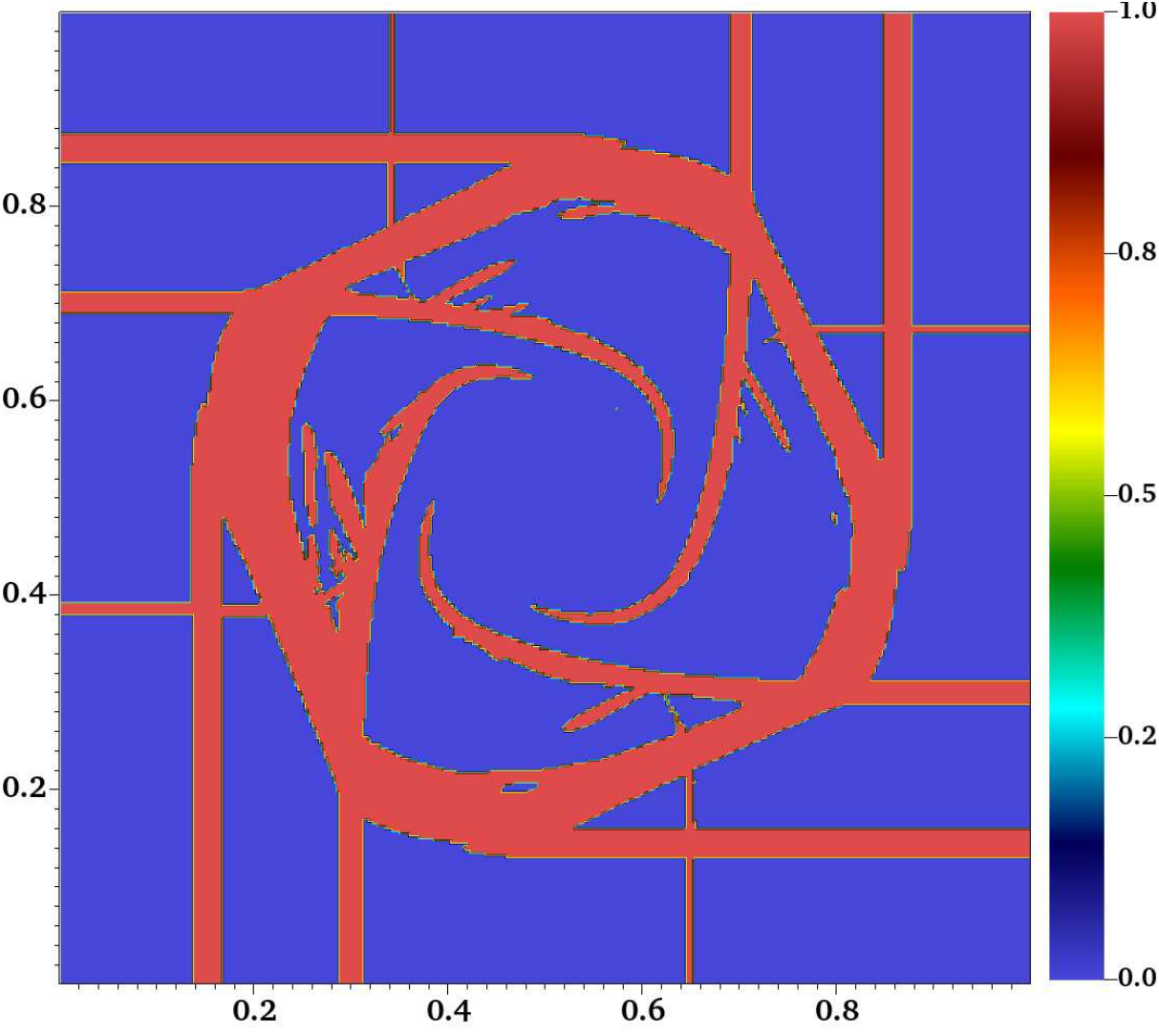}\\
  (a)  H1-WENO $(\mathcal{K}=1~~T=0)$ & (b)  H1-WENO $(\mathcal{K}=1)$ &(c)   H1-WENO $(\mathcal{K}=100)$\\
     \includegraphics[width = 4.6cm]{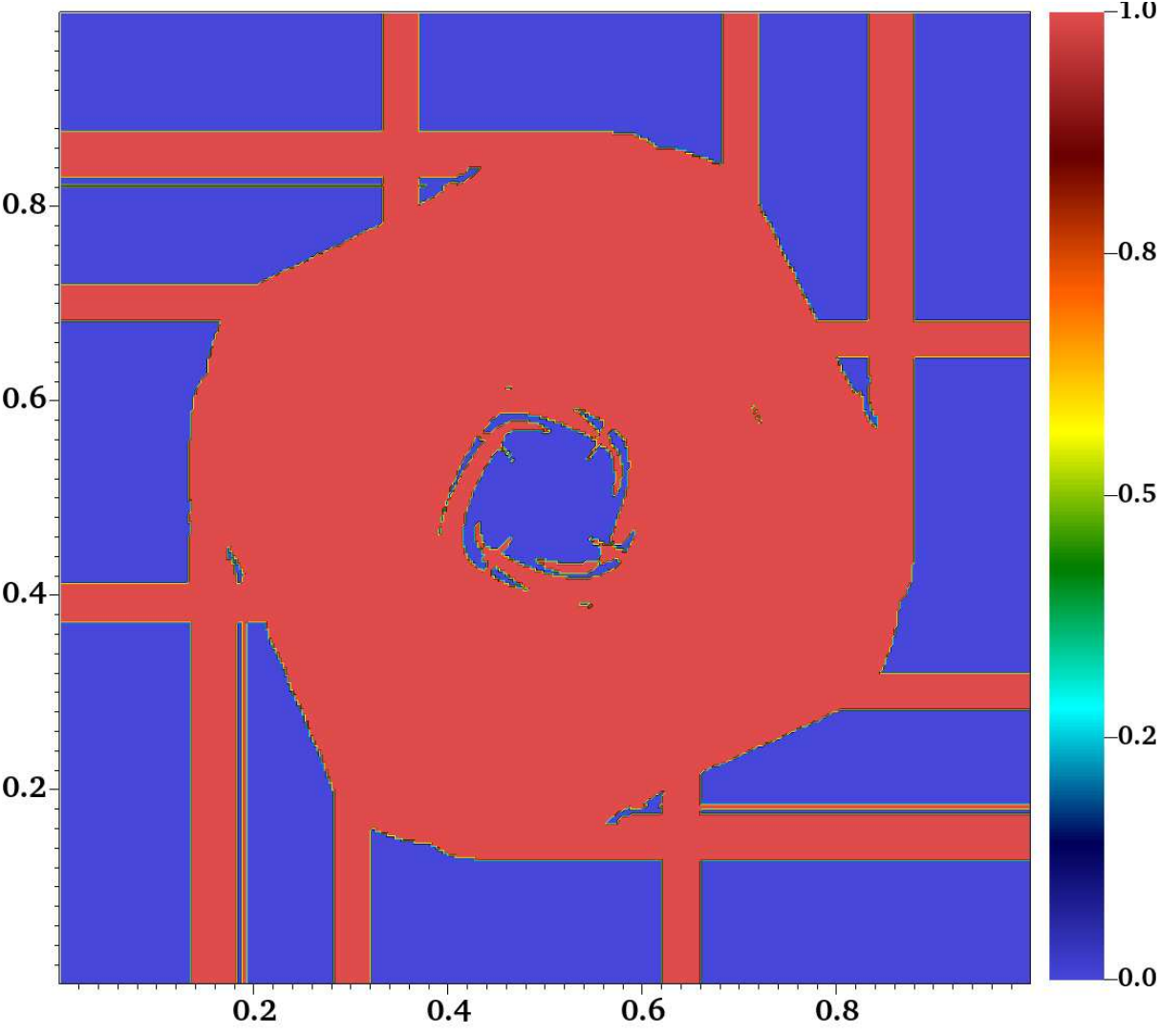} &
  \includegraphics[width = 4.6cm]{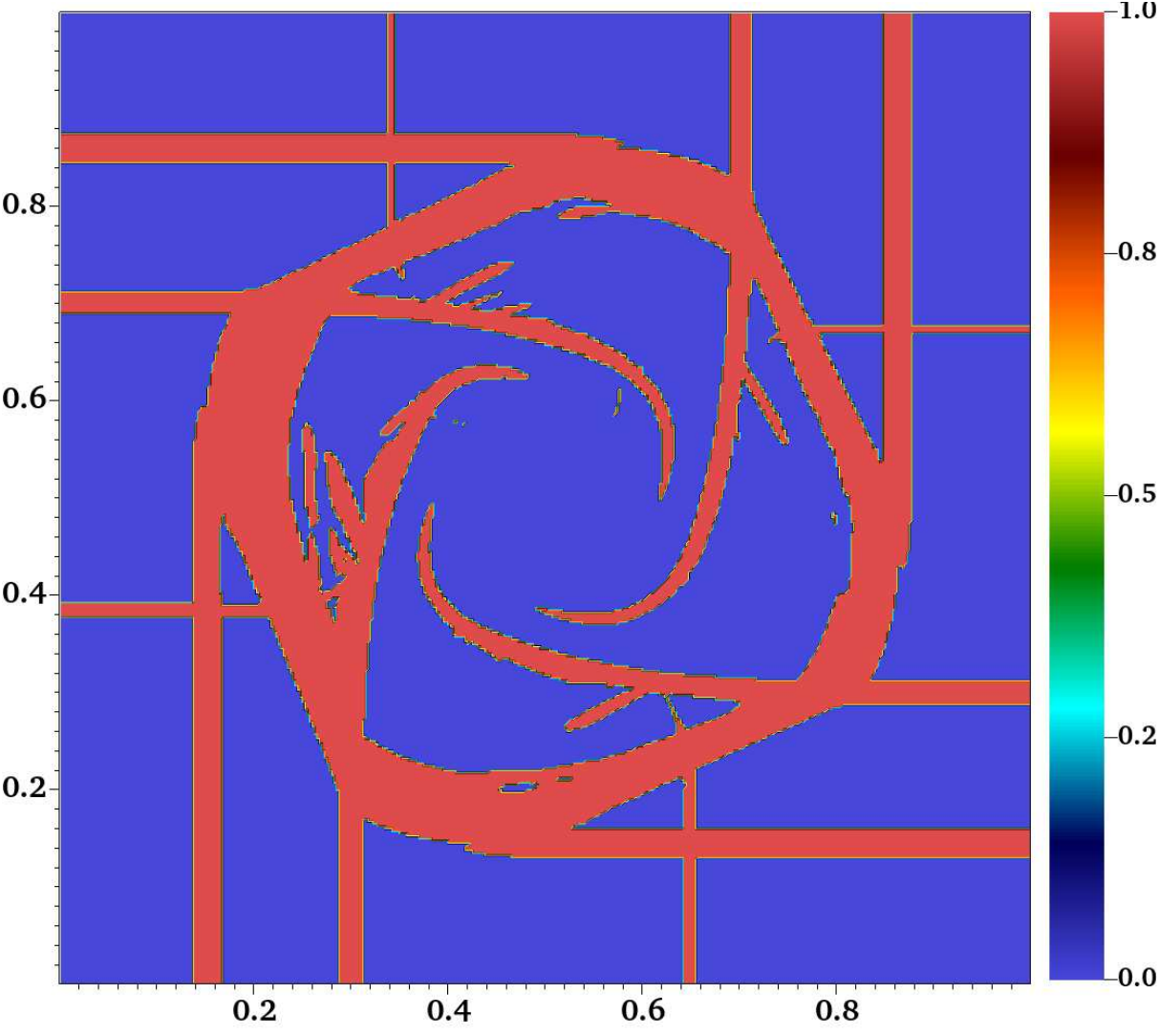}&
   \includegraphics[width = 4.60cm]{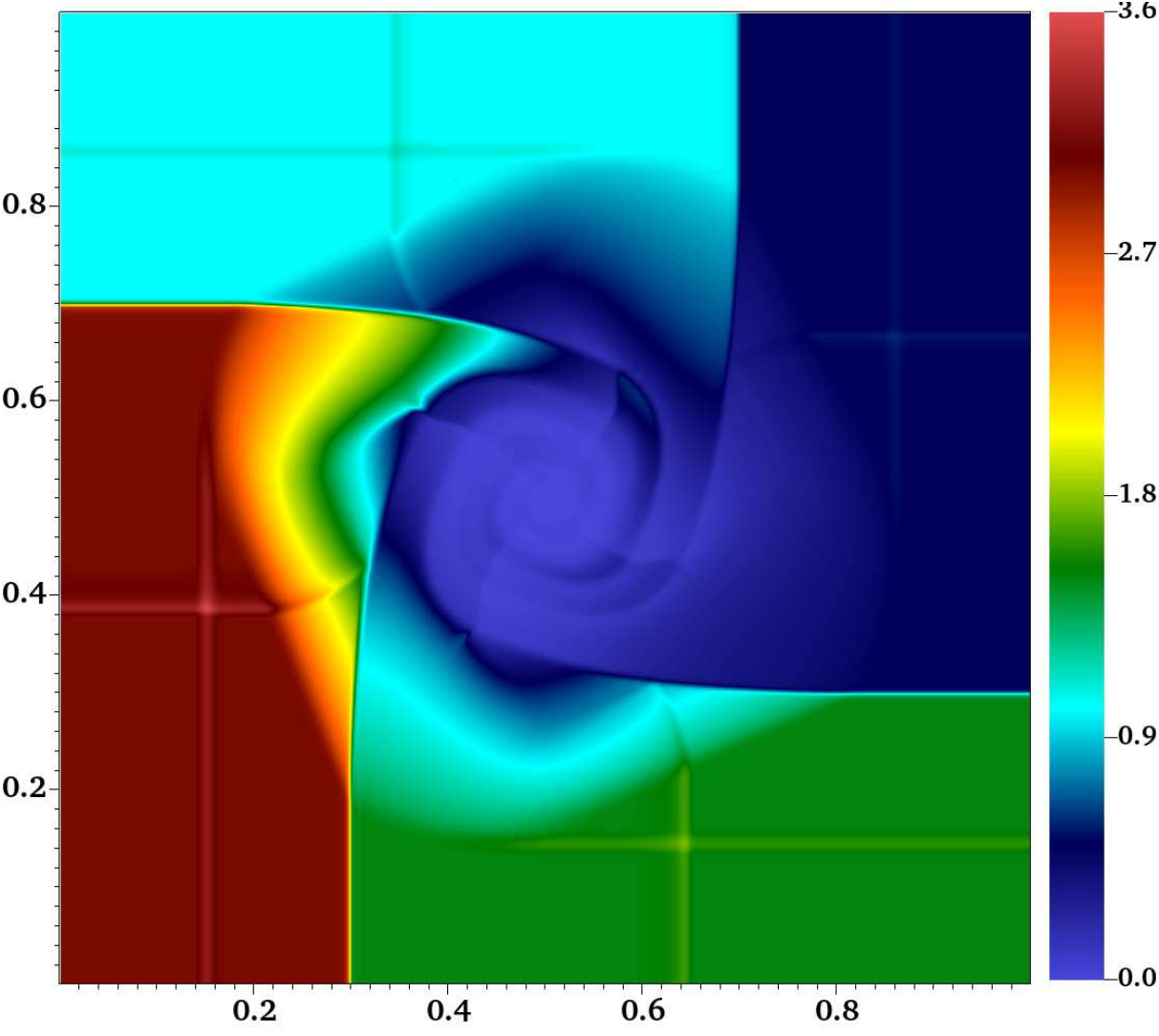}\\
  (d)  H2-WENO $(\mathcal{K}=1)$ & (e) H2-WENO $(\mathcal{K}=100)$ & (f) H1-WENO $(\mathcal{K}=1)$ \\
     \includegraphics[width = 4.6cm]{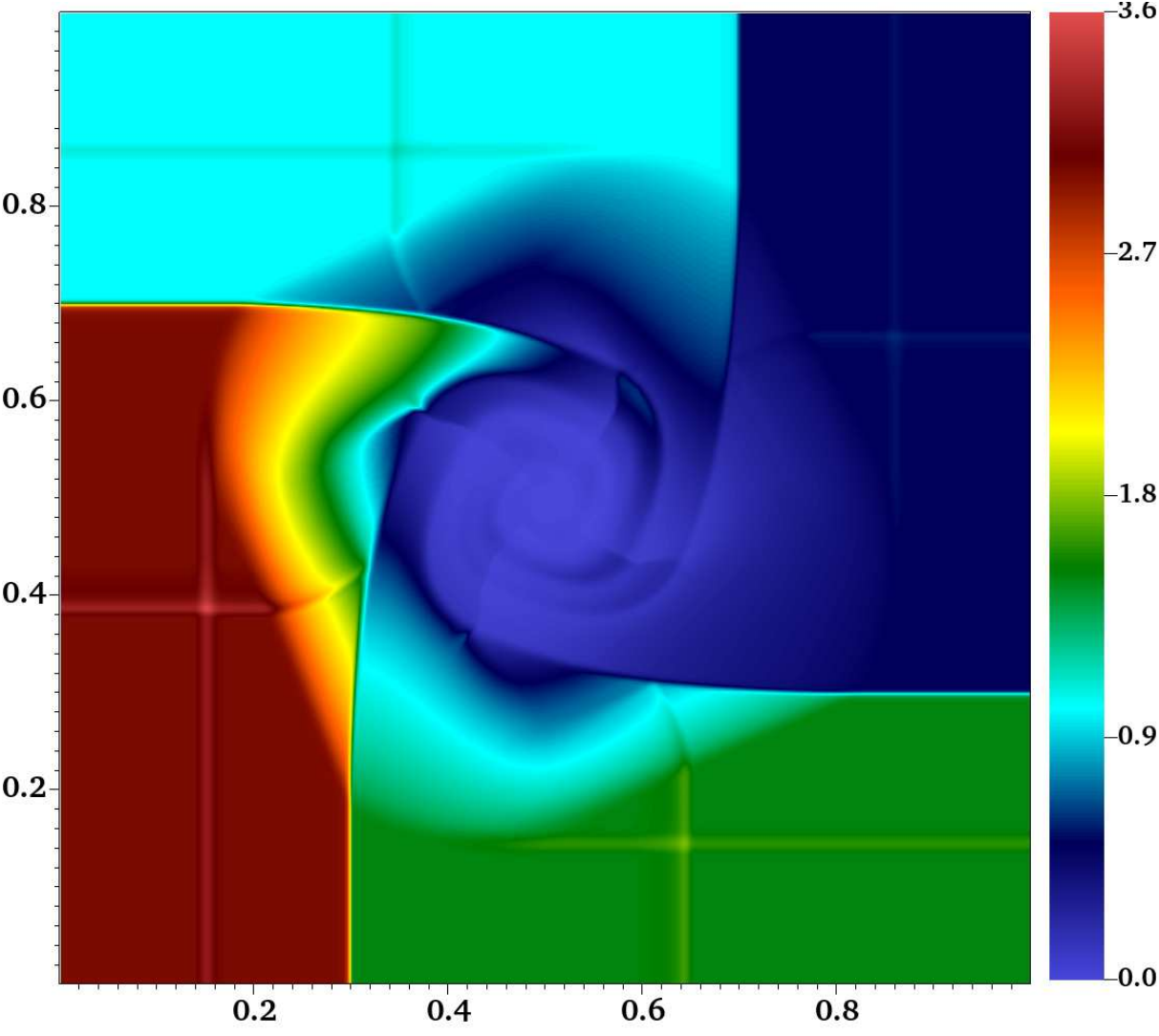} &
  \includegraphics[width = 4.6cm]{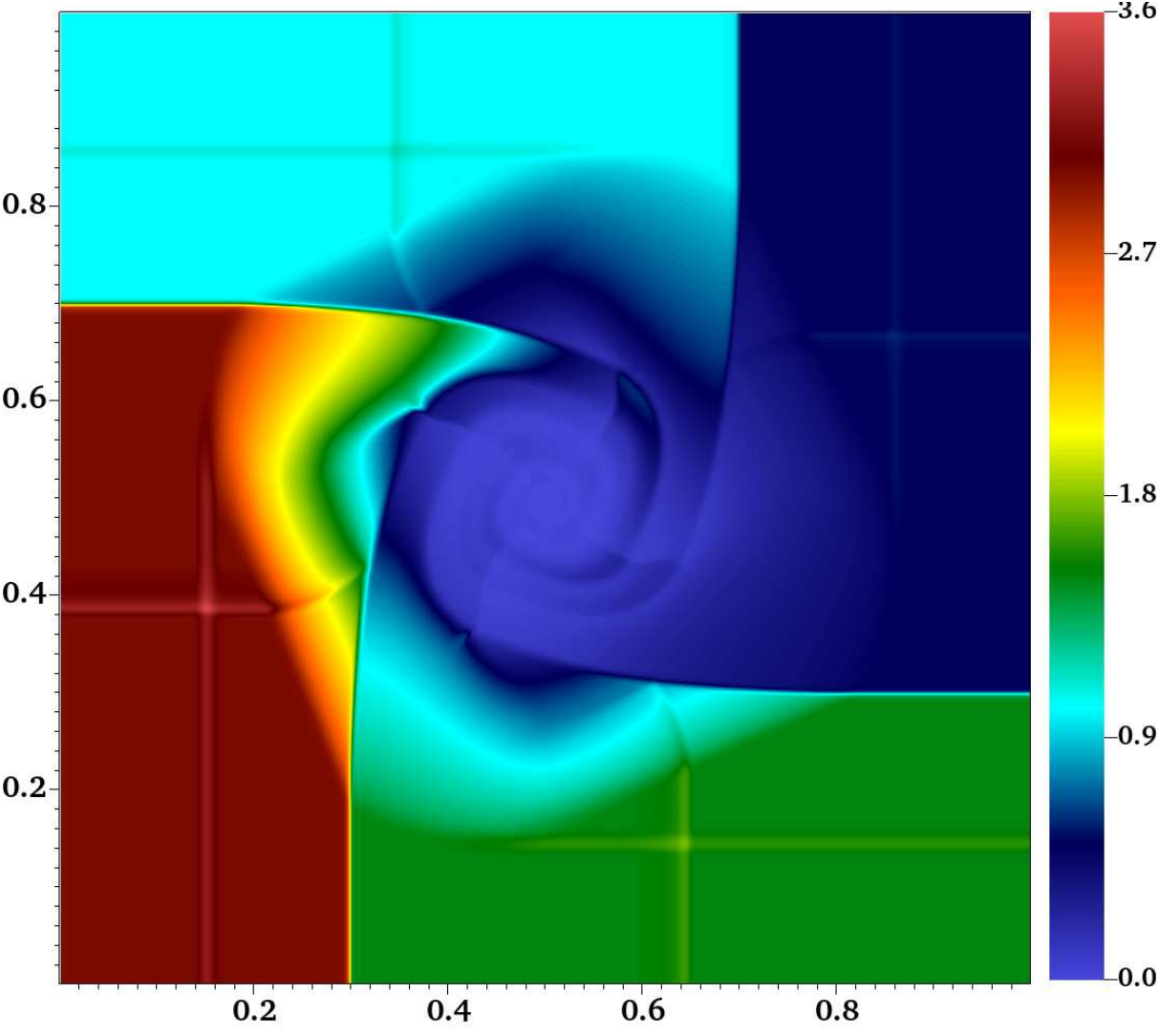}&
   \includegraphics[width = 4.60cm]{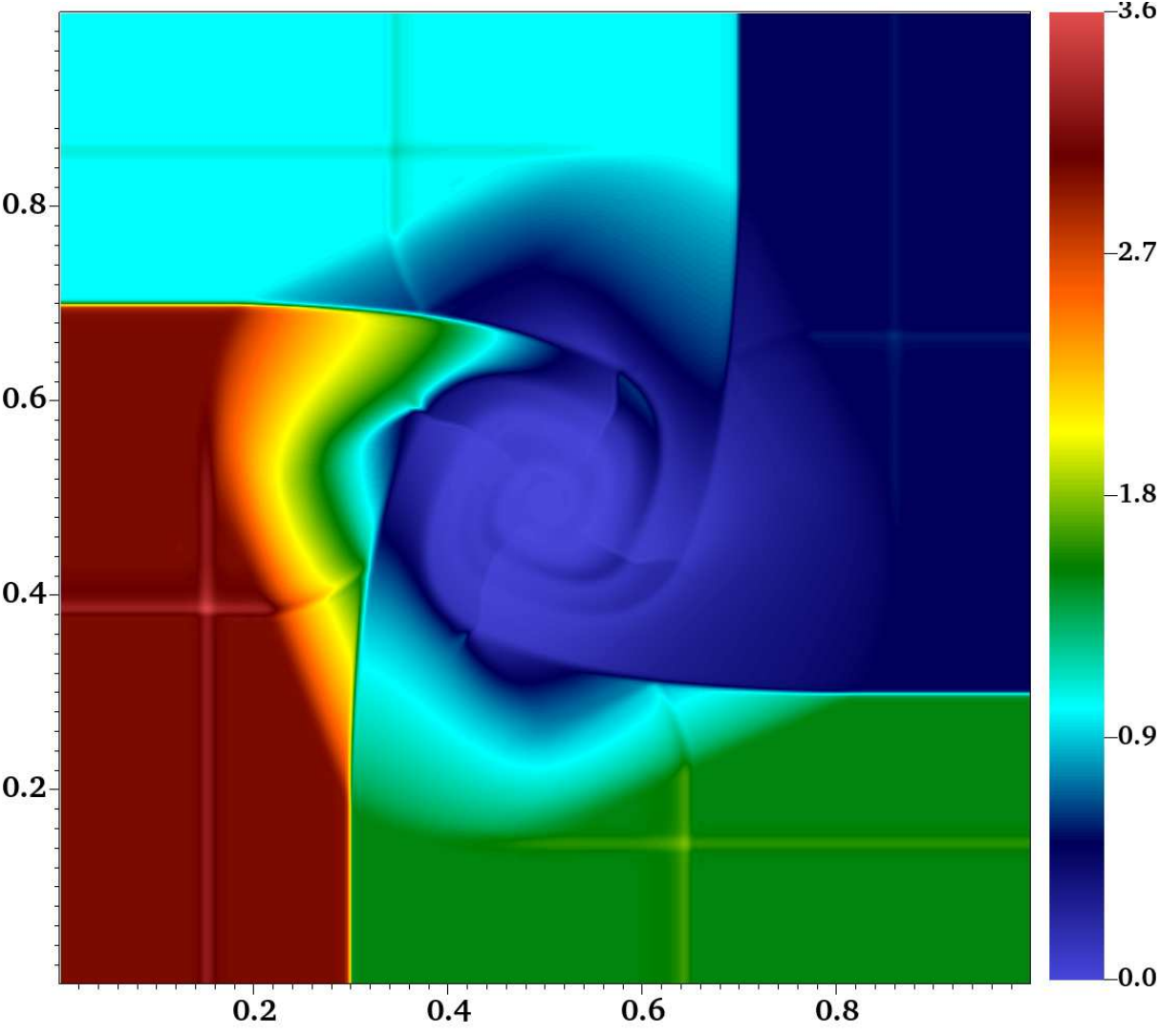}\\
  (g) H1-WENO $(\mathcal{K}=100)$  & (h) H2-WENO $(\mathcal{K}=1)$ & (i) H2-WENO $(\mathcal{K}=100)$ \\
 \end{tabular}
\caption{ Comparison of the troubled-cell indicator at initial and final times for Example \ref{rp1} using H1-WENO and H2-WENO schemes with $\mathcal{K}=1$ and $100$, together with the corresponding density profiles at final time computed on a $400 \times 400$ grid.}
\label{Fig:RP11-H1H2}
\end{figure}

\begin{figure}
 \centering
 \begin{tabular}{cc}
  \includegraphics[width = 7.6cm]{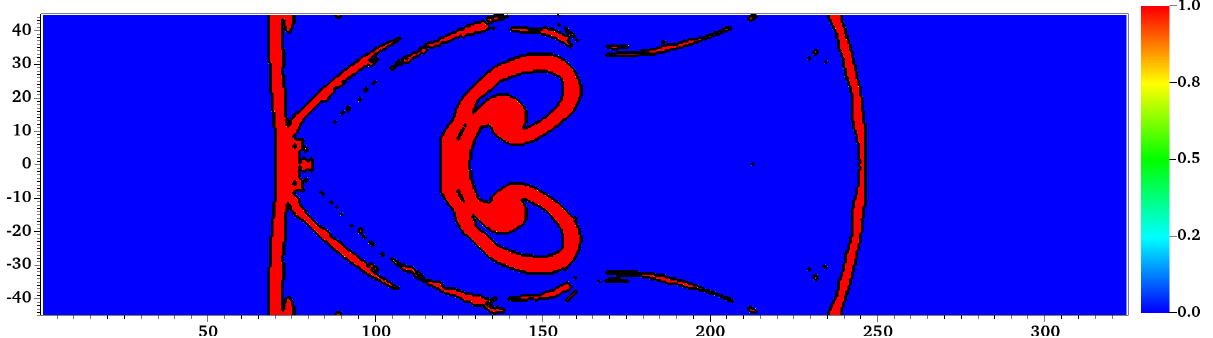} &
  \includegraphics[width = 7.6cm]{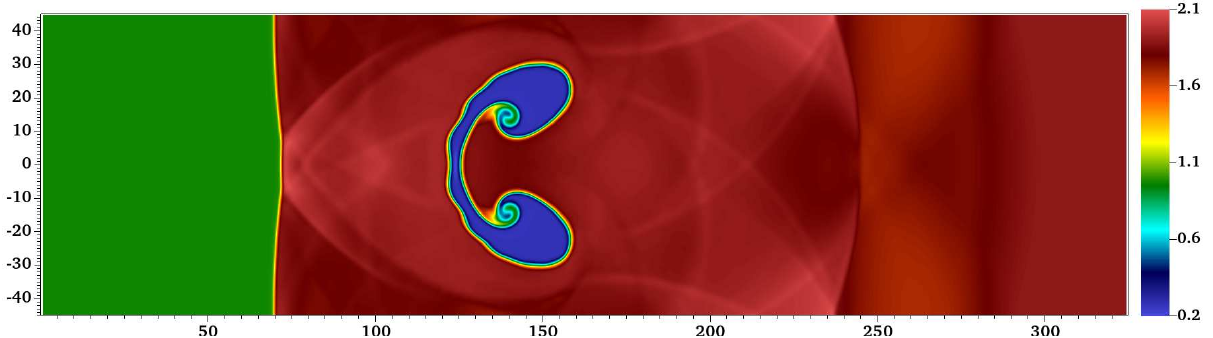}\\
  (a) H1-WENO ($\mathcal{K}=1$) & (b) H1-WENO(Density)\\
  \includegraphics[width = 7.6cm]{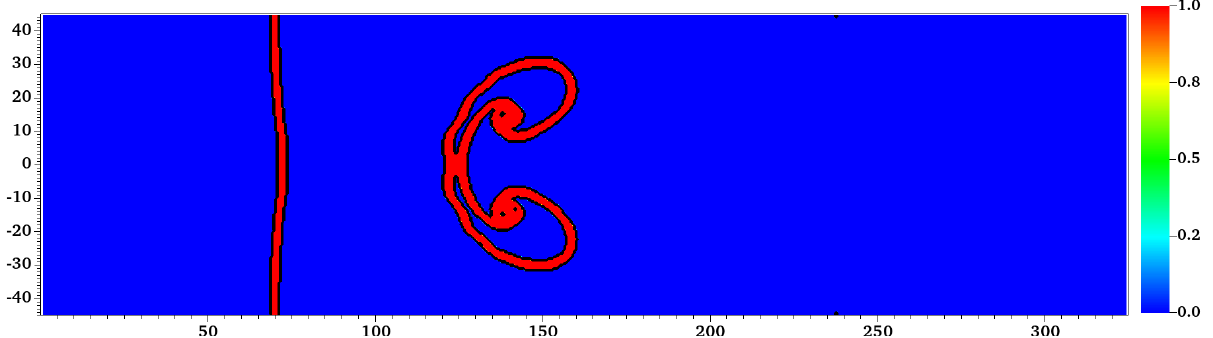} &
  \includegraphics[width = 7.6cm]{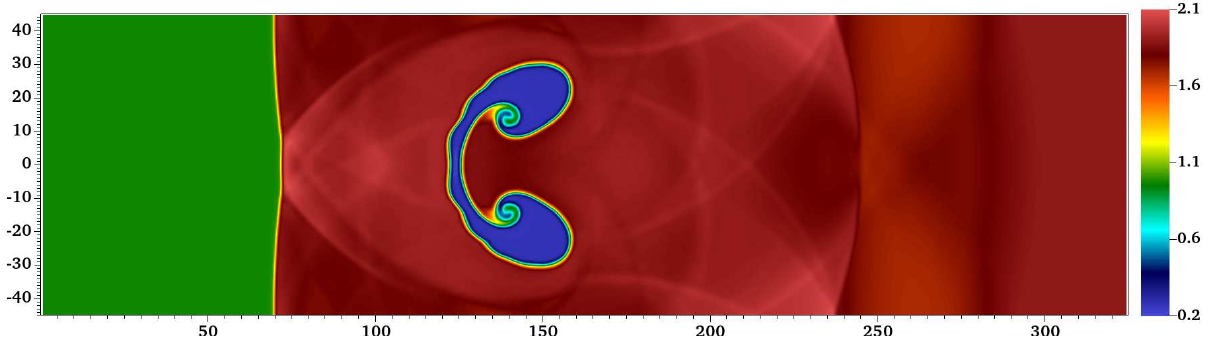}\\
  (c) H1-WENO ($\mathcal{K}=50$) & (d) H1-WENO (Density)\\
  \includegraphics[width = 7.6cm]{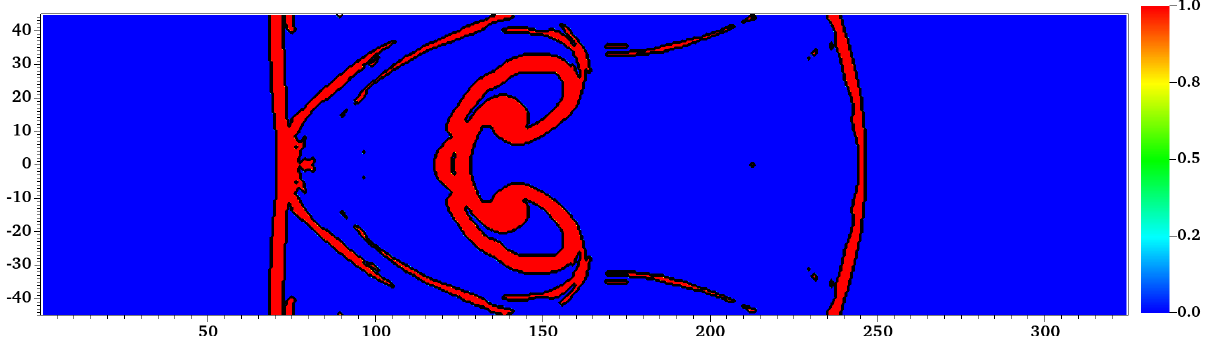} &
  \includegraphics[width = 7.6cm]{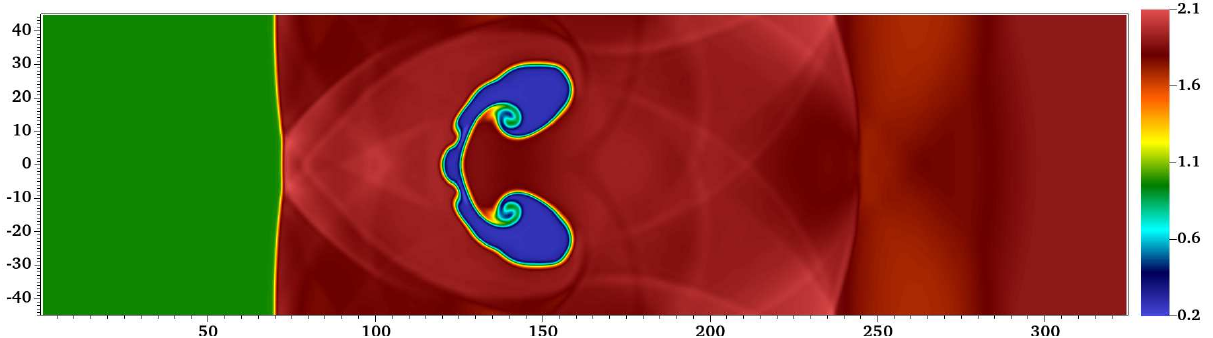}\\
  (e) H2-WENO ($\mathcal{K}=1$) & (f) H2-WENO (Density)\\
  \includegraphics[width = 7.6cm]{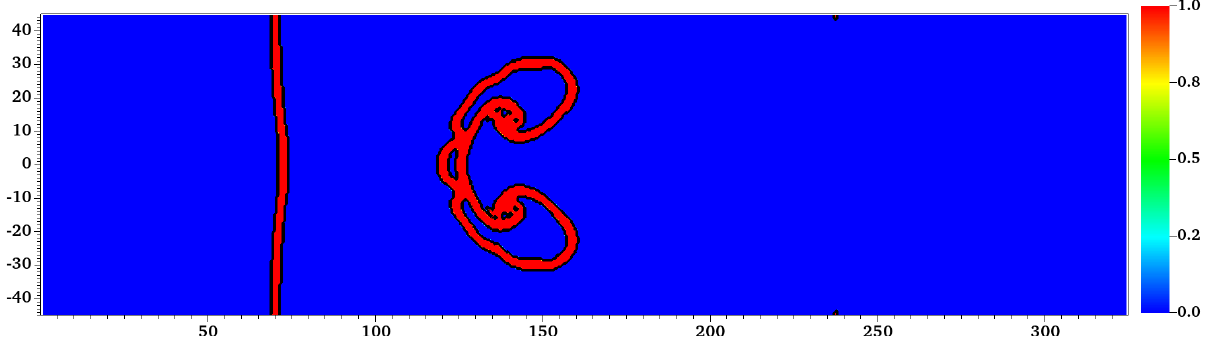} &
  \includegraphics[width = 7.6cm]{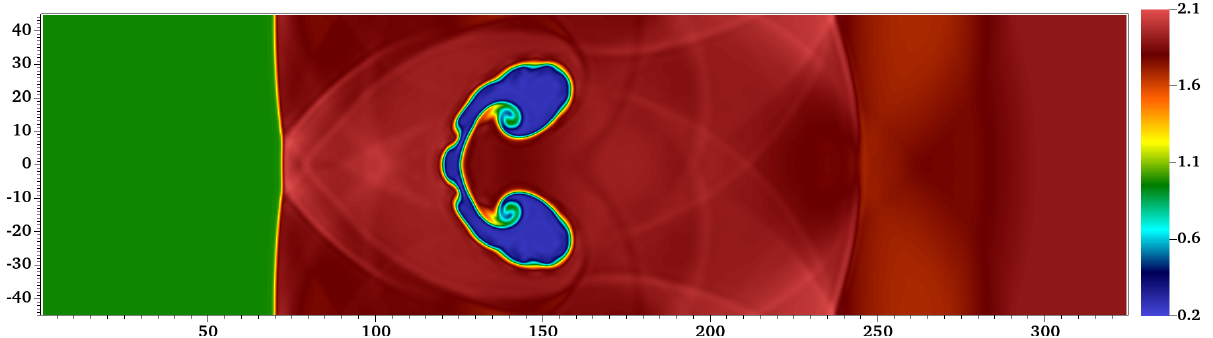}\\
  (g) H2-WENO ($\mathcal{K}=50$)& (h) H2-WENO (Density)\\
 \end{tabular}
\caption{Plots of the troubled-cell indicator and density solution for Example \ref{sb1} with $\mathcal{K}=1$ and $50$ on a $650 \times 180$ mesh, obtained using the H1-WENO and H2-WENO schemes at time $T=450$. }
\label{Fig:dmr}
\label{Fig:sb1_H1H2}
\end{figure}

As discussed in the previous section, for $\mathcal{K}=1$ the hybrid schemes are sufficient to identify troubled cells; however, they also tend to incorrectly flag some smooth cells, which can adversely affect the overall efficiency of the schemes. This necessitates increasing the value of $\mathcal{K}$ to reduce the unwanted marking of smooth regions. On the other hand, choosing a larger $\mathcal{K}$ may result in failure to detect certain discontinuities. Thus, there is a clear trade-off in selecting an appropriate value of the parameter $\mathcal{K}$. 
To investigate this balance, we perform numerical experiments for both H1-WENO and H2-WENO schemes. Furthermore, to analyze the effect of varying $\mathcal{K}$ on the percentage of marked troubled cells, we consider a range of values $\mathcal{K} \in \{1, \ldots, 100\}$. In Figures~\ref{Fig:CPU1D} and \ref{Fig:CPU2D}, we present the speed-up factors of the H1-WENO and H2-WENO schemes for different values of $\mathcal{K}$, relative to their corresponding baseline schemes with $\mathcal{K}=1$, for 1D and 2D test cases, respectively. In addition, we report the percentage of troubled cells identified during the simulations for both H1-WENO and H2-WENO schemes.

From Figures~\ref{Fig:CPU1D}~(a)-(b), we observe that the speed-up factor of both schemes becomes nearly constant for $\mathcal{K} \geq 40$. This behavior is further supported by Figures~\ref{Fig:CPU1D}~(c)--(d), where the percentage of troubled cells also stabilizes beyond $\mathcal{K} \geq 40$. Therefore, increasing $\mathcal{K}$ beyond this threshold does not lead to any significant improvement in the speed-up factor for either scheme. A similar trend is observed for the 2D test cases shown in Figure~\ref{Fig:CPU2D}, where results are presented only for the 2D Riemann problems. This is because Examples~\ref{sb1} and \ref{sb2} violate the positivity of variables when $\mathcal{K} > 50$. Moreover, we observe that for $\mathcal{K} \geq 20$, both the speed-up factor and the percentage of troubled cells marked remain nearly constant.

In order to compare the solutions for $\mathcal{K}=1$ and $\mathcal{K}=100$, we consider Example \ref{test3} and employ the H1-WENO and H2-WENO schemes with 500 mesh points. Figure \ref{Fig:RP1-H1H2} presents the distribution of troubled cells in the $x$–$t$ plane for both values of $\mathcal{K}$, along with the corresponding solution profiles shown in Figure \ref{Fig:RP1-H1H2}~(e)–(f).
It can be observed that the troubled-cell indicators effectively capture the discontinuities in both cases, with no significant difference between $\mathcal{K}=1$ and $\mathcal{K}=100$. Moreover, the solutions obtained using both schemes for these values of $\mathcal{K}$ are in close agreement. In Figure \ref{Fig:RP11-H1H2}, we compare the troubled-cell indicators at the initial and final times, along with the corresponding density profiles obtained using both schemes for $\mathcal{K}=1$ and $\mathcal{K}=100$ on a $400 \times 400$ mesh for Example \ref{rp1}.
It can be observed that the troubled-cell indicator effectively captures the discontinuities in all cases. However, for $\mathcal{K}=1$, some smooth cells are also identified as troubled at the final time, whereas this effect is significantly reduced for $\mathcal{K}=100$.
Furthermore, the density profiles obtained using both schemes for $\mathcal{K}=1$ and $\mathcal{K}=100$ are in close agreement.

In Figure \ref{Fig:sb1_H1H2}, we compare the proposed scheme for two different values of $\mathcal{K}$, namely $\mathcal{K}=1$ and $\mathcal{K}=50$, in terms of the troubled cell indicator and the density variable for Example \ref{sb1}291982. It is observed that, for both schemes, the case $\mathcal{K}=1$ captures the discontinuities along with a small portion of the smooth region. This undesired detection in smooth regions disappears when $\mathcal{K}=50$ is used. In both cases, the density profiles remain comparable. For the H1-WENO scheme, the percentage of troubled cells marked during the simulation is $4.87\%$ for $\mathcal{K} = 1$, which reduces to $1.6\%$ for $\mathcal{K} = 50$. A similar trend is observed for the H2-WENO scheme, where the percentage of troubled cells decreases from $5.05\%$ to $1.69\%$ as $\mathcal{K}$ increases from 1 to 50.
\section{Conclusion}\label{sec:con}
In this work, we proposed a new class of hybrid WENO schemes, namely H1-WENO, H2-WENO, and H3-WENO, for the efficient numerical approximation of SRHD equations. The proposed schemes are based on a new troubled-cell indicator constructed using the smoothness indicators of the WENO scheme. The indicator effectively distinguishes smooth and discontinuous regions, enabling the adaptive use of inexpensive reconstructions in smooth regions and characteristic-wise WENO reconstruction only near discontinuities.
The proposed hybrid strategies were developed using the WENO-AO(5,3) scheme as the underlying reconstruction framework. In particular, the H1-WENO scheme employs component-wise WENO reconstruction in smooth regions, the H2-WENO scheme uses a fifth-order upwind component-wise finite difference approximation, and the H3-WENO scheme adopts a characteristic-wise upwind finite difference method, while all three schemes retain characteristic-wise WENO reconstruction near discontinuities. Numerical experiments demonstrate that the proposed schemes preserve the accuracy, robustness, and non-oscillatory properties of characteristic-wise WENO methods.
We also investigated the relative computational cost associated with non-linear WENO weight evaluation and characteristic decomposition. The numerical results indicate that the computation of eigenvectors required for characteristic decomposition constitutes the major computational expense in WENO-type methods. By restricting characteristic decomposition only to troubled regions, the proposed hybrid schemes significantly improve computational efficiency. In particular, the H1-WENO scheme achieves approximately 40--50\% reduction in computational cost compared with the standard WENO-AO scheme, while H2-WENO and H3-WENO provide improvements of about 30--40\% and 5--10\%, respectively. We have also demonstrated that the parameter used in the troubled-cell indicator does not significantly affect the accuracy or computational efficiency of the proposed schemes, provided it is chosen within a suitable range. Overall, the proposed hybrid WENO schemes provide an effective balance between computational efficiency and numerical accuracy, making them promising candidates for large-scale simulations.
Overall, the proposed hybrid WENO schemes provide an effective balance between computational efficiency and numerical accuracy, making them promising candidates for large-scale simulations involving complex systems of hyperbolic conservation laws.
\section{Acknowledgment} 
The author Rakesh Kumar is supported by Prime Minister Early Career Research Grant (PMECRG) of the Anusandhan National Research Foundation (ANRF), India, under Grant No. ANRF/ECRG/2025/004846/PMS. Biswarup Biswas is supported by the State University Research Excellence (SURE) Scheme of the ANRF, India, under File No. SUR/2022/001786 and Asha Kumari Meena is funded by ANRF with File No: EEQ/2023/000453.
'

\bibliography{comp_bib}

@incollection {kum_20a,
    AUTHOR = {Kumar, Rakesh},
     TITLE = {Hybrid {FDM}-{WENO} method for the convection-diffusion
              problems},
 BOOKTITLE = {Hyperbolic problems: theory, numerics, applications},
    SERIES = {AIMS Ser. Appl. Math.},
    VOLUME = {10},
     PAGES = {515--523},
 PUBLISHER = {Am. Inst. Math. Sci. (AIMS), Springfield, MO},
      YEAR = {[2020] \copyright 2020},
   MRCLASS = {65M06 (35K57 65M22)},
  MRNUMBER = {4362552},
}

@article {kum_cha_22a,
    AUTHOR = {Kumar, Rakesh and Chandrashekar, Praveen},
     TITLE = {Multi-level {WENO} schemes with an adaptive
              characteristic-wise reconstruction for system of {E}uler
              equations},
   JOURNAL = {Comput. \& Fluids},
  FJOURNAL = {Computers \& Fluids. An International Journal},
    VOLUME = {239},
      YEAR = {2022},
     PAGES = {Paper No. 105386, 24},
      ISSN = {0045-7930},
   MRCLASS = {76M12 (76Nxx)},
  MRNUMBER = {4396537},
       DOI = {10.1016/j.compfluid.2022.105386},
       URL = {https://doi.org/10.1016/j.compfluid.2022.105386},
}

@article {aru_etal_22a,
    AUTHOR = {Arun, K R and Dond, Asha K. and Kumar, Rakesh},
     TITLE = {WENO Smoothness Indicator Based Troubled-cell Indicator for Hyperbolic Conservation Laws},
   JOURNAL = {},
  FJOURNAL = {Computers \& Fluids. An International Journal},
    VOLUME = {},
      YEAR = {2023},
     PAGES = {},
      ISSN = {0045-7930},
   MRCLASS = {76M12 (76Nxx)},
  MRNUMBER = {4396537},
       DOI = {},
       URL = {https://doi.org/10.1016/j.compfluid.2022.105386},
}

@article {zhao-etal_20a,
    AUTHOR = {Zhao, Guo-Yan and Sun, Ming-Bo and Pirozzoli, Sergio},
     TITLE = {On shock sensors for hybrid compact/{WENO} schemes},
   JOURNAL = {Comput. \& Fluids},
  FJOURNAL = {Computers \& Fluids. An International Journal},
    VOLUME = {199},
      YEAR = {2020},
     PAGES = {104439, 17},
      ISSN = {0045-7930},
   MRCLASS = {76M20 (76L05)},
  MRNUMBER = {4057020},
       DOI = {10.1016/j.compfluid.2020.104439},
       URL = {https://doi.org/10.1016/j.compfluid.2020.104439},
}

@article {fu-etal_14a,
    AUTHOR = {Fu, Huankun and Wang, Zhengjie and Yan, Yonghua and Liu,
              Chaoqun},
     TITLE = {Modified weighted compact scheme with global weights for shock
              capturing},
   JOURNAL = {Comput. \& Fluids},
  FJOURNAL = {Computers \& Fluids. An International Journal},
    VOLUME = {96},
      YEAR = {2014},
     PAGES = {165--176},
      ISSN = {0045-7930},
   MRCLASS = {65M06 (76L05)},
  MRNUMBER = {3198071},
       DOI = {10.1016/j.compfluid.2014.02.022},
       URL = {https://doi.org/10.1016/j.compfluid.2014.02.022},
}

@article{mov-joh_13a,
title = {A solution-adaptive method for efficient compressible multifluid simulations, with application to the Richtmyer–Meshkov instability},
journal = {Journal of Computational Physics},
volume = {239},
pages = {166-186},
year = {2013},
issn = {0021-9991},
doi = {https://doi.org/10.1016/j.jcp.2013.01.016},
url = {https://www.sciencedirect.com/science/article/pii/S0021999113000478},
author = {Pooya Movahed and Eric Johnsen},
}

@inbook{vis-gai_05a,
author = {Miguel Visbal and Datta Gaitonde},
title = {Shock Capturing Using Compact-Differencing-Based Methods},
booktitle = {43rd AIAA Aerospace Sciences Meeting and Exhibit},
chapter = {},
pages = {},
doi = {10.2514/6.2005-1265},
URL = {https://arc.aiaa.org/doi/abs/10.2514/6.2005-1265},
eprint = {https://arc.aiaa.org/doi/pdf/10.2514/6.2005-1265}
}

@article{hil-pul_04a,
title = {Hybrid tuned center-difference-WENO method for large eddy simulations in the presence of strong shocks},
journal = {Journal of Computational Physics},
volume = {194},
number = {2},
pages = {435-450},
year = {2004},
issn = {0021-9991},
doi = {https://doi.org/10.1016/j.jcp.2003.07.032},
url = {https://www.sciencedirect.com/science/article/pii/S002199910300490X},
author = {D.J Hill and D.I Pullin},
}

@article {cra-etal_18a,
    AUTHOR = {Cravero, I. and Puppo, G. and Semplice, M. and Visconti, G.},
     TITLE = {C{WENO}: uniformly accurate reconstructions for balance laws},
   JOURNAL = {Math. Comp.},
  FJOURNAL = {Mathematics of Computation},
    VOLUME = {87},
      YEAR = {2018},
    NUMBER = {312},
     PAGES = {1689--1719},
      ISSN = {0025-5718},
   MRCLASS = {65M08 (65M12)},
  MRNUMBER = {3787389},
MRREVIEWER = {Zhongming Wang},
       DOI = {10.1090/mcom/3273},
       URL = {https://doi.org/10.1090/mcom/3273},
}

@article {cra-etal_19a,
    AUTHOR = {Cravero, I. and Semplice, M. and Visconti, G.},
     TITLE = {Optimal definition of the nonlinear weights in
              multidimensional central {WENOZ} reconstructions},
   JOURNAL = {SIAM J. Numer. Anal.},
  FJOURNAL = {SIAM Journal on Numerical Analysis},
    VOLUME = {57},
      YEAR = {2019},
    NUMBER = {5},
     PAGES = {2328--2358},
      ISSN = {0036-1429},
   MRCLASS = {65M08 (65M20)},
  MRNUMBER = {4013927},
MRREVIEWER = {Huazhong Tang},
       DOI = {10.1137/18M1228232},
       URL = {https://doi.org/10.1137/18M1228232},
}

@article {dum-etal_17a,
    AUTHOR = {Dumbser, Michael and Boscheri, Walter and Semplice, Matteo and
              Russo, Giovanni},
     TITLE = {Central weighted {ENO} schemes for hyperbolic conservation
              laws on fixed and moving unstructured meshes},
   JOURNAL = {SIAM J. Sci. Comput.},
  FJOURNAL = {SIAM Journal on Scientific Computing},
    VOLUME = {39},
      YEAR = {2017},
    NUMBER = {6},
     PAGES = {A2564--A2591},
      ISSN = {1064-8275},
   MRCLASS = {65M08 (35L60 35L65 65Y05 76N15)},
  MRNUMBER = {3723333},
MRREVIEWER = {Benjamin Ivorra},
       DOI = {10.1137/17M1111036},
       URL = {https://doi.org/10.1137/17M1111036},
}

@article {kum_18a,
    AUTHOR = {Kumar, Rakesh},
     TITLE = {Adaptive semi-discrete formulation of {BSQI}-{WENO} scheme for
              the modified {B}urgers' equation},
   JOURNAL = {BIT},
  FJOURNAL = {BIT. Numerical Mathematics},
    VOLUME = {58},
      YEAR = {2018},
    NUMBER = {1},
     PAGES = {103--132},
      ISSN = {0006-3835},
   MRCLASS = {65M70 (65D07 65M22)},
  MRNUMBER = {3771438},
MRREVIEWER = {Gonzalo Rubio},
       DOI = {10.1007/s10543-017-0675-8},
       URL = {https://doi.org/10.1007/s10543-017-0675-8},
}

@article {kum-cha_19a,
    AUTHOR = {Kumar, Rakesh and Chandrashekar, Praveen},
     TITLE = {Efficient seventh order {WENO} schemes of adaptive order for
              hyperbolic conservation laws},
   JOURNAL = {Comput. \& Fluids},
  FJOURNAL = {Computers \& Fluids. An International Journal},
    VOLUME = {190},
      YEAR = {2019},
     PAGES = {49--76},
      ISSN = {0045-7930},
   MRCLASS = {65M06 (76M20 76Nxx)},
  MRNUMBER = {3961109},
       DOI = {10.1016/j.compfluid.2019.06.003},
       URL = {https://doi.org/10.1016/j.compfluid.2019.06.003},
}

@incollection {shu-97d,
    AUTHOR = {Shu, Chi-Wang},
     TITLE = {Essentially non-oscillatory and weighted essentially
              non-oscillatory schemes for hyperbolic conservation laws},
 BOOKTITLE = {Advanced numerical approximation of nonlinear hyperbolic
              equations ({C}etraro, 1997)},
    SERIES = {Lecture Notes in Math.},
    VOLUME = {1697},
     PAGES = {325--432},
 PUBLISHER = {Springer, Berlin},
      YEAR = {1998},
   MRCLASS = {65Mxx (65-02 76M20)},
  MRNUMBER = {1728856},
MRREVIEWER = {Shi Jin},
       DOI = {10.1007/BFb0096355},
       URL = {https://doi.org/10.1007/BFb0096355},
}

@article {shu-osh_88a,
    AUTHOR = {Shu, Chi-Wang and Osher, Stanley},
     TITLE = {Efficient implementation of essentially nonoscillatory
              shock-capturing schemes},
   JOURNAL = {J. Comput. Phys.},
  FJOURNAL = {Journal of Computational Physics},
    VOLUME = {77},
      YEAR = {1988},
    NUMBER = {2},
     PAGES = {439--471},
      ISSN = {0021-9991},
   MRCLASS = {65M05 (35L65)},
  MRNUMBER = {954915},
       DOI = {10.1016/0021-9991(88)90177-5},
       URL = {https://doi.org/10.1016/0021-9991(88)90177-5},
}

@article {jia-shu_96a,
    AUTHOR = {Jiang, Guang-Shan and Shu, Chi-Wang},
     TITLE = {Efficient implementation of weighted ENO schemes},
   JOURNAL = {J. Comput. Phys.},
  FJOURNAL = {Journal of Computational Physics},
    VOLUME = {126},
      YEAR = {1996},
    NUMBER = {1},
     PAGES = {202--228},
      ISSN = {0021-9991},
   MRCLASS = {65M06 (65Y05)},
  MRNUMBER = {1391627},
       URL = {https://doi.org/10.1006/jcph.1996.0130},
}

@article {bal-shu_00a,
    AUTHOR = {Balsara, Dinshaw S. and Shu, Chi-Wang},
     TITLE = {Monotonicity preserving weighted essentially non-oscillatory
              schemes with increasingly high order of accuracy},
   JOURNAL = {J. Comput. Phys.},
  FJOURNAL = {Journal of Computational Physics},
    VOLUME = {160},
      YEAR = {2000},
    NUMBER = {2},
     PAGES = {405--452},
      ISSN = {0021-9991},
   MRCLASS = {65L06 (76M20)},
  MRNUMBER = {1763821},
       URL = {https://doi.org/10.1006/jcph.2000.6443},
}

@article {ren-etal_03a,
    AUTHOR = {Ren, Yu-Xin and Liu, Miao'er and Zhang, Hanxin},
     TITLE = {A characteristic-wise hybrid compact-{WENO} scheme for solving
              hyperbolic conservation laws},
   JOURNAL = {J. Comput. Phys.},
  FJOURNAL = {Journal of Computational Physics},
    VOLUME = {192},
      YEAR = {2003},
    NUMBER = {2},
     PAGES = {365--386},
      ISSN = {0021-9991},
   MRCLASS = {65M25 (65M06 76L05 76M20 76N15)},
  MRNUMBER = {2020743},
       URL = {https://doi.org/10.1016/j.jcp.2003.07.006},
}

@article {bor-etal_08a,
    AUTHOR = {Borges, Rafael and Carmona, Monique and Costa, Bruno and Don,
              Wai Sun},
     TITLE = {An improved weighted essentially non-oscillatory scheme for
              hyperbolic conservation laws},
   JOURNAL = {J. Comput. Phys.},
  FJOURNAL = {Journal of Computational Physics},
    VOLUME = {227},
      YEAR = {2008},
    NUMBER = {6},
     PAGES = {3191--3211},
      ISSN = {0021-9991},
   MRCLASS = {65M06 (35L65)},
  MRNUMBER = {2392730},
MRREVIEWER = {Allaberen Ashyralyev},
       URL = {https://doi.org/10.1016/j.jcp.2007.11.038},
}

@article {bal-etal_20a,
    AUTHOR = {Balsara, Dinshaw S. and Garain, Sudip and Florinski, Vladimir
              and Boscheri, Walter},
     TITLE = {An efficient class of {WENO} schemes with adaptive order for
              unstructured meshes},
   JOURNAL = {J. Comput. Phys.},
  FJOURNAL = {Journal of Computational Physics},
    VOLUME = {404},
      YEAR = {2020},
     PAGES = {109062, 32},
      ISSN = {0021-9991},
   MRCLASS = {65N06},
  MRNUMBER = {4044720},
       DOI = {10.1016/j.jcp.2019.109062},
       URL = {https://doi.org/10.1016/j.jcp.2019.109062},
}

@article {bal-etal_16a,
    AUTHOR = {Balsara, Dinshaw S. and Garain, Sudip and Shu, Chi-Wang},
     TITLE = {An efficient class of {WENO} schemes with adaptive order},
   JOURNAL = {J. Comput. Phys.},
  FJOURNAL = {Journal of Computational Physics},
    VOLUME = {326},
      YEAR = {2016},
     PAGES = {780--804},
      ISSN = {0021-9991},
   MRCLASS = {65M06},
  MRNUMBER = {3554165},
       DOI = {10.1016/j.jcp.2016.09.009},
       URL = {https://doi.org/10.1016/j.jcp.2016.09.009},
}

@incollection {pup_03a,
    AUTHOR = {Puppo, Gabriella},
     TITLE = {Adaptive application of characteristic projection for central
              schemes},
 BOOKTITLE = {Hyperbolic problems: theory, numerics, applications},
     PAGES = {819--829},
 PUBLISHER = {Springer, Berlin},
      YEAR = {2003},
   MRCLASS = {65M25},
  MRNUMBER = {2053230},
}

@article {jun-etal_19a,
    AUTHOR = {Peng, Jun and Zhai, Chuanlei and Ni, Guoxi and Yong, Heng and
              Shen, Yiqing},
     TITLE = {An adaptive characteristic-wise reconstruction {WENO}-{Z}
              scheme for gas dynamic euler equations},
   JOURNAL = {Comput. \& Fluids},
  FJOURNAL = {Computers \& Fluids. An International Journal},
    VOLUME = {179},
      YEAR = {2019},
     PAGES = {34--51},
      ISSN = {0045-7930},
   MRCLASS = {65M06 (76M20 76N15)},
  MRNUMBER = {3872384},
       DOI = {10.1016/j.compfluid.2018.08.008},
       URL = {https://doi.org/10.1016/j.compfluid.2018.08.008},
}

@article{don-bor_13a,
title = "Accuracy of the weighted essentially non-oscillatory
conservative finite difference schemes",
journal = "Journal of Computational Physics",
volume = "250",
number = "Supplement C",
pages = "347 - 372",
year = "2013",
issn = "0021-9991",
doi = "https://doi.org/10.1016/j.jcp.2013.05.018",
url = "http://www.sciencedirect.com/science/article/pii/S0021999113003501",
author = "Wai-Sun Don and Rafael Borges"
}

@article{cas-etal_11a,
title = "High order weighted essentially non-oscillatory WENO-Z
schemes for hyperbolic conservation laws",
journal = "Journal of Computational Physics",
volume = "230",
number = "5",
pages = "1766 - 1792",
year = "2011",
issn = "0021-9991",
doi = "https://doi.org/10.1016/j.jcp.2010.11.028",
url = "http://www.sciencedirect.com/science/article/pii/S0021999110006431",
author = "Marcos Castro and Bruno Costa and Wai Sun Don"
}

@article{shu-osh_89a,
title = "Efficient implementation of essentially non-oscillatory shock-capturing schemes, II",
journal = "Journal of Computational Physics",
volume = "83",
number = "1",
pages = "32 - 78",
year = "1989",
issn = "0021-9991",
doi = "https://doi.org/10.1016/0021-9991(89)90222-2",
url = "http://www.sciencedirect.com/science/article/pii/0021999189902222",
author = "Chi-Wang Shu and Stanley Osher"
}

@article {kum-cha_18a,
    AUTHOR = {Kumar, Rakesh and Chandrashekar, Praveen},
     TITLE = {Simple smoothness indicator and multi-level adaptive order
              {WENO} scheme for hyperbolic conservation laws},
   JOURNAL = {J. Comput. Phys.},
  FJOURNAL = {Journal of Computational Physics},
    VOLUME = {375},
      YEAR = {2018},
     PAGES = {1059--1090},
      ISSN = {0021-9991},
   MRCLASS = {65M06 (76M20)},
  MRNUMBER = {3874573},
       DOI = {10.1016/j.jcp.2018.09.027},
       URL = {https://doi.org/10.1016/j.jcp.2018.09.027},
}

@article{Arb-etal_17a,
author = {Arbogast, T. and Huang, C. and Zhao, X.},
title = {Accuracy of WENO and Adaptive Order WENO Reconstructions for Solving Conservation Laws},
journal = {SIAM Journal on Numerical Analysis},
volume = {56},
number = {3},
pages = {1818-1847},
year = {2018},
doi = {10.1137/17M1154758},

URL = { 
        https://doi.org/10.1137/17M1154758
    
},
eprint = { 
        https://doi.org/10.1137/17M1154758
    
}

}

@article {ger-etal_09a,
    AUTHOR = {Gerolymos, G. A. and S\'en\'echal, D. and Vallet, I.},
     TITLE = {Very-high-order {WENO} schemes},
   JOURNAL = {J. Comput. Phys.},
  FJOURNAL = {Journal of Computational Physics},
    VOLUME = {228},
      YEAR = {2009},
    NUMBER = {23},
     PAGES = {8481--8524},
      ISSN = {0021-9991},
   MRCLASS = {65M06},
  MRNUMBER = {2558763},
       URL = {https://doi.org/10.1016/j.jcp.2009.07.039},
}

@article{har-etal_97a,
title = "Uniformly High Order Accurate Essentially Non-oscillatory Schemes, III",
journal = "Journal of Computational Physics",
volume = "71",
number = "1",
pages = "231-303",
year = "1987",
issn = "0021-9991",
doi = "https://doi.org/10.1006/jcph.1996.5632",
url = "http://www.sciencedirect.com/science/article/pii/S0021999196956326",
author = "Ami Harten and Bjorn Engquist and Stanley Osher and Sukumar R. Chakravarthy"
}

@article{liu-etal_94a,
title = "Weighted Essentially Non-oscillatory Schemes",
journal = "Journal of Computational Physics",
volume = "115",
number = "1",
pages = "200 - 212",
year = "1994",
issn = "0021-9991",
doi = "https://doi.org/10.1006/jcph.1994.1187",
url = "http://www.sciencedirect.com/science/article/pii/S0021999184711879",
author = "Xu-Dong Liu and Stanley Osher and Tony Chan"
}

@article {cra-sem_16a,
    AUTHOR = {Cravero, I. and Semplice, M.},
     TITLE = {On the accuracy of {WENO} and {CWENO} reconstructions of third
              order on nonuniform meshes},
   JOURNAL = {J. Sci. Comput.},
  FJOURNAL = {Journal of Scientific Computing},
    VOLUME = {67},
      YEAR = {2016},
    NUMBER = {3},
     PAGES = {1219--1246},
      ISSN = {0885-7474},
   MRCLASS = {65M06 (65M08)},
  MRNUMBER = {3493501},
MRREVIEWER = {Erc\'\i lia Sousa},
       URL = {https://doi.org/10.1007/s10915-015-0123-3},
}

@article {cas-sem_19a,
    AUTHOR = {Castro, Manuel J. and Semplice, Matteo},
     TITLE = {Third- and fourth-order well-balanced schemes for the shallow
              water equations based on the {CWENO} reconstruction},
   JOURNAL = {Internat. J. Numer. Methods Fluids},
  FJOURNAL = {International Journal for Numerical Methods in Fluids},
    VOLUME = {89},
      YEAR = {2019},
    NUMBER = {8},
     PAGES = {304--325},
      ISSN = {0271-2091},
   MRCLASS = {65M08 (76M12)},
  MRNUMBER = {3911483},
       DOI = {10.1002/fld.4700},
       URL = {https://doi.org/10.1002/fld.4700},
}

@article {sem-vis_20a,
    AUTHOR = {Semplice, M. and Visconti, G.},
     TITLE = {Efficient {I}mplementation of {A}daptive {O}rder
              {R}econstructions},
   JOURNAL = {J. Sci. Comput.},
  FJOURNAL = {Journal of Scientific Computing},
    VOLUME = {83},
      YEAR = {2020},
    NUMBER = {1},
     PAGES = {Paper No. 6, 27},
      ISSN = {0885-7474},
   MRCLASS = {65M08 (65M12 76M12)},
  MRNUMBER = {4078386},
       DOI = {10.1007/s10915-020-01156-6},
       URL = {https://doi.org/10.1007/s10915-020-01156-6},
}

@article {kol_14a,
    AUTHOR = {Kolb, Oliver},
     TITLE = {On the full and global accuracy of a compact third order
              {WENO} scheme},
   JOURNAL = {SIAM J. Numer. Anal.},
  FJOURNAL = {SIAM Journal on Numerical Analysis},
    VOLUME = {52},
      YEAR = {2014},
    NUMBER = {5},
     PAGES = {2335--2355},
      ISSN = {0036-1429},
   MRCLASS = {65M08 (65M12)},
  MRNUMBER = {3262606},
MRREVIEWER = {Qing Fang},
       URL = {https://doi.org/10.1137/130947568},
}

@article{ryu2006equation,
  title={Equation of state in numerical relativistic hydrodynamics},
  author={Ryu, Dongsu and Chattopadhyay, Indranil and Choi, Eunwoo},
  journal={The Astrophysical Journal Supplement Series},
  volume={166},
  number={1},
  pages={410},
  year={2006},
  publisher={IOP Publishing}
}

@article{Duan2020,
   author = {Junming Duan and Huazhong Tang},
   doi = {10.4208/aamm.OA-2019-0124},
   issn = {2070-0733},
   issue = {1},
   journal = {Advances in Applied Mathematics and Mechanics},
   month = {6},
   pages = {1-29},
   title = {High-Order Accurate Entropy Stable Finite Difference Schemes for One- and Two-Dimensional Special Relativistic Hydrodynamics},
   volume = {12},
   year = {2020}
}

@InCollection{landau1987,
  author	= {Landau, L. D. and Lifshitz, E.M.},
  booktitle	= {Fluid Mechanics},
  doi		= {10.1016/b978-0-08-033933-7.50023-4},
  edition	= {Second},
  isbn		= {978-0-08-033933-7},
  pages		= {505--514},
  publisher	= {Elsevier},
  title		= {Chapter 15: Relativistic Fluid Dynamics},
  year		= {1987}
}

@Article{	  begelman1984theory,
  author	= {Begelman, Mitchell C. and Blandford, Roger D. and Rees,
		  Martin J.},
  doi		= {10.1103/RevModPhys.56.255},
  issn		= {00346861},
  journal	= {Reviews of Modern Physics},
  number	= {2},
  pages		= {255--351},
  publisher	= {APS},
  title		= {{Theory of extragalactic radio sources}},
  volume	= {56},
  year		= {1984}
}

@Article{	  mirabel1999sources,
  archiveprefix	= {arXiv},
  arxivid	= {astro-ph/9902062},
  author	= {Mirabel, I. F. and Rodr{\'{i}}guez, L. F.},
  doi		= {10.1146/annurev.astro.37.1.409},
  eprint	= {9902062},
  issn		= {0066-4146},
  journal	= {Annual Review of Astronomy and Astrophysics},
  number	= {1},
  pages		= {409--443},
  primaryclass	= {astro-ph},
  publisher	= {Annual Reviews 4139 El Camino Way, PO Box 10139, Palo
		  Alto, CA 94303-0139, USA},
  title		= {{Sources of Relativistic Jets in the Galaxy}},
  volume	= {37},
  year		= {1999}
}

@Article{	  zensus1997parsec,
  author	= {Zensus, J. Anton},
  doi		= {10.1146/annurev.astro.35.1.607},
  issn		= {0066-4146},
  journal	= {Annual Review of Astronomy and Astrophysics},
  number	= {1},
  pages		= {607--636},
  publisher	= {Annual Reviews 4139 El Camino Way, PO Box 10139, Palo
		  Alto, CA 94303-0139, USA},
  title		= {{Parsec-Scale Jets in Extragalactic Radio Sources}},
  volume	= {35},
  year		= {1997}
}

@Book{bottcher2012relativistic,
  address	= {Weinheim, Germany},
  author	= {B{\"{o}}ttcher, Markus and Harris, Daniel E. and
		  Krawczynski, Henric},
  booktitle	= {Relativistic Jets from Active Galactic Nuclei},
  doi		= {10.1002/9783527641741},
  editor	= {B{\"{o}}ttcher, Markus and Harris, Daniel E. and
		  Krawczynski, Henric},
  isbn		= {9783527410378},
  publisher	= {Wiley-VCH Verlag GmbH \& Co. KGaA},
  title		= {{Relativistic Jets from Active Galactic Nuclei}},
  volume	= {1},
  year		= {2012}
}

@Book{anile,
  author	= {Anile, A.M},
  publisher	= {Cambridge University Press, Cambridge},
  title		= {Relativistic Fluids and Magneto-Fluids: With Applications
		  in Astrophysics and Plasma Physics},
  year		= {1989}
}

@Article{	  deepak2019entropy,
  author	= {Bhoriya, Deepak and Kumar, Harish},
  journal	= {Zeitschrift f{\"{u}}r angewandte Mathematik und Physik},
  doi		= {10.1007/s00033-020-1250-8},
  issn		= {0044-2275},
  number	= {1},
  pages		= {29},
  title		= {{Entropy-stable schemes for relativistic hydrodynamics
		  equations}},
  volume	= {71},
  year		= {2020}
}

@Article{	  dai1997iterative,
  author	= {Dai, Wenlong and Woodward, Paul R.},
  doi		= {10.1137/S1064827595282234},
  issn		= {10648275},
  journal	= {SIAM Journal of Scientific Computing},
  number	= {4},
  pages		= {982--995},
  publisher	= {SIAM},
  title		= {{An iterative Riemann solver for relativistic
		  hydrodynamics}},
  volume	= {18},
  year		= {1997}
}

@Article{	  mignone2005piecewise,
  archiveprefix	= {arXiv},
  arxivid	= {astro-ph/0505200},
  author	= {Mignone, A. and Plewa, T. and Bodo, G.},
  doi		= {10.1086/430905},
  eprint	= {0505200},
  issn		= {0067-0049},
  journal	= {The Astrophysical Journal Supplement Series},
  number	= {1},
  pages		= {199--219},
  primaryclass	= {astro-ph},
  publisher	= {IOP Publishing},
  title		= {{The Piecewise Parabolic Method for Multidimensional
		  Relativistic Fluid Dynamics}},
  volume	= {160},
  year		= {2005}
}

@Article{	  wilson1972numerical,
  author	= {Wilson, James R.},
  doi		= {10.1086/151434},
  issn		= {0004-637X},
  journal	= {The Astrophysical Journal},
  pages		= {431},
  title		= {{Numerical Study of Fluid Flow in a Kerr Space}},
  volume	= {173},
  year		= {1972}
}

@Article{mignone2005hllc,
  archiveprefix	= {arXiv},
  arxivid	= {astro-ph/0506414},
  author	= {Mignone, A. and Bodo, G.},
  doi		= {10.1111/j.1365-2966.2005.09546.x},
  eprint	= {0506414},
  issn		= {00358711},
  journal	= {Monthly Notices of the Royal Astronomical Society},
  number	= {1},
  pages		= {126--136},
  primaryclass	= {astro-ph},
  title		= {{An HLLC Riemann solver for relativistic flows -I.
		  Hydrodynamics}},
  volume	= {364},
  year		= {2005}
}

@Article{	  del2002efficient,
  author	= {{Del Zanna}, L. and Bucciantini, N. and Londrillo, P.},
  doi		= {10.1051/0004-6361:20021641},
  issn		= {00046361},
  journal	= {Astronomy and Astrophysics},
  number	= {2},
  pages		= {397--413},
  title		= {{An efficient shock-capturing central-type scheme for
		  multidimensional relativistic flows II.
		  Magnetohydrodynamics}},
  volume	= {400},
  year		= {2003}
}

@Article{	  weno2007tchekhovskoy,
  archiveprefix	= {arXiv},
  arxivid	= {0704.2608},
  author	= {Tchekhovskoy, Alexander and McKinney, Jonathan C. and
		  Narayan, Ramesh},
  doi		= {10.1111/j.1365-2966.2007.11876.x},
  eprint	= {0704.2608},
  issn		= {00358711},
  journal	= {Monthly Notices of the Royal Astronomical Society},
  number	= {2},
  pages		= {469--497},
  title		= {{wham: A WENO-based general relativistic numerical scheme
		  - I. Hydrodynamics}},
  volume	= {379},
  year		= {2007}
}

@Article{	  wu2015high,
  archiveprefix	= {arXiv},
  arxivid	= {1504.07707},
  author	= {Wu, Kailiang and Tang, Huazhong},
  doi		= {10.1016/j.jcp.2015.06.012},
  eprint	= {1504.07707},
  issn		= {10902716},
  journal	= {Journal of Computational Physics},
  pages		= {539--564},
  publisher	= {Elsevier},
  title		= {{High-order accurate physical-constraints-preserving
		  finite difference WENO schemes for special relativistic
		  hydrodynamics}},
  volume	= {298},
  year		= {2015}
}

@Article{	  marti1996extension,
  author	= {Mart{\'{i}}, Jos{\'{e}} Ma and M{\"{u}}ller, Ewald},
  doi		= {10.1006/jcph.1996.0001},
  issn		= {00219991},
  journal	= {Journal of Computational Physics},
  number	= {1},
  pages		= {1--14},
  title		= {{Extension of the piecewise parabolic method to
		  one-dimensional relativistic hydrodynamics}},
  volume	= {123},
  year		= {1996}
}

@Article{	  aloy1999genesis,
  author	= {Aloy, M. A. and Ibanez, J. M. and Marti, J. M. and Muller,
		  E.},
  doi		= {10.1086/313214},
  issn		= {0067-0049},
  journal	= {The Astrophysical Journal Supplement Series},
  number	= {1},
  pages		= {151--166},
  publisher	= {IOP Publishing},
  title		= {{GENESIS: A High‐Resolution Code for Three‐dimensional
		  Relativistic Hydrodynamics}},
  volume	= {122},
  year		= {1999}
}

@Article{	  marti2003numerical,
  author	= {Mart{\'{i}}, Jos{\'{e}} Maria and M{\"{u}}ller, Ewald},
  doi		= {10.12942/lrr-2003-7},
  issn		= {14338351},
  journal	= {Living Reviews in Relativity},
  number	= {1},
  pages		= {7},
  title		= {{Numerical hydrodynamics in special relativity}},
  volume	= {6},
  year		= {2003}
}

@Article{	  ibanez1999riemann,
  author	= {Ib{\'{a}}{\~{n}}ez, J. M.A. and Mart{\'{i}}, J. M.A.},
  doi		= {10.1016/S0377-0427(99)00158-2},
  issn		= {03770427},
  journal	= {Journal of Computational and Applied Mathematics},
  number	= {1-2},
  pages		= {173--211},
  publisher	= {Elsevier},
  title		= {{Riemann solvers in relativistic astrophysics}},
  volume	= {109},
  year		= {1999}
}

@Article{	  marti1994analytical,
  author	= {Mart{\'{i}}, Jos{\'{e}} M. and M{\"{u}}ller, Ewald},
  doi		= {10.1017/S0022112094003344},
  issn		= {14697645},
  journal	= {Journal of Fluid Mechanics},
  pages		= {317--333},
  publisher	= {Cambridge University Press},
  title		= {{The analytical solution of the Riemann problem in
		  relativistic hydrodynamics}},
  volume	= {258},
  year		= {1994}
}

@Article{	  marti1991numerical,
  author	= {Mart{\'{i}}, Jos{\'{e}} Ma and Ib{\~{n}}ez, Jos{\'{e}} Ma
		  and Miralles, Juan A.},
  doi		= {10.1103/PhysRevD.43.3794},
  issn		= {05562821},
  journal	= {Physical Review D},
  number	= {12},
  pages		= {3794--3801},
  publisher	= {American Physical Society},
  title		= {{Numerical relativistic hydrodynamics: Local
		  characteristic approach}},
  volume	= {43},
  year		= {1991}
}

@Article{	  nunez2016xtroem,
  author	= {{Nunez-de la Rosa}, Jonatan and Munz, Claus Dieter},
  doi		= {10.1093/mnras/stw999},
  issn		= {13652966},
  journal	= {Monthly Notices of the Royal Astronomical Society},
  number	= {1},
  pages		= {535--559},
  publisher	= {Oxford University Press},
  title		= {{XTROEM-FV: A new code for computational astrophysics
		  based on very high order finite-volume methods - II.
		  Relativistic hydro- and magnetohydrodynamics}},
  volume	= {460},
  year		= {2016}
}

@Article{		he2012,
	author = {He, Peng and Tang, Huazhong},
	doi = {10.4208/cicp.291010.180311a},
	journal = {Communications in Computational Physics},
	number = {1},
	pages = {114--146},
	title = {{An Adaptive Moving Mesh Method for Two-Dimensional Relativistic Hydrodynamics}},
	volume = {11},
	year = {2012}
}

@article{centrella1984planar,
	title={Planar numerical cosmology. II-The difference equations and numerical tests},
	author={Centrella, Joan and Wilson, James R},
	journal={Astrophysical Journal Supplement Series (ISSN 0067-0049), vol. 54, Feb. 1984, p. 229-249. Research supported by the University of Texas and Aspen Center for Physics.},
	volume={54},
	pages={229--249},
	year={1984}
}

@article{chen2022physical,
  title={A physical-constraint-preserving finite volume WENO method for special relativistic hydrodynamics on unstructured meshes},
  author={Chen, Yaping and Wu, Kailiang},
  journal={Journal of Computational Physics},
  volume={466},
  pages={111398},
  year={2022},
  publisher={Elsevier}
}

@article{bhoriya2026physical,
  title={Physical Constraint Preserving Alternative Finite-Difference WENO Scheme for Hyperbolic Systems with Non-conservative Products},
  author={Bhoriya, Deepak and Balsara, Dinshaw S and Chandrashekar, Praveen and Shu, Chi-Wang},
  journal={Journal of Scientific Computing},
  volume={107},
  number={1},
  pages={1},
  year={2026},
  publisher={Springer}
}

\end{document}